\DeclareSymbolFontAlphabet{\mathbb}{AMSb} 
\DeclareSymbolFontAlphabet{\mathbbl}{bbold}
\newcommand{\Prism}{{\mathlarger{\mathbbl{\Delta}}}} 
\numberwithin{equation}{section}
\theoremstyle{plain}
\newtheorem{theorem}[equation]{Theorem}
\newtheorem{proposition}[equation]{Proposition}
\newtheorem{lemma}[equation]{Lemma}
\newtheorem{claim}[equation]{Claim}
\newtheorem{corollary}[equation]{Corollary}
\newtheorem{porism}[equation]{Porism}
\theoremstyle{definition}
\newtheorem{definition}[equation]{Definition}
\newtheorem{notation}[equation]{Notation}
\newtheorem{construction}[equation]{Construction}
\newtheorem{question}[equation]{Question}
\newtheorem{example}[equation]{Example}
\newtheorem{remark}[equation]{Remark}
\newcommand{\rH}{{\mathrm{H}}}
\newcommand{\qsyn}{{\mathrm{qSyn}}}
\newcommand*{\s}{{\mathfrak{S}}}
\DeclareMathOperator{\gr}{gr}
\DeclareMathOperator{\id}{id}
\DeclareMathOperator{\Hom}{Hom}
\DeclareMathOperator{\Ker}{Ker}
\DeclareMathOperator{\Ext}{Ext}
\DeclareMathOperator{\Gal}{Gal}
\DeclareMathOperator{\GL}{GL}
\DeclareMathOperator{\Spec}{Spec}
\DeclareMathOperator{\Spf}{Spf}
\DeclareMathOperator{\coker}{coker}
\DeclareMathOperator{\et}{\acute{e}t}
\DeclareMathOperator{\dR}{dR}
\DeclareMathOperator{\Cris}{Cris}
\DeclareMathOperator{\cris}{crys}
\DeclareMathOperator{\tors}{tors}
\DeclareMathOperator{\free}{fr}
\DeclareMathOperator{\Mod}{Mod}
\DeclareMathOperator{\Fil}{Fil}
\DeclareMathOperator{\MF}{MF}
\DeclareMathOperator{\Rep}{Rep}
\DeclareMathOperator{\tor}{tor}
\DeclareMathOperator{\tf}{tf}
\DeclareMathOperator{\Tor}{Tor}
\DeclareMathOperator{\FL}{{FM}}
\newcommand{\LL}{{\mathbb{L}}}
\newcommand*{\Z}{\ensuremath{\mathbb{Z}}}
\newcommand*{\Kbar}{\overline{K}}
\newcommand*{\m}{\mathfrak{M}}
\newcommand*{\C}{\mathbf{C}}
\newcommand*{\F}{\mathbf{F}}
\newcommand*{\calF}{\mathcal{F}}
\newcommand*{\calM}{\mathcal{M}}
\renewcommand*{\O}{\mathcal{O}}
\newcommand*{\calK}{\mathcal{K}}
\newcommand*{\calN}{\mathcal{N}}
\newcommand{\rN}{{\rm N}}
\newcommand*{\calL}{\mathcal{L}}
\newcommand*{\M}{\ensuremath{\mathbf{M}}}
\renewcommand*{\int}{\ensuremath{\mathrm{int}}}
\renewcommand*{\u}[1]{\underline{#1}}
\newcommand*{\inj}{\hookrightarrow}
\newcommand*{\onto}{\twoheadrightarrow}
\renewcommand{\tilde}{\widetilde}
\renewcommand{\bar}{\overline}
\newcommand{\ku}{{k[\![u]\!]}}
\newcommand{\gt}{{\mathfrak t}}
\newcommand{\cI}{\mathcal I}
\newcommand{\cX}{\mathcal X}
\newcommand{\upi}{\underline{\pi}}
\title{On the $u^{\infty}$-torsion submodule of prismatic cohomology}
\author{Shizhang Li}
\address{Department of Mathematics,University of Michigan, 530 Church Street,
  Ann Arbor, MI 48109}
\email{shizhang@umich.edu}
\author{Tong Liu}
\address{Department of Mathematics
                 150 N. University Street
                 Purdue University
                 West Lafayette, IN 47907}
\email{tongliu@purdue.edu}
\date{First version March 22nd, 2022; Revised version \today.}
\begin{document}

\maketitle

\begin{abstract}
We investigate the maximal finite length submodule of the Breuil--Kisin prismatic cohomology
of a smooth proper formal scheme over a $p$-adic ring of integers.
This submodule governs pathology phenomena in integral $p$-adic cohomology theories.
Geometric applications include a control, in low degrees and mild ramifications, of 
(1) the discrepancy between two naturally associated Albanese varieties in
characteristic $p$, and
(2) kernel of the specialization map in $p$-adic \'{e}tale cohomology.
As an arithmetic application, we study the boundary case of the theory due to
Fontaine--Laffaille, Fontaine--Messing, and Kato.
Also included is an interesting example, 
generalized from a construction in Bhatt--Morrow--Scholze's work, which
(1) illustrates some of our theoretical results being sharp, and
(2) negates a question of Breuil.
\end{abstract}

\tableofcontents

\section{Introduction}

Let $\mathcal{O}_K$ be a mixed characteristic DVR with
perfect residue field $k$ and fraction field $K$.
Let $\mathcal{X}$ be a smooth proper (formal) scheme over $\mathcal{O}_K$,
it is natural to ask how the geometry of $\mathcal{X}_k$ and $\mathcal{X}_K$ are related.
Recall that proper base change theorem~\cite[\href{https://stacks.math.columbia.edu/tag/0GJ2}{Tag 0GJ2}]{stacks-project} says that,
for any prime $\ell$, there is a specialization map
\[
\mathrm{Sp} \colon \mathrm{R\Gamma}_{\et}(\mathcal{X}_{\bar{k}}, \mathbb{Z}_\ell) \to \mathrm{R\Gamma}_{\et}(\mathcal{X}_{\bar{K}}, \mathbb{Z}_\ell).
\]
The smooth base change theorem says~\cite[\href{https://stacks.math.columbia.edu/tag/0GKD}{Tag 0GKD}]{stacks-project} that the above map is an isomorphism for any $\ell \not= p$.

The lack of the smooth base change theorem when $\ell = p$ is related to many interesting ``pathology'' phenomena
in $p$-adic cohomology theories.
In this paper, we try to investigate these pathologies using the recent advances of prismatic cohomology theory.

The driving philosophy in this article is the following: 
recall in \cite{BMS1}, \cite{BMS2}, and \cite{BS19}, the authors
attached a natural cohomology theory, known as the prismatic cohomology, to the mixed characteristic family $\mathcal{X}/\mathcal{O}_K$.
This cohomology can be thought of as ``the universal $p$-adic cohomology theory'',
therefore we expect certain well-defined piece inside prismatic cohomology to be ``the universal source of pathology''
in all $p$-adic cohomology theory.
Before explicating the above, let us first say that the comparison between
\'{e}tale torsion and crystalline torsion as in \cite[Theorem 1.1.(ii)]{BMS1}
serves as the initial inspiration.
Now let us showcase two more such pathologies and state what our
main theorem specializes to in these two cases.

\subsection*{Albanese and reduction}
\addtocontents{toc}{\protect\setcounter{tocdepth}{1}}
Let us assume, in addition to above, that $\mathcal{X}$ possesses an $\mathcal{O}_K$-point $x$.
Associated with the pair $(\mathcal{X}, x)$ is a functorial map of abelian varieties
$f \colon \mathrm{Alb}(\mathcal{X}_k) \to \mathcal{A}_k$ over $k$,
where $\mathcal{A}$ is the N\'{e}ron model of the Albanese of $(\mathcal{X}_K, x_K)$.
The smooth and proper base change theorem tells us that $f$ is a $p$-power isogeny.
What can we say about the $\ker(f)$?


\begin{theorem}[{\Cref{defect controlled by ramification index}}]
\label{Thm 1}
Let $e$ be the ramification index of $\mathcal{O}_K$.
\begin{enumerate}
\item If $e < p-1$ then the map $f \colon \mathrm{Alb}(\mathcal{X}_0) \to \mathrm{Alb}(X)_0$ is an isomorphism.
\item If $e < 2(p-1)$ then $\ker(f)$ is $p$-torsion.
\item If $e = p-1$ then $\ker(f)$ is $p$-torsion and of multiplicative type, hence must be a form of several copies of $\mu_p$.
Moreover there is a canonical injection of $\mathcal{O}_K$-modules
\[
\mathbb{D}(\ker(f)) \otimes_k \big(\mathcal{O}_K/p\big) \hookrightarrow \rH^2(\mathcal{X}, \mathcal{O}_{\mathcal{X}}).
\]
\end{enumerate}
\end{theorem}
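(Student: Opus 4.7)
The plan is to identify the contravariant Dieudonn\'e module $\mathbb{D}(\ker(f))$ with the $u$-torsion submodule $H^2_{\s}(\mathcal{X})[u]$ of the second Breuil--Kisin prismatic cohomology, and then to extract (1)--(3) from the paper's structural results on this $u^{\infty}$-torsion.

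First I would set up the identification. Let $\s = W(k)[\![u]\!]$ and write $C = \RG_{\s}(\mathcal{X})$ for the Breuil--Kisin complex. The multiplication-by-$u$ exact triangle $C \xrightarrow{u} C \to C/u$, together with the crystalline comparison $C/u \simeq \RG_{\mathrm{cris}}(\mathcal{X}_k/W)$, produces in degree one the short exact sequence
\[
0 \to H^1_{\s}(\mathcal{X})/u \to H^1_{\mathrm{cris}}(\mathcal{X}_k/W) \to H^2_{\s}(\mathcal{X})[u] \to 0.
\]
Since $\mathcal{X}/\mathcal{O}_K$ is smooth proper with an $\mathcal{O}_K$-point, the generic Albanese $\mathrm{Alb}(\mathcal{X}_K)$ is an abelian variety of good reduction, so $\mathcal{A}/\mathcal{O}_K$ is an abelian scheme, and the generic Albanese morphism extends uniquely to $\mathcal{X} \to \mathcal{A}$ by the N\'eron mapping property. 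Applying the same $u$-triangle to $\mathcal{A}$ identifies $H^1_{\s}(\mathcal{A})/u$ with $H^1_{\mathrm{cris}}(\mathcal{A}_k/W) = \mathbb{D}(\mathcal{A}_k[p^{\infty}])$, while the middle term above is $H^1_{\mathrm{cris}}(\mathrm{Alb}(\mathcal{X}_k)/W) = \mathbb{D}(\mathrm{Alb}(\mathcal{X}_k)[p^{\infty}])$. A diagram chase along the pullback $\mathcal{X} \to \mathcal{A}$ then identifies the left injection with $\mathbb{D}(f)$, and hence the cokernel $H^2_{\s}(\mathcal{X})[u]$ with $\mathbb{D}(\ker(f))$.

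Granted this identification, parts (1) and (2) reduce to the expected main theorem controlling the $u^{\infty}$-torsion of $H^i_{\s}(\mathcal{X})$: namely, $H^2_{\s}(\mathcal{X})[u] = 0$ when $e < p-1$, and $H^2_{\s}(\mathcal{X})[u]$ is killed by $p$ when $e < 2(p-1)$. For part (3), the ``multiplicative type'' condition on $\ker(f)$ is equivalent to the vanishing of Frobenius on $\mathbb{D}(\ker(f)) = H^2_{\s}(\mathcal{X})[u]$, which I expect to extract from a boundary-case analysis of the Nygaard filtration at $e = p-1$. The canonical injection $\mathbb{D}(\ker(f)) \otimes_k \mathcal{O}_K/p \hookrightarrow H^2(\mathcal{X}, \mathcal{O}_{\mathcal{X}})$ should come from the Hodge--Tate specialization: base-changing the $u$-multiplication triangle along $\s \to \mathcal{O}_K$ and running the Hodge-to-de Rham spectral sequence should embed $H^2_{\s}(\mathcal{X})[u]$, tensored along $W \to \mathcal{O}_K/p$, into the degree-zero Hodge piece $H^2(\mathcal{X}, \mathcal{O}_{\mathcal{X}})$.

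The main obstacle is the boundary case $e = p-1$: both the Frobenius vanishing and the explicit Hodge embedding require delicate control of the Nygaard-filtered $\varphi$-structure on the $u^{\infty}$-torsion of $H^2_{\s}$, which presumably constitutes the heart of the paper's main structural theorem. Parts (1)--(2) and the Dieudonn\'e-theoretic identification $H^2_{\s}(\mathcal{X})[u] \cong \mathbb{D}(\ker(f))$ should follow more cleanly once the general framework is in place.
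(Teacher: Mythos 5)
Your proposal follows the same route as the paper: identify $\mathbb{D}(\ker f)$ with the $u$-torsion in degree two of Breuil--Kisin prismatic cohomology via the $u$-multiplication triangle and crystalline comparison, then invoke the structural theorem on $u^\infty$-torsion for (1)--(2) and the Nygaard-filtration analysis at the boundary $e=p-1$ for (3). Two corrections, though. First, the comparison you invoke introduces a Frobenius twist you have suppressed: $C/u \simeq \RG_{\cris}(\mathcal{X}_0/W)^{(-1)}$, not $\RG_{\cris}(\mathcal{X}_0/W)$, so the paper runs the whole argument with the Frobenius-twisted sheaf $\Prism^{(1)}$ (for which $\Prism^{(1)}/u \simeq i_*\Cris$) and obtains $\mathbb{D}(\ker f) \cong \rH^2_{\Prism}(\mathcal{X}/\s)^{(1)}[u]$; tracking this twist is what makes the induced $F$ and $V$ come out correctly, and the paper phrases the multiplicative-type claim through bijectivity of the induced Verschiebung (\Cref{induced Vers Corollary 3}) rather than vanishing of Frobenius, though the two are equivalent for a $p$-torsion Dieudonn\'e module. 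Second, your ``diagram chase'' elides the two nontrivial inputs that make it close: that $g^*\colon \rH^1_{\Prism}(\mathcal{A}/\s) \to \rH^1_{\Prism}(\mathcal{X}/\s)$ is an isomorphism (reduces to \'etale realization and full faithfulness for finite free Kisin modules), and that $h^*\colon \rH^1_{\cris}(\mathrm{Alb}(\mathcal{X}_0)/W) \to \rH^1_{\cris}(\mathcal{X}_0/W)$ is an isomorphism (Illusie). With these supplied and the twist restored, your sketch coincides with the paper's proof. Your guess that the Hodge embedding in (3) comes from the $0$-th Hodge--Tate piece is also correct in substance: the paper implements it via the sequence $\Fil^1_{\rN}\to\Prism^{(1)}\to\mathcal{O}$ and the fact that the induced first Nygaard filtration on the $u^\infty$-torsion is exactly the $u^e$-multiple, not via the Hodge--de Rham spectral sequence per se.
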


Here $\mathbb{D}(-)$ denotes the Dieudonn\'{e} module of said finite flat group scheme.
If one translates this result to a statement concerning maps between Picard schemes, 
then our result slightly refines an old result by Raynaud~\cite[Th\`{e}or\'{e}me 4.1.3]{Ray79}
in the setting of smooth central fiber, see \Cref{translation to Picard remark}.

\subsection*{Kernel of specialization}
The $p$-adic specialization map is not an isomorphism, as it is almost never going to be surjective,
for the rank of the source is at most half of the rank of the target.
One can still ask whether the $p$-adic specialization map is injective or not.
\begin{theorem}[{\Cref{control ker Cosp}}]
Let $e$ be the ramification index of $\mathcal{O}_K$, 
and let $i \in \mathbb{N}$.
Consider the specialization map
$\mathrm{Sp}^i \colon 
\rH^i_{\et}(\mathcal{X}_{\bar{k}}, \mathbb{Z}_p) \to \rH^i_{\et}(\mathcal{X}_{\bar{K}}, \mathbb{Z}_p)$.
\begin{enumerate}
\item If $e \cdot (i-1) < p-1$, then $\mathrm{Sp}^i$ is injective.
\item If $e \cdot (i-1) < 2(p-1)$, then $\ker(\mathrm{Sp}^i)$ is annihilated by $p^{i-1}$.
\item If $e \cdot (i-1) = p-1$, then $\ker(\mathrm{Sp}^i)$ is $p$-torsion,
and there is a $\mathrm{Gal}(\bar k /k)$-equivariant injection:
\[
\ker(\mathrm{Sp}^i) \otimes_{\mathbb{F}_p} \big(\mathcal{O}_K \otimes_W W(\bar{k})\big)/p
\hookrightarrow \rH^i(\mathcal{O}_{\mathcal{X}}) \otimes_{W} W(\bar{k}).
\]
\end{enumerate}
\end{theorem}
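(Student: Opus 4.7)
The plan is to reduce the theorem to the paper's main structural result on the $u^\infty$-torsion submodule $M_{\tors} \subset M$ of the Breuil--Kisin prismatic cohomology $M := \rH^i_{\Prism}(\mathcal{X}/\s)$, where $\s = W(k)[\![u]\!]$. The bridge is as follows. The \'{e}tale comparison theorem recovers $\rH^i_{\et}(\mathcal{X}_{\bar{K}}, \ZZ_p)$ from $M$ by inverting $u$ and applying a Kisin-style functor; since inverting $u$ kills $M_{\tors}$, this identification depends only on $M/M_{\tors}$. On the other hand, the Hodge--Tate/crystalline comparison gives $M/u \simeq \rH^i_{\cris}(\mathcal{X}_k/W)$ with its Frobenius, and in each of the three ramification ranges in the statement, Fontaine--Messing--Kato theory is applicable and identifies $\rH^i_{\et}(\mathcal{X}_{\bar k}, \ZZ_p)$ with an appropriate Frobenius-fixed part of $M/u$ after tensoring with a suitable period ring. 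The map $\mathrm{Sp}^i$ then arises from the natural comparison between $M$ and $M/M_{\tors}$ at the mod-$u$ fibre.

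Next I would identify $\ker(\mathrm{Sp}^i)$ in terms of $M_{\tors}$. The short exact sequence
\[
0 \to M_{\tors} \to M \to M/M_{\tors} \to 0
\]
induces, after reducing mod $u$ and applying the Frobenius-fixed-point functors that compute the two sides of $\mathrm{Sp}^i$ over $W(\bar{k})$, a long exact sequence whose connecting term realises $\ker(\mathrm{Sp}^i) \otimes_{\FF_p} W(\bar{k})/p$ as (a quotient of) $M_{\tors}/u \otimes_W W(\bar{k})$. Thus all three claims reduce to the structural statement about $M_{\tors}$ established earlier in the paper.

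Now apply the paper's main theorem on $M_{\tors}$: in the range $e(i-1) < p-1$ one has $M_{\tors} = 0$, yielding injectivity of $\mathrm{Sp}^i$; in the range $e(i-1) < 2(p-1)$ the torsion is killed by $p^{i-1}$, and the estimate transfers to $\ker(\mathrm{Sp}^i)$; in the boundary case $e(i-1) = p-1$ the torsion $M_{\tors}$ is $p$-torsion and sits via a canonical map inside $\rH^i(\mathcal{X},\mathcal{O}_{\mathcal{X}})$. Tensoring this embedding with $\mathcal{O}_K \otimes_W W(\bar{k})/p$ and matching it against the identification of $\ker(\mathrm{Sp}^i)$ above produces the desired injection; the $\Gal(\bar{k}/k)$-equivariance is inherent in the functoriality of the prismatic cohomology construction and the Kisin functor.

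The main obstacle is the boundary case (3): not merely establishing that the kernel is $p$-torsion, but producing the specific \emph{canonical} embedding into Hodge cohomology compatibly with the tensor factor $\mathcal{O}_K \otimes_W W(\bar{k})/p$ on the \'{e}tale side. This requires precise bookkeeping of the Hodge/Nygaard filtration on $M/u$ and its interaction with the Frobenius at the critical ramification $e(i-1) = p-1$, at the outer edge of the Fontaine--Laffaille range, where the usual comparison arguments are tightest. The other two parts will then follow formally from the vanishing/annihilation statements on $M_{\tors}$ and the identification of $\ker(\mathrm{Sp}^i)$ as a quotient thereof.
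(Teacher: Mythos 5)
The proposal has the right high-level shape (reduce to the structure theorem on $u^\infty$-torsion), but the bridge you propose between \'{e}tale cohomology of the special fibre and prismatic cohomology does not work, and there are a couple of supporting claims that are incorrect.

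The central problem is your use of Fontaine--Messing--Kato theory to identify $\rH^i_{\et}(\mathcal{X}_{\bar{k}}, \ZZ_p)$ with Frobenius fixed points of $M/u$. Fontaine--Messing--Kato theory only applies when $e = 1$ (unramified base) and $i < p$; but the theorem must cover cases with $e > 1$ (for example $e = p-1$, $i = 2$ in part (3)). So this bridge is unavailable precisely in the ramified range. What the paper uses instead is the Bhatt--Scholze \'{e}tale comparison over $A_{\inf}$ (\cite[Theorem~9.1 and Remark~9.3]{BS19}, recalled as \Cref{etale comparison}): it unconditionally identifies $\rH^i_{\et}(\mathcal{X}_{\bar{k}}, \ZZ/p^n)$ with the Frobenius fixed points of $\rH^i(\Prism_{\mathcal{X}_{\mathcal{O}_C}/A_{\inf}}/p^n)$, and $\rH^i_{\et}(\mathcal{X}_C, \ZZ/p^n)$ with the fixed points after inverting $\xi$, with $\mathrm{Sp}^i$ the evident map. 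A snake lemma, together with the observation that $\varphi - 1$ is surjective on the relevant groups (\Cref{Frob-id lemma}), then yields a clean \emph{isomorphism} $\ker(\mathrm{Sp}^i_n) \cong (\m^i_n \otimes_{\s} A_{\inf})^{\varphi = 1} \cong (\m^i_n/u \otimes_{W(k)} W(\bar{k}))^{\varphi = 1}$ (\Cref{ker Cosp and u-torsion}). Your description --- ``a quotient of $M_{\tors}/u \otimes_W W(\bar{k})$'' --- is both weaker than and structurally different from this: the actual identification is via Frobenius fixed points, and getting an isomorphism (not a subquotient) is what lets the estimates on $\m^i$ transfer verbatim to $\ker(\mathrm{Sp}^i)$.

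Two smaller but real errors: first, the claim $M/u \simeq \rH^i_{\cris}(\mathcal{X}_k/W)$ is false in general; by the crystalline comparison there is an exact sequence $0 \to M/u \to \rH^i_{\cris}(\mathcal{X}_k/W) \to (\rH^{i+1}_{\Prism})[u] \to 0$, so $M/u$ is only a submodule, with cokernel controlled by $u$-torsion in the next degree. Second, for part (3) it is not enough to say that $M_{\tors}$ ``sits inside'' $\rH^i(\mathcal{O}_{\mathcal{X}})$: the specific injection $\m^i \otimes_k (\mathcal{O}_K/p) \hookrightarrow \rH^i(\mathcal{O}_{\mathcal{X}})$ is constructed via the exact sequence $\Fil^1_{\rN} \to \Prism^{(1)} \to \mathcal{O}$ together with the computation of the induced Nygaard filtration on $u^\infty$-torsion in the boundary degree (\Cref{prop induced Nygaard filtration}, \Cref{torsion in O bounds prismatic torsion}); to get the Galois-equivariant statement one repeats that argument after base changing $\mathcal{X}$ to $\mathcal{O}_K \otimes_W W(\bar{k})$, as in the proof of \Cref{control ker Cosp}, rather than merely tensoring an existing map with $W(\bar{k})/p$.
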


The above two theorems are of similar shape, and that is because
they are shadows of the same result concerning prismatic cohomology,
which we explain next.

\subsection*{Prismatic input}
Choose a uniformizer $\pi \in \mathcal{O}_K$, there is a canonical surjection
$\s \coloneqq W(k)[\![u]\!] \twoheadrightarrow \mathcal{O}_K$
with kernel generated by the Eisenstein polynomial of $\pi$,
which has degree given by the ramification index $e$.
Let $\varphi_{\s}$ be the endomorphism on $\s$ which restricts to usual
Frobenius on $W(k)$ and sends $u$ to $u^p$.
The triple $(\s, (E), \varphi_{\s})$ is known as the Breuil--Kisin prism associated with
$(\mathcal{O}_K, \pi)$ \cite[Example 1.3.(3)]{BS19}.

In \cite{BMS2}, and \cite{BS19}, the authors attached an $\s$-perfect complex
$\mathrm{R\Gamma}_{\Prism}(\mathcal{X}/\s)$ with a Frobenius operator.
Similar to the classical crystalline story, the Frobenius operator 
is also an isogeny. A concrete consequence of having an isogenous Frobenius map
is that the torsion submodule in $\rH^i_{\Prism}(\mathcal{X}/\s)$
is $p$-power torsion \cite[Proposition 4.3.(i)]{BMS1}.
Hence the torsion must be supported on
$\Spec(\s/p)$, note that $\s/p \cong k[\![u]\!]$ is a DVR.
An upshot of the above discussion is that we have three descriptions of
a submodule in $\rH^i_{\Prism}(\mathcal{X}/\s)$:
\begin{enumerate}
\item the $u^{\infty}$-torsion submodule in $\rH^i_{\Prism}(\mathcal{X}/\s)$,
from now on we denote it as $\rH^i_{\Prism}(\mathcal{X}/\s)[u^{\infty}]$;
\item the maximal finite length submodule in $\rH^i_{\Prism}(\mathcal{X}/\s)$; and
\item the submodule in $\rH^i_{\Prism}(\mathcal{X}/\s)$ supported at the closed point in $\Spec(\s)$.
\end{enumerate}

To convey readers the above \emph{is} the universal source of pathology in $p$-adic cohomology theory,
let us exhibit the connection between $u^{\infty}$-torsion and our
results before.

\begin{theorem}
\leavevmode
\begin{enumerate}
\item \emph{(}\Cref{defect of Albanese}\emph{)}
Concerning the natural map $f \colon \mathrm{Alb}(\mathcal{X}_0) \to \mathrm{Alb}(X)_0$,
we have a natural isomorphism of Dieudonn\'{e} modules:
\[
\mathbb{D}(\ker(f))^{(-1)} \cong \rH^2_{\Prism}(\mathcal{X}/\s)[u],
\]
where $(-)^{(-1)}$ denote the Frobenius \emph{untwist} and $(-)[u]$ denotes
the $u$-torsion submodule.
\item \emph{(}\Cref{ker Cosp and u-torsion}\emph{)}
As for the kernel of $p$-adic specialization map, we have a natural isomorphism
of $\mathrm{Gal}(\bar k/k) $-representations:
\[
\ker(\mathrm{Sp}^i) \cong 
\big(\rH^i_{\Prism}(\mathcal{X}/\s)[u^\infty]/u \otimes_{W(k)} W(\bar{k})\big)^{\varphi = 1}.
\]
\end{enumerate}
\end{theorem}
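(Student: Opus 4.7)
The plan is to exploit a common backbone. Multiplication by $u$ on $\rR\Gamma_{\Prism}(\mathcal{X}/\s)$, together with the Frobenius-twisted base change
$\rR\Gamma_{\Prism}(\mathcal{X}/\s) \otimes^{\rL}_{\s} \s/u \simeq \rR\Gamma_{\cris}(\mathcal{X}_0/W)^{(-1)}$,
unpacks the cofiber sequence $\s \xrightarrow{u} \s \to \s/u$ into short exact sequences
\[
0 \to \rH^i_{\Prism}(\mathcal{X}/\s)/u \to \rH^i_{\cris}(\mathcal{X}_0/W)^{(-1)} \to \rH^{i+1}_{\Prism}(\mathcal{X}/\s)[u] \to 0
\]
for every $i$.

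For part (1) I take $i=1$, so $\rH^2_{\Prism}(\mathcal{X}/\s)[u]$ is realized as the cokernel of the inclusion $\rH^1_{\Prism}(\mathcal{X}/\s)/u \hookrightarrow \rH^1_{\cris}(\mathcal{X}_0/W)^{(-1)}$. Independently, the exact sequence $0 \to \ker(f) \to \Alb(\mathcal{X}_0)[p^\infty] \to \mathcal{A}_0[p^\infty] \to 0$ and contravariant Dieudonn\'e theory, combined with $\mathbb{D}(\Alb(\mathcal{X}_0)[p^\infty]) \cong \rH^1_{\cris}(\mathcal{X}_0/W)$, identify $\mathbb{D}(\ker f)$ with the cokernel of $f^* \colon \rH^1_{\cris}(\mathcal{A}_0/W) \hookrightarrow \rH^1_{\cris}(\mathcal{X}_0/W)$. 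The key step is then to match these two cokernels after Frobenius untwist. For this I use that the N\'eron property extends the Albanese to a morphism $\mathcal{X} \to \mathcal{A}$; since $\mathcal{A}$ is an abelian scheme, $\rH^1_{\Prism}(\mathcal{A}/\s)$ is $\s$-free, and the induced $\rH^1_{\Prism}(\mathcal{A}/\s) \to \rH^1_{\Prism}(\mathcal{X}/\s)$ should identify its source with the maximal $u$-torsion-free quotient of the target; modding out by $u$ then upgrades this to the desired matching of images.

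For part (2), the plan is to realize both \'etale cohomology groups as $\varphi=1$-invariants of explicit prismatic modules. On the special-fiber side, the classical crystalline--\'etale comparison over a perfect base of characteristic $p$, combined with the backbone identification above, gives
\[
\rH^i_{\et}(\mathcal{X}_{\bar k}, \Z_p) \cong \bigl(\rH^i_{\Prism}(\mathcal{X}/\s)/u \otimes_W W(\bar k)\bigr)^{\varphi=1}.
\]
On the generic-fiber side, the \'etale--prismatic comparison of \cite{BS19} factors through a localization of $\s$ away from the closed point, which by definition kills $T \coloneqq \rH^i_{\Prism}(\mathcal{X}/\s)[u^\infty]$ (a module supported at that point), and yields
\[
\rH^i_{\et}(\mathcal{X}_{\bar K}, \Z_p) \cong \bigl((\rH^i_{\Prism}(\mathcal{X}/\s)/T)/u \otimes_W W(\bar k)\bigr)^{\varphi=1}.
\]
Under these identifications the specialization map becomes the projection induced by $\rH^i_{\Prism}/u \twoheadrightarrow (\rH^i_{\Prism}/T)/u$, and applying left-exactness of $(\cdot)^{\varphi=1}$ to the short exact sequence $0 \to T/u \to \rH^i_{\Prism}/u \to (\rH^i_{\Prism}/T)/u \to 0$ (after flat base change to $W(\bar k)$) pins down $\ker(\mathrm{Sp}^i)$ as claimed.

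The main obstacle is the second identification in part (2): one must trace through the \'etale--prismatic comparison to confirm that the $u^\infty$-torsion becomes irrelevant at the generic fiber and that the resulting map on $\varphi=1$-invariants is genuinely the topological specialization $\mathrm{Sp}^i$. By contrast, the crucial step in part (1) --- the $u$-torsion-free identification coming from the N\'eron/abelian-scheme $\mathcal{A}$ --- is comparatively formal once one has $\s$-freeness of $\rH^1_{\Prism}(\mathcal{A}/\s)$ and the factorization of the Albanese through $\mathcal{A}$.
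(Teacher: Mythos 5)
Both parts of your proposal share the right backbone (the cofiber sequence coming from multiplication by $u$ on $\Prism$), and part (1) is close in spirit to the paper's argument, but there are genuine gaps.

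\textbf{Part (1).} Your claim that the map $g^*\colon \rH^1_{\Prism}(\mathcal{A}/\s) \to \rH^1_{\Prism}(\mathcal{X}/\s)$ ``should identify its source with the maximal $u$-torsion-free quotient of the target'' is not formal, and you offer no argument for it. In fact $\rH^1_{\Prism}(\mathcal{X}/\s)$ is \emph{already} $u$-torsion-free (indeed $\s$-free, by \Cref{finite free in low degree and ramification}), and the paper proves the stronger statement that $g^*$ is an isomorphism of Kisin modules. That proof is not a consequence of N\'eron-ness and $\s$-freeness alone: it passes through full faithfulness of the \'{e}tale realization of finite free Kisin modules, faithful flatness of $\s \to A_{\inf}$, the \'etale comparison theorem, and the nontrivial fact that $g_K$ induces an isomorphism on $p$-adic $\rH^1_{\et}$, which itself relies on the Picard variety of $X$ being an abeloid. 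Your ``comparatively formal'' remark is where the missing work is.

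\textbf{Part (2).} The claim
\[
\rH^i_{\et}(\mathcal{X}_{\bar K}, \Z_p) \cong \bigl((\rH^i_{\Prism}(\mathcal{X}/\s)/T)/u \otimes_W W(\bar k)\bigr)^{\varphi=1}
\]
is false, and this breaks the argument. Take $\mathcal{X} = \mathbb{P}^1_{\mathcal{O}_K}$ and $i=2$: then $T=0$, $\rH^2_{\Prism} = \s\{-1\}$, and mod $u$ the Frobenius acts as $p$ times a unit, so the right-hand side has trivial $\varphi=1$-invariants; but $\rH^2_{\et}(\mathbb{P}^1_{\bar K},\Z_p) \cong \Z_p(-1)$, which is rank $1$. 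More structurally: you observed yourself that the source of $\mathrm{Sp}^i$ has at most half the rank of the target, yet your proposed formula for the target has the same ``size'' as your formula for the source, so they cannot both be right. The issue is that inverting $\xi$ in $A_{\inf}$ and $p$-completing retains genuine $p$-adic information that is not visible after reducing mod $u$; one cannot pass to $W(\bar{k})$ before applying the comparison on the generic fiber. The paper's workaround is to keep both sides as $\varphi=1$-invariants of honest $A_{\inf}$-modules, to apply the snake lemma there, to identify the kernel of the inclusion with $\m^i_n \otimes_\s A_{\inf}$ via flat base change and \cite[Prop. 4.3]{BMS1}, and only then reduce to $W(\bar k)$ when computing $(\m^i_n \otimes_\s A_{\inf})^{\varphi=1}$, using that $\varphi$ is topologically nilpotent on the $[\mathfrak{m}_C^\flat]$-part. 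You need a mechanism of this kind; a naive mod-$u$ shortcut on the target side cannot work.

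One smaller remark on part (2): your route to the special-fiber identification (crystalline--\'etale comparison plus the backbone sequence) can be made to work, but it requires checking that the induced Frobenius on the cokernel $\rH^{i+1}_{\Prism}[u]$ is (topologically) nilpotent, so that its $\varphi=1$-invariants vanish. This does hold --- the induced Frobenius is $u^{p-1}$ times the prismatic Frobenius and iterating it produces $u^{p^n-1}\varphi^n$, which kills the finite-length module $\rH^{i+1}_{\Prism}[u^\infty]$ for $n$ large --- but it is a step, not an invocation of the standard comparison; the paper sidesteps it by proving the identification directly at the level of $A_{\inf}$-modules (Porism \ref{porism etale in Frob twisted}).
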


In view of aforementioned statements, the reader can probably guess what our main result,
concerning the structure of $u^\infty$-torsion in prismatic cohomology, should look like.
\begin{theorem}[{\Cref{cor control u-torsion} and~\Cref{torsion in O bounds prismatic torsion}}]
\label{Thm 1.4}
Let us write $\mathfrak{M}^i \coloneqq \rH^i_{\Prism}(\mathcal{X}/\s)[u^\infty]$,
and write $\rm Ann(-)$ for the annihilator ideal of an $\s$-module.
\begin{enumerate}
\item If $e \cdot (i-1) < p-1$, then $\mathfrak{M}^i = 0$.
\item If $e \cdot (i-1) < 2(p-1)$, then ${\rm Ann}(\mathfrak{M}^i) + (u) \supset (p^{i-1},u)$.
\item If $e \cdot (i-1) = p-1$, then ${\rm Ann}(\mathfrak{M}^i) \supset (p,u)$.
Moreover the semi-linear Frobenius on $\mathfrak{M}^i$ is bijective,
and there is a natural injection 
$\mathfrak{M}^i \otimes_{k} \big(\mathcal{O}_K/p\big)
\hookrightarrow \rH^i(\mathcal{X}, \mathcal{O}_{\mathcal{X}})$.
\end{enumerate}
\end{theorem}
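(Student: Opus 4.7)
The starting point is that $\mathfrak{M}^i := \rH^i_{\Prism}(\mathcal{X}/\s)[u^\infty]$ is a finite-length $\s$-module stable under the semi-linear Frobenius $\varphi$: if $u^n \cdot x = 0$ then $u^{pn} \cdot \varphi(x) = \varphi(u^n x) = 0$. The plan is to extract Fontaine--Laffaille-style bounds by combining this Frobenius action with the Hodge--Tate comparison for prismatic cohomology. The bound $e(i-1) < p-1$ matches the classical Fontaine--Laffaille range for Hodge--Tate weights in $[0, i-1]$, which suggests viewing $\mathfrak{M}^i$ as a torsion Breuil--Kisin module with effective weights in this shifted range.

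The first step is to establish a Frobenius-height bound on $\mathfrak{M}^i$. The linearized Frobenius $\varphi^* \rH^i_{\Prism} \to \rH^i_{\Prism}$ becomes an isomorphism upon inverting $E$, with cokernel annihilated by $E^i$ (reflecting Hodge--Tate weights in $[0, i]$). Restricted to the torsion submodule, I expect the cokernel to be annihilated by $E^{i-1}$: the extra gain comes from the top Hodge piece $\rH^0(\mathcal{X}, \Omega^i)$ being $\mathcal{O}_K$-flat, hence contributing only to the torsion-free quotient. Concretely, one analyzes the image of $\mathfrak{M}^i$ in $\rH^i_{\Prism}/E$ and uses the conjugate Hodge filtration.

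With this in hand, the Fontaine--Laffaille-type estimate runs as follows. Given $x \in \mathfrak{M}^i$, Step 1 lets us write $E^{i-1} \cdot x = \varphi_{\mathrm{lin}}(y)$ for some $y \in \varphi^*\mathfrak{M}^i$; since $E \equiv u^e$ modulo $p$ and $\varphi(u) = u^p$, this trade-off effectively reduces the $u$-order by a factor of $p$ at the cost of $e(i-1)$. Iterating modulo successive powers of $p$ gives: when $e(i-1) < p-1$ the $p$-gain strictly outpaces the $E^{i-1}$-cost, so $\mathfrak{M}^i$ must vanish, yielding (1); when $e(i-1) < 2(p-1)$ the same iteration yields $p^{i-1} \in \Ann(\mathfrak{M}^i) + (u)$, giving (2); in the boundary case $e(i-1) = p-1$ the bound is tight, forcing $p \in \Ann(\mathfrak{M}^i) + (u)$ and bijectivity of $\varphi$ on $\mathfrak{M}^i$. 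The injection in (3) then comes from identifying $\mathfrak{M}^i/u$, via the $E^{i-1}$-divided Frobenius, with a subspace of the top piece of the conjugate filtration on $\rH^i_{\Prism}/E$, hence with a subspace of $\rH^i(\mathcal{O}_{\mathcal{X}})$ after tensoring with $\mathcal{O}_K/p$.

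The main obstacle is the careful Frobenius-descent of the third step: tracking exactly how the $u$-order of elements of $\mathfrak{M}^i$ decreases under iterated preimages of $\varphi_{\mathrm{lin}}$, while controlling the interaction with $p$-torsion, so that the total trade-off matches the inequalities with the sharp constant. In particular, the refined height bound $E^{i-1}$ (rather than the naive $E^i$) established in the second step is what makes the proof land at $e(i-1)$ rather than $ei$.
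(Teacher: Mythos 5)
Your overall strategy — viewing the $u^\infty$-torsion as a Frobenius module of ``effective height $i-1$'' and extracting a Fontaine--Laffaille-type bound from the interplay between $E\equiv u^e$ mod $p$ and $\varphi(u)=u^p$ — matches the shape of the paper's argument, and your Step~2 intuition that the $\mathcal{O}_K$-flatness of $\rH^0(\mathcal{X},\Omega^i)$ is what shaves off one power of $E$ is on target (it enters the paper's \Cref{induced Vers Lemma 1}). But the concrete mechanism producing the bound is not the statement you write down in Step~3, and the iteration as described does not land on the claimed constants.

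Your Step~3 formulation ``$E^{i-1}\cdot x = \varphi_{\mathrm{lin}}(y)$ for $y\in\varphi^*\mathfrak{M}^i$'' is a statement about the \emph{cokernel} of $\varphi_{\mathrm{lin}}\colon\varphi^*\mathfrak{M}^i\to\mathfrak{M}^i$; translated into annihilator terms with $\mathrm{Ann}(\mathfrak{M}^i)+(p)=(u^\alpha,p)$ it only yields $e(i-1)+p\alpha\geq\alpha$, which is automatic and bounds nothing. The inequality you actually need, $e(i-1)+\alpha\geq p\alpha$, comes from the \emph{opposite} factorization: multiplication by $E^{i-1}$ on $\varphi^*\mathfrak{M}^i$ factors through a submodule of $\mathfrak{M}^i$, giving $E^{i-1}\cdot\mathrm{Ann}(\mathfrak{M}^i)\subset\mathrm{Ann}(\varphi^*\mathfrak{M}^i)$. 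This is \Cref{prop control u-torsion}, and the key technical input is the \emph{injectivity} of the divided Frobenius $\varphi_{i-1}\colon\rH^i_{\qsyn}(\mathcal{X},\Fil^{i-1}_{\rN}/p^n)\to\rH^i_{\qsyn}(\mathcal{X},\Prism_n)$ — a Nygaard-filtration fact from \cite[Lemma~7.8(3)]{LL20} — which lets one embed the middle term of the factorization inside $\rH^i_{\Prism}$ so that its annihilator contains $\mathrm{Ann}(\mathfrak{M}^i)$; this is not the same as the conjugate-filtration heuristic. Beyond the vanishing in (1), ``iterating mod powers of $p$'' does not produce the exponent $p^{i-1}$ in (2) — that comes from $v_p\!\left(E(0)^{i-1}\right)=i-1$ via an explicit computation of the linear coefficient of $E^{i-1}\cdot(u+a)$ for a generator $u+a$ of $\mathrm{Ann}(\mathfrak{M}^i)$ modulo $u^2$ — and (3) requires a two-stage bootstrap on this generator (first upgrading $u+a$ to $u$, then reading off $p$), a length comparison for bijectivity of $\varphi$, and the identification of the induced Nygaard filtration on $\mathfrak{M}^i$ with the $u^e$-adic filtration (\Cref{prop induced Nygaard filtration}) for the injection into $\rH^i(\mathcal{O}_{\mathcal{X}})$.
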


\begin{remark}
\label{Rmk 1.5}
\leavevmode
\begin{enumerate}
\item We also prove the mod $p^n$ analogs as well.
As a consequence we obtain the following \Cref{finite free in low degree and ramification}
concerning the shape of prismatic cohomology:
Let $i$ be an integer satisfying $e \cdot (i-1) < p-1$,
then there exists a (non-canonical) isomorphism of $\s$-modules:
\[
\rH^i_{\Prism}(\mathcal{X}/\s) \simeq \rH^i_{\et}(\mathcal{X}_{\bar{K}}, \mathbb{Z}_p) \otimes_{\mathbb{Z}_p} \s.
\]
\item Previously Min has obtained vanishing of $\mathfrak{M}^i$
with the assumption $e \cdot i < p-1$ \cite[Theorem 0.1]{Min20}.
Let us briefly explain the appearance of $(i-1)$ in our result,
which might seem odd at first glance. 
It is due to the fact that the prismatic Verschiebung operator $V_i$ becomes
canonically divisible by $E$
when restricted to the $p^\infty$-torsion submodule or the $u^\infty$-torsion submodule,
and these submodules with the usual prismatic Frobenius and the ``divided Verschiebung''
is canonically a (generalized) Kisin module of height $(i-1)$ instead of $i$.
For more details, see \Cref{induced Vers Corollary 2}.
\item One may ask if there can be a better trick/argument showing better
bounds on vanishing of $u$-torsion.
Later on we shall explain a generalization of a construction in
\cite[Subsection 2.1]{BMS1} with $u$-torsion in cohomological degree $2$
and ramification index $p-1$.
Hence our result is actually sharp in terms of largest $e \cdot (i-1)$ allowed.
\end{enumerate}
\end{remark}

\subsection*{Special fiber telling Hodge numbers of the generic fiber}
As a third geometric application of our result,
we revisit the question discussed in \cite{Li20}:
what mild condition on $\mathcal{X}$ guarantees that the Hodge numbers of the generic fibre $X$
can be read off from the special fibre $\mathcal{X}_0$?
In loc.~cit.~the first named author obtained a result along this line, with technical input
of prismatic cohomology and the structural result in \cite{Min20}.
However it was noted that the results in loc.~cit.~are
not optimal already in the unramified case, when compared with what one got from
results by Fontaine--Messing, Kato, and Wintenberger.
We analyzed the situation and concluded that it is because we lack knowledge of the shape of
$u^\infty$-torsion in prismatic cohomology in the boundary degree.
This paper is partially motivated by the hope to improve results in \cite{Li20},
and our improvement is:
\begin{theorem}[{\Cref{improving Li20}, Improvement of \cite[Theorem 1.1]{Li20}}]
Let $\mathcal{X}$ be a smooth proper $p$-adic formal scheme over $\Spf(\mathcal{O}_K)$
of ramification index $e$.
Let $T$ be the largest integer such that $e \cdot (T-1) \leq p-1$.
\begin{enumerate}
\item Assume there is a lift of $\mathcal{X}$ to $\s/(E^2)$,
then for all $i,j$ satisfying $i + j < T$, we have equalities
\[
h^{i,j}(X) = \mathfrak{h}^{i,j}(\mathcal{X}_0)
\]
where the latter denotes virtual Hodge numbers of $\mathcal{X}_0$, defined as in \cite[Definition 3.1]{Li20}.
\item Assume furthermore that $e \cdot (\dim{\mathcal{X}_0}-1) \leq p-1$.
Then the special fibre $\mathcal{X}_0$ knows the Hodge numbers of the rigid generic fibre $X$.
\end{enumerate}
\end{theorem}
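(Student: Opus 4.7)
The plan is to re-run the argument of \cite{Li20}, feeding in the sharper $u^\infty$-torsion bounds from \Cref{Thm 1.4}, and in particular exploiting part (3) of that theorem at the boundary degree --- which, as flagged in \Cref{Rmk 1.5}, is precisely the missing piece in \cite{Li20}. Throughout, one uses that the Breuil--Kisin prismatic cohomology $\rH^n_{\Prism}(\mathcal{X}/\s)$ has two natural reductions: modulo $E$ it computes the Hodge cohomology of $\mathcal{X}$ (and hence, after inverting $p$, the Hodge groups of the rigid generic fibre $X/K$ via the Hodge--Tate comparison), while modulo $u$ it computes the crystalline cohomology of $\mathcal{X}_0/W$, whose Nygaard-graded pieces --- controlled by the lift to $\s/(E^2)$ --- produce the virtual Hodge numbers $\mathfrak{h}^{i,j}(\mathcal{X}_0)$ of \cite[Definition~3.1]{Li20}. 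The discrepancy between the two sets of Hodge numbers is then encoded by the failure of these two reductions to match on the graded level, and a standard diagram chase identifies this failure with the $u^\infty$-torsion modules $\mathfrak{M}^n$ and $\mathfrak{M}^{n+1}$.

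For part (1), fix $(i,j)$ with $n := i+j < T$. By the definition of $T$ one has $e(n-1) < p-1$ for all $n \le T-1$, so \Cref{Thm 1.4}(1) gives $\mathfrak{M}^n = 0$; this disposes of the lower obstruction. The only delicate term is $\mathfrak{M}^{n+1}$ in the extreme case $n+1 = T$ with $e(T-1) = p-1$, where this module can be nonzero. Here the plan is to invoke \Cref{Thm 1.4}(3): the Frobenius is bijective on $\mathfrak{M}^T$, and the canonical embedding $\mathfrak{M}^T \otimes_k (\mathcal{O}_K/p) \hookrightarrow \rH^T(\mathcal{X}, \mathcal{O}_{\mathcal{X}})$ pins down the contribution of this torsion to the Hodge--Tate spectral sequence. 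Tracking the embedding shows that every additional class contributed by $\mathfrak{M}^T$ to the mod-$E$ reduction is paired with an equal contribution to the appropriate Nygaard-graded piece of the mod-$u$ reduction; thus the alternating-sum bookkeeping relating $h^{i,j}(X)$ and $\mathfrak{h}^{i,j}(\mathcal{X}_0)$ collapses term by term. The main obstacle lies precisely in making this boundary bookkeeping rigorous, and is overcome by the Frobenius bijectivity and the explicit injection furnished by \Cref{Thm 1.4}(3).

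For part (2), the hypothesis $e(\dim\mathcal{X}_0-1) \le p-1$ is equivalent to $d := \dim X \le T$. Part (1) determines $h^{i,j}(X)$ whenever $i+j < T$; Serre duality on the smooth proper rigid space $X$ then determines them whenever $i+j > 2d - T$. When $d < T$ these two ranges cover the entire Hodge diamond $\{(i,j) : 0 \le i,j \le d\}$. In the remaining case $d = T$, only the middle total degree $i+j = d$ is not directly covered; it is pinned down by combining the total de Rham rank in that degree (which equals the $\s$-generic rank of $\rH^d_{\Prism}(\mathcal{X}/\s)$, and hence is visible on the special fibre through crystalline cohomology) with Hodge symmetry and the adjacent degrees already computed. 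Either way, every Hodge number of $X$ is reconstructed from $\mathcal{X}_0$ together with the $\s/(E^2)$-lift, completing the proof.
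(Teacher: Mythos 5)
Your high-level framing is right --- re-run the argument of \cite{Li20} with the sharper $u^\infty$-torsion control from \Cref{Thm 1.4}, with the boundary degree as the new ingredient --- but two key steps are wrong or insufficient.

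For part (1), the ``pairing/cancellation'' claim at the boundary term $\mathfrak{M}^T$ is not what happens, and in fact cannot happen when $e > 1$. The paper's intermediate \Cref{integral HdRSS} first establishes degeneration and splitting of the (mod $p^n$) Hodge--de Rham spectral sequence in the range $< T$, by comparing $\ell(\rH^m_{\mathrm{HT}})$ with $\ell(\rH^m_{\dR})$ via the identities $\ell(\rH^m_{\mathrm{HT}}) = \ell(\rH^m_{\Prism} \otimes_\s \mathcal{O}_K) + \ell(\mathrm{Tor}_1^\s(\mathfrak{M}^{m+1}, \mathcal{O}_K))$ and $\ell(\rH^m_{\dR}) = \ell(\rH^m_{\Prism} \otimes_{\s,\varphi} \mathcal{O}_K) + \ell(\mathrm{Tor}_1^\s(\mathfrak{M}^{m+1}, \varphi_*\mathcal{O}_K))$. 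The key point (\Cref{length lemma}) is that the two $\mathrm{Tor}_1$ lengths are equal only when $e=1$, and \emph{strictly unequal} when $e>1$. For $e>1$ the liftability hypothesis forces $\mathfrak{M}^T_n = 0$ precisely because a nonzero $\mathfrak{M}^T_n$ (which by \Cref{Thm 1.4}(3) would have to be a direct sum of copies of $k$) would create a strict length inequality contradicting degeneration. So \Cref{Thm 1.4}(3) is used to \emph{constrain the shape} of $\mathfrak{M}^T$ and drive a contradiction, not to run a term-by-term cancellation. Moreover, even after degeneration, there is a nontrivial step converting this into the equality $h^{i,j}(X) = \mathfrak{h}^{i,j}(\mathcal{X}_0)$: the paper invokes the splitting statements of \Cref{integral HdRSS}(3)(4) and \cite[Lemma 2.16]{Li20}. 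Your proposal skips this mechanism entirely.

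For part (2), one global equation plus Hodge symmetry does not determine the middle row. Already for $d=2$ the middle row is $(h^{0,2}, h^{1,1}, h^{2,0})$ with the single relation $2h^{0,2} + h^{1,1} = b_2$, two unknowns and one equation. The paper instead uses that $\chi(\mathcal{X}, \Omega^i_{\mathcal{X}/\mathcal{O}_K})$ is locally constant in the flat family, giving for each $i$ a separate relation $\chi(\Omega^i) = \sum_j (-1)^j h^{i,j}(X)$; since all $h^{i,j}$ with $i+j \ne d$ are already known, each such relation isolates the single unknown $h^{i,d-i}$. That is the ``one equation per $i$'' that makes the middle row determinable.
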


Along the way, we also improve the results in \cite{Li20} concerning the integral Hodge--de Rham spectral sequence
(see~\Cref{integral HdRSS}),
as well as obtain a curious degeneration statement of the ``Nygaard--Prism'' spectral sequence
(see \Cref{Nygaard--Prism degeneration}) in the unramified case.

\subsection*{Application to integral $p$-adic Hodge theory}
It is a central theme in integral $p$-adic Hodge theory to understand Galois representations such as
$\rH^i_{\et}(\mathcal{X}_{\bar{K}}, \mathbb{Z}_p)$ in terms of linear algebraic data
such as certain crystalline cohomology of $\mathcal{X}$ together with natural structures.
The first result along such lines is that of Fontaine--Messing \cite{FontaineMessing}
and Kato \cite{Katovanishingcycles}, which treats the case of $e=1$ (namely unramified base) and 
$i < p-1$.\footnote{See also \cite{AMMN21} for an approach of different flavor.}
Later on Breuil \cite{Bre98} generalized the above to semistable $\mathcal{X}$,
whereas Faltings \cite{Faltings} studied the analogue for $p$-divisible groups allowing arbitrary ramification index $e$.
A few years later, Caruso \cite{CarusoInvent} made progress allowing
$e > 1$ as long as $e \cdot (i+1) < p-1$,\footnote{For the mod $p$ analogue, Caruso's work even allows $e \cdot i < p-1$.}

To our interest is Breuil's question \cite[Question 4.1]{BreuilIntegral}:
\begin{question}
\label{Breuil question}
Assuming $i < p-1$ and let $S$ be the $p$-adic divided power envelope of $\s \twoheadrightarrow \mathcal{O}_K$, 
then the (torsion-free) crystalline cohomology $\rH^i_{\cris}(\mathcal{X}/S)/\mathrm{tors}$ together with 
its natural structure (such as divided Frobenius operator, filtration and connection) should be a ``strongly divisible lattice''
and ``corresponds'' to the Galois representation $\rH^i_{\et}(\mathcal{X}_{\bar{K}}, \mathbb{Z}_p)$.
\end{question}

All works mentioned above can be thought of as solving various special cases of the above question.
In \cite[Theorem 7.22]{LL20}, a connection with $u$-torsion in prismatic cohomology is observed:
We showed that, fix an $i < p-1$ and a smooth proper formal scheme $\mathcal{X}/\mathcal{O}_K$,
the mod $p^n$ analogue of the above question has a positive answer in degree $i$
if and only if both of $i$-th and $(i+1)$-st mod $p^n$ prismatic cohomology of $\mathcal{X}/\s$
are $u$-torsion free.
In loc.~cit., we then used Caruso's result on the mod $p$ analog as a starting point
to do an induction to show the vanishing as in \Cref{Thm 1.4} (1) and \Cref{Rmk 1.5} (1),
which in turn implies the mod $p^n$ analog of Breuil's question for all $n$ and $e \cdot i < p-1$,
see \cite[Corollary 7.25]{LL20}.
In particular this gives an affirmative answer to Breuil's original question when $e \cdot i < p-1$.
In this paper, the aforementioned vanishing of $u$-torsion is easily deduced,
hence gives a ``shortcut'' to the above result bypassing Caruso's work.

In private communications with Breuil, we were encouraged to study his question beyond the above bound.
To our surprise, we discovered that the construction in \cite[Subsection 2.1]{BMS1} can be generalized to
a counterexample with $e = p-1$ and $i=1$ to Breuil's question, see \Cref{intro-example}.
Note that in this example, we have $e \cdot i = p-1$, hence our previous result was actually sharp.

The other extreme of $(e,i)$ with $e \cdot i = p-1$ is $e=1, i=p-1$.
In this case, Fontaine--Messing \cite{FontaineMessing} and Kato \cite{Katovanishingcycles}
showed that the crystalline cohomology $\rH^{p-1}_{\cris}(\mathcal{X}_n/W_n)$
together with its natural structure is still a Fontaine--Laffaille module,
which according to \cite{Fontaine--Laffaille} can be attached a Galois representation $\rho^{p-1}_{n, \mathrm{FL}}$.
It is only natural to ask:
\begin{question}
What is the relation between $\rho^{p-1}_{n, \mathrm{FL}}$ and $\rH^{p-1}_{\et}(\mathcal{X}_{\bar{K}}, \mathbb{Z}/p^n)$?
\end{question}
Although we have not found any discussion on this question, it seems consensus among experts that these two Galois
representations are different. We are not aware of any particular expectation made in the past.
Our entire \Cref{arithmetic applications} is more or less devoted to this question,
and we arrive at the following statement.
\begin{theorem}[\Cref{Thm-FM-p-1}]
\label{Thm 1.9}
There exists a natural map 
$\eta \colon \rH ^{p-1} _{\et} (\cX_\C, \Z/ p ^n \Z)(p-1)  \to \rho^{p-1}_{n, \mathrm{FL}}$ of $G_K$-representations so that 
$\ker(\eta)$ is an unramified representation of $G_K$ killed by $p$, 
and $\coker(\eta)$ sits in a natural exact sequence $ 0 \to \ker(\eta) \to \coker (\eta)\to \ker(\mathrm{Sp}^{p-1}_n)$.
\end{theorem}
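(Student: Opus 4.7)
The plan is to construct $\eta$ by going through prismatic cohomology, which serves as a common source for both Galois representations. In the unramified setting $e = 1$ we have $\s = W(k)[\![u]\!]$; write $\FM^i_n \coloneqq \rH^i_{\Prism}(\mathcal{X}/\s)/p^n$. Applying \Cref{Thm 1.4}(1) with $i = p-1$ (so $e(i-1) = p-2 < p-1$) gives $\FM^{p-1}_n[u^\infty] = 0$, hence $\FM^{p-1}_n$ is a bona fide (generalized) Kisin module of height $\leq p-1$. Its $u = 0$ reduction identifies with $\rH^{p-1}_{\cris}(\mathcal{X}_n/W_n)$, and together with the induced Nygaard filtration and divided Frobenius is a Fontaine--Laffaille module in the admissible window $[0, p-1]$; the Fontaine--Laffaille functor attaches to it the Galois representation $\rho^{p-1}_{n, \mathrm{FL}}$. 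Meanwhile the Breuil--Kisin--étale realization of $\FM^{p-1}_n$ is $\rH^{p-1}_\et(\mathcal{X}_\C, \Z/p^n)(p-1)$.

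I would construct $\eta$ by realizing both representations inside compatible period rings---the étale $\varphi$-module realization for the Breuil--Kisin side, and the standard $(A_{\cris} \otimes_{W(k)} \FM^{p-1}_n/u)^{\varphi = 1} \cap \Fil^{p-1}$ for the Fontaine--Laffaille side---and defining $\eta$ from the ring map $\s \twoheadrightarrow W(k) \hookrightarrow A_{\cris}$ that kills $u$ before feeding into the Fontaine--Messing construction. For $i < p-1$ this reproduces the classical Fontaine--Messing/Kato isomorphism; at the boundary $i = p-1$ the comparison genuinely fails and $\eta$ is merely a natural map.

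The main obstacle is pinning down $\ker(\eta)$ and $\coker(\eta)$. I expect both to be controlled by $\FM^p_n[u^\infty]$, the $u^\infty$-torsion one cohomological degree up, via a long exact sequence produced by comparing the two realization functors. The mod $p^n$ analog of \Cref{Thm 1.4}(3) (applied with $i = p$, $e = 1$, $e(i-1) = p-1$) gives that $\FM^p_n[u^\infty]$ is killed by $p$ with bijective semilinear Frobenius. Since the residual coefficients live on $\Spec(k)$, the classical equivalence between étale $\varphi$-modules over $k$ and $\FF_p$-representations of $G_K/I_K$ turns bijectivity of $\varphi$ into an unramified $G_K$-action, accounting for $\ker(\eta)$ being unramified and $p$-torsion. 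The same long exact sequence places a second copy of $\ker(\eta)$ as a subobject of $\coker(\eta)$, and the residual quotient $\coker(\eta)/\ker(\eta)$ then injects into $\ker(\mathrm{Sp}^{p-1}_n)$ via the identification recalled in the ``Kernel of specialization'' subsection, which realizes the portion of $\ker(\mathrm{Sp}^{p-1}_n)$ coming from degree $p$ as a $\varphi$-invariant part of $\FM^p_n[u^\infty]$ after base change to $W(\bar k)$.
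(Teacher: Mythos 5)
Your overall strategy is in the right spirit (compare the prismatic/Kisin-module realization to the Breuil/Fontaine--Laffaille realization inside $A_{\cris}$), but the identification of the source of $\ker(\eta)$ is wrong, and the construction of $\eta$ as stated would not produce the map in the paper.

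\textbf{Source of $\ker(\eta)$.} You say that both $\ker(\eta)$ and $\coker(\eta)$ should be ``controlled by $\FM^p_n[u^\infty]$, the $u^\infty$-torsion one cohomological degree up.'' This is not what happens. The degree-$p$ $u^\infty$-torsion $V := \rH^p_{\qsyn}(\cX,\Prism_n)[u^\infty]$ only enters via the piece $\overline{M} = \mathrm{Tor}^{\s_1}_1(V, \varphi_*S_1)$ of the crystalline cohomology $\rH^{p-1}_{\cris}(\cX_n/S_n)$, and $T_S(\overline{M})$ accounts for the rightmost term $\ker(\mathrm{Sp}^{p-1}_n)$ in the cokernel; it contributes \emph{nothing} to $\ker(\eta)$. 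The kernel $\ker(\eta)$ is instead equal to $\ker(\iota)$ for the natural map $\iota: T^{p-1}_\s(\m) \to T_S(\u\calM(\m))$ applied to the classical torsion Kisin module $\m = \rH^{p-1}_{\qsyn}(\cX,\Prism_n)$ in degree $p-1$ (which has no $u$-torsion here). At height exactly $p-1$ this functor has a defect concentrated on the \emph{multiplicative part} of $\m$: for the rank-one multiplicative module $\s_n\cdot e_1$, $\varphi(e_1)=e_1$, one computes $T^{p-1}_\s$ and $T_S$ explicitly and finds an index-$p$ inclusion. Taking the multiplicative/nilpotent filtration and exactness of $T^{p-1}_\s$ with left-exactness of $T_S$ then gives $\ker(\iota)\cong\coker(\iota)$, unramified and killed by $p$. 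This is a phenomenon intrinsic to the Kisin-to-Breuil comparison in the boundary case and lives in degree $p-1$, not degree $p$; your étale-$\varphi$-module/ $\FF_p$-representation argument for $\FM^p_n[u^\infty]$ does not apply to it.

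\textbf{Construction of $\eta$.} Defining $\eta$ ``from the ring map $\s\twoheadrightarrow W(k)\hookrightarrow A_{\cris}$ that kills $u$'' cannot be right: the period-ring maps used throughout send $u\mapsto[\underline{\pi}]$, and killing $u$ first would discard all the interesting structure (the resulting map would factor through the $u=0$ fiber and cannot see $\Fil^{p-1}$ correctly). The paper's $\eta$ is $\rH^{p-1}_{\et}(\cX_C,\Z/p^n)(p-1)\cong T_\s(\m)(p-1)\cong T^{p-1}_\s(\m)\xrightarrow{\iota} T_S(\u\calM(\m))\hookrightarrow T_S(\calM)=T_{\FL}(M)$, and the $G_K$-equivariance of $\iota$ has to be verified by showing the natural $\m\otimes_\s A_{\inf}\to\u\calM(\m)\otimes_S A_{\cris}$ is $G_K$-equivariant via the connection. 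Relatedly, the $u=0$ reduction of $\FM^{p-1}_n$ is only a \emph{submodule} of $\rH^{p-1}_{\cris}(\cX_n/W_n)$, with quotient $\overline{M}/I_+$; the Fontaine--Laffaille module $M$ underlying $\rho^{p-1}_{n,\mathrm{FL}}$ has the full crystalline cohomology as its underlying $W_n$-module, and its $\FL$-structure in the boundary case $i=p-1$ requires a nontrivial argument (the \Cref{MF and Breuil module} / \Cref{FL module equivalent to Breuil module} combination in the paper).

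So the high-level architecture is recoverable, but as written the proposal misidentifies the mechanism behind $\ker(\eta)$, and without isolating the defect of $\iota$ on multiplicative objects you cannot conclude that $\ker(\eta)$ is unramified and $p$-torsion, nor that $\coker(\eta)$ contains a matching copy of it.
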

Here $\mathrm{Sp}^{p-1}_n$ denotes the specialization map of mod $p^n$ \'{e}tale cohomology in degree $p-1$,
which is also known to be an unramified $G_K$-representation killed by $p$, see \Cref{control ker Cosp} (3). 
The appearance of $\ker(\eta)$ is due to the defect of a key functor in integral $p$-adic Hodge theory,
which is well-known to experts; whereas the potential $u$-torsion in degree $p$ of mod $p^n$ prismatic cohomology of $\cX$
is solely responsible for the appearance of $\ker(\mathrm{Sp}^{p-1}_n)$.

\subsection*{Example and open questions}
Now let us discuss an interesting example, generalized from \cite[Subsection 2.1]{BMS1}.
\begin{example}
\label{intro-example}
Let $\mathcal{E}/W(\bar{k})$ be the canonical lift of an ordinary elliptic curve over an algebraically closed
field $\bar{k}$ of characteristic $p$.
Fix an $n \in \mathbb{N}$ and let $\mathcal{O}_K \coloneqq W(\bar{k})[\zeta_{p^n}]$.
Over $\mathcal{O}_K$ we have a tautological map of group schemes $\chi \colon \mathbb{Z}/p^n \to \mu_{p^n}$
sending $1$ to $\zeta_{p^n}$.

With the above notation, we consider the following smooth proper Deligne--Mumford stack
$\mathcal{X} \coloneqq [\mathcal{E}_{\mathcal{O}_K}/(\mathbb{Z}/p^n)]$
where the action of $\mathbb{Z}/p^n$ is via the character $\chi$
and the embedding $\mu_{p^n} \subset \mathcal{E}[p^n]$ (as $\mathcal{E}$
is the canonical lift).
Note that its special fiber is $\mathcal{E}_{\bar{k}} \times B(\mathbb{Z}/p^n)$
and its generic fiber is an elliptic curve
$\mathcal{E}'_K \coloneqq \big(\mathcal{E}_{\mathcal{O}_K}/\mu_{p^n}\big)_K$.
In view of the pathologies discussed in the beginning of the introduction,
let us record some facts concerning this example:
\begin{itemize}
\item The Albanese map has N\'{e}ron model given by the ``further quotient'' map:
$\mathcal{X} \to \mathcal{E}' \coloneqq \mathcal{E}_{\mathcal{O}_K}/\mu_{p^n}$,
and the special fiber of this map factors as
$\mathcal{E}_{\bar{k}} \times B(\mathbb{Z}/p^n) \to \mathcal{E}_{\bar{k}} 
\xrightarrow{f} \mathcal{E}_{\bar{k}}/\mu_{p^n}$.
Here $\mathcal{E}'$ is abstractly isomorphic to $\mathcal{E}_{\mathcal{O}_K}$
since $\mathcal{E}$ was chosen to be the canonical lift.
Note that $\ker(f) = \mu_{p^n}$.
\item The fundamental group of $\mathcal{X}_{\bar{k}}$ is abelian,
with torsion given by $\mathbb{Z}/p^n$ due to the factor of $B(\mathbb{Z}/p^n)$.
By universal coefficient theorem, we have 
$\rH^2_{\et}(\mathcal{X}_{\bar{k}}, \mathbb{Z}_p)_{\mathrm{tors}} \cong \mathbb{Z}/p^n$
whereas $\rH^2_{\et}(\mathcal{X}_{\bar{K}}, \mathbb{Z}_p)$ is torsion-free.
Hence we have $\ker(\mathrm{Sp}^2) = \mathbb{Z}/p^n$.
\item One can go through the Leray spectral sequence for the cover $\mathcal{E} \to \mathcal{X}$
to compute the prismatic cohomology of $\mathcal{X}/\s$.
The most relevant computation is:
$\rH^2_{\Prism}(\mathcal{X}/\s)[u^{\infty}] \cong \s/((u+1)^{p^{n-1}} - 1, p^n)$.
\item Finally we compute the crystalline cohomology of $\mathcal{X}/S$
and to our surprise we have
$\rH^1(\mathcal{X}/S) \cong S \oplus J$
where $J$ is the ideal 
\[
\{x \in S \mid p^n \text{ divides } x \cdot ((u+1)^{p^n} - 1)\}.
\]
In particular it is torsion-free of rank $2$ yet not free.
This gives a counterexample to \Cref{Breuil question}.
\end{itemize}
By standard approximation technique, one can cook up schematic examples having all the above
features.
When $n=1$, we have $e = p-1$, therefore our aforesaid results (which was only
stated and proved for formal schemes) are sharp.
For more details, see \Cref{subsection example}.
\end{example}

Combining a generalized version of the above construction with our \Cref{Thm 1},
we get a geometric proof of Raynaud's theorem 
\cite[Th\'{e}or\`{e}me 3.3.3]{Ray74}
on prolongations of finite flat commutative group schemes
over mixed characteristic DVR, see \Cref{Raynaud subsection}.

Finally, let us end the introduction with two natural questions awaiting explorations.
We consider them to be the next step in understanding pathological torsion
in $p$-adic cohomology theory.

\begin{question}
Is there a smooth proper (formal) scheme $\mathcal{X}$ over an unramified base $W$
which has $u$-torsion in its $p$-th prismatic cohomology?
Note that $p$ is the smallest possible cohomological degree according to our result,
and when $p=2$ this is achieved by the above example.
\end{question}

\begin{question}[see \Cref{general question}]
Recall $\m^i \coloneqq \rH^i_{\qsyn}(\mathcal{X}, \Prism)[u^\infty]$.
\begin{enumerate}
\item Let $\beta$ be the smallest exponent such that $p^\beta \in \mathrm{Ann}(\m^i)$,
let $\gamma$ be the exponent such that $\mathrm {Ann}(\m^i) + (u) = (u, p^{\gamma})$.
Is there a bound on $\beta$ and $\gamma$ in terms of $e$ and $i$?
\item In light of the above example, we guess $\beta$ and/or $\gamma$ are bounded above by
$\log_p(\frac{e \cdot (i-1)}{p-1}) + 1$ when $p$ is odd.
\end{enumerate}
\end{question}

\begin{remark}
Confirming the above guess will give us results along the following line:
If $\rH^i_{\cris}(\mathcal{X}_k/W)$ has torsion \emph{not} annihilated by $p^N$,
then $\rH^i_{\et}(\mathcal{X}_{\bar{K}}, \mathbb{Z}_p)$ has torsion
\emph{not} annihilated by $p^{N - c(e,i)}$ with $c(e,i)$ being some constants
depending only on $e$ and $i$.
Note that this would be a relation between
torsion in \'{e}tale and crystalline cohomology ``converse'' 
to the one established in \cite[Theorem 1.1.(ii)]{BMS1}.
When $e \cdot i < 2(p-1)$, our \Cref{Thm 1.4}.(2) can be translated
to such a statement.
Since our \Cref{Thm 1.4}.(2) does not seem to be optimal, we do not pursue 
that direction in this paper.
\end{remark}

The logic between each section is as below.

{\large
    \begin{center}

        \centering
                    {\LARGE \textbf{Leitfaden}} \\
        \vspace{0.3in}
		\scriptsize
		\tikzstyle{format}=[rectangle,draw,thin,fill=white,minimum width=3cm,
                        minimum height = 1cm]
		\tikzstyle{test}=[diamond,aspect=2,draw,thin]
		\tikzstyle{point}=[coordinate,on grid,]

		\begin{tikzpicture}[node distance=2cm,
				auto,>=latex',
				thin,
				start chain=going below,
				every join/.style={norm},]
			\node[format] (n2){\Cref{modules and Galois rep}};
			\node[format,below of=n2] (n5){\Cref{arithmetic applications}};
			\node[point,left of=n5] (p3){};
			\node[format,left of=p3] (n3){\Cref{boundary degree}};
			\node[format,below of=n5] (n6){\Cref{subsection example}};
			\node[point,left of=n6] (p4){};
			\node[format,left of=p4] (n4){\Cref{geometric applications}};
			
			\draw[->] (n2.south) -- (n5);
			\draw[->] (n3.east) -- (n5);
			\draw[->] (n3.south) -- (n4);
			\draw[-,dashed] (n4.east) -- (n6);
			\draw[-,dashed] (n5.south) -- (n6);

		\end{tikzpicture}
    
    \end{center}
    
}

\vspace{0.3cm}

\subsection*{Notation and Conventions}
Let $k$ be a perfect field of characteristic $p > 0$ with $W = W(k)$ its Witt ring.
Let $K$ be a totally ramified degree $e$ finite extension of $W(k)[1/p]$, let $\mathcal{O}_K$ be its ring of integers.
Choose a uniformizer $\pi \in \mathcal{O}_K$ whose Eisenstein polynomial we denote by $E$ with $E(0) = a_0 p$, we get a surjection
$\s \coloneqq W[\![u]\!] \twoheadrightarrow \mathcal{O}_K$ sending $u$ to $\pi$.
We equip $\s$ the $\delta$-structure with $\varphi_{\s}(u) = u^p$.
The pair $(\s, (E))$ is the so-called Breuil--Kisin prism, see~\cite[Example 1.3.(3)]{BS19}.
Denote the $p$-adic divided power envelope of $\s \twoheadrightarrow \mathcal{O}_K$ by $S$.

We always use $C$ and its cousins like $C^{\flat}$ or $A_{\inf}$,
to denote the usual construction associated with the completion of an algebraic closure $\overline K$ of $K$
in $p$-adic Hodge theory. We use $G_K: = \Gal(\overline K / K)$ denote the absolute Galois group. Similarly, $G_k : = \Gal (\bar k / k)$. 

We use $\mathcal{X}$ to denote a smooth proper $p$-adic formal scheme on $\Spf(\mathcal{O}_K)$,
use $\mathcal{X}_0$ to denote its reduction mod $\pi$ and use $X$ to denote its rigid generic fiber.

On $(\mathcal{O}_K)_{\qsyn}$ we have the sheaf ${\Prism}$ given by (left Kan extended) 
prismatic cohomology relative to $\s$. We use ${\Prism}^{(1)}$ to denote its $\varphi_{\s}$ twist,
this sheaf of Frobenius-twisted prismatic cohomology admits a decreasing filtration called the Nygaard filtration, see~\cite[Section 15]{BS19},
which shall be denoted by $\Fil^{\bullet}_{\rN}$.
Let us note that $\mathrm{R\Gamma}_{\Prism}(\mathcal{X}/\s) \cong \mathrm{R\Gamma}_{\qsyn}(\mathcal{X}, \Prism)$
and $\varphi^*_{\s}\mathrm{R\Gamma}_{\Prism}(\mathcal{X}/\s) \cong \mathrm{R\Gamma}_{\qsyn}(\mathcal{X}, \Prism^{(1)})$.

For any $n \in \mathbb{N} \cup \{\infty\}$, we use subscript $(-)_n$ to denote the derived mod $p^n$ of
a quasi-syntomic sheaf, e.g.~$\mathrm{R\Gamma}_{\qsyn}(\mathcal{X}, \Prism^{(1)}_n) \coloneqq \mathrm{R\Gamma}_{\qsyn}(\mathcal{X}, \Prism^{(1)}/p^n)$.

In this paper we only consider relative prismatic cohomology, and hopefully readers will not confuse our notation
with the absolute prismatic cohomology developed in \cite{APC}.

\subsection*{Acknowledgements}
We are very grateful to Bhargav Bhatt for so many helpful conversations
and for sharing excitements.
During the preparation of this paper, we have benefited from discussions
with the following mathematicians: Christophe Breuil,
Johan de Jong, Luc Illusie, Dmitry Kubrak, Yu Min, Shubhodip Mondal,
Sasha Petrov, and Ziquan Yang.

The first named author thanks the support of AMS-Simons Travel Grant 2021-2023. 
He is also very grateful to the department of Mathematics at the University of Michigan
for the support and environment they provide.

\section{Various modules and their Galois representations}
\label{modules and Galois rep}
In this section, we discuss 3 types of Frobenius modules: Kisin modules, Breuil modules and Fontaine--Laffaille modules,
and their associated Galois representations.  Roughly speaking, various cohomology discussed in this paper will have these structures and functors to Galois representations just model comparison to \'etale cohomology.   
The major difference between the current work and \cite{LL20} is that we
now focus on the boundary case $eh = p-1$. 
So it is necessary to discuss \emph{nilpotent} objects for Fontaine--Laffaille modules and Breuil modules when $e=1$ and $h = p -1$.  
\subsection{Kisin modules}
\addtocontents{toc}{\protect\setcounter{tocdepth}{2}}
We review (generalized) Kisin modules from \cite[\S 6.1]{LL20}. 
Let $(\s , E(u))$ be the Breuil--Kisin prism over $\O_K$ with $d = E(u )= E$ the Eisenstein polynomial of
a fixed uniformizer $\pi\in \O_K$. 
A $\varphi$-module $\m$ over $\s$ is an $\s$-module  $\m$ together $\varphi_\s$-semilinear map $\varphi_\m: \m \to \m$. 
Write $ \varphi^*\m = \s \otimes_{\varphi, \s} \m$. Note that $1 \otimes \varphi_\m  : \varphi ^* \m \to \m$ is an $\s$-linear map.   A \emph{(generalized) Kisin module $\m$ of height $h$} is a $\varphi$-module $\m$ of finite $\s$-type
together with an $\s$-linear map $\psi: \m \to \varphi ^* \m$ such that $ \psi \circ (1 \otimes \varphi) = E^h \id_{\varphi^*\m}$ 
and $(1\otimes \varphi) \circ \psi = E^h\id_{\m}$. The map between generalized Kisin modules is $\s$-linear map that compatible with $\varphi$ and $\psi$.  We denote by $\Mod^{\varphi, h}_{\s}$ the category of (generalized) Kisin module of height $h$. As explained in \cite{LL20}, the main difference between generalized Kisin modules and classical theory Kisin modules is that the classical theory only discuss the situation that $\m$ has no $u$-torsion, while Kisin module from prismatic cohomology could have $u$-torsion in general. In the following, when we need to restrict to the classical theory, we will call $\m$ either \emph{classical} or \emph{$u$-torsion free}. Let $\Mod_{\s , \tor}^{\varphi , h, {\rm c}}$ denote the full subcategory of $\Mod_{\s}^{\varphi, h}$ consists of classical Kisin modules of height $h$ and killed by $p^n$ for some $n \in \mathbb{N}$. 

Now we review some technologies to deal with classical Kisin modules on boundary case and extends them to generalized Kisin modules. Following \cite[(1.2.10)]{Kisin-Modularity},  \cite[\S 2.1]{GaoHui1}, we call a $\varphi$-module $\m$  \emph{multiplicative} (resp.~\emph{nilpotent}) if $(1 \otimes \varphi): \varphi ^* \m \to \m$ is surjective (resp.~if $\lim\limits_{n \to \infty}\varphi^n (x)= 0, \ \forall x \in \m$). 
\begin{remark}In \cite[(1.2.10)]{Kisin-Modularity} and \cite[\S 2.1]{GaoHui1},
the authors define \emph{multiplicative} to mean $(1 \otimes \varphi): \varphi ^* \m \to \m$ is \emph{bijective}.  For classical Kisin module $\m$ these two concepts are the same as $1\otimes \varphi$ is always injective.  
But for generalized Kisin modules, as $u$-torsion exists, bijection of $1 \otimes \varphi$ is too restrictive. For example, $\s/ (p , u)\s$ with the usual Frobenius is multiplicative but $1 \otimes \varphi$ is not injective. 
\end{remark} 


Let $\m$ be a $\varphi$-module over $\s$ of finite $\s$-type. Set $M \coloneqq \m / u \m$ and write $q \colon \m \to M = \m / u \m$. 
By Fitting lemma, we have $M = M^{\rm m} \oplus M ^{\rm n }$ where $\varphi$ is bijective on $ M ^{\rm m}$ and nilpotent on $M ^{\rm n}$.

\begin{lemma}\label{lem-construct-Mm} Notations as the above, there exists a unique $W(k)$-linear section $[\cdot]: M^{\rm m} \to  \m$ so that 
$[\cdot ]$ is $\varphi$-equivariant and $q \circ [\cdot]= \id|_{M^{\rm m}}$. 
\end{lemma}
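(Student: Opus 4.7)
The plan is to adapt the classical Teichmüller construction to this semi-linear setting, exploiting both the bijectivity of $\varphi$ on $M^{\rm m}$ and the contraction $\varphi(u\m) \subseteq u^p\m$. For $\bar{x} \in M^{\rm m}$, bijectivity yields a unique sequence $(\bar{x}_n)_{n \geq 0} \subset M^{\rm m}$ with $\bar{x}_0 = \bar{x}$ and $\varphi(\bar{x}_{n+1}) = \bar{x}_n$. Pick arbitrary lifts $y_n \in \m$ of $\bar{x}_n$ under $q$ and define
\[
[\bar{x}] := \lim_{n \to \infty} \varphi^n(y_n),
\]
with the limit taken in the $(p,u)$-adic topology on $\m$.

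To justify convergence, observe that $q(\varphi(y_{n+1})) = \varphi(\bar{x}_{n+1}) = \bar{x}_n = q(y_n)$ gives $\varphi(y_{n+1}) - y_n \in u\m$, and the contraction $\varphi^n(u\m) \subseteq u^{p^n}\m$ yields
\[
\varphi^{n+1}(y_{n+1}) - \varphi^n(y_n) \in u^{p^n}\m \subseteq (p,u)^{p^n}\m.
\]
As $\m$ is finitely generated over the Noetherian complete local ring $\s$, it is $(p,u)$-adically complete and separated, so the limit exists and the same estimate shows it is independent of the choice of lifts. The identity $q(\varphi^n(y_n)) = \varphi^n(\bar{x}_n) = \bar{x}$ for all $n$ gives $q \circ [\cdot] = \id$.

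For $W(k)$-linearity, additivity is clear from choosing lifts additively. For $a \in W(k)$, use that Frobenius is bijective on $W(k)$ to select lifts $\varphi_W^{-n}(a)\,y_n$ of $(a\bar{x})_n = \varphi_W^{-n}(a)\bar{x}_n$, whereupon $\varphi^n(\varphi_W^{-n}(a)\,y_n) = a\,\varphi^n(y_n)$ implies $[a\bar{x}] = a[\bar{x}]$. For $\varphi$-equivariance, $\varphi$ is continuous in the $(p,u)$-adic topology, so $\varphi([\bar{x}]) = \lim_n \varphi^{n+1}(y_n)$; independently, using the shifted sequence $y_{n-1}$ as lifts in the construction of $[\varphi(\bar{x})]$ produces the same limit.

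For uniqueness, suppose $s$ is another such section. Since both $s$ and $[\cdot]$ are $\varphi$-equivariant, for every $n$ we have
\[
s(\bar{x}) - [\bar{x}] = \varphi^n\!\bigl(s(\bar{x}_n) - [\bar{x}_n]\bigr) \in \varphi^n(u\m) \subseteq (p,u)^{p^n}\m,
\]
which forces $s(\bar{x}) = [\bar{x}]$ by $(p,u)$-adic separatedness. The only real subtlety is that, because generalized Kisin modules may carry $u$-torsion, one cannot rely on $u$-adic separatedness of $\m$; it is therefore essential throughout to upgrade $u$-adic estimates to $(p,u)$-adic ones via the trivial inclusion $u^k\m \subseteq (p,u)^k\m$.
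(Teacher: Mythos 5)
Your proposal is correct and follows essentially the same approach as the paper: construct $[\bar{x}]$ as a limit of $\varphi^n$ applied to lifts of $\varphi^{-n}(\bar{x})$, and use the contraction $\varphi^n(u\m) \subseteq u^{p^n}\m$ to handle convergence, independence of lifts, and uniqueness. Your extra caution about working $(p,u)$-adically rather than $u$-adically is sound but not strictly necessary, since the Krull intersection theorem already gives $\bigcap_n u^n\m = 0$ for a finitely generated module over the Noetherian local ring $\s$, so the $u$-adic estimates in the paper suffice on their own.
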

\begin{proof}
Pick any $\bar x \in M ^{\rm m}$, since $\varphi$ on $M^{\rm m}$ is bijective,  there exists unique $\bar x_n \in M$ so that $\varphi^n (\bar x_n ) = \bar x$.  Select $x_n\in \m$ a lift of $\bar x_n$ and define $[\bar x] : = \lim\limits_{n \to \infty} \varphi ^n (x_n)$. 
We first check that $\varphi ^n (x_n)$ converges to an $x \in \m$ so that $q(x) = \bar x$. Indeed, since $\varphi(\bar x_{n +1}) = \bar x_n$, $\varphi(x_{n +1}) -x_n = u y _n$ with $y _n \in \m$. So $\varphi^{n +1} (x_{n +1}) - \varphi^n (x_n) = \varphi ^n (u) \varphi^n (y )$ and hence $\varphi ^n (x_n)$ converges to a $x \in \m$ and clearly $q(x)= \bar x$.  Suppose that $x_n' \in \m$ is another lift of $\bar x_n$, then $x_n' - x_n = u z_n$ with $z_n \in \m_n$. Then $ \varphi^n  (x'_n)-\varphi^n (x_n ) = u ^{p ^n} \varphi ^n (z_n)$. So $\{\varphi^n (x_n')\}$ also converges to $x$. This implies that $x= [\bar x]$ does not depend on the choice of lift $x_n$ of $\bar x_n = \varphi^{-n} (\bar x)$. Hence the section $[\cdot]: M^{\rm m}\to \m$ is well-defined and satisfies $ q \circ [\cdot] = \id_{M^{\rm m}}$. 
For any $a\in W(k)$, it is clear that $a [\bar x] = [a \bar x]$ from construction of $[\bar x]$. So $[\cdot]$ is $W(k)$-linear. If 
$\bar y = \varphi (\bar x)$ then $\varphi(x_n)$ is a lift of $ \varphi^{-n} (y ) $ and $[\bar y] = \lim \limits_{n \to \infty} \varphi^{n+1} (x_n) = \varphi (x)= \varphi ([\bar x])$. So $[\cdot]$ is $\varphi$-equivariant. Finally, suppose there is another sections $[\cdot]': M^{\rm m} \to \m $. Then $[ \bar x] - [\bar x]' \in u \m$ for any $\bar x \in M ^{\rm m}$. Then $[ \bar x] - [\bar x]'= \varphi ^n ([\bar x_n]- [\bar x_n]')\in u ^{p ^n}\m$. This forces that $ [\bar x]= [\bar x]'$. 
\end{proof}

\begin{lemma}\label{lem-multi} Let $\m$ be a $\varphi$-module with finite $\s$-type. Then there exists an exact sequence of $\varphi$-modules
\begin{equation}\label{eqn-multi-nil-seq}
    \xymatrix{ 0 \ar[r] & \m ^{\rm m} \ar[r]&   \m \ar[r]  & \m^{\rm n}\ar[r] & 0}
\end{equation}
so that $\m ^{\rm m}$ is multiplicative and $\m^{\rm n}$ has no nontrivial multiplicative submodule. Furthermore, the above exact sequence is functorial for $\m$, and if $\m  $ is in $  \Mod_{\s , \tor}^{\varphi , h, {\rm c}}$ then so are $ \m ^{\rm m}$ and $\m ^{\rm n}$. 
\end{lemma}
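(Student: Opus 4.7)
The plan is to take $\m^{\rm m} := \s \cdot [M^{\rm m}] \subseteq \m$, the $\s$-submodule generated by the image of the canonical $\varphi$-equivariant section $[\cdot]\colon M^{\rm m}\to \m$ produced in Lemma~\ref{lem-construct-Mm}, and to set $\m^{\rm n} := \m/\m^{\rm m}$. Since $[\cdot]$ is $\varphi$-equivariant, $\m^{\rm m}$ is $\varphi$-stable; since $\varphi$ acts bijectively on $M^{\rm m}$, it acts bijectively on $[M^{\rm m}]$, so $\s\cdot\varphi(\m^{\rm m}) = \s\cdot[M^{\rm m}] = \m^{\rm m}$, proving multiplicativity. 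Using that $[\cdot]$ is a section of $q$, one sees $\m^{\rm m}/u\m^{\rm m}$ surjects onto $M^{\rm m}\subseteq M$, and after a short diagram chase with $-\otimes_\s(\s/u)$, one concludes $\m^{\rm n}/u\m^{\rm n}=M^{\rm n}$.

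To show $\m^{\rm n}$ has no nonzero multiplicative submodule $\n$, I would reduce mod $u$: then $\n/u\n$ is a multiplicative $\varphi$-submodule of $M^{\rm n}$. Iterating the defining surjectivity yields $\n/u\n = W(k)\cdot\varphi^k(\n/u\n)$ for every $k$, and nilpotence of $\varphi$ on $M^{\rm n}$ forces $\n/u\n = 0$. Since $\m$ (hence $\n$) is finite over the Noetherian local ring $\s$, Nakayama then gives $\n = 0$. Functoriality of the sequence follows immediately from the uniqueness clause of Lemma~\ref{lem-construct-Mm}: any morphism $\m_1\to\m_2$ of $\varphi$-modules induces $M_1^{\rm m}\to M_2^{\rm m}$, which by uniqueness commutes with $[\cdot]$, so it carries $\m_1^{\rm m}$ into $\m_2^{\rm m}$.

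The main obstacle is the classical case $\m\in \Mod^{\varphi,h,{\rm c}}_{\s,\tor}$, where one must check $\m^{\rm m}$ and $\m^{\rm n}$ are again $u$-torsion-free, killed by $p^n$, and of height $h$. For $\m^{\rm m}$ the first two are immediate from it being a submodule of $\m$, and the height-$h$ structure restricts from $\psi_\m$ using multiplicativity together with $\psi\circ(1\otimes\varphi)=E^h=(1\otimes\varphi)\circ\psi$. For $\m^{\rm n}$ I would obtain $u$-torsion-freeness by applying the snake lemma to multiplication by $u$ on $0\to\m^{\rm m}\to\m\to\m^{\rm n}\to 0$: since $\m[u]=0$ and $\m^{\rm m}/u\m^{\rm m}=M^{\rm m}\hookrightarrow M=\m/u\m$ is the inclusion of a direct summand, the resulting six-term sequence forces $\m^{\rm n}[u]=0$; the $\psi$ structure then descends to $\m^{\rm n}$. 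The delicate point will be verifying that $\psi_\m(\m^{\rm m})\subseteq\varphi^*\m^{\rm m}$ inside $\varphi^*\m$, since $\varphi^*$ does not automatically preserve injections; I would address this by combining the multiplicativity of $\m^{\rm m}$ with the factorization $\psi\circ(1\otimes\varphi)=E^h\cdot\mathrm{id}$ to factor $\psi_\m|_{\m^{\rm m}}$ through $\varphi^*\m^{\rm m}$, and then invoking $u$-torsion-freeness to lift through $\varphi^*\m^{\rm m}\to\varphi^*\m$.
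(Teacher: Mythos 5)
Your construction is exactly the paper's: set $\m^{\rm m}$ to be the $\s$-submodule generated by $[M^{\rm m}]$ from Lemma~\ref{lem-construct-Mm} and $\m^{\rm n}=\m/\m^{\rm m}$, verify $\m^{\rm m}/u=M^{\rm m}$ and $\m^{\rm n}/u=M^{\rm n}$ by reducing the presentation $\s\otimes_{W(k)}[M^{\rm m}]\to\m\to\m^{\rm n}\to 0$ mod $u$, and deduce functoriality from the uniqueness of $[\cdot]$. The snake-lemma argument for $u$-torsion-freeness of $\m^{\rm n}$ is the same as in the paper. However, two steps in your write-up have gaps.

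First, the ``no nontrivial multiplicative submodule'' step: you assert that $\n/u\n$ is a \emph{sub}module of $M^{\rm n}$ and then conclude $\n/u\n=0$, whence Nakayama gives $\n=0$. But the natural map $\n/u\n\to M^{\rm n}=\m^{\rm n}/u\m^{\rm n}$ need not be injective (its kernel is $(\n\cap u\m^{\rm n})/u\n$). What the nilpotence argument really gives is that the \emph{image} of $\n$ in $M^{\rm n}$ is a multiplicative submodule and hence zero, i.e.\ $\n\subseteq u\m^{\rm n}$; this is weaker than $\n=u\n$, so Nakayama does not apply directly. One must then use the Frobenius again: $\n=\s\cdot\varphi(\n)\subseteq\s\cdot\varphi(u\m^{\rm n})\subseteq u^p\m^{\rm n}$, and iterating gives $\n\subseteq u^{p^k}\m^{\rm n}$ for all $k$, so $\n=0$ by Krull intersection. (Equivalently, as the paper phrases it, $\varphi$ is topologically nilpotent on all of $\m^{\rm n}$, which precludes any nonzero multiplicative submodule.)

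Second, for the claim that $\m^{\rm m},\m^{\rm n}$ lie in $\Mod_{\s,\tor}^{\varphi,h,{\rm c}}$ once $\m$ does, the paper simply invokes Fontaine's result (Prop.~B~1.3.5 of his book) that a $u$-torsion-free sub- or quotient $\varphi$-module of a classical height-$h$ Kisin module again has $E$-height $h$, so the $\psi$-structure comes for free. Your approach of constructing $\psi$ on $\m^{\rm m}$ and $\m^{\rm n}$ by hand from $\psi_\m$ is more laborious and, as you acknowledge, left incomplete at the ``delicate point''; note also that the specific worry you raise about $\varphi^*\m^{\rm m}\to\varphi^*\m$ failing to be injective is unfounded ($\varphi_\s$ is flat, so $\varphi^*$ is exact), but the real issue --- showing $\psi_\m$ sends $\m^{\rm m}$ into $\varphi^*\m^{\rm m}$ --- is genuinely not addressed. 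Citing Fontaine is the efficient route here.
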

\begin{proof} Note that \cite[Prop. (1.2.11)]{Kisin-Modularity} has treated the situation that $\m$ has no $u$-torsion but our idea here is slightly different. By the above lemma, we can set $\m^{\rm m}$ to be the $\s$-submodule of $\m$ generated by $[M^{\rm m}]$ and $\m^{\rm n}: = \m / \m ^{\rm m}$. Clearly, $1 \otimes \varphi: \s \otimes_{\varphi , \s} \m^{\rm m} \to \m^{\rm m}$ is surjective. Consider the right exact sequence $ \s \otimes_{W(k)} [M^{\rm}] \to \m \to \m^{\rm n}\to 0$. By modulo $u$, we have the right exact (indeed exact) sequence $M^{\rm m}\to M \to \m^{\rm n}/ u \m ^{\rm n}\to 0$. So $\m^{\rm n}/ u \m ^{\rm n}\simeq M ^{\rm n}$ and also forces $\m^{\rm m}/ u \m ^{\rm m} = M^{\rm m}$. Hence 
$\varphi$ on $\m^{\rm n}$ is topologically nilpotent as $\varphi$ on $M^{\rm n}$ is nilpotent, thus $\m^{\rm n}$ can not have nontrivial multiplicative submodule. So we obtain exact sequence \eqref{eqn-multi-nil-seq} which is functorial for $\m$ because $[\cdot]$ is clearly functorial for $\m$ by the above lemma. 

If $\m \in \Mod_{\s , \tor}^{\varphi , h, {\rm c}}$ then $\m^{\rm m}$ has no $u$-torsion. Note that the exact sequence \eqref{eqn-multi-nil-seq} modulo $u$ becomes the exact sequence 
$0 \to M^{\rm m} \to M \to M ^{\rm n}\to 0$. Then $\m^{\rm n}$ can not have $u$-torsion as $\m$ has no $u$-torsion. Hence both 
$\m ^{\rm m}$ and $\m^{\rm n}$ have no $u$-torsion. Then both $\m ^{\rm m}$ and $\m^{\rm n}$ have $E$-height $h$ by \cite[Prop.  B 1.3.5]{Fontaine} as required. 
\end{proof}

But for a generalized Kisin module $\m$ with height $h$, it is unclear if we can define $\psi: \m^{\rm m} \to \varphi^*\m ^{\rm m} $ so that $\m^{\rm  m}$ has height $h$. Luckily, we will not need such a statement.  

Let $M[p^n]$ denote the $p^n$-torsion in $M$, for later application, we need the following
two statements.
\begin{lemma}
\label{lem Zp module structure}
Let $M$ be a finitely generated $\s$-module.
Assume $M/p^nM$ are $u$-torsion free for all $n > 0$.
Then $M/(M[p^n] + pM)$ are also $u$-torsion free for all $n > 0$.
\end{lemma}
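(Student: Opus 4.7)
The plan is to argue elementwise and exploit the hypothesis with a single multiplication by $p^n$ to eliminate the contribution of $M[p^n]$.

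First I would take an element $x \in M$ whose image in $M/(M[p^n] + pM)$ is $u$-torsion, i.e., $ux \in M[p^n] + pM$, and write
\[
ux = y + pz \qquad \text{with } p^n y = 0 \text{ and } z \in M.
\]
The only structural feature of $M[p^n] + pM$ that I want to use is that it is killed by $p^{n+1}$ modulo $pM$; more concretely, multiplying the above display by $p^n$ annihilates $y$ and yields
\[
u \cdot (p^n x) = p^{n+1} z \in p^{n+1} M.
\]

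Now I would invoke the hypothesis applied to the exponent $n+1$: since $M/p^{n+1}M$ is $u$-torsion free, the relation $u \cdot (p^n x) \in p^{n+1} M$ forces $p^n x \in p^{n+1} M$. Choose $w \in M$ with $p^n x = p^{n+1} w$. Then $p^n(x - pw) = 0$, so $x - pw \in M[p^n]$, and therefore
\[
x = (x - pw) + pw \in M[p^n] + pM,
\]
as required.

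There is no real obstacle here; the only trick is noticing that the summand $M[p^n]$ disappears after multiplying by $p^n$, which converts the conclusion into exactly the $u$-torsion freeness of $M/p^{n+1}M$ supplied by the hypothesis. Hence applying the hypothesis one step higher (at $n+1$ rather than $n$) suffices to close the argument.
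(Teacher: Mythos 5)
Your proof is correct and is essentially identical to the paper's own argument: write $ux = y + pz$ with $y \in M[p^n]$, multiply by $p^n$ to kill $y$, invoke $u$-torsion freeness of $M/p^{n+1}M$ to get $p^n x \in p^{n+1}M$, and then decompose $x$ accordingly.
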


\begin{proof}
Suppose $x \in M$ is a lift of a $u$-torsion in $M/(M[p^n] + pM)$, hence satisfies $u \cdot x = y + p \cdot z$ for some $y \in M[p^n]$ and $z \in M$.
Multiply the equation by $p^n$, we get $u \cdot p^n \cdot x = p^{n+1} \cdot z$.
As $M/p^{n+1}M$ also has no $u$-torsion by assumption, we see that $p^n \cdot x = p^{n+1} \cdot \tilde{z}$
for some $\tilde{z} \in M$. Write $x = (x - p \cdot \tilde{z}) + p \cdot \tilde{z}$ shows that in fact $x \in M[p^n] + pM$, as required.
\end{proof}

\begin{proposition}
\label{prop Zp module structure}
Let $M$ be a finitely generated generalized Breuil--Kisin module.
Assume $M/p^nM$ are $u$-torsion free for all $n > 0$.
Then there exists a $\mathbb{Z}_p$-module $N$ and an isomorphism of $\s$-modules
$M \simeq N \otimes_{\mathbb{Z}_p} \s$.
\end{proposition}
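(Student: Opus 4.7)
The plan is to construct an explicit isomorphism $M \simeq \s^a \oplus \bigoplus_{j=1}^r \s/p^{n_j}$, from which the $\mathbb{Z}_p$-module $N := \mathbb{Z}_p^a \oplus \bigoplus_j \mathbb{Z}/p^{n_j}$ furnishes $N \otimes_{\mathbb{Z}_p} \s \simeq M$. The invariants $a$ and $\{n_j\}$ will be read off from a structure-theorem decomposition of $\overline{M} := M/uM$ as a $W(k)$-module.

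First I record that $M[u] = 0$. Since $\s = W[\![u]\!]$ is $p$-adically complete and Noetherian, $M$ is $p$-adically separated, i.e., $\bigcap_n p^n M = 0$. Any $x$ with $ux = 0$ then maps to a $u$-torsion element of each $M/p^n M$, which vanishes by hypothesis, forcing $x \in \bigcap_n p^n M = 0$. Next, the structure theorem over the DVR $W = W(k)$ provides $\overline{M} \simeq W^a \oplus \bigoplus_{j=1}^r W/p^{n_j}$, and I pick lifts $v_1, \ldots, v_a, w_1, \ldots, w_r \in M$ of the respective standard generators.

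The crux is to adjust each torsion lift $w_j$ so that $p^{n_j} w_j = 0$ holds exactly in $M$. A priori only $p^{n_j} w_j \in uM$ is known, say $p^{n_j} w_j = u y_j$. The hypothesis that $u$ is injective on $M/p^{n_j} M$ forces $y_j \in p^{n_j} M$, say $y_j = p^{n_j} t_j$; replacing $w_j$ by $w_j - u t_j$ preserves its image in $\overline{M}$ while securing $p^{n_j}(w_j - u t_j) = 0$.

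With these adjusted lifts, I assemble
\[
\Phi \colon \s^a \oplus \bigoplus_{j=1}^r \s/p^{n_j} \longrightarrow M.
\]
For surjectivity, $\Phi \bmod u$ is the chosen decomposition of $\overline{M}$, hence an isomorphism, so $\Phi \bmod (p, u)$ is an isomorphism and Nakayama's lemma applies. For injectivity, if $\Phi(x) = 0$, reducing mod $u$ places $x$ in $u \cdot (\text{source})$; writing $x = u x'$ and using $M[u] = 0$ gives $\Phi(x') = 0$, and iteration shows $x \in u^k \cdot (\text{source})$ for every $k$, whence $x = 0$ by $u$-adic separation of each summand $\s$ and $\s/p^{n_j}$. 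The generalized Breuil--Kisin (Frobenius) structure plays no role; the adjustment step is the unique place using the full hypothesis on $M/p^n M$ for $n>1$, and is the main thing that needs care.
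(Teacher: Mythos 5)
Your proof is correct, and it takes a genuinely different route from the paper's. The paper proceeds by d\'evissage: it first handles the $p^\infty$-torsion case (appealing to \cite[Proposition 4.3.(i)]{BMS1} for boundedness of $p$-torsion and to \cite[Lemma 5.9]{Min20}), then splits off the torsion via the two canonical short exact sequences $0 \to M_{\tor} \to M \to M_{\tf} \to 0$ and $0 \to M_{\tf} \to M_{\free} \to M_0 \to 0$, showing $M_0 = 0$ by way of the auxiliary \Cref{lem Zp module structure}. You instead build the isomorphism directly: decompose $\overline{M} = M/uM$ over the PID $W$, lift the generators, perform the crucial correction $w_j \rightsquigarrow w_j - ut_j$ so that $p^{n_j}$ literally annihilates each torsion lift, and then establish bijectivity of the resulting map by Nakayama (surjectivity) and an iterated division-by-$u$ argument using $M[u]=0$ and $u$-adic separation (injectivity). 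Both arguments are sound, but yours is more elementary and self-contained: it never invokes the Frobenius or the height structure of a generalized Breuil--Kisin module, so it in fact proves the statement for an arbitrary finitely generated $\s$-module satisfying the hypothesis, whereas the paper's reduction of the torsion case leans on the Frobenius isogeny via \cite[Proposition 4.3.(i)]{BMS1}. What the paper's approach buys in exchange is conceptual alignment with the standard torsion/torsion-free/reflexive filtration of Breuil--Kisin modules, which is the language in which several adjacent results in the paper (e.g.\ \Cref{finite free follows from crystalline torsion free}) are phrased. One small stylistic remark: when invoking Krull intersection for $\bigcap_n p^n M = 0$, it is worth noting explicitly that $\s$ is Noetherian local with $p$ in the maximal ideal, which is what makes the intersection theorem apply; you assert $p$-adic separatedness slightly informally, but the claim is correct.
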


\begin{proof}
First let us treat the case when $M$ is torsion. In this case $M$ is killed by a power of $p$,
see~\cite[Proposition 4.3.(i)]{BMS1}. Denote ${\rm Im}(M \xrightarrow{p} M) = pM \eqqcolon M_1$.
We claim $M_1/p^nM_1$ are also $u$-torsion free for all $n > 0$.
Granting this claim, by induction on the exponent of $p$ annihilating $M$,
we know $M_1$ satisfies the conclusion. 
Here, for the starting point of induction, we used the fact that a finitely generated $\s/p$-module
is $u$-torsion free if and only if it is free.
Then by \cite[Lemma 5.9]{Min20}, we get the conclusion for $M$.

We now verify the claim. Applying the snake lemma to
\[\xymatrix{
0 \ar[r] & M_1 \ar[r] \ar[d]^{\cdot p^n} & M \ar[r] \ar[d]^{\cdot p^n} & M/M_1 \ar[r] \ar[d]^{0} & 0 \\
0 \ar[r] & M_1 \ar[r] & M \ar[r] & M/M_1 \ar[r] & 0
}
\]
yields an exact sequence
\[
0 \to M/(M[p^n] + pM) \to M_1/p^nM_1 \to M/p^nM.
\]
Here $M[p^n]$ denotes the $p^n$-torsion in $M$.
Since $M/p^nM$ has no $u$-torsion by assumption, it suffices to show the same for $M/(M[p^n] + pM)$.
Applying \Cref{lem Zp module structure} gives the claim.

Next we turn to the general case. By \cite[Proposition 4.3]{BMS1}, we have two short exact sequences of generalized Breuil--Kisin
modules
\[
0 \to M_{\tor} \to M \to M_{\tf} \to 0
\]
and
\[
0 \to M_{\tf} \to M_{\free} \to M_0 \to 0.
\]
Here $M_{\tor}$ is the torsion submodule, $M_{\tf}$ is the torsion free quotient, $M_{\free}$ is the reflexive hull of $M$
(which is free as $\s$ is a $2$-dimensional regular Noetherian domain), and $M_0$ has finite length.
The first sequence implies that $M_{\tor}/p^n$ injects into $M/p^n$, therefore $M_{\tor}$ satisfies the assumption.
Since we have treated the torsion case, we see that $M_{\tor}$ satisfies the conclusion.
Now we claim $M_0$ vanishes. This immediately implies that $M_{\tf} = M_{\free}$ is free,
hence the first sequence splits, and $M = M_{\tor} \oplus M_{\tf}$ has shape of a $\mathbb{Z}_p$-module.

Finally let us justify the claim that $M_0 = 0$.
Take the second sequence above, derived modulo $p$ gives an inclusion
$M_0[p] \subset M_{\tf}/p$. Since $M_0$ has finite length, we see that $M_0[p]$ must be $u^{\infty}$-torsion.
If we can show that $M_{\tf}/p$ is $u$-torsion free, 
then we get $M_0[p] = 0$ which implies $M_0 = 0$ as it must be $p^{\infty}$-torsion.
We now reduce ourselves to showing $M/(M_{\tor} + p \cdot M)$ is $u$-torsion free.
Since $M_{\tor} = M[p^n]$ for sufficiently large $n$, we finish the proof by appealing to \Cref{lem Zp module structure}.
\end{proof}

\subsection{Breuil modules} Fix $0 \leq h \leq p-1$. 
 Let $S$ be the  $p$-adically completed PD-envelope
of $\theta: \s \twoheadrightarrow \O_K, u\mapsto \pi$, and for $i\ge 1$
write $\Fil^i S\subseteq S$ for the (closure of the) ideal generated by $\{ \gamma_n (E) = E^n/n!\}_{n\ge i}$.
For $i \le p-1$,  one has $\varphi(\Fil^i S) \subseteq p^i S$,
so we may define $\varphi_i:\Fil^i S\rightarrow S$ where $\varphi_i \coloneqq p^{-i}\varphi$. We have  $c_1 : = \varphi(E(u))/p \in S^\times$. Note that $S\subset K_0 [\![u]\!]$. Define $I_+ : = S \cap u K_0 [\![u]\!]$. Clearly $S/ I_+ = W(k)$. Let $S_n \coloneqq S/ p ^n S$. 
Let $^\sim\textnormal{Mod}^{\varphi, h }_{S}$ denote the category
whose objects are triples $(\mathcal M, \Fil^h \calM, \varphi_h)$, consisting of
\begin{enumerate}
	\item two $S$-modules $\calM$ and $\Fil^h \calM$;
	\item an $S$-module map $\iota: \Fil^h  \calM \to \calM $ whose image contains $\Fil ^h S \cdot \calM $; and
	\item a $\varphi$-semi-linear map $\varphi_h : \Fil ^h  \calM  \to \calM $ such that for all $s \in \Fil^h S$ and $x\in \calM$
	we have $$\varphi_h (sx)= (c_1)^{-h }\varphi_h (s) \varphi_h(E(u)^hx).$$
\end{enumerate}
Morphisms  are given by $S$-linear maps  compatible with $\iota$'s and commuting with
$\varphi_h$.
Let $'\textnormal{Mod}^{\varphi, h }_{S}$ denote the full subcategory of  $^\sim\textnormal{Mod}^{\varphi, h }_{S}$ whose objects $(\mathcal M, \Fil^h \calM, \varphi_h)$ satisfy 
\begin{enumerate}
	\item  $\iota$ is injective so that $\Fil ^h \calM$ is regarded as a submodule of $\calM$. 
	\item $\varphi_h (\Fil ^h \calM)$ generates $\calM$ as $S$-modules. 
\end{enumerate}
 A sequence is defined to be \emph{short exact}
if it is short exact as a sequence of $S$-module, and induces a
short exact sequence on $\Fil^h$'s. Let  $\textnormal{Mod}^{\varphi, h }_{S, \tor}$ denote the full subcategory of  $'\textnormal{Mod}^{\varphi, h  }_{S}$ so that
the underlying module $\calM$ is killed by a $p$-power and the triple $\calM$ can be a written as successive
extensions of triples $\calM_i$ in  $'\textnormal{Mod}^{\varphi, h }_{S}$
with each underlying module $\calM_i \simeq \bigoplus_{\text{finite}} S_1 $. 


Let 
$\nabla : S \to S$ be $W(k)$-linear continuous derivation so that $\nabla(u)= 1$. Let  $\Mod_{S, \tor}^{\varphi, h, \nabla}$ denote the category of the object $(\calM, \Fil ^h \calM, \varphi _h, \nabla) $ where $(\calM , \Fil^h \calM , \varphi_h)$
is an object in $\Mod_{S, \tor}^{\varphi, h}$ and $\nabla$ is 
$W(k)$-linear morphism $\nabla: \calM \to \calM $ such that :
\begin{enumerate}
	\item for all $s\in S $ and $x\in \calM$, $\nabla (sx)=\nabla(s) x + s \nabla(x)$.
    \item $E \nabla(\Fil^h \calM )\subset \Fil^h \calM $. 
    \item  the following diagram  commutes:
	\begin{equation}
	\begin{split}
	\xymatrix{ \Fil^h  \calM\ar[d]_{E(u) \nabla} \ar[r]^-{\varphi_h } & \calM \ar[d]^{c_1 \nabla}\\
		\Fil ^h  \calM   \ar[r]^-{ u ^{p-1} \varphi_h } &\calM }
	\end{split}
	\end{equation}
\end{enumerate}
An object $\calM $ in  $\Mod_{S, \tor}^{\varphi, h }$  is called a (torsion) \emph{Breuil module}. 

Now let us recall the relation of classical torsion Kisin modules and objects in $\Mod_{S, \tor}^{\varphi, h}$.   
For each such $\m\in \Mod_{\s , \tor}^{\varphi, h , {\rm c}}$, we construct an object $\calM:= \u\calM (\m) \in \Mod^{\varphi, h}_{S, \tor}$ as the following: $\calM: = S \otimes_{\varphi, \s} \m$ and
$$\Fil ^h \calM: = \{x \in \calM | (1\otimes \varphi_\m ) (x) \in \Fil ^h S \otimes_{\s}\m\}; $$
and $\varphi_h : \Fil ^h \calM \to \calM$ is defined as the composite of following map

\begin{equation*}
\xymatrix{
	{\Fil ^h \calM} \ar[r]^-{1\otimes \varphi_{\m}} & {\Fil ^h S \otimes_{\s}\m }
	\ar[r]^-{\varphi_h \otimes 1} & S\otimes_{\varphi,\s} \m = \calM
} .
\end{equation*}

For any $\calM \in \Mod _{S, \tor}^{\varphi, h}$, define a semi-linear $\varphi: \calM \to \calM$ by $\varphi (x) = (c_1)^{-h} \varphi_h (E^h x)$. Similar to the situation of Kisin module, we say $\calM$ is  \emph{multiplicative} (resp.~\emph{nilpotent}) if  $1 \otimes \varphi: S \otimes_{\varphi, S}\calM \to \calM$ is surjective (resp.~$\lim\limits_{n\to \infty} \varphi^n (x) = 0, \ \forall x \in \calM$). Clearly if $\m \in \Mod_{\s , \tor}^{\varphi , h , {\rm c}}$ is multiplicative (resp.~nilpotent) then so is $\u \calM (\m)$. 
\begin{remark}Here our definition of multiplicative is different from that in \cite[Def. 2.2.2]{GaoHui1} where $\calM$ is called multiplicative if $\Fil^h \calM = \Fil^h S \calM$. Indeed these two definitions are equivalent. Suppose that $\Fil^h \calM = \Fil^h S \calM$. Since  $\varphi_h ( a x)= \varphi_h (a) \varphi (x)$ for any $a \in \Fil^h S$ and $x\in \calM$,  $\{\varphi (x)= c_1^{-h}\varphi_h (E^h x)\}$ and $\{\varphi_h (\Fil^h S \calM)\}$ generates the same subsets in $\calM$. This implies that $\varphi (\calM)$ generates $\calM$. Conversely, suppose that $\varphi (\calM)$ generates $\calM$.  To show that $\Fil^h\calM = \Fil ^hS \calM$, we can reduce to the case that $\calM$ is finite $S_1$-free by d\'evissage. See the last part of proof of Lemma \ref{lem-uni-etale-Breuil}. 
\end{remark}


\begin{lemma}
\label{lem-uni-etale-Breuil} 
For any object $\calM \in \Mod^{\varphi, h}_{S, \tor}$, there exists a  functorial  exact sequence
\begin{equation}\label{eqn-multi-nil-seq-Breuil}
    \xymatrix{ 0 \ar[r] & \calM ^{\rm m } \ar[r]&   \calM  \ar[r]  & \calM^{\rm n}\ar[r] & 0}
\end{equation}
with $\calM^{\rm m}$  a multiplicative submodule of $\calM$ and $\calM^{\rm n}$ being nilpotent. 
\end{lemma}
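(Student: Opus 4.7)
The plan is to adapt the argument of \Cref{lem-multi}, with reduction modulo $I_+ \subset S$ playing the role that reduction modulo $u \subset \s$ played for Kisin modules. First I would set $M \coloneqq \calM / I_+ \calM$. Since $\calM$ is killed by some power of $p$ and finitely generated over $S$, and since $S/I_+ = W(k)$, the module $M$ is a finitely generated $p$-power torsion $W(k)$-module. The semi-linear endomorphism $\varphi : \calM \to \calM$ descends to $M$, and Fitting's lemma gives a decomposition $M = M^{\rm m} \oplus M^{\rm n}$ with $\varphi$ bijective on $M^{\rm m}$ and nilpotent on $M^{\rm n}$.

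Next I would construct a canonical $W(k)$-linear and $\varphi$-equivariant section $[\cdot] : M^{\rm m} \to \calM$ exactly as in \Cref{lem-construct-Mm}: for $\bar x \in M^{\rm m}$, bijectivity of $\varphi$ on $M^{\rm m}$ yields unique $\bar x_n \in M^{\rm m}$ with $\varphi^n(\bar x_n) = \bar x$; choose any lifts $x_n \in \calM$ and set $[\bar x] \coloneqq \lim_{n \to \infty} \varphi^n(x_n)$. Well-definedness requires that the sequence converges in $\calM$ and is independent of the choice of $x_n$. Both reduce to showing that $\varphi^n(I_+ \calM)$ tends to $0$ in $\calM$. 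For this I would use that $\varphi_S$ carries $I_+$ into $\Fil^p S \cap u^p S$, so iterates of $\varphi$ push any element of $I_+$ arbitrarily deep in the PD-filtration; combined with the fact that $\calM$ is by definition a successive extension of objects of the form $\bigoplus S_1$, a d\'evissage reduces the convergence claim to the $S_1$-free case, where it is immediate.

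Finally I would set $\calM^{\rm m}$ to be the $S$-submodule of $\calM$ generated by $[M^{\rm m}]$, equipped with $\Fil^h \calM^{\rm m} \coloneqq \calM^{\rm m} \cap \Fil^h \calM$ and the restriction of $\varphi_h$; let $\calM^{\rm n} \coloneqq \calM / \calM^{\rm m}$ with the induced structure. By construction $\calM^{\rm m}$ is generated by $\varphi$-invariants, so $\varphi(\calM^{\rm m})$ generates $\calM^{\rm m}$ as an $S$-module, which is the multiplicativity condition (equivalent, by the remark preceding the lemma and a d\'evissage to the $S_1$-free case, to $\Fil^h \calM^{\rm m} = \Fil^h S \cdot \calM^{\rm m}$). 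For nilpotency of $\calM^{\rm n}$: since $\calM^{\rm n}/I_+ \calM^{\rm n}$ is a quotient of $M^{\rm n}$ on which $\varphi$ is nilpotent, and since $\varphi^n$ contracts $I_+ \calM^{\rm n}$ to zero by the same argument used in Step 2, every element satisfies $\varphi^n(x) \to 0$. Functoriality of the sequence follows from the functoriality of the Fitting decomposition together with the uniqueness of the section $[\cdot]$, which can be checked exactly as in \Cref{lem-construct-Mm}. The main obstacle is the convergence statement for the Frobenius iterates, which is the technical heart of the argument and is the reason the devissage via the $S_1$-free case is essential.
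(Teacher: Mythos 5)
Your outline follows the same conceptual route as the paper --- reduce modulo $I_+$, adapt \Cref{lem-construct-Mm}, take $\calM^{\rm m}$ to be the $S$-span of $[M^{\rm m}]$ --- but it skips what is in fact the hard part of the argument and mislocates the difficulty. You identify the ``technical heart'' as the convergence of the Frobenius iterates; that step is essentially immediate, since $\varphi^\ell(I_+) = 0$ in $S_n$ for $\ell$ large enough (note, incidentally, that your claim $\varphi_S(I_+) \subset \Fil^p S$ is not true: $\varphi(u) = u^p$ does not lie in $\Fil^p S$ in general). The genuine difficulty is promoting the sequence from $^\sim\Mod^{\varphi,h}_S$ to an exact sequence in $\Mod^{\varphi,h}_{S,\tor}$: one must check that $\calM^{\rm m}$ and $\calM^{\rm n}$ are successive extensions of finite free $S_1$-modules, that $\varphi_h(\Fil^h\calM^{\rm m})$ generates $\calM^{\rm m}$, and that the induced sequence of $\Fil^h$'s is short exact. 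None of this is addressed, and the proposal treats all of it as a formality.

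There is also a circularity in the way you justify the $\Fil^h$-structure. You define $\Fil^h\calM^{\rm m} \coloneqq \calM^{\rm m} \cap \Fil^h\calM$ with ``the restriction of $\varphi_h$,'' but a priori $\varphi_h$ need not carry $\calM^{\rm m}\cap\Fil^h\calM$ back into $\calM^{\rm m}$; that only follows once one knows the identity $\calM^{\rm m}\cap\Fil^h\calM = \Fil^h S\cdot\calM^{\rm m}$. You then cite ``the remark preceding the lemma'' to get exactly that identity, but that remark's proof refers forward to the last part of this lemma's proof, so the appeal is circular. In the paper, this identity \emph{is} the content of the $n=1$ base case, and it requires an explicit adapted-basis argument (choosing $\alpha_i = u^{a_i}e_i$, showing each $a_i = eh$ by a contradiction involving $\varphi_h(u^{eh}\hat e_i)$ mod $I_+$). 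The passage from $n=1$ to general $n$ then requires a careful d\'evissage with a $3\times 3$ diagram, using coherence of $S_n$, vanishing of $\Tor_1^S(-, S/I_+)$ for $S_1$-free quotients, and Nakayama. Your proposal gestures at ``d\'evissage reduces to the $S_1$-free case'' in two places without running either argument, and neither is the kind of thing that goes through by general nonsense --- the paper devotes roughly three-quarters of its proof to precisely these verifications.
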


\begin{proof} Recall $I_+ = S \cap u K_0 [\![u]\!]$,  $S/ I _+\simeq W(k)$ and $\varphi (x) = c_1^{-h} \varphi_h (E^h x)$. Write $S_n : = S/ p ^n S$ and assume that $\calM$ is an $S_n$-module. 
We claim Lemma \ref{lem-construct-Mm} still holds by  replacing $\m$ by $\calM$,  $M = \calM/ I_+$ and $q : \calM \to M = \calM / I _+$. Indeed, the same proof goes through because $\varphi^\ell (I_+)=0$ in $S_n$ for sufficient large $\ell$. 
Now we can set $\calM^{\rm m}$ be $S$-submodule of $\calM$ generated by $[M^{\rm m}]$ and $\calM^{\rm n} : = \calM / \calM^{\rm m}$. Using the same argument as in Lemma \ref{lem-multi}, the  right exact sequence $ S \otimes_{W(k)} [M^{\rm m}] \to \calM \to \calM^{\rm n} \to 0 $ modulo $I_+$ becomes an exact sequence $0 \to M^{\rm m} \to M \to M^{\rm n} \to 0$. This forces to that $\calM^{\rm m}/ I_+ = M ^{\rm m}$ and $\calM^{\rm n} / I_+ = M ^{\rm n }$.  
Set $\Fil ^h \calM ^m = \Fil ^h S \cdot \calM^{\rm m}$ and $\Fil ^h \calM^{\rm n} = \Fil^h \calM / \Fil^h \calM^{\rm m}$.
It is clear that $\varphi_h: \Fil ^h \calM ^{\rm m} \to \calM ^{\rm m}$ and $\varphi_h : \Fil ^h \calM ^{\rm n} \to \calM ^{\rm n }$ are well defined.
So we obtain an exact sequence
 $ 0 \to \calM ^{\rm m } \to \calM \to \calM^{\rm n} \to 0$ in the category $^\sim \Mod^{\varphi, h}_S $. 
 
 To promote our exact sequence to the category $\Mod^{\varphi , h}_{S, \tor}$, 
 we make induction on $n$ where $p ^n$ kills $\calM$. The base case $n =1$ is most complicated and postpone to the end. For general $n$, by definition, $\calM$ sits in the exact sequence in $\Mod^{\varphi , h}_{S, \tor}: \ 0 \to \calM_1 \to \calM \to \calM_2 \to 0 $ with $\calM_1$, $\calM_2$ killed by $p ^{n -1}$ and $p$ respectively. Consider the following commutative diagram
\begin{equation}\label{diagram-devissage}
    \xymatrix{ 0 \ar[r] & \calM_1^{\rm m}\ar[d] \ar[r]  & \calM_1 \ar[r]\ar[d] &\calM^{\rm n } _1 \ar[r]\ar[d]  & 0 \\  0 \ar[r] & \calM^{\rm m}\ar[d]^f \ar[r]  & \calM \ar[r]\ar[d]  & \calM ^{\rm n} \ar[r]\ar[d]  & 0  \\ 0 \ar[r] & \calM_2^{\rm m} \ar[r]  & \calM_2 \ar[r] & \calM _2^{\rm n} \ar[r]  & 0}
\end{equation}    
 We need to show that the first columns is short exact. 
 Note that $\calM _2$ is finite $S_1$-free, the exact sequence in the second column yields the exact sequence $ 0 \to M _1 \to M \to M_2 \to 0$ where $M_i : = \calM_i / I _+ \calM_i$ for $i= 1, 2$.  So the sequence $ 0 \to \calM_1^{\rm m } / I _+ \to \calM ^{\rm m }/I_+ \to \calM_2^{\rm m }/ I_+ \to 0$ is also exact as it is the same as the exact sequence $0 \to M_1^{\rm m} \to M ^{\rm m} \to M_2^{\rm m} \to 0$. Note that $\calM_i^{\rm m }$ is finite $S$-generated as they are generated by $[M_i^{\rm m}]$. 
 Note that $S_n$ is coherent ring, see \cite[Lemma 7.15]{LL20}. Induction on $n$ and \cite[\href{https://stacks.math.columbia.edu/tag/05CW}{Tag 05CW}]{stacks-project}, 
 we see that $\calM$ is coherent and then $\calM^{\rm m}$ is coherent. Since $\calM_1^{\rm m}$ is coherent by induction,  $\calL = \calM^{\rm m}/ \calM_1^{\rm m}$ is also coherent by \cite[\href{https://stacks.math.columbia.edu/tag/05CW}{Tag 05CW}]{stacks-project} again.
 Note $f$ induces a map $f':  \calL \to \calM_2 ^{\rm m}$. We need to show that $ f'$ is an isomorphism. Let $L = \calL / I_+ $. Note that $\bar f' : = f' \mod I_+ : L \to M^{\rm m}_2 $ is an isomorphism. Nakayama Lemma shows that $f'$ is surjective. Let $\mathcal K: = \ker(f')$ which is still coherent. Since $\calM^{\rm m}_2$ is finite $S_1$-free by induction, $\Tor_1^S (\calM^{\rm m }_2, S/I_+ )= 0$. So we obtain an exact sequence 
 $0 \to \mathcal K / I_+ \to L \to M_2^{\rm m} \to 0. $ Hence $\mathcal K /I_+ = 0$ as $\bar f'$ is an isomorphism. 
 By Nakayama lemma, $\mathcal K = 0$ and first column is exact as finite $S$-module. Using that $\calM_2^{\rm m}$ is finite $S_1$-free, we see that the sequence 
 $ 0 \to \calM_1^{\rm m}/ \Fil ^ h S \to \calM ^{\rm m}/\Fil^h S\to \calM_2^{\rm m}/ \Fil ^h S \to 0$ is exact. So the sequence 
 $0 \to \Fil ^ h S \cdot  \calM_1^{\rm m}  \to \Fil ^ h S \cdot  \calM ^{\rm m} \to \Fil ^ h S \cdot  \calM_2^{\rm m} \to 0$ is exact. Therefore, the  first column of \eqref{diagram-devissage} is exact in $\Mod_{S, \tor}^{\varphi, h}$. Then it is standard to check that last column is also exact sequence in  $'\Mod_{S, \tor}^{\varphi, h}$. In particular, $\calM^{\rm n}$ is an object in $\Mod_{S, \tor}^{\varphi, h}$ by induction on $n$.  Once \eqref{eqn-multi-nil-seq-Breuil} is exact in $\Mod_{S, \tor}^{\varphi, h}$. Then $\varphi$ on $\calM^{\rm m}$, $\calM$ and $\calM^{\rm n}$ defined from $\varphi (x) = c_1^{-h}\varphi _h (x)$ are compatible with maps in the sequence. Since 
 $\calM^{\rm m}$ is generated by $[M^{\rm m}]$ and $\calM^{\rm n}/ I_+ = M^{\rm n}$, we see that 
 $\calM ^{\rm m}$ is multiplicative and $\calM^{\rm n}$ is nilpotent. 
 
 Now we discuss the case $n =1$. First we have shown that $\calM^{\rm m}$ is finite $S$-generated as the above.  Now the exact sequence $0 \to \calM^{\rm m}/ I_+ \to \calM/ I _+ \to \calM^{\rm n}/ I_+ \to 0 $ is an exact sequence of $k$-vector spaces. Pick $m_i \in \calM ^{\rm m}$ and $n_j \in \calM$ so that $ m_i \mod I_+ $ and $n_j\mod I_+$ are basis of $\calM^{\rm m}/ I_+ $ and $\calM^{\rm n}/ I_+$ respectively. Using that $\calM$ is finite $S_1$-free. It is easy to show that $m_i , n_j $ forms a basis of $\calM$ and then both $\calM^{\rm m}$ and $\calM^{\rm n}$ are finite $S_1$-free. Now it remains to show that $\Fil^h \calM \cap \calM ^{\rm m} = \Fil^h S \calM^{\rm m}$ so that $\Fil^h \calM^{\rm n}= \Fil^h \calM / \Fil^h \calM^{\rm m}$ is a submodule of $\calM ^{\rm n}$. Then it is easy to check that $(\calM^{\rm n}, \Fil^h \calM^{\rm n}, \varphi_h)$ is a object in $\Mod_{S, \tor}^{\varphi, h}$ and thus the sequence $ 0 \to \calM ^{\rm m } \to \calM \to \calM^{\rm n} \to 0$ is in the category $\Mod^{\varphi, h}_{S, \tor}$. To show that $\Fil^h \calM^{\rm n}= \Fil^h \calM / \Fil^h \calM^{\rm m}$,  consider $\calF: = \calM ^{\rm m}/ \Fil^ p S_1\calM^{\rm m}$.  Write $\tilde \Fil^h \calF : = (\Fil^h \calM \cap \calM ^{\rm m }) / \Fil ^pS_1$ and $\Fil^h \calF = \Fil^h S \calM^{\rm m}/\Fil^p S_1$.  Since $\Fil^h \calF= u ^{eh}\calF \subset \tilde \Fil^h \calF \subset \calF$ which is a finite free $\ku /u^{pe}$-module. There exists a basis $e_1 , \dots , e_d$ of $\calF$ so that $\tilde \Fil^h \calF$ is generated by $u ^{a_i} e_i$ with $0 \leq a_i \leq eh$. Suppose one of $a_i < eh$. Say $a_1< eh$. Let $\hat e_i \in \calM^{\rm m}$ be a basis which lift $e_i$. Then $u^{a_1} \hat e_1 \in \Fil ^h \calM \cap \calM ^{\rm m}$. 
So $\varphi _h (u ^{eh} \hat e _1) = \varphi_h (u^{eh -a_1 } u ^a_i \hat e_1)= \varphi(u^{eh -a_1}) \varphi_h (u ^{a_1} \hat e_1)\in I_+ \calM$. This contradicts to  that $\varphi_h ( u ^{eh}\hat e_i)\mod I_+$ is a basis $M^m\subset M= \calM / I_+$. So all $a_i =eh$ and we have $\Fil^h \calM^{\rm m} = \Fil^h S \cdot \calM ^{\rm m}= \Fil^h \calM \cap \calM ^{\rm m}$ as required.   
\end{proof}
\begin{corollary}\label{cor-sequence-canonical}The exact sequence \eqref{eqn-multi-nil-seq-Breuil} is canonical in the sense of the following: Suppose $\calM$ admits another exact sequence in $\Mod_{S, \tor}^{\varphi, h}$: 
\[0 \to \tilde \calM^{\rm m} \to \calM \to \tilde \calM^{\rm n} \to 0\] with $\tilde \calM^{\rm m }$ being multiplicative and $\tilde \calM ^{\rm n}$ being nilpotent. Then  $\tilde \calM^{\rm m} = \calM^{\rm m}$ and $\tilde \calM ^{\rm n} = \calM^{\rm n}$. 
\end{corollary}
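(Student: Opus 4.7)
The plan is to reduce the canonicality assertion to a single abstract vanishing property of the category $\Mod^{\varphi,h}_{S,\tor}$, namely that \emph{any morphism $f\colon \calA \to \calB$ with $\calA$ multiplicative and $\calB$ nilpotent is automatically zero}. Granting this, I would apply it to the two composites $\tilde{\calM}^{\rm m} \hookrightarrow \calM \twoheadrightarrow \calM^{\rm n}$ and $\calM^{\rm m} \hookrightarrow \calM \twoheadrightarrow \tilde{\calM}^{\rm n}$, each of which is a morphism from a multiplicative object to a nilpotent one. The first vanishing forces $\tilde{\calM}^{\rm m} \subseteq \ker(\calM \twoheadrightarrow \calM^{\rm n}) = \calM^{\rm m}$, and the second gives the reverse containment $\calM^{\rm m} \subseteq \tilde{\calM}^{\rm m}$. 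The equality $\tilde{\calM}^{\rm n} = \calM^{\rm n}$ then follows by passing to quotients.

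To verify the key vanishing property, I would exploit the \emph{functoriality} of the exact sequence constructed in Lemma \ref{lem-uni-etale-Breuil}: the morphism $f$ fits into a commutative diagram of the canonical multiplicative--nilpotent sequences, so that $f(\calA^{\rm m}) \subseteq \calB^{\rm m}$. It therefore suffices to prove $\calA^{\rm m} = \calA$ and $\calB^{\rm m} = 0$, for then $f(\calA) = f(\calA^{\rm m}) \subseteq \calB^{\rm m} = 0$.

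The identity $\calB^{\rm m} = 0$ is the easier direction: if $\calB$ is nilpotent then so is Frobenius on the finite $W(k)/p^n$-module $B := \calB / I_+ \calB$, whence the Fitting multiplicative summand $B^{\rm m}$ vanishes, and by the very construction $\calB^{\rm m} = S \cdot [B^{\rm m}] = 0$. The identity $\calA^{\rm m} = \calA$ requires a little more care: surjectivity of $1 \otimes \varphi$ on $\calA$ descends to surjectivity of Frobenius on the finite length $W(k)$-module $A := \calA / I_+ \calA$, and a surjective endomorphism of such a module is bijective, so $A^{\rm m} = A$; then $\calA^{\rm n} := \calA / \calA^{\rm m}$ satisfies $\calA^{\rm n} / I_+ \calA^{\rm n} = A^{\rm n} = 0$, and Nakayama's lemma applied to the finitely generated $S_n$-module $\calA^{\rm n}$ yields $\calA^{\rm n} = 0$.

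The main subtlety I anticipate lies precisely in this last Nakayama step, since $S$ itself is not Noetherian. I would circumvent it by working modulo $p^n$ with the coherent ring $S_n$ (as in the proof of Lemma \ref{lem-uni-etale-Breuil}, via \cite[Lemma 7.15]{LL20}) and observing that $I_+ \subseteq (u) \subseteq (p,u)$ sits inside the unique maximal ideal of the local ring $S_n$, so that Nakayama applies to finitely generated $S_n$-modules. With the abstract vanishing established, the canonicality of the exact sequence \eqref{eqn-multi-nil-seq-Breuil} becomes a formal two-line consequence as outlined in the first paragraph.
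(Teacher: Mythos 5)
Your proposal is correct and takes a route that is organized quite differently from the paper's, though it rests on the same technical ingredients (reduction mod $I_+$, the Fitting decomposition, the Teichm\"uller-type section $[\cdot]$, and Nakayama). The paper's proof directly compares the given decomposition to the canonical one: it reduces the alternative sequence mod $I_+$, shows $\tilde\calM^{\rm m}/I_+ = M^{\rm m}$ by pitting multiplicativity of $\tilde\calM^{\rm m}$ against nilpotency of $\tilde\calM^{\rm n}$, deduces $[M^{\rm m}]\subset\tilde\calM^{\rm m}$ (using that $\tilde\calM^{\rm m}$ is closed, so $[\cdot]$ lands in it), hence $\calM^{\rm m}\subset\tilde\calM^{\rm m}$, and then concludes equality by Nakayama. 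You instead isolate the cleaner abstract statement that $\Hom(\text{multiplicative},\text{nilpotent})=0$ in $\Mod^{\varphi,h}_{S,\tor}$, prove it via functoriality of the $(-)^{\rm m}$ construction together with the facts $\calA^{\rm m}=\calA$ for $\calA$ multiplicative and $\calB^{\rm m}=0$ for $\calB$ nilpotent, and then apply it symmetrically in both directions to get both containments at once. This buys a more conceptual reusable lemma and avoids an explicit appeal to the closedness of $\tilde\calM^{\rm m}$, at the cost of invoking (and implicitly burdening) the functoriality assertion of Lemma~\ref{lem-uni-etale-Breuil}.

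Two small points of imprecision that do not affect the argument. First, the inclusion $I_+\subseteq (u)$ you use to justify Nakayama is not correct: $I_+$ consists of elements of $S$ with vanishing constant term in $K_0[\![u]\!]$, which is strictly larger than $uS$ (e.g.~$\gamma_p(E)-\gamma_p(E)(0)$ need not be $u$ times an element of $S$). What you actually need, and what is true, is that $I_+$ lies in the maximal ideal $(p,I_+)$ of the local ring $S_n$; since $\calA^{\rm n}$ is finitely generated over $S_n$ and $\calA^{\rm n}=I_+\calA^{\rm n}$, Nakayama applies. Second, ``a surjective endomorphism of a finite-length module is bijective'' is being applied to the $\varphi$-semilinear Frobenius, not a genuine endomorphism; it works after linearization because $1\otimes\varphi\colon\varphi^*A\to A$ is a surjection between $W(k)$-modules of equal length (using perfectness of $k$). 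Finally, it would be worth spelling out at the end that the equality $\tilde\calM^{\rm m}=\calM^{\rm m}$ of $S$-submodules forces equality of the filtered $\varphi_h$-objects, since for any short exact sequence in $\Mod^{\varphi,h}_{S,\tor}$ the filtration on the sub is the intersection with $\Fil^h\calM$, and the paper's Lemma~\ref{lem-uni-etale-Breuil} shows $\Fil^h\calM^{\rm m}=\calM^{\rm m}\cap\Fil^h\calM$; the paper makes this explicit and you should too.
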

\begin{proof}
Since $\tilde \calM^{\rm n}$ is successive extension of finite free $S_1$-modules, $\Tor^1_{S} (\calM^{\rm n}, S/I_+) = 0$. Hence the sequence $0 \to \tilde \calM^{\rm m}/I_+ \to \calM/I_+ \to \tilde \calM^{\rm n}/I_+  \to 0 $ is exact. Since $\tilde \calM^{\rm m}$ is multiplicative, $\tilde \calM^{\rm m}/I_+ \subset M^{\rm m}$ and thus $\tilde \calM^{\rm m}/ I_+ = M^{\rm m}$ otherwise $\varphi$ on $\tilde \calM^{\rm n}/I_+$ can not be nilpotent. So $[M^{\rm m }]\subset \tilde \calM^{\rm m}$. Hence $\calM^{\rm m}\subset \tilde \calM^{\rm m}$ as $\calM^{\rm m}$ is constructed as $S$-submodule of $\calM$ generated by $[M^{\rm m}]$. Since $\calM^{\rm m }/I_+= M^{\rm m}$, we have $\tilde \calM^{\rm m}= \calM^{\rm m}$ by Nakayama's lemma. By the definition of exact sequence in the category $\Mod_{S, \tor}^{\varphi, h}$, we see that \[\Fil^h \tilde \calM^{\rm m} = \tilde \calM ^{\rm m} \cap \Fil^h \calM = \calM^{\rm m} \cap \Fil^h \calM = \Fil ^h \calM^{\rm m},\] 
where the last equality was proved by the end of the proof of \Cref{lem-uni-etale-Breuil}.
Therefore we have the desired equality
$(\tilde \calM^{\rm m}, \Fil^h \tilde \calM^{\rm m}, \varphi_h )=( \calM^{\rm m}, \Fil^h \calM^{\rm m}, \varphi_h )$ as sub-object of $\mathcal{M}$.
\end{proof}

\subsection{Fontaine--Laffaille modules} Fix $h = p -1$ for this subsection. Let us review Fontaine--Laffaille theory from \cite{Fontaine--Laffaille}. 
Let $\FL_{W(k)}$ denote the category whose objects are finite $W(k)$-modules $M$ together with decreasing filtration $\{\Fil ^i M\}_{i \geq 0}$ and Frobenius semi-linear map $\varphi_i: \Fil^i M \to M$ satisfying: 
\begin{enumerate}
    \item $\Fil^{i+1} M$ is a
direct summand of $\Fil ^i M$ for all $i\in \mathbb N$, and $\Fil^0 M = M$, $\Fil^{h+1} M =
\{0 \}$;\footnote{It turns out that this condition follows from the next two conditions, see \cite[Proposition 1.4.1 (ii)]{Wintenberger}.} 
\item $\varphi_i |_{\Fil^{i +1} M} = p \cdot \varphi_{i+1}$; 
\item $\sum_{i \geq 0} \varphi_i (\Fil ^i M) = M$. 
\end{enumerate}
Morphisms in $\FL_{W(k)}$ are $W(k)$-linear homomorphisms compatible with filtration and $\varphi_i$. 
It turns out that the category $\FL_{W(k)}$ is abelian, see \cite[Proposition 1.8]{Fontaine--Laffaille});
and any morphism is automatically strict with respect to the filtrations, see \cite[1.10 (b)]{Fontaine--Laffaille}.
A sequence $0 \to  M_1 \to  M \to  M_2 \to  0$ in $\FL_{W(k)}$
is \emph{short exact} if the underlying $W(k)$-module is 
exact\footnote{Note that by the above result of Fontaine--Laffaille, the sequence of filtrations are forced to be exact as well.}.
In this case, we call $M_2$ a quotient of $M$. 
An object $M\in \FL_{W(k)}$ is called  \emph{multiplicative} if $\Fil ^1M= \{0\}$ and   $M$ is called  \emph{nilpotent} if does not have  multiplicative subobject. 
Just as in previous sections, we have the following: 
\begin{lemma}
Let $(M, \Fil^{\bullet} M, \varphi_{\bullet}) \in \FL_{W(k)}$. 
\begin{enumerate}
\item It is multiplicative (resp.~nilpotent) if and only if $\varphi_0$ is bijective (resp.~nilpotent).
\item There is a canonical multiplicative-nilpotent exact sequence in $\FL_{W(k), \tor}$: 
\begin{equation}\label{eqn-seq-m-n-FM}
\xymatrix{0 \ar[r] & M ^{\rm m}\ar[r]  & M \ar[r] & M^{\rm n} \ar[r] & 0 }    
\end{equation}
so that $M^{\rm m}$ is the maximal multiplicative subobject in $M$ and $M ^{\rm n}$ is nilpotent.
\end{enumerate}
\end{lemma}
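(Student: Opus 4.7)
The plan is to mirror the arguments of \Cref{lem-multi} and \Cref{lem-uni-etale-Breuil}: the underlying Frobenius $\varphi_0 \colon M \to M$ is a $\sigma$-semi-linear endomorphism of a finite-length $W(k)$-module, so the usual Fitting decomposition canonically splits $M$ as a $W(k)$-module with $\varphi_0$-action, and the only real work is to promote this splitting to one in $\FL_{W(k)}$.

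I would first dispense with the multiplicative half of part (1). The forward direction is formal: if $\Fil^1 M = 0$, then axiom (3) becomes $\varphi_0(M) = M$, and a surjective $\sigma$-semi-linear endomorphism of a finite length $W(k)$-module is automatically bijective. For the converse, assume $\varphi_0$ is bijective. Since $\varphi_0|_{\Fil^1 M} = p \cdot \varphi_1$ lands in $pM$, and $\varphi_0$ bijective together with $\sigma(p) = p$ gives $\varphi_0^{-1}(pM) = pM$, we conclude $\Fil^1 M \subset pM$. Axiom (1) supplies a $W(k)$-decomposition $M = \Fil^1 M \oplus Q$; writing any $x \in \Fil^1 M$ as $p m$ with $m = m_1 + m_2 \in \Fil^1 M \oplus Q$, we see $pm_2 = x - pm_1 \in \Fil^1 M \cap Q = 0$, whence $x \in p\Fil^1 M$. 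Nakayama's lemma then forces $\Fil^1 M = 0$.

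Turning to part (2), I would set $M^{\rm m} \coloneqq \bigcap_{n \geq 0} \varphi_0^n(M)$; by finite length this stabilizes to $\varphi_0^N(M)$ for $N$ large, yielding a $\varphi_0$-stable $W(k)$-submodule on which $\varphi_0$ is bijective. Equipping $M^{\rm m}$ with the \emph{trivial} filtration (so $\Fil^i M^{\rm m} = 0$ for $i \geq 1$) makes it an object of $\FL_{W(k)}$: axiom (3) is exactly $\varphi_0(M^{\rm m}) = M^{\rm m}$, and the inclusion $M^{\rm m} \hookrightarrow M$ is trivially a morphism of FL-modules. Since $\FL_{W(k)}$ is abelian, the quotient $M^{\rm n} \coloneqq M/M^{\rm m}$ acquires a canonical FL-structure (quotient filtration, induced divided Frobenii), and its underlying $\varphi_0$-module is identified with the nilpotent Fitting summand of $M$. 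Maximality is immediate: any multiplicative sub-FL-object $N \subset M$ satisfies $N = \varphi_0^n(N) \subset \varphi_0^n(M)$ for all $n$ by the already-proven half of part (1), hence $N \subset M^{\rm m}$; functoriality follows from the fact that FL-morphisms commute with $\varphi_0$.

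Finally I would close the nilpotent half of part (1) using part (2). If $\varphi_0$ is nilpotent on $M$, then on any multiplicative subobject $N$ the map $\varphi_0|_N$ is simultaneously bijective and nilpotent, forcing $N = 0$. Conversely, if $M$ admits no nonzero multiplicative subobject, then $M^{\rm m} = 0$, which means $\bigcap_n \varphi_0^n(M) = 0$ and hence $\varphi_0^N(M) = 0$ for some $N$ by finite length. The main obstacle I anticipate is the Nakayama iteration at the end of the second paragraph: one needs to use the direct-summand axiom (1) in an essential way to convert mere $p$-divisibility in $M$ into $p$-divisibility inside $\Fil^1 M$; everything else then drops out of the Fitting decomposition together with the abelianness of $\FL_{W(k)}$.
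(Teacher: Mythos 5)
Your proposal is correct and follows the same core strategy as the paper: identify the multiplicative part via the Fitting decomposition of $(M,\varphi_0)$, give it the trivial filtration, and let the abelianness of $\FL_{W(k)}$ supply the quotient FL-structure on $M^{\rm n}$; the nilpotent half of (1) is then read off from the sequence exactly as the paper does it.

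The one genuinely different step is the converse of the multiplicative characterization in (1). The paper introduces an auxiliary object $M'$ (same underlying module and $\varphi_0$, trivial filtration) and appeals to the Fontaine--Laffaille fact that every morphism in $\FL_{W(k)}$ is strict with respect to filtrations, so $\Fil^1 M = \Fil^1 M' = 0$. You instead argue directly: $\varphi_0|_{\Fil^1 M} = p\varphi_1$ lands in $pM$, semilinear bijectivity of $\varphi_0$ gives $\varphi_0^{-1}(pM) = pM$ hence $\Fil^1 M \subset pM$, and the direct-summand axiom upgrades this to $\Fil^1 M \subset p\Fil^1 M$, so Nakayama kills it. This is slightly more elementary (it avoids quoting the strictness result) and is the only place where the two arguments truly part ways. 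One could shave the Nakayama step even further: $M = \Fil^1 M \oplus Q$ forces $pM = p\Fil^1 M \oplus pQ$, so $\Fil^1 M \subset pM$ already gives $\Fil^1 M = pM \cap \Fil^1 M = p\Fil^1 M$, but your version is equivalent. You are also more explicit than the paper about maximality of $M^{\rm m}$; this is worth spelling out as you did, though it follows formally from the already-proved half of (1) either way.
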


\begin{proof}
(1): the condition (3) of being an object in $\FL_{W(k)}$ in the case of a multiplicative object translates
to $\varphi_0$ being surjective, which is equivalent to being bijective due to length consideration.
Conversely, if $\varphi_0$ is bijective, we let $M' \in \FL_{W(k)}$ be defined as:
the underlying module is $M$ itself, with $\Fil^0 M' = M \supset \Fil^1 M' = 0$ and $\varphi_0$.
Then there is an evident morphism $M' \to M$ in $\FL_{W(k)}$, which is necessarily strict with respect to filtrations 
(see \cite[1.10 (b)]{Fontaine--Laffaille}),
hence $\Fil^1 M = \Fil^1 M' = 0$. The proof for nilpotent object is in end of the proof of (2). 

(2): By Fitting lemma, we have $M = M^{\rm m} \oplus  M^{\rm n}$, only as $\varphi$-modules,
so that $\varphi_0$ on $M ^{\rm m}$ is bijective and $\varphi_0$ on $M ^{\rm n}$ is nilpotent. 
Let $\Fil^1 M^{\rm m} = 0$, we get the desired sequence. The fact that the \emph{quotient} $M^{\rm n}$ with the induced filtration is nilpotent
follows from (1). By the exact sequence \eqref{eqn-seq-m-n-FM}, $M$ is nilpotent if and only if $M = M^{\rm n}$, whose $\varphi_0$ is nilpotent. 
\end{proof}

For any object $M$ in $\FL_{W(k)}$, we can attach a Breuil module $\u \calM_{\FL} (M) \in \Mod_{S \tor}^{\varphi, h, \nabla}$ in the following ways: 
Let $\calM = \u \calM_{\FL} (M) : = S \otimes_{W(k)} M$; $\nabla_{\calM} = \nabla_S \otimes \id_{M}$; 
$\Fil^h\calM : = \sum\limits_{i = 0}^h \Fil^i S \otimes_{W(k)} \Fil^{h-i} M$. 
By definition $\Fil^h \calM$ is a submodule of $\calM$.  
We define $\varphi_{h , \calM} : \Fil^h \calM \to \calM$ by $\varphi_{h , \calM} \coloneqq
\sum\limits_{i = 0}^h \big(\varphi_{i} \mid_{\Fil^i S}\big) \otimes \big(\varphi_{h-i} \mid_{\Fil^{h -i}M}\big)$,
this is well-defined because $\Fil^{i +1} M$ is a direct summand of $\Fil ^i M$.
It is standard to check that $\u\calM_{\FL} (M)$ is a Breuil module in $\Mod_{S, \tor}^{\varphi, h,  \nabla}$.  

\begin{proposition}\label{prop-same-uni} \begin{enumerate}
    \item Let $M \in \FL_{W(k), \tor}$. Then $\u\calM _{\FL} (\eqref{eqn-seq-m-n-FM})$ is isomorphic to \eqref{eqn-multi-nil-seq-Breuil} with $\calM = \u \calM (M)$. In particular, $\u\calM (M^{\rm m}) = \u \calM (M )^{\rm m}$. 
\item Given an $M \in \FL_{W(k), \tor}$ and suppose that there exists a classical Kisin module $\m \in \Mod^{\varphi, h, {\rm c}}_{\s , \tor}$ so that $\u \calM (\m)\simeq \u \calM_{\FL} (M)$ in the category of $\Mod_{S, \tor}^{\varphi, h}$. Then we have isomorphism 
$\u \calM_{\FL} (\eqref{eqn-seq-m-n-FM})\simeq \u \calM (\eqref{eqn-multi-nil-seq})$. In particular, $\u \calM (\m^{\rm n}) =\u \calM (\m)^{\rm n } = \u\calM_{\FL} (M^{\rm n })$. 
\end{enumerate}

\end{proposition}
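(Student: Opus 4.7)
The plan is to apply the uniqueness of the multiplicative-nilpotent decomposition (Corollary~\ref{cor-sequence-canonical}) twice, once on the Breuil side through the functor $\u\calM_{\FL}$, and once through the functor $\u\calM$, then transport the identifications across the given isomorphism $\u\calM(\m)\simeq \u\calM_{\FL}(M)$. The whole argument reduces to checking that both functors transport exact sequences with multiplicative sub/nilpotent quotient to sequences of the same shape in $\Mod_{S,\tor}^{\varphi,h}$.

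For part (1), I would first verify that $\u\calM_{\FL}$ is exact on \eqref{eqn-seq-m-n-FM}: since $S$ is $W(k)$-flat, the underlying $S$-module sequence stays exact, and since $\Fil^1 M^{\rm m}=0$, the filtration decomposes as $\Fil^{h-i}M=\Fil^{h-i}M^{\rm m}\oplus \Fil^{h-i}M^{\rm n}$ (trivially when $i<h$ and via the Fitting decomposition when $i=h$), so the sequence of $\Fil^h$'s is also short exact. Next, a direct computation gives $\varphi(1\otimes m)=c_1^{-h}\varphi_h(E^h\otimes m)=c_1^{-h}\cdot c_1^h\otimes \varphi_0(m)=1\otimes \varphi_0(m)$, so the semi-linear Frobenius on $\u\calM_{\FL}(N)$ is simply $\varphi_S\otimes \varphi_0$. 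Hence $\u\calM_{\FL}(M^{\rm m})$ is multiplicative (because $\varphi_0$ is bijective on $M^{\rm m}$, so $1\otimes \varphi$ is surjective) and $\u\calM_{\FL}(M^{\rm n})$ is nilpotent (because $\varphi_0^N=0$ on $M^{\rm n}$ for $N$ large, forcing $\varphi^N=0$). Invoking Corollary~\ref{cor-sequence-canonical} then identifies this exact sequence with \eqref{eqn-multi-nil-seq-Breuil} for $\calM=\u\calM_{\FL}(M)$.

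For part (2), I run the exact same argument on the Kisin side, using the remark already recorded in the excerpt that $\u\calM$ preserves the properties of being multiplicative or nilpotent. That is, I check that applying $\u\calM$ to \eqref{eqn-multi-nil-seq} yields a short exact sequence in $\Mod^{\varphi,h}_{S,\tor}$ with multiplicative first term and nilpotent quotient; the underlying $S$-module exactness follows from flatness of $\varphi\colon \s\to S$, and the $\Fil^h$-level exactness uses that $\Fil^h\u\calM(\m^{\rm m})=\Fil^h S\cdot \u\calM(\m^{\rm m})$ when $\m^{\rm m}$ is multiplicative, so a lift of $y\in\Fil^h\u\calM(\m^{\rm n})$ can be corrected by a genuine element of $\u\calM(\m^{\rm m})$ without leaving $\Fil^h$. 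Corollary~\ref{cor-sequence-canonical} then gives $\u\calM(\m)^{\rm m}=\u\calM(\m^{\rm m})$ and $\u\calM(\m)^{\rm n}=\u\calM(\m^{\rm n})$. Finally, Corollary~\ref{cor-sequence-canonical} applied to the two isomorphic Breuil modules $\u\calM(\m)\simeq \u\calM_{\FL}(M)$ identifies their canonical multiplicative-nilpotent sequences, and combining with part (1) produces the desired isomorphism of short exact sequences.

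The main obstacle is the $\Fil^h$-level exactness in both parts. In part~(1) the key observation $\Fil^{h-i}M=\Fil^{h-i}M^{\rm m}\oplus \Fil^{h-i}M^{\rm n}$ is the technical input that makes the additive formula $\Fil^h\u\calM_{\FL}(\cdot)=\sum_i \Fil^i S\otimes_{W(k)}\Fil^{h-i}(\cdot)$ behave well with respect to short exact sequences, and this is precisely where $\Fil^1 M^{\rm m}=0$ is used. In part~(2), the analogous verification on the Kisin side is more delicate because $\Fil^h \u\calM(\m)$ is only implicitly defined through the condition $(1\otimes \varphi_\m)(x)\in \Fil^h S\otimes_\s \m$; here one must use the characterization of multiplicativity via $\Fil^h\u\calM(\m^{\rm m})=\Fil^h S\cdot \u\calM(\m^{\rm m})$ to lift elements, and this is where Lemma~\ref{lem-construct-Mm} (which gives the canonical section $[\cdot]$) plays an essential role.
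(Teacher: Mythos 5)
Your overall strategy — show that $\u\calM_{\FL}$ and $\u\calM$ preserve the multiplicative/nilpotent dichotomy, carry \eqref{eqn-seq-m-n-FM} and \eqref{eqn-multi-nil-seq} to short exact sequences in $\Mod^{\varphi,h}_{S,\tor}$, and then invoke Corollary~\ref{cor-sequence-canonical} — is the same as the paper's. The computation $\varphi = \varphi_S\otimes\varphi_0$ on $\u\calM_{\FL}(M)$ and the preservation of multiplicativity/nilpotency are correct.

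The gap is in the $\Fil^h$-exactness in part~(1). Your claim that $\Fil^{h-i}M = \Fil^{h-i}M^{\rm m}\oplus\Fil^{h-i}M^{\rm n}$ holds ``trivially when $i<h$'' would, under the Fitting decomposition $M = M^{\rm m}\oplus M^{\rm n}$, force $\Fil^jM\subset M^{\rm n}$ for all $j\ge1$. But the Fitting splitting is only a splitting of $\varphi$-modules; strictness in $\FL_{W(k)}$ gives $\Fil^jM\cap M^{\rm m}=0$, which is strictly weaker than $\Fil^jM\subset M^{\rm n}$. A concrete counterexample (with $p=2$, $h=1$): take $M=(W/4)^2$, $\Fil^1M=\langle e_2\rangle$, $\varphi_0(e_1)=e_1+2e_2$, $\varphi_1(e_2)=e_1+e_2$; then $M^{\rm m}=\langle e_1+2e_2\rangle$, $M^{\rm n}=\langle e_2+2e_1\rangle$, and $e_2=2(e_1+2e_2)+(e_2+2e_1)\notin M^{\rm n}$. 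So the displayed decomposition can fail, and the needed identity $\Fil^h\u\calM_{\FL}(M)\cap(S\otimes M^{\rm m}) = \Fil^hS\otimes M^{\rm m}$ does not follow from the reasoning you give (it concerns the \emph{sum} $\sum_i\Fil^iS\otimes\Fil^{h-i}M$, and sums of short exact subsequences need not stay exact). The claim is nonetheless true, but it needs a real argument: since $\Fil^jM\xrightarrow{\sim}\Fil^jM^{\rm n}$ for $j\ge1$, one can lift an adapted basis $\{g_\gamma\}$ of $M^{\rm n}$ to $\tilde g_\gamma\in\Fil^{k_\gamma}M$ \emph{with the same annihilators} and combine with an adapted basis of $M^{\rm m}$ to get an adapted basis for $M$; computing $\Fil^h\u\calM_{\FL}(M)=\bigoplus_\alpha\Fil^{h-j_\alpha}S\otimes\langle e_\alpha\rangle$ in this basis, any element whose $S\otimes M^{\rm n}$-component vanishes has $\tilde g_\gamma$-coordinates in $p^{\ell_\gamma}S$, hence the $\tilde g_\gamma$-terms vanish and the element lies in $\Fil^hS\otimes M^{\rm m}$. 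The analogous $\Fil^h$-exactness of $\u\calM$ on \eqref{eqn-multi-nil-seq} in part~(2) is likewise asserted rather than proved and is a genuinely nontrivial verification.
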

\begin{proof}It is easy to check that if $M\in \FL_{W(k), \tor}$ (resp.~$\m \in \Mod_{\s, \tor}^{\varphi , h , {\rm c}}$) is multiplicative or nilpotent then so is $\u \calM_{\FL} (M)$ (resp.~$\u \calM (\m)$). Then the Proposition follows Corollary \ref{cor-sequence-canonical}.  
\end{proof}

For later use, let us prove the following technical lemma which says that one can test an object in $\FL_{W(k)}$ after 
looking at its ``Breuil's counterpart''. This is well-known to experts.

\begin{lemma}
\label{FL module equivalent to Breuil module}
Let $(M, \Fil^{\bullet} M, \varphi_{\bullet})$ be a filtered module with divided Frobenius,
namely only assuming the condition (2) in the definition of $\FL_{W(k)}$ is satisfied.
Let $\calM = \u \calM_{\FL} (M) : = S \otimes_{W(k)} M$ and
$\Fil^h\calM : = \sum\limits_{i = 0}^h \Fil^i S \otimes_{W(k)} \Fil^{h-i} M$.
Suppose there is a semi-linear map $\varphi_h \colon \Fil^h \calM \to \calM$
satisfying 
\[\varphi_{h} =\sum\limits_{i = 0}^h \big(\varphi_{i} \mid_{\Fil^i S}\big) \otimes \big(\varphi_{h-i} \mid_{\Fil^{h -i}M}\big).\]
Then $(M, \Fil^{\bullet} M, \varphi_{\bullet})$ is an object in $\FL_{W(k)}$ if and only if
$\varphi_h(\Fil^h \calM)$ generates $\calM$ as an $S$-module.
\end{lemma}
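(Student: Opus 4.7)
The plan is to reduce the asserted equivalence to verifying condition (3) in the definition of $\FL_{W(k)}$, namely $\sum_{j=0}^{h}\varphi_j(\Fil^j M)=M$. Condition (2) is assumed by hypothesis, and by the cited result of Wintenberger, condition (1) (the direct-summand property and $\Fil^{h+1}M=0$) is automatic once (2) and (3) are in hand. So everything comes down to proving: $\sum_j \varphi_j(\Fil^j M)=M$ if and only if $\varphi_h(\Fil^h\calM)$ generates $\calM$ over $S$.

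For the ``only if'' direction, assume $\sum_j\varphi_j(\Fil^j M)=M$. For each $0\le j\le h$ and each $m\in\Fil^j M$, the element $E^{h-j}\otimes m$ lies in $\Fil^{h-j}S\otimes_{W(k)}\Fil^j M\subset\Fil^h\calM$. Applying the given formula gives
\[
\varphi_h(E^{h-j}\otimes m)=\varphi_{h-j}(E^{h-j})\cdot\varphi_j(m)=c_1^{h-j}\cdot\varphi_j(m),
\]
using $\varphi_{h-j}(E^{h-j})=\varphi(E)^{h-j}/p^{h-j}=c_1^{h-j}$. Since $c_1\in S^{\times}$, the $S$-submodule of $\calM$ generated by $\varphi_h(\Fil^h\calM)$ contains $\varphi_j(\Fil^j M)$ for every $j$, hence contains $\sum_j\varphi_j(\Fil^j M)=M$, and therefore equals $S\cdot M=\calM$.

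For the ``if'' direction, I would reduce modulo the ideal $I_+ = S\cap uK_0[\![u]\!]$. Recall $S/I_+\cong W(k)$ and $\calM/I_+\calM\cong M$ (the latter because $\calM=S\otimes_{W(k)}M$). The hypothesis that $\varphi_h(\Fil^h\calM)$ generates $\calM$ over $S$ implies, after reduction mod $I_+$, that the image of $\varphi_h(\Fil^h\calM)$ in $M$ generates $M$ as a $W(k)$-module. On the other hand, any element of $\Fil^h\calM$ is a finite sum of pure tensors $s\otimes m$ with $s\in\Fil^i S$, $m\in\Fil^{h-i}M$; by the formula defining $\varphi_h$, the image in $M$ of $\varphi_h(s\otimes m)=\varphi_i(s)\cdot\varphi_{h-i}(m)$ equals $\varphi_i(s)(0)\cdot\varphi_{h-i}(m)$, which lies in the $W(k)$-submodule $\sum_j\varphi_j(\Fil^j M)$. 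So this submodule equals $M$, giving condition (3).

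The only conceptual input is the Wintenberger automaticity statement for condition (1), which is already quoted in the text; beyond that, both directions are formal manipulations with the definitions. The main item to watch is that the constants $c_1^{h-j}\in S^\times$ and $\varphi_i(s)(0)\in W(k)$ are tracked correctly through the identifications $\calM=S\otimes_{W(k)}M$ and $\calM/I_+\calM=M$, but no difficulty is anticipated there.
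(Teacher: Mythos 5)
Your proof is correct, and both directions are handled. The ``only if'' direction fills in the calculation that the paper relegates to ``the standard direction,'' and it is fine. For the ``if'' direction your route is the same in spirit as the paper's but slightly more streamlined: the paper reduces modulo $(p,I_+)$, shows the image of $\overline{\varphi_h}$ in $M/p$ is $\sum_i\overline{\varphi_i}(\Fil^i M)$, and then invokes $p$-adic completeness of $M$ to lift surjectivity from $M/p$ to $M$; you instead reduce only modulo $I_+$, observe that the image of $\varphi_h(\Fil^h\calM)$ in $\calM/I_+\calM\cong M$ lands inside the $W(k)$-submodule $\sum_j\varphi_j(\Fil^j M)$ (using that $\overline{\varphi_i(s)}\in W(k)=S/I_+$ and that each $\varphi_j(\Fil^j M)$ is a $W(k)$-submodule because $\varphi$ is bijective on $W(k)$), and then conclude directly from the fact that this image $W(k)$-generates $M$. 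This bypasses the Nakayama-type step and needs only a containment rather than the equality of images that the paper asserts. The remaining conceptual input, that condition (1) in the definition of $\FL_{W(k)}$ is automatic from (2) and (3), is the same appeal to Wintenberger that the paper implicitly makes, so no new issue arises there.
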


\begin{proof}
``Only if'' part follows from the standard direction of going from Fontaine--Laffaille modules to Breuil modules
as discussed above, below we prove the ``if'' part, which is the only part that will be used later.
To that end, we simply observe that $\calM/(p,I_+) \cdot \calM \cong M/p$.
One checks that the induced map $\overline{\varphi_h} \colon \Fil^h\calM \to M/p$ has image given by
the image of $\sum_{i=0}^{h} \overline{\varphi_i} \colon \bigoplus_{i=0}^{h} \Fil^i M \to M/p$.
Our condition now implies the reduction map is surjective. Since $M$ is $p$-adically complete,
it follows that the map $\sum_{i=0}^{h} \varphi_i \colon \bigoplus_{i=0}^{h} \Fil^i M \to M$
before mod $p$ is also surjective, which is exactly what we need to show.
\end{proof}

\subsection{Relations to Galois representations} Fix $\pi_n \in \overline K$ so that $\underline \pi : = (\pi _n) \in \O_{\C} ^\flat$ and $\pi_0= \pi$;  $K _\infty : = \bigcup_{n \geq 0} K (\pi _n)$ and $G_\infty : = \Gal (\Kbar/ K_\infty)$. We embed $\s \to A_{\inf}$ via $u \mapsto [\underline \pi]$. 
As discussed in \cite[\S 6.2]{LL20},  for a classical Kisin module $\m\in \Mod_{\s }^{\varphi , h}$, we can associate Galois representation of $G_\infty $ via 
$T_\s (\m) = (\m\otimes_\s W(\O_{\C}^\flat) )^{\varphi=1}$ and $T^h_\s (\m)=  (\Fil^h \varphi ^* \m \otimes A_{\inf})^{\varphi_h =1}$ where $\Fil^h \varphi^* \m : = \{ x \in \varphi^* \m | (1\otimes \varphi) (x) \in E^h \m \}$ and $\varphi_h : \Fil^h \varphi ^* \m \to \varphi^* \m$ is given by $\varphi_h (x) = \frac{(1\otimes \varphi)(x)}{\varphi (a_0^{-1}E) ^h}.$
Please consult \cite[\S 6.2]{LL20} for more details of $T^h _\s $ and $T_\s$, for example,  $T_\s^h (\m) = T _\s (\m) (h)$ and both $T_\s$ and $T^h _\s$ are exact. 

Note that if $\m \otimes _\s A_{\inf}$ has an $A_{\inf}$-semi-linear $G_K$-action which extends the natural $G_{\infty}$-action and commutes with $\varphi$, then $T_\s (\m)$ is a  $G_K$-representations.  
In particular, this is the case when $\m = \rH^i_\Prism (\cX/ \s_n)$ modulo $u^\infty$-torsion. 

Now given a Breuil module  $\calM\in \Mod_{S, \tor}^{\varphi, h, \nabla}$, 
then as explained around \cite[Eqn (6.19)]{LL20},  we 
define $\Fil^h (\calM \otimes _S A_{\cris} ): = \Fil^h \calM \otimes _S A_{\cris}$ then $\varphi_h $ extends to $\calM \otimes_ S A_{\cris}$  and  define a $G_K$-action on $\calM \otimes_S A_{\cris}$:
for any $\sigma \in
G_K$, 
any  $x \otimes a \in A_{\cris}\otimes_S \calM$,
define
\begin{equation} \label{eqn-G-action-by-nabla}
\sigma(x \otimes a)= \sum_{i=0}^\infty \nabla^i (x) \otimes  \gamma_i
\left ( \sigma ([\upi])- [\upi] \right ) \sigma (a) .
\end{equation}

The above $G_K$-action on $\calM \otimes_ S A_{\cris}$ extends the $G_\infty$-action, preserves filtration and commutes with $\varphi_h$. As in \cite[\S 6.3]{LL20}, we define $$T_S(\calM): = (\Fil^h (\calM \otimes_S A_{\cris}))^{\varphi _h= 1},$$ which is a $\Z_p [G_K]$-module. 

 
 Now given $\m \in \Mod_{\s, \tor}^{\varphi, h, {\rm c}}$ and  let $\frak c : = \prod\limits_{n =1}^\infty \varphi ^n ({\frac{E}{E(0)}})\in S^\times$.  
As explained in the proof of \cite[Prop. 6.12]{LL20}, the map $m \mapsto \frak c^h (1\otimes m)$ induces to natural map $\iota : T^h _\s (\m) \to T_S(\u \calM (\m))$. 

Suppose that $\m \otimes _\s A_{\inf}$ has $G_K$-action which extends $G_\infty$-action and commutes with $\varphi$,  and the natural map 
$\m \otimes_\s A_{\inf} \to \u \calM(\m) \otimes _S A_{\cris}$ is compatible with $G_K$-actions on both sides. Then as explained in  \cite[Remark 6.14]{LL20}, the natural map $T_\s (\m)(h) \simeq T_\s^h (\m) \overset{\iota}{\longrightarrow} T_S (\u \calM(\m))$ is compatible with $G_K$-actions on the both sides. In particular, this will  happen (see the proof of Theorem \ref{Thm-FM-p-1}) when $\m = \rH^i_ \qsyn (\cX, \Prism_n)$ is an object in $ \Mod_{\s, \tor}^{\varphi, h, {\rm c}}$ and $\u\calM (\m)$ is subobject  of $\rH^i _{\cris} (\cX/ S_n )$ inside $\Mod_{S, \tor}^{\varphi, h , \nabla}$.

\begin{lemma}\label{lem-Galois-unipotent}If $\m\in \Mod_{\s, \tor}^{\varphi, h,  {\rm c}}$ is  nilpotent then the natural map  $\iota: T^h _\s(\m) \to T_S (\u \calM (\m))$ is an isomorphism. 
\end{lemma}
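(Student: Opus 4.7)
The plan is to establish the isomorphism by devissage to the $p$-torsion case, followed by an explicit inverse construction using topological nilpotency of Frobenius.

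First I would verify that the subcategory of nilpotent objects in $\Mod_{\s,\tor}^{\varphi,h,{\rm c}}$ is closed under subobjects and quotients: this follows from the canonical multiplicative-nilpotent sequence of \Cref{lem-multi} together with its uniqueness (the Breuil-module analogue of which is \Cref{cor-sequence-canonical}), since a subobject of a nilpotent module cannot acquire a nontrivial multiplicative direct summand and similarly for quotients. Combined with the exactness of $T^h_\s$ recalled from \cite[\S 6.2]{LL20}, together with a parallel short-exact-sequence argument for $T_S\circ \u\calM(-)$ restricted to nilpotent Kisin modules, the five lemma reduces the isomorphism to the case $p\m = 0$.

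Next, for $\m$ killed by $p$ and nilpotent, I would construct an explicit inverse $\kappa\colon T_S(\u\calM(\m)) \to T^h_\s(\m)$. Given $y \in T_S(\u\calM(\m)) \subset \Fil^h(\u\calM(\m)\otimes_S A_{\cris})$, the equation $\varphi_h(y) = y$ iterates to produce expressions involving $\varphi^n$ applied to the $\m$-component of $y$. Since $\varphi$ is topologically nilpotent on $\m$, the components of $y$ supported on the divided powers $\gamma_i(E)$ with $i \geq 1$ (the ``crystalline corrections'' absent from $A_{\inf}$) must vanish in the limit under $\varphi_h^n$-iteration. This yields a lift of $y$ to an element of $\Fil^h\varphi^*\m\otimes A_{\inf}$ fixed by $\varphi_h$, giving the desired section. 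The identity $\iota\circ\kappa = \id$ is built into the construction; the identity $\kappa\circ\iota = \id$ then follows because $\mathfrak{c}$ is a unit in $S$ and the natural map $A_{\inf}/p \to A_{\cris}/p$ is injective on the relevant $\varphi_h$-fixed pieces.

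The main obstacle will be making the convergent iteration rigorous in the boundary case $h=p-1$, where the bound $\varphi(\Fil^h S)\subset p^h S$ is tight and one loses the comfortable margin that allows easier estimates in the range $h < p-1$ treated in \cite[\S 6.3]{LL20}. One must carefully track how the nilpotency of $\varphi$ on $\m$ interacts with $\Fil^\bullet A_{\cris}$; crucially, nilpotency on $\m$ transfers to nilpotency of $\varphi$ on the Breuil module $\u\calM(\m)$ (as observed right before \Cref{lem-uni-etale-Breuil}), and this is precisely what makes the PD-correction terms $\gamma_i(E)\cdot(\text{stuff})$ converge to zero $p$-adically under iterated Frobenius, thereby forcing the limit to live in $A_{\inf}\otimes\varphi^*\m$ rather than merely in its $p$-adic completion inside $A_{\cris}\otimes\varphi^*\m$.
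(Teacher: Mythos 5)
Your plan matches the paper's proof in all essentials: reduce to $\mathfrak{m}$ killed by $p$ by exactness of $T^h_\s$ and $\u\calM$ together with left-exactness of $T_S$ (the paper does exactly this, though note the "five lemma" is not strictly applicable since $T_S\circ\u\calM$ is a priori only left exact — one runs a direct diagram chase or a length count instead), and then in the base case use topological nilpotency of $\varphi$ to force the divided-power ($\Fil^p A_{\cris}$) components of a $\varphi_h$-fixed element to vanish and to make an iterative lifting to $A_{\inf}$ converge. The paper carries out the base case with explicit matrix manipulations ($Y = A\varphi(A)\cdots\varphi^m(A)\varphi^{m+1}(Y) \to 0$ for injectivity, and an analogous recursive construction of $\tilde Y$ for surjectivity), which is precisely the concrete form of the "iteration using nilpotency" you describe.
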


\begin{proof} Write $\calM : = \u \calM (\m)$. Then $\calM$ is also nilpotent by Proposition \ref{prop-same-uni}. 
When $h \leq p-2$, $\iota$  is known to be isomorphism (without assuming  nilpotency of $\m$) by  \cite[Prop.6.12]{LL20}. So in the following, we assume $h = p-1$.   

Since $T^h_\s$ and $\u \calM$ exact and $T_S $ is left exact, 
we can assume that $\m$ is killed by $p$ so that $\m$ is finite free $\ku$-module with basis $e_1, \dots , e_d$. Write $\varphi (e_1 , \dots, e_d)= (e_1 , \dots, e_d)A$ with $AB = B A =a_0^{-h} u ^{eh}I_d$. Let $\tilde e_i : = 1 \otimes e_i $ be basis of $\varphi ^*\m$ and $S_1$-basis of $\calM$. Then $\Fil ^h \varphi^* \m$ is generated by $(\alpha_1 , \dots , \alpha_d) = (\tilde e_1 , \dots , \tilde e_d)B$ and $\Fil^h \calM$ is generated by $(\alpha_1 , \dots , \alpha _d)$ and $\Fil^p S_1 \calM$. Note that $\iota (\tilde e_1, \dots , \tilde e_d)= \frak c^h (\tilde e_1, \dots ,\tilde  e_d)$, and any $x \in (\Fil^h \varphi^*\m \otimes_\s A_{\inf})$ can be written as $x = (\alpha_1 ,\dots,  \alpha_d) X$ with $X\in (\O_\C ^\flat)^d$ and any $y \in \Fil^h \calM \otimes_S A_{\cris}$ can be written as $y = \frak c^h (\alpha _1 , \dots , \alpha_d) Y + \frak c^h (\tilde e_1 , \dots , \tilde e_d) Z$ with $Y \in (\O _\C^\flat/ u ^{ep})^d , Z \in (\Fil^pA_{\cris, 1}) ^d$. 
Then $\iota$ is the same as the following: 
\[\{X \in (\O_\C^\flat)^d | \varphi (X) = B X \}\longrightarrow \{(Y, Z)|Y \in (\O _\C^\flat/ u ^{ep})^d , Z \in (\Fil^pA_{\cris, 1}) ^d, \ \ \varphi (Y) + \varphi (A)\varphi _h (Z) = B Y +Z \}\]
by sending $X\mapsto (X, 0)$. We must show the above map is bijective. For injectivity, note that $X\in \ker (\iota)$ if and only if $B X \in (u^{pe}\O_\C^\flat )^d$. Then $ {a_0 ^{-h }} u ^{eh} X=AB X \in (u^{pe}\O_\C^\flat )^d $.  Hence $Y=a_0 u^{-e} X\in (\O_C^\flat)^d $. Note that $\varphi (X) = B X$ implies that $A\varphi (X)  = a_0^{-h}u^{eh} X $ and then  $Y  =  A \varphi (Y)$. 
So  $Y = A \varphi (A) \cdots \varphi^m (A) \varphi^{m +1} (Y)$. Since $\m$ is nilpotent, $A \varphi (A) \cdots \varphi^m (A)\to 0$ for $m \to \infty$, we see that $Y = 0$. This proves the injectivity of $\iota$.  

To prove the surjectivity of $\iota$, consider the equation $ \varphi (Y) + \varphi (A)\varphi _h (Z) - B Y =Z$. Note that  $A_{\cris, 1} = (\O_\C^\flat/ u ^{pe}) [\{z_i\}_{i \geq 1}]/ \{z_i ^p, i \geq 1\}$ 
with $z_i$ the image of $\gamma_{p ^i} (E) $ in $A_{\cris,1}$. Since $\varphi_h (z_i) = a_0^{p ^i}$ or $0$, the left side of equation is in $(\O_\C^\flat/u ^{pe})^d$, this forces the right side $Z= 0$ and we only have $\varphi (Y) = BY $ inside $ (\O_\C^\flat/u ^{pe})^d$.   So it suffices  to show   there exists $\tilde Y\in (\O_C^\flat) ^d $ so that $\varphi (\tilde Y) = B \tilde Y$ and $ B \tilde Y =  B Y $ inside $(\O _\C^\flat/ u ^{pe})^d$. To prove the existence of $\tilde Y$,  pick any lift $Y_0 \in (\O_\C^\flat)^d $ of $Y $. Then $\varphi (Y_0) = BY_0 + u ^{pe} W_0$. Since $u ^{pe} I_d = B A (a_0u ^e) ^h I _d$, we have $\varphi (Y_0)= B Y_1$ with $Y_1 = Y_0 + u^e A a^h_0W_0$. Then $\varphi (Y_1)= B Y_1 + u ^{pe}\varphi (A) W_1$ for some $W_1 \in (\O_\C^\flat)^d$. Continue construct $Y_n$ in this way, we have $\varphi (Y_{n}) = B Y_n + u ^{pe} A \varphi (A) \cdots \varphi ^n (A) W_n $ for some $W_n \in (\O_\C^\flat)^d $ and then $Y_{n+1} = Y_n + u^e A\varphi (A) \cdots \varphi^n (A) W_n$. Then $Y_n$ converges to $\tilde Y$ as  $A\varphi (A) \cdots \varphi^n (A)\to 0$. Since $\tilde Y = Y_0 + u ^e A \tilde W$ for some $\tilde W \in  (\O_C^\flat)^d$, we see that 
$B \tilde Y = B Y_0 = B Y $ inside $ (\O_\C^\flat / u ^{pe})^d$. 
\end{proof}
For $h = p-1$ the following example show that the above lemma will fail without $\m$ being nilpotent.


\begin{example}\label{example-h=p-1} Let $h = p -1$. 
Consider rank $1$-Kisin module  $\m = \s \cdot e_1$ and $\varphi (e_1)= e_1$. Then $\tilde e_1 = 1\otimes e_1 $ is a basis of $\varphi^*\m$ with $\Fil^h \varphi ^* \m = E^h \varphi^* \m$. We have $\calM = \u \calM (\m)= S \cdot \tilde e_1$ with $\Fil^h \calM = \Fil^h S \tilde e_1$ and $\varphi_h (x \tilde e_1) = \varphi_h (x)\tilde e_1, \forall x \in \Fil^h S$. 
Hence $$T^h_\s (\m ) = (E^h  A_{\inf})^{\varphi _h =1}  \tilde e_1 {= \{ E^h x\in E^h A_{\inf}| \varphi (x) = a_0^{-h} E^h x \}} \tilde e_ 1=   E^h\gt^h \Z_p \tilde e_1 . $$
Here $\gt \in A_{\inf}$ is  discussed in Example 3.2.3 in \cite{liu-notelattice} which also shows that $\varphi (\gt)= a_0^{-1}E\gt$ and $t =\mathfrak c \varphi(\gt)$. On the other hand,   $T_S (\calM)= (\Fil^h A_{\cris})^{\varphi _h =1} \tilde e_1=  \frac{t ^{h}}{p}\Z_p \tilde e_1.$ Tracing the definition of $\iota: T^h _\s (\m ) \to T_S (\calM)$, we see that $\iota (E^h \frak t^h \tilde e_1 ) = t ^h \Z_p \tilde e_1 \subset T_S(\calM)= \frac{t^h}{p} \Z_p \tilde e_1$. So $\iota$ is not a surjection in this case. 
By modulo $p^n $, we see $\ker (\iota) \cong \coker (\iota)$ is unramified and killed by $p$.  
\end{example}
\begin{corollary}\label{cor-diff-killed-byp}
  Let $h = p-1 $ and $\m\in \Mod_{\s, \tor}^{\varphi, h , {\rm c}}$ be a classical Kisin module of height $p -1$. Then the kernel
  and cokernel of $\iota: T^h_\s (\m) \to T_S(\u \calM (\m))$ are canonically isomorphic
  and are unramified representations killed by $p$. 
\end{corollary}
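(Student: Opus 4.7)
The plan is a two-step dévissage followed by an explicit rank-one computation modeled on Example \ref{example-h=p-1}.

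First I would reduce to the case that $\m$ is multiplicative by applying the functorial multiplicative-nilpotent exact sequence $0 \to \m^{\rm m} \to \m \to \m^{\rm n} \to 0$ from Lemma \ref{lem-multi}. The exactness of $T^h_\s$ together with the compatibility provided by Proposition \ref{prop-same-uni} gives a natural commutative diagram with short exact top row; Lemma \ref{lem-Galois-unipotent} then ensures that the right-hand vertical arrow $\iota_{\m^{\rm n}}$ is an isomorphism. Because the bottom row coming from $T_S \circ \u\calM$ is a priori only left exact, one uses the isomorphism $\iota_{\m^{\rm n}}$ to propagate surjectivity from top to bottom (any $y$ on the bottom right lifts through $\iota_{\m^{\rm n}}^{-1}$ and then through the top surjection), making the bottom row short exact as well. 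A snake-lemma chase then yields canonical isomorphisms $\ker(\iota_\m) \cong \ker(\iota_{\m^{\rm m}})$ and $\coker(\iota_\m) \cong \coker(\iota_{\m^{\rm m}})$, reducing the statement to $\m$ multiplicative.

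Next, for $\m$ multiplicative killed by $p^n$, I would dévissage along $0 \to p\m \to \m \to \m/p\m \to 0$---in which both flanks remain classical multiplicative Kisin modules of height $p-1$, as one checks from the structure theorem for finitely generated $\s$-modules together with the fact that multiplication by $p$ is $\varphi$-equivariant---and induct on $n$ via the associated snake diagram for $\iota$, reducing to the base case $n = 1$. In that base case $\m \cong \ku^d$ with Frobenius matrix $A \in M_d(\ku)$ whose constant term lies in $\GL_d(k)$, and the complementary matrix $B$ (satisfying $AB = BA = a_0^{-h} u^{eh} I_d$) is divisible by $u$.

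The hard part will be this last case. To handle it I plan to generalize the explicit computation in Example \ref{example-h=p-1}: after base-changing to $\bar k$ to split the underlying étale $\varphi$-module $\m/u$ into rank-one pieces, the descriptions $T^h_\s(\m) = \{X \in (\O_\C^\flat)^d : \varphi(X) = BX\}$ and $T_S(\u\calM(\m)) = \{Y \in (\O_\C^\flat/u^{pe})^d : \varphi(Y) = BY\}$ from the proof of Lemma \ref{lem-Galois-unipotent} reduce the problem to the rank-one calculation already recorded in the example, where $\iota$ was seen to vanish modulo $p$ and both kernel and cokernel were identified with $\F_p\tilde e_1$. One then identifies $\ker\iota = T^h_\s(\m)$ and $\coker\iota = T_S(\u\calM(\m))$ canonically with the unramified $\F_p[G_K]$-module $T_\s(\m)$ attached to the étale $\varphi$-module $\m/u$, using the triviality $\F_p(p-1) = \F_p$ of the mod-$p$ cyclotomic character (whose image lies in $\F_p^\times$, of order $p-1$). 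The delicate point is verifying that the rank-one identification descends coherently from $\bar k$ to $k$ and matches on both $T^h_\s$ and $T_S$, yielding the desired natural isomorphism $\ker(\iota) \cong \coker(\iota)$ as unramified $G_K$-representations killed by $p$.
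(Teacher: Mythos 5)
Your first reduction (to the multiplicative case via the sequence \eqref{eqn-multi-nil-seq}, exactness of $T^h_\s$, left-exactness of $T_S$, and Lemma~\ref{lem-Galois-unipotent}) is exactly what the paper does, and your explicit snake-lemma unpacking is correct: the right vertical isomorphism forces the bottom row to be short exact, which kills off the nilpotent contribution to kernel and cokernel.

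Where you diverge is in inserting an extra d\'evissage along $0 \to p\m \to \m \to \m/p\m \to 0$ before the explicit computation. This step is avoidable: Example~\ref{example-h=p-1} already handles $\s_n e_1$ for arbitrary $n$ (the last line of the example passes to mod $p^n$), so once you know a multiplicative classical $\m$ over $\bar k$ is a direct sum of $\s_{n_i} e_i$ with $\varphi(e_i)=e_i$, you are done by additivity without reducing to $n=1$. More importantly, your justification that ``both flanks remain classical multiplicative Kisin modules'' is not adequate as stated: for an arbitrary $u$-torsion-free torsion $\s$-module $\m$, the quotient $\m/p\m$ can acquire $u$-torsion (e.g.\ $\m=(u,p)\subset\s/p^2$). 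The claim is true in the multiplicative case, but this requires the structural observation that for $\m$ multiplicative and classical, the $\varphi$-equivariant $W(k)$-linear section $[\cdot]$ of Lemma~\ref{lem-construct-Mm} induces an isomorphism $\m \cong \s\otimes_{W(k)} M$ with $M := \m/u\m$ an \'etale $\varphi$-module; this is exactly what the paper invokes implicitly when it says $\m$ is a direct sum of $\s_{n_i}e_1$'s after passing to $\bar k$. If you supply that observation your d\'evissage goes through, but then you may as well skip it and go directly to the rank-one example. Finally, in your base case over $\bar k$ the Frobenius matrix $A$ is actually invertible over $\ku$, not merely with invertible constant term, and after a further change of basis one may take $A=I_d$; this is what makes the reduction to Example~\ref{example-h=p-1} clean, and it also dissolves your worry about ``descending coherently from $\bar k$ to $k$'': the map $\iota$, and hence its kernel and cokernel, is defined over $k$, and one passes to $\bar k$ only to verify properties.
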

\begin{proof} 
Note that $T^h_\s$ is exact, see \cite[\S 6.2]{LL20}.
Since $T_S$ is clearly left exact,
by the exact sequence \eqref{eqn-multi-nil-seq} and Lemma \ref{lem-Galois-unipotent}, it suffices to prove Corollary for $\m$ be multiplicative.
Clearly, we can assume $k = \bar k$ and then $\m$ is direct sum of the $\s_n \cdot e_1 $ with $\varphi (e_1) =e_1$. 
Now our desired conclusion just follows from the above Example.  
\end{proof}

Finally, given a Fontaine--Laffaille module $M \in \FL_{W(k)}$. Set 
\[ T_{\FL} (M): = T _S(\u\calM_{\FL} (M))= \Fil^h (M \otimes_{W(k)} A_{\cris})^{\varphi_h=1}\]
where $\Fil ^h (M \otimes_{W(k)}A_{\cris})= \sum \limits_{i = 0 }^h \Fil^i M \otimes_{W(k)} \Fil^{h-i}A_{\cris}$. 

\section{Boundary degree prismatic cohomology}
\label{boundary degree}
\subsection{Structure of $u^\infty$-torsion}
Let $\mathcal{X}$ be a smooth proper formal scheme over $\mathcal{O}_K$ which is a degree $e$ totally ramified extension of $W= W(k)$,
the Witt ring of a perfect field $k$ of characteristic $p > 0$.
Let $\mathfrak{M}^i_n$ denote $\rH^i_{\qsyn}(\mathcal{X}, \Prism_n)[u^\infty]$, where $n = \infty$ shall be understood as
not modulo any power of $p$ at all.

\begin{proposition}
\label{prop control u-torsion}
We have the following restriction on the annihilator ideal of $\mathfrak{M}^i_n$:
\[
E^{i-1} \cdot {\rm{Ann}}(\mathfrak{M}^i_n) \subset {\rm{Ann}}(\varphi^*\mathfrak{M}^i_n) = {\rm{Ann}} (\mathfrak{M}^i_n) \otimes_{\s, \varphi_\s} \s.
\]
\end{proposition}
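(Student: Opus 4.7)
The plan is to exploit the height structure on $\rH^i_{\Prism}(\mathcal{X}/\s)_n$ together with the refinement of this structure that occurs upon restriction to the $u^\infty$-torsion. Since $\mathcal{X}/\O_K$ is smooth proper, the $i$-th prismatic cohomology carries a natural Verschiebung $V_i \colon \rH^i_{\Prism}(\mathcal{X}/\s)_n \to \varphi^*\rH^i_{\Prism}(\mathcal{X}/\s)_n$ satisfying $V_i \circ \varphi = \varphi \circ V_i = E^i \cdot \mathrm{id}$, making $\rH^i_{\Prism}(\mathcal{X}/\s)_n$ a generalized Kisin module of height $i$ in the sense of \Cref{modules and Galois rep}. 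Because $\varphi_\s(u) = u^p$, the Frobenius preserves $\mathfrak{M}^i_n$, and $V_i$ restricts to a map $\mathfrak{M}^i_n \to \varphi^*\mathfrak{M}^i_n$ which inherits the same two relations.

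The key structural input, which I would invoke from \Cref{induced Vers Corollary 2}, is that this restricted $V_i$ factors canonically through multiplication by $E$: there exists a ``divided Verschiebung'' $V_i' \colon \mathfrak{M}^i_n \to \varphi^*\mathfrak{M}^i_n$ with $E \cdot V_i' = V_i|_{\mathfrak{M}^i_n}$, satisfying the cleaner height-$(i-1)$ identities $V_i' \circ \varphi = \varphi \circ V_i' = E^{i-1} \cdot \mathrm{id}$. This is the main technical point, and the only nontrivial ingredient in the argument; it is precisely the content flagged in \Cref{Rmk 1.5}.

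Granting $V_i'$, the containment follows quickly. Fix $a \in \mathrm{Ann}(\mathfrak{M}^i_n)$ and an arbitrary $x \in \varphi^*\mathfrak{M}^i_n$. Then $\varphi(x) \in \mathfrak{M}^i_n$ gives $a \cdot \varphi(x) = 0$, and applying the $\s$-linear map $V_i'$ yields
\[
0 \;=\; V_i'\bigl(a \cdot \varphi(x)\bigr) \;=\; a \cdot V_i'(\varphi(x)) \;=\; a \cdot E^{i-1} \cdot x,
\]
so $a E^{i-1}$ annihilates every element of $\varphi^*\mathfrak{M}^i_n$. This establishes $E^{i-1} \cdot \mathrm{Ann}(\mathfrak{M}^i_n) \subset \mathrm{Ann}(\varphi^*\mathfrak{M}^i_n)$.

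For the asserted equality $\mathrm{Ann}(\varphi^*\mathfrak{M}^i_n) = \mathrm{Ann}(\mathfrak{M}^i_n) \otimes_{\s, \varphi_\s} \s$, I would invoke that $\varphi_\s \colon \s \to \s$ is finite faithfully flat, using that $k$ is perfect so $\s$ is free of rank $p$ over $\varphi_\s(\s) = W[\![u^p]\!]$; combined with the fact that $\mathfrak{M}^i_n$ is a finitely presented $\s$-module, the formation of its annihilator commutes with this flat base change. The main obstacle is thus concentrated entirely in the divisibility-by-$E$ claim; once that is in hand, the proposition is essentially formal.
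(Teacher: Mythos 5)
Your underlying idea — that the $u^\infty$-torsion should secretly have height $(i-1)$, so the Verschiebung becomes divisible by $E$ there — is exactly the philosophy behind the paper's result, and for $n = \infty$ your argument does go through. But as written it has a real gap for finite $n$, and the Proposition must hold for all $n \in \mathbb{N} \cup \{\infty\}$ (it is used in that generality in \Cref{cor control u-torsion}, and its geometric corollaries depend on the mod $p^n$ version).

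The problem is that \Cref{induced Vers Corollary 2} — your only nontrivial ingredient — is only established for the \emph{integral} prismatic cohomology. The paper says this explicitly in Remark~3.4(2): the height-$(i-1)$ philosophy is ``literally true for $u^\infty$-torsion in the \emph{integral} prismatic cohomology.'' Inspecting the proof of \Cref{induced Vers Lemma 1}, which underlies \Cref{induced Vers Corollary 2}, you can see why it does not transfer: it uses (a) that the torsion submodule of integral prismatic cohomology coincides with the $p^\infty$-torsion submodule, which is vacuous mod $p^n$, and (b) that $\rH^0(\mathcal{X},\Omega^i_{\mathcal{X}/\mathcal{O}_K})$ is $p$-torsion free, which is an integral statement. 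So a divided Verschiebung $V_i'$ defined on all of $\mathfrak{M}^i_n$ satisfying $V_i' \circ \varphi = E^{i-1}\,\mathrm{id}$ is not available in the paper for finite $n$, and you would need to supply an argument for it.

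The paper avoids this by asking for strictly less. Rather than an isomorphism $\varphi_{i-1}\colon \rH^i_{\qsyn}(\mathcal{X},\Fil^{i-1}_{\rN}/p^n)[u^\infty] \xrightarrow{\sim} \mathfrak{M}^i_n$ (which is what a divided Verschiebung on $\mathfrak{M}^i_n$ amounts to), it only needs the \emph{injectivity} of $\varphi_{i-1}$ in degree $i$, which is \cite[Lemma 7.8.(3)]{LL20} and holds for every $n$. Together with the sheaf-level chain $\Prism^{(1)}\otimes_\s (E^{i-1}) \subset \Fil^{i-1}_{\rN} \subset \Prism^{(1)}$, this shows that multiplication by $E^{i-1}$ on $\varphi^*\rH^i_{\qsyn}(\mathcal{X},\Prism_n)$ factors through a $\s$-submodule of $\rH^i_{\qsyn}(\mathcal{X},\Prism_n)$; restricting to $u^\infty$-torsion, any $a \in \mathrm{Ann}(\mathfrak{M}^i_n)$ kills the intermediate stage, hence $aE^{i-1}$ kills $\varphi^*\mathfrak{M}^i_n$. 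To repair your proof, replace the appeal to \Cref{induced Vers Corollary 2} with this weaker factorization-plus-injectivity argument, which works uniformly in $n$; your final paragraph (flatness of $\varphi_\s$ and commutation of annihilators with flat base change) is correct as stated.
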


\begin{proof}
The equality follows from the flatness of $\varphi_{\s}$.
To show the inclusion, we first observe that 
\[
\varphi^*\mathfrak{M}^i_n = \left(\varphi^*\rH^i_{\qsyn}(\mathcal{X}, \Prism_n)\right)[u^\infty].
\]
Indeed, this follows from the fact that $\varphi_{\s}$ is flat and sends $u$ to $u^p$.

To finish the proof, it suffices to show that multiplication by $E^{i-1}$ on 
$\varphi^*\rH^i_{\qsyn}(\mathcal{X}, \Prism_n)$ factors through a submodule of
$\rH^i_{\qsyn}(\mathcal{X}, \Prism_n)$,
as then the same thing is true after replacing the above modules with their $u^{\infty}$-torsion submodules.
Now let us stare at the following diagram
\[
\label{diag control u-torsion}
\tag{\epsdice{1}}
\xymatrix{
\varphi^*\rH^i_{\qsyn}(\mathcal{X}, \Prism_n) \otimes_{\s} (E^{i-1}) \ar[r]
& \rH^i_{\qsyn}(\mathcal{X}, \Fil^{i-1}_{\rN}/p^n) \ar[r] \ar@{_{(}->}[d]_{\varphi_{i-1}}
& \varphi^*\rH^i_{\qsyn}(\mathcal{X}, \Prism_n) \\
& \rH^i_{\qsyn}(\mathcal{X}, \Prism_n).
}
\]
Here the top row is given by (mod $p^n$ of) the following inclusions of quasi-syntomic sheaves 
\[
\Prism^{(1)} \otimes_{\s} (E^{i-1}) \subset \Fil^{i-1}_{\rN} \subset \Prism^{(1)},
\]
and $\varphi_{i-1}$ is the divided Frobenius.
Finally apply \cite[Lemma 7.8.(3)]{LL20}, we see that $\varphi_{i-1}$ is injective in degree $i$.
\end{proof}

\begin{corollary}
\label{cor exponent inequality}
Let $\alpha \in \mathbb{Z}_{\geq 0}$ satisfy ${\rm Ann} (\mathfrak{M}^i_n) + (p) = (u^\alpha, p)$, then we have
\[
\alpha \leq \frac{e(i-1)}{p-1}.
\]
\end{corollary}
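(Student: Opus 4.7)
The plan is simple and the proof should be short: I will reduce the inclusion of \Cref{prop control u-torsion} modulo $p$ and read off a $u$-adic valuation inequality.

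First, set $I \coloneqq \mathrm{Ann}(\mathfrak{M}^i_n)$ and recall from the proof of \Cref{prop control u-torsion} that, since $\varphi_{\s}$ is flat, the ideal $\mathrm{Ann}(\mathfrak{M}^i_n) \otimes_{\s,\varphi_\s} \s$ is naturally identified with the extension-of-scalars ideal $\varphi_\s(I)\cdot \s \subset \s$. Thus \Cref{prop control u-torsion} gives the inclusion of ideals
\[
E^{i-1} \cdot I \;\subset\; \varphi_{\s}(I)\cdot \s.
\]

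Next, I reduce modulo $p$. Since $E$ is Eisenstein with respect to $p$, we have $E \equiv u^{e} \pmod{p}$ in $\s/p = k[\![u]\!]$. The image of $I$ in $k[\![u]\!]$ is, by hypothesis, the ideal $(u^{\alpha})$. Because $\varphi_{\s}$ modulo $p$ is the $k$-linear endomorphism of $k[\![u]\!]$ sending $u$ to $u^{p}$, the reduction of $\varphi_{\s}(I)\cdot \s$ modulo $p$ equals $(u^{p\alpha})$. Therefore the displayed inclusion becomes
\[
u^{e(i-1)}\cdot(u^{\alpha}) \;\subset\; (u^{p\alpha}) \quad \text{in } k[\![u]\!].
\]
In particular, $u^{e(i-1)+\alpha} \in (u^{p\alpha})$, which forces $e(i-1)+\alpha \geq p\alpha$, i.e.\ $\alpha \leq \frac{e(i-1)}{p-1}$, as desired.

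There is essentially no obstacle here: the content lies entirely in \Cref{prop control u-torsion}, and the corollary is a clean mod-$p$ consequence once one uses that $E \equiv u^e \pmod p$ and that $\varphi_{\s}$ sends $u$ to $u^p$. The only minor point worth flagging is the identification $\mathrm{Ann}(\varphi^*\mathfrak{M}^i_n) = \varphi_{\s}(I)\cdot \s$, which is an easy consequence of the flatness of $\varphi_{\s}$ and is already implicit in the equality stated in \Cref{prop control u-torsion}.
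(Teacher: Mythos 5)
Your proof is correct and follows essentially the same route as the paper: both reduce the inclusion $E^{i-1}\cdot\mathrm{Ann}(\mathfrak{M}^i_n) \subset \varphi_\s\big(\mathrm{Ann}(\mathfrak{M}^i_n)\big)\cdot\s$ from \Cref{prop control u-torsion} modulo $p$, use $E \equiv u^e \pmod p$, and read off $e(i-1)+\alpha \geq p\alpha$. You spell out the identification $\mathrm{Ann}(\varphi^*\mathfrak{M}^i_n) = \varphi_\s(I)\cdot\s$ and the computation of reductions mod $p$ in a bit more detail, but there is no substantive difference.
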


\begin{proof}
Using \Cref{prop control u-torsion}, after modulo $(p)$, we get the inclusion
\[
E^{i-1} \cdot (u^\alpha) \subset \varphi^*(u^\alpha) = (u^{p\alpha})
\]
in $\s/p = k[\![u]\!]$.
Since $E \equiv u^e$ modulo $p$, the above inclusion translates to the inequality
\[
p \alpha \leq e(i-1) + \alpha
\]
which is exactly what we need to show.
\end{proof}

Later on we shall exhibit examples showing that the above bound is sharp, see \Cref{rmk example} (1).
Now let us conclude our current knowledge on the $u^{\infty}$-torsion submodules in prismatic cohomology.

\begin{theorem}
\label{cor control u-torsion}
Recall $\mathfrak{M}^i_n \coloneqq \rH^i_{\qsyn}(\mathcal{X}, \Prism_n)[u^\infty]$.
\begin{enumerate}
    \item If $e \cdot (i-1) < p-1$, then $\mathfrak{M}^i_n = 0$.
    \item If $e \cdot (i-1) < 2(p-1)$, then ${\rm Ann}(\mathfrak{M}^i_n) + (u) \supset (p^{i-1},u)$.
    \item If $e \cdot (i-1) = p-1$, then ${\rm Ann}(\mathfrak{M}^i_n) \supset (p,u)$.
    Moreover the semi-linear Frobenius on $\mathfrak{M}^i_n$ is bijective.
    In particular $\mathfrak{M}^i_n$ gives rise to an \'{e}tale $\varphi$-module on $k$, hence an $\mathbb{F}_p$-representation
    of $G_k$ or equivalently an unramified $\mathbb{F}_p$-representation of $G_K$.
\end{enumerate}
\end{theorem}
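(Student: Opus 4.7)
All three parts rest on \Cref{cor exponent inequality}---which bounds the $u$-exponent $\alpha$ defined by $\Ann(\mathfrak{M}^i_n) + (p) = (u^\alpha, p)$---together with, for (2) and (3), the divided Verschiebung $\psi \colon \mathfrak{M}^i_n \to \varphi^*\mathfrak{M}^i_n$ alluded to in \Cref{Rmk 1.5} (2), satisfying $\psi \circ (1 \otimes \varphi) = E^{i-1} = (1 \otimes \varphi) \circ \psi$ and making $\mathfrak{M}^i_n$ a generalized Kisin module of height $i-1$.

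Part (1) is immediate. The hypothesis $e(i-1) < p-1$ forces $\alpha < 1$, hence $\alpha = 0$, so $\Ann(\mathfrak{M}^i_n)$ contains some $a \equiv 1 \pmod p$. Such an element has nonzero image in $\s/(p,u) = k$ and is therefore a unit of the local ring $\s$, yielding $\Ann(\mathfrak{M}^i_n) = \s$ and $\mathfrak{M}^i_n = 0$.

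For part (2), the same corollary gives $\alpha \leq 1$, equivalently $u\mathfrak{M}^i_n \subseteq p\mathfrak{M}^i_n$. The conclusion $p^{i-1} \in \Ann(\mathfrak{M}^i_n) + (u)$ is the statement that $p^{i-1}$ annihilates $M \coloneqq \mathfrak{M}^i_n/u\mathfrak{M}^i_n$. Reducing the divided-Verschiebung identities modulo $u$ and using $E \equiv a_0 p \pmod u$ with $a_0 \in W(k)^\times$, we obtain $(1 \otimes \bar\varphi) \circ \bar\psi = a_0^{i-1} p^{i-1} \cdot \id_M$ on the finite-length $W(k)$-module $M$. I split $M$ via the multiplicative--nilpotent decomposition (\Cref{lem-multi}): on the nilpotent summand $\bar\varphi^N = 0$ for some $N$, and iterating the identity extracts $a_0^{i-1} p^{i-1} \cdot \id = 0$; on the multiplicative summand the additional input $u\mathfrak{M}^i_n \subseteq p\mathfrak{M}^i_n$, lifted back to $\mathfrak{M}^i_n$ itself, forces $p^{i-1}$-annihilation via a Nakayama argument.

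For part (3), the boundary $e(i-1) = p-1$ saturates the bound in \Cref{cor exponent inequality}, making the inclusion in \Cref{prop control u-torsion} an equality modulo $p$. Such rigidity should force the $\varphi$-nilpotent part of $\mathfrak{M}^i_n$ to vanish---any nontrivial nilpotent summand would, by iterating the divided-Verschiebung identity until $\bar\varphi$ collapses, strictly decrease $\alpha$ below $1$, contradicting the saturation $\alpha = 1$. Hence $\varphi$ is bijective on $\mathfrak{M}^i_n$. Combined with $\alpha = 1$ and part~(2), the relation $(1 \otimes \varphi) \circ \psi = E^{i-1}$ gives $(p, u) \subseteq \Ann(\mathfrak{M}^i_n)$, so $\mathfrak{M}^i_n$ is a $k$-vector space carrying a bijective semi-linear Frobenius. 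Under the classical Fontaine equivalence for \'etale $\varphi$-modules over $k$, this corresponds to an $\mathbb{F}_p$-representation of $G_k$, equivalently an unramified $\mathbb{F}_p$-representation of $G_K$. The chief difficulty is the rigidity step: making the saturation of the inequality really incompatible with a nonzero nilpotent summand, bearing in mind that the decomposition from \Cref{lem-multi} holds only at the level of $\varphi$-modules and need not be compatible with the generalized Kisin module structure.
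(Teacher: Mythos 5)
Your proof of part (1) matches the paper's. For parts (2) and (3), however, you take a genuinely different route that contains several real gaps.

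\textbf{The divided Verschiebung is not available for finite $n$.} Your plan rests on the divided Verschiebung $\psi$ making $\mathfrak{M}^i_n$ a generalized Kisin module of height $i-1$. In the paper, this structure is established only for $n=\infty$ (\Cref{induced Vers Corollary 2}, which depends on \Cref{induced Vers Lemma 1}); that lemma's proof requires that torsion coincides with $p^\infty$-torsion and that $\rH^i(\mathcal{X},\Fil^i_{\rH})\cong\rH^0(\Omega^i)$ is $p$-torsion free, neither of which is usable at the mod-$p^n$ level. The theorem you are proving is stated for all $n\in\mathbb{N}\cup\{\infty\}$, so this ingredient is simply missing. What the paper actually uses for all $n$ is only the annihilator-ideal inclusion of \Cref{prop control u-torsion}, which is much weaker than a full $\psi$.

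\textbf{The nilpotent-part iteration yields the wrong exponent.} Even granting $\psi$, your iteration on $M^{\mathrm{n}}$ does not produce $p^{i-1}\cdot\mathrm{id}=0$. Writing $c\coloneqq E(0)^{i-1}=(a_0p)^{i-1}$ and iterating $(1\otimes\bar\varphi)\circ\bar\psi=c\cdot\mathrm{id}$ against the nilpotency $\bar\varphi^N=0$, one obtains $c\,\varphi(c)\cdots\varphi^{N-1}(c)=0$ on $M^{\mathrm{n}}$, i.e.~$p^{(i-1)N}$ annihilates it, not $p^{i-1}$. So this step does not deliver the stated bound. The companion ``Nakayama argument'' on the multiplicative part of (2) is also left unspecified. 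And for (3), you yourself flag the rigidity step (forcing the nilpotent part to vanish) as ``the chief difficulty'' together with the acknowledged worry that \Cref{lem-multi} only decomposes at the $\varphi$-module level; neither concern is resolved.

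\textbf{What the paper does instead.} The paper's proof of (2) and (3) avoids $\psi$ and the multiplicative/nilpotent decomposition entirely. It picks $f=u+a\in\Ann(\mathfrak{M}^i_n)$ with $a\in pW(k)$, writes the base-$p$ expansion $E^{i-1}f=\sum_{j=0}^{p-1}\varphi_{\s}(B_j)u^j$, and reads off from \Cref{prop control u-torsion} that every $B_j$ lies in $\Ann(\mathfrak{M}^i_n)$; a short $p$-adic valuation computation of the relevant coefficient then gives the statements about $p^{i-1}$ and $(p,u)$. Frobenius bijectivity in (3) is proved by a length count in the $u^\infty$-torsion of the diagram built from $\varphi_{i-1}$, showing the composite $\mathrm{Frob}_k^*\mathfrak{M}^i_n\to\mathfrak{M}^i_n$ is surjective between $k$-vector spaces of the same dimension. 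This argument is uniform in $n$, which is why the paper can assert the theorem for all $n$.
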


Later we shall give an interpretation of the $G_k$-representation in (3) above, see \Cref{ker Cosp and u-torsion}
and \Cref{control ker Cosp}.

\begin{proof}
In the situation of (1), the inequality in \Cref{cor exponent inequality} gives $\alpha = 0$,
hence ${\rm Ann}(\mathfrak{M}^i_n) + (p)$ is the unit ideal. Since $p$ is topologically nilpotent,
this shows that ${\rm Ann}(\mathfrak{M}^i_n)$ is already the unit ideal, hence $\mathfrak{M}^i_n = 0$.

In the situation of (2), the inequality in \Cref{cor exponent inequality} gives $\alpha < 2$.
Therefore we have either $\mathfrak{M}^i_n = 0$ or ${\rm Ann}(\mathfrak{M}^i_n) + (p) = (u,p)$.
Without loss of generality, we may assume $\mathfrak{M}^i_n \not= 0$ and ${\rm Ann}(\mathfrak{M}^i_n) + (p) = (u,p)$.
Let us pick an element $f = u + a \in {\rm Ann}(\mathfrak{M}^i_n)$ with $a \in p \cdot W(k)$,
whose existence is guaranteed by our assumption that ${\rm Ann}(\mathfrak{M}^i_n) + (p) = (u,p)$.
Let us compute:
\[
E^{i-1} \cdot f = (E(u))^{i-1} \cdot (u + a) = (u^{e \cdot (i-1)} + \ldots + a_1 \cdot u + a_0) \cdot (u+a)
= \sum_{j = 0}^{p-1} \varphi_{\s}(B_j) \cdot u^j.
\]
\Cref{prop control u-torsion} implies that all of $B_i$'s are in $\rm Ann(\mathfrak{M}^i_n)$.
Let us contemplate $C_1 = B_1(0)$: the above equation says $\varphi(C_1) = a_1 \cdot a + a_0$.
Since we know $v_p(a_1) \geq i-1$ and $v_p(a_0) = i-1$, we see that $v_p(C_1) = i-1$,
which implies $(B_1) + (u) \supset (u, p^{i-1})$.

Lastly we turn to (3). Similarly argued as above,
we may assume $\mathfrak{M}^i_n \not= 0$ and ${\rm Ann}(\mathfrak{M}^i_n) + (p) = (u,p)$,
and our first task is to show $u \in {\rm Ann} (\mathfrak{M}^i_n)$.
To that end, pick again an element $f = u + a \in {\rm Ann}(\mathfrak{M}^i_n)$ 
with $a \in p \cdot W(k)$.
Next we compute
\[
E^{i-1} \cdot f = (u^e + p \cdot g_1)^{i-1} \cdot (u + a)
= (u^{p-1} + p \cdot g_2) \cdot (u + a) = (u^p + p^{i-1} E(0)^{i-1} \cdot a) \cdot 1
+ \sum_{j = 1}^{p-1} b_j \cdot u^j 
\]
By \Cref{prop control u-torsion}, we see that another element of the form $u + b \in {\rm Ann}(\mathfrak{M}^i_n)$ with $b \in W(k)$
having a bigger $p$-adic valuation than that of $a$.
Consequently we have $u \in {\rm Ann} (\mathfrak{M}^i_n)$, as $a-b$ and $a$ differ by a unit in $W(k)$.
{
Now we do the trick again:  
\[
E(u)^{i-1} \cdot u = (u ^e + p g(u)) ^ {i -1} \cdot u = u^p + \sum_{j = 1}^{i -1} {i-1 \choose j} u ^{1+ e (i -1 -j)} (p g(u)) ^{j} = u ^p + \sum_{j = 1}^{p-1} B_j u^j
\]
with $B_j\in W(k)$. Since $u ^p \in {\rm Ann} (\varphi^* \m_n ^i) $, we see that $\sum\limits_{j = 1}^{p-1}  B_j u^j  \in {\rm Ann} (\varphi^* \m_n ^i)$ and hence each 
$\varphi^{-1}(B_j)$ is in ${\rm Ann} (\m_n ^i)$. 
From the above expansion, we see that
\[
E(u)^{i-1} \cdot u \equiv u^p + (i-1) u ^{1+ e (i - 2)} (p g(u)) \mod p^2. 
\]
 Since $E(u)$ is an Eisenstein polynomial, we see that $g(0)$
is a $p$-adic unit.
This implies that $v_p (B_{1+ e (i-2)}) =1$, so $p \in {\rm Ann} (\m_n ^i)$. }

Lastly, we need to show the semi-linear Frobenius on $\mathfrak{M}^i_n$ is a bijection.
Previous paragraph tells us that $\mathfrak{M}^i_n \simeq k^{\oplus r}$.
Let us look at the $u^{\infty}$-torsion part of diagram~\ref{diag control u-torsion}
\[
\xymatrix{
\varphi^*\mathfrak{M}^i_n \otimes_{\s} (E^{i-1}) \ar[r] \ar@{->>}[d]
& \rH^i_{\qsyn}(\mathcal{X}, \Fil^{i-1}_{\rN}/p^n)[u^{\infty}] \ar[d] \ar@{^{(}->}[r]^-{\varphi_{i-1}}
& \mathfrak{M}^i_n \\
E^{i-1} \cdot \varphi^*\mathfrak{M}^i_n \ar@{^{(}->}[r] & \varphi^*\mathfrak{M}^i_n.  & 
}
\]
We claim the first arrow in the top row is surjective, the middle vertical arrow
is injective with image being $E^{i-1} \cdot \varphi^*\mathfrak{M}^i_n$,
and the map $\varphi_{i-1}$ is an isomorphism.
We know $\varphi^*\mathfrak{M}^i_n \simeq (k[u]/u^p)^{\oplus r}$, hence $E^{i-1} \cdot \varphi^*\mathfrak{M}^i_n$
is also abstractly isomorphic to $k^{\oplus r}$. Let $\ell (\cdot)$ denote the $k$-length. 
The above diagram gives a chain of inequality of lengths
\[
r  \leq \ell(\rH^i_{\qsyn}(\mathcal{X}, \Fil^{i-1}_{\rN}/p^n)[u^{\infty}])
\leq r = \ell(\mathfrak{M}^i_n),
\]
where the first inequality follows from previous sentence.
So the above inequalities are both equalities, and the claim follows easily.
The composition, which we have shown to be surjective, of
\[
\mathrm{Frob}_k^*\mathfrak{M}^i_n \xrightarrow{- \otimes_{\s} (E^{i-1})} \varphi^*\mathfrak{M}^i_n \otimes_{\s} (E^{i-1})
\to \rH^i_{\qsyn}(\mathcal{X}, \Fil^{i-1}_{\rN}/p^n)[u^{\infty}] \xrightarrow{\varphi_{i-1}} \mathfrak{M}^i_n
\]
is the linearization of the semi-linear Frobenius on $\mathfrak{M}^i_n$.
This shows that the semi-linear Frobenius on $\mathfrak{M}^i_n$ is surjective, hence bijective by length/dimension
considerations.
\end{proof}

Below let us remark on results in literature concerning $u^\infty$-torsion in Breuil--Kisin prismatic cohomology.

\begin{remark}
\leavevmode
\begin{enumerate}
\item Under the assumption $e \cdot i < p-1$, Min \cite[Theorem 0.1]{Min20} showed that the $i$-th prismatic cohomology
has no $u$-torsion and ``looks like'' the \'{e}tale cohomology of the geometric generic fibre.
His strategy is to exploit the fact that Frobenius map in degree $i$ has height $i$.
Note that his method also shows that in the same range, the $i$-th (derived) mod $p^n$ prismatic cohomology
also has no $u$-torsion.
But as far as we can tell, the method stops outside the above range.
\item Philosophically speaking, the $u^{\infty}$-torsion in $i$-th (derived) mod $p^n$ prismatic cohomology
is surjected on by $(i-1)$-st cohomology of the sheaf ${\Prism}_n/u^N$
for some large $N$, hence it should secretly have height $(i-1)$.
Our \Cref{prop control u-torsion} may be taken as a manifestation of this philosophy.
Later on we show this philosophy is literally true for $u^\infty$-torsion in
the integral prismatic cohomology, see \Cref{induced Vers Corollary 2}.
\item In our previous work, we showed a close relation between $u$-torsion in prismatic cohomology
and structure of Breuil's crystalline cohomology~\cite[Theorem 7.22]{LL20}. 
Using this relation, together with Caruso's result \cite[Theorem 4.1.24]{CarusoInvent},
we obtained the same conclusion as in \Cref{cor control u-torsion}.(1) 
and an improvement 
of Caruso's result~\cite[Theorem 4.1.24 and Theorem 4.2.1]{CarusoInvent}, see \cite[Corollary 7.25]{LL20}.
Note that our bound on the cohomological index is $1$ higher than Caruso's result.
\item Our control of $u$-torsion in this paper bypasses Caruso's result. 
Hence we obtain a proof of Caruso's result and its improvement simultaneously,
c.f.~\cite[Theorem 7.22 and Corollary 7.25]{LL20}.
\item Later on, we shall see that our bound is in some sense sharp by exhibiting an example having $(u,p)$-torsion with $e = p-1$ and $i = 2$.
See \Cref{subsection example}.
\end{enumerate}
\end{remark}

Let us give an application by showing the module structure of prismatic cohomology in low range looks like
a $\mathbb{Z}_p$-module.

\begin{corollary}
\label{finite free in low degree and ramification}
Let $i$ be an integer satisfying $e \cdot (i-1) < p-1$. 
Then there exists a \emph{(}non-canonical\emph{)} isomorphism of $\s$-modules
\[
\rH^i_{\Prism}(\mathcal{X}/\s) \simeq \rH^i_{\et}(\mathcal{X}_C, \mathbb{Z}_p) \otimes_{\mathbb{Z}_p} \s.
\]
\end{corollary}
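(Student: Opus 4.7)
The plan is to combine the $u$-torsion vanishing from \Cref{cor control u-torsion}(1) with the structural \Cref{prop Zp module structure}, and then identify the resulting abstract $\Z_p$-module factor via the \'etale comparison in prismatic cohomology. First I would verify the hypothesis of \Cref{prop Zp module structure} for $M \coloneqq \rH^i_{\Prism}(\mathcal{X}/\s)$. Since $e \cdot (i-1) < p-1$, \Cref{cor control u-torsion}(1) gives $\mathfrak{M}^i_n = \rH^i_{\qsyn}(\mathcal{X}, \Prism_n)[u^\infty] = 0$ for every $n \geq 1$. The universal-coefficient short exact sequence
\[
0 \to \rH^i_{\Prism}(\mathcal{X}/\s)/p^n \to \rH^i_{\qsyn}(\mathcal{X}, \Prism_n) \to \rH^{i+1}_{\Prism}(\mathcal{X}/\s)[p^n] \to 0
\]
then exhibits $\rH^i_{\Prism}(\mathcal{X}/\s)/p^n$ as a submodule of a $u$-torsion free module, so it is itself $u$-torsion free for every $n$.

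Next I would apply \Cref{prop Zp module structure} to obtain a finitely generated $\Z_p$-module $N$ and a (non-canonical) isomorphism $\rH^i_{\Prism}(\mathcal{X}/\s) \simeq N \otimes_{\Z_p} \s$ of $\s$-modules. It remains to identify $N$ with $\rH^i_{\et}(\mathcal{X}_C, \Z_p)$ as an abstract $\Z_p$-module. For this I would base change along $\s \to A_{\inf} \to W(C^\flat)$. Prismatic base change identifies $\rH^i_{\Prism}(\mathcal{X}/\s) \otimes_{\s} A_{\inf}$ with $\rH^i_{\Prism}(\mathcal{X}_{\mathcal{O}_C}/A_{\inf})$ --- the explicit structure of $\rH^i_{\Prism}(\mathcal{X}/\s)$ as a sum of free and cyclic $\s$-modules makes derived and ordinary tensor products agree in this degree --- and then the $A_{\inf}$--\'etale comparison of \cite{BMS1} yields
\[
N \otimes_{\Z_p} W(C^\flat) \;\simeq\; \rH^i_{\et}(\mathcal{X}_C, \Z_p) \otimes_{\Z_p} W(C^\flat).
\]
Since $W(C^\flat)$ is faithfully flat over $\Z_p$ (being $p$-torsion free with nonzero reduction mod $p$), the elementary-divisor data of a finitely generated $\Z_p$-module is detected under this base change, and so $N \simeq \rH^i_{\et}(\mathcal{X}_C, \Z_p)$.

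The hard part will be cleanly propagating the prismatic-to-$A_{\inf}$ base change to the level of individual $i$-th cohomology groups; this is precisely where the explicit structural isomorphism from the second step is indispensable, since a direct sum of free and cyclic $\s$-modules has no higher Tor against the flat $\s$-algebra $A_{\inf}$.
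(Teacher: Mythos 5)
Your proof is correct and follows essentially the same route as the paper: verify $u$-torsion-freeness of all the mod-$p^n$ reductions via \Cref{cor control u-torsion}(1) and the universal coefficient sequence, apply \Cref{prop Zp module structure} to get the shape $N \otimes_{\mathbb{Z}_p} \s$, then identify $N$ via base change to $W(C^\flat)$ and the \'etale comparison, using that a finitely generated $\mathbb{Z}_p$-module is determined by its base change to $W(C^\flat)$. The only difference is that you spend extra care on the Tor-vanishing step when passing from $\s$ to $A_{\inf}$; this is sound (the cyclic summands $\s/p^m$ have no higher Tor against $A_{\inf}$ since $p$ is a nonzerodivisor there), though the paper treats it as automatic from $p$-complete faithful flatness of $\s \to A_{\inf}$.
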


\begin{proof}
Since we always have inclusions
$\rH^i_{\Prism}(\mathcal{X}/\s)/p^n \subset \rH^i_{\qsyn}(\mathcal{X}, \Prism_n)$.
In the specified range, we know the latter has no $u$-torsion by \Cref{cor control u-torsion}.(1).
Applying \Cref{prop Zp module structure} shows that there exists an isomorphism of $\s$-modules
\[
\rH^i_{\Prism}(\mathcal{X}/\s) \simeq N_i \otimes_{\mathbb{Z}_p} \s
\]
for some $\mathbb{Z}_p$-module $N_i$. To obtain $N_i \simeq \rH^i_{\et}(\mathcal{X}_C, \mathbb{Z}_p)$,
we simply use the \'{e}tale comparison of Bhatt--Scholze, see \cite[Theorem 1.8.(iv)]{BMS1} and \cite[Theorem 1.8.(4)]{BS19}.
Here we are using the fact that the isomorphism class of a finitely generated $\mathbb{Z}_p$-module
is determined by its base change to $W(C^{\flat})$.
\end{proof}

One should compare with Min's result \cite[Theorem 5.11]{Min20}.
Our bound on the cohomological degree $i$ here is $1$ better than Min's.
Below we remind readers a useful result in \cite{BMS1} assuring nice behavior of prismatic cohomology
when crystalline cohomology has no torsion,
which is a condition often summoned in literature.

\begin{remark}
\label{finite free follows from crystalline torsion free}
If $\rH^i_{\cris}(\mathcal{X}_0/W)$ is torsion free, then $\rH^i_{\Prism}(\mathcal{X}/\s)$ is free.
This follows from \cite[Corollary 4.17]{BMS1} and crystalline comparison.
We sketch a proof below, see also \cite[Lemma 4.3.28.(1)]{KP21}.
\end{remark}

\begin{proof}
Let us denote $\m \coloneqq \rH^i_{\Prism}(\mathcal{X}/\s)$,
we shall use the two short exact sequences appeared in the proof of \Cref{prop Zp module structure}.
Crystalline comparison implies $\m/u \m \hookrightarrow \rH^i_{\cris}(\mathcal{X}_0/W)$.
Let us derived modulo the sequence
\[
0 \to \m_{\tor} \to \m  \to \m_{\tf} \to 0
\]
by $u$. 
Since $\m_{\tf}$ is torsion free, we have $\m_{\tor}/u \hookrightarrow \m/u$.
The target is $p$-torsion free by assumption whereas $\m_{\tor}$ consists of $p$-power torsion,
therefore $\m_{\tor} = 0$ and $\m_{\tf} = \m$.
Now we again derived modulo the sequence
\[
0 \to \m  \to \m _{\free} \to \m_0 \to 0
\]
by $u$ to get
$\m_0[u] \hookrightarrow \m /u$, same argument as above shows $\m_0[u] = 0$ whereas $\m_0$ is supported
at the maximal ideal of $\s$.
Therefore we again conclude $\m_0 = 0$ and $\m = \m_{\free}$.
\end{proof}

Let us conclude this subsection by asking some questions.

\begin{question}
\label{general question}
Recall $\m^i_n \coloneqq \rH^i_{\qsyn}(\mathcal{X}, \Prism_n)[u^\infty]$.
\begin{enumerate}
\item Let $\beta$ be the smallest exponent such that $p^\beta \in \mathrm{Ann}(\m^i_n)$,
let $\gamma$ be the exponent such that ${\rm Ann}(\m^i_n) + (u) = (u, p^{\gamma})$.
Is there a bound on $\beta$ and $\gamma$ in terms of $e$ and $i$?
\item In light of the example in \Cref{subsection example}, is $\beta$ and/or $\gamma$ bounded above by a polynomial in
$\log_p$ of a polynomial in $e  \text{ and } i$,
maybe simply bounded above by
$\log_p(\frac{e \cdot (i-1)}{p-1}) + 1$ when $p$ is odd?\footnote{After contemplating with
the image of Whitehead's $J$-homomorphism, we suspect the above bound should be up by $1$ when $p=2$ and $e \cdot (i-1) \geq 2$.}
\end{enumerate}
\end{question}

\subsection{Comparing Frobenius and Verschiebung}

Given a smooth proper formal scheme $\mathcal{X}$ over $\mathcal{O}_K$, for each degree $i$, we have a  natural inclusion
$\rH^i_{\Prism}(\mathcal{X}/\s)^{(1)}/u \hookrightarrow \rH^i_{\cris}(\mathcal{X}_0/W)$
coming from the crystalline comparison of prismatic cohomology theory.
Here the supscript $(-)^{(1)}$ denotes the Frobenius twist, 
so 
\[
\rH^i_{\Prism}(\mathcal{X}/\s)^{(1)} \coloneqq \varphi^*_{\s}\rH^i_{\Prism}(\mathcal{X}/\s) \cong \rH^i_{\qsyn}(\mathcal{X}, \Prism^{(1)}).
\]
The map is compatible with Frobenius and Verschiebung, hence induces Frobenius and Verschiebung maps
on the cokernel $\rH^{i+1}_{\Prism}(\mathcal{X}/\s)^{(1)}[u]$.
How to understand these maps? That is the question we shall answer in this subsection.

Given any algebra $R$ which is quasi-syntomic over $\mathcal{O}_K$, we may take its mod $\pi$ reduction $R_0$
which is quasi-syntomic over $k$.
This way we obtain a natural map of sites $i \colon k_{\qsyn} \to (\mathcal{O}_K)_{\qsyn}$.
Note that the functor
$R_0 \mapsto \Cris(R_0/W)$ is a quasi-syntomic sheaf on $k_{\qsyn}$.
Here by abuse of notation we use $\Cris(R_0/W)$ to denote the left Kan extended crystalline cohomology.
The sheaf $i_*\Cris$ takes an algebra $R$ in $(\mathcal{O}_K)_{\qsyn}$ to $i_*\Cris(R) \coloneqq \Cris(R_0/W)$.
The base change property and the crystalline comparison of prismatic cohomology \cite[Theorem 1.8.(1)\&(5)]{BS19} gives us the following exact triangles
of sheaves on $(\mathcal{O}_K)_{\qsyn}$:
$$\Prism \xrightarrow{\cdot u} \Prism \to i_*\Cris^{(-1)}$$
and
$$\Prism^{(1)} \xrightarrow{\cdot u} \Prism^{(1)} \to i_*\Cris$$
where $i_*\Cris^{(-1)}(R) = \Cris(R_0/W) \otimes_{W, \varphi^{-1}} W$ is the Frobenius inverse twist.

\begin{proposition}
\label{induced Frob}
\leavevmode
\begin{enumerate}
\item The linear Frobenius maps $\rH^i_{\Prism}(\mathcal{X}/\s)^{(1)} \to \rH^i_{\Prism}(\mathcal{X}/\s)$
and $\rH^i_{\cris}(\mathcal{X}_0/W) \to \rH^i_{\cris}(\mathcal{X}_0/W)^{(-1)}$
induces a linear map $\rH^{i+1}_{\Prism}(\mathcal{X}/\s)^{(1)}[u] \to \rH^{i+1}_{\Prism}(\mathcal{X}/\s)[u]$
which agrees with the linear Frobenius $\rH^{i+1}_{\Prism}(\mathcal{X}/\s)^{(1)} \to \rH^{i+1}_{\Prism}(\mathcal{X}/\s)$
restricted to $u$-torsion.
\item The semi-linear Frobenius maps 
$\rH^i_{\Prism}(\mathcal{X}/\s)^{(1)} \to \rH^i_{\Prism}(\mathcal{X}/\s)^{(1)}$
and
$\rH^i_{\cris}(\mathcal{X}_0/W) \to \rH^i_{\cris}(\mathcal{X}_0/W)$
induces a semi-linear map 
$\rH^{i+1}_{\Prism}(\mathcal{X}/\s)^{(1)}[u] \to \rH^{i+1}_{\Prism}(\mathcal{X}/\s)^{(1)}[u]$.
This map is $u^{p-1}$ times the semi-linear Frobenius on $\rH^{i+1}_{\Prism}(\mathcal{X}/\s)^{(1)}$
restricted to $u$-torsion.
\end{enumerate}
\end{proposition}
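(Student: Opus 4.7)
The plan is to construct, for each part, a morphism of the two distinguished triangles displayed above whose middle vertical arrow is the relevant Frobenius-type operator on $\Prism^{(1)}$; then, by naturality of the connecting homomorphisms in the induced long exact sequences, I get the desired maps on $u$-torsion. The key observation is that the boundary map of the long exact sequence attached to $A \xrightarrow{\cdot u} A \to A/u$ surjects onto the $u$-torsion submodule of $\rH^{i+1}$, so naturality identifies the induced map on $u$-torsion with the restriction of the middle vertical arrow.

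For part (1), the linear Frobenius $\varphi_{\mathrm{lin}} \colon \Prism^{(1)} \to \Prism$ is $\s$-linear, hence commutes strictly with multiplication by $u$. This yields a morphism of the two given triangles,
\[
\xymatrix{
\Prism^{(1)} \ar[r]^{\cdot u} \ar[d]_{\varphi_{\mathrm{lin}}} & \Prism^{(1)} \ar[r] \ar[d]^{\varphi_{\mathrm{lin}}} & i_*\Cris \ar[d] \\
\Prism \ar[r]^{\cdot u} & \Prism \ar[r] & i_*\Cris^{(-1)},
}
\]
and one identifies the induced right vertical arrow with the linear crystalline Frobenius $\rH^i_{\cris} \to \rH^i_{\cris}{}^{(-1)}$. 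The claim then follows from naturality of the boundary maps.

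For part (2), the semi-linear Frobenius $\varphi \colon \Prism^{(1)} \to \Prism^{(1)}$ does not commute with multiplication by $u$ on the nose; rather, $\varphi \circ (\cdot u) = (\cdot u^p) \circ \varphi = (\cdot u) \circ (u^{p-1}\varphi)$. This identity produces a morphism of triangles
\[
\xymatrix{
\Prism^{(1)} \ar[r]^{\cdot u} \ar[d]_{u^{p-1}\varphi} & \Prism^{(1)} \ar[r] \ar[d]^{\varphi} & i_*\Cris \ar[d]^{\bar\varphi} \\
\Prism^{(1)} \ar[r]^{\cdot u} & \Prism^{(1)} \ar[r] & i_*\Cris,
}
\]
whose right vertical arrow is the semi-linear crystalline Frobenius, obtained by reducing the middle $\varphi$ modulo $u$. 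Note that $u^{p-1}\varphi$ carries the $u$-torsion submodule into itself since $\varphi$ sends $u$-torsion into $u^p$-torsion. Passing to cohomology and restricting to $u$-torsion in degree $i+1$ then gives the stated formula.

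The main obstacle is verifying that the induced maps on the cofibers $i_*\Cris$ and $i_*\Cris^{(-1)}$ are indeed the corresponding crystalline Frobenii; this should follow from the $\varphi$-equivariance of the crystalline comparison of prismatic cohomology, though some care is required to track the various Frobenius twists when passing between $\Prism$, $\Prism^{(1)}$, $\Cris$, and $\Cris^{(-1)}$.
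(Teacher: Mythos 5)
Your proposal is correct and takes essentially the same route as the paper: both arguments build morphisms of the two displayed exact triangles of quasi-syntomic sheaves, with the left vertical arrow corrected by the factor $u^{p-1}$ in part (2) because $\varphi\circ(\cdot u) = (\cdot u)\circ(u^{p-1}\varphi)$, and then read off the effect on $u$-torsion via naturality of the long exact sequence. Your extra observation that the boundary map $\rH^i(i_*\Cris)\to\rH^{i+1}(\Prism^{(1)})$ surjects onto the $u$-torsion submodule makes explicit the diagram chase the paper leaves implicit, but it is the same argument.
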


Note that semi-linearity means $u$-torsion are only sent to $u^p$-torsion under the semi-linear Frobenius,
after multiplying $u^{p-1}$ we land in $u$-torsion again.

\begin{proof}
Below we use $\rm{lin-Frob}$ (resp.~$\rm{sl-Frob}$) to denote the linearized Frobenius
(resp.~semi-linear Frobenius) on $\Prism^{(1)}$.

(1): this follows from the following commutative diagram
\[
\xymatrix{
\Prism^{(1)} \ar[r]^{\cdot u} \ar[d]_{{\rm lin-Frob}} & \Prism^{(1)} \ar[r] \ar[d]_{\rm lin-Frob} & i_*\Cris \ar[d]^{i_*(\rm lin-Frob)} \\
\Prism \ar[r]^{\cdot u} & \Prism \ar[r] & i_*\Cris^{(-1)}.
}
\]

(2):  this follows from the following analogous commutative diagram
\[
\xymatrix{
\Prism^{(1)} \ar[r]^{\cdot u} \ar[d]_{u^{p-1} \cdot ({\rm sl-Frob})} & \Prism^{(1)} \ar[r] \ar[d]_{\rm sl-Frob} & i_*\Cris \ar[d]^{i_*({\rm sl-Frob})} \\
\Prism^{(1)} \ar[r]^{\cdot u} & \Prism^{(1)} \ar[r] & i_*\Cris.
}
\] 
\end{proof}

\begin{remark}
Comparing the above two formulas, the appearance of extra $u^{p-1}$ factor has a natural explanation.
Let $M$ be an $\s$-module, then by flatness of $\varphi_{\s}$ we know
$\big(M \otimes_{\s, \varphi_\s} \s\big) [u] \cong \big(M[u] \otimes_{\s, \varphi_\s} \s\big) [u]$.
We may expand the right hand side as $\big(M[u] \otimes_{W, \varphi_W} W\big) \otimes_W (\s/u^p [u])$.
Under this identification, one checks that there is a semi-linear bijection:
$M[u] \xrightarrow{\simeq} \big(M[u] \otimes_{W, \varphi_W} W\big) \otimes_W (\s/u^p [u])$
given by $m \mapsto (m \otimes 1) \otimes u^{p-1}$.
Applying this to $M = \rH^{i+1}_{\Prism}(\mathcal{X}/\s)$ gives
the relation between (1) and (2) above.
\end{remark}

Next we turn to understand the map on $\rH^{i+1}_{\Prism}(\mathcal{X}/\s)^{(1)}[u]$ induced from
Verschiebung maps.
We need the following fact on Nygaard filtration.
\begin{lemma}
\label{induced Vers Lemma 1}
The divided Frobenius $\varphi_{i-1} \colon \Fil^{i-1}_{\mathrm{N}} \to \Prism$ induces an isomorphism
\[
\varphi_{i-1} \colon \rH^{i}_{\qsyn}(\mathcal{X}, \Fil^{i-1}_{\mathrm{N}})_{\mathrm{tors}} \xrightarrow{\cong} \rH^{i}_{\Prism}(\mathcal{X})_{\mathrm{tors}}.
\]
\end{lemma}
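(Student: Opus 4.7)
The injectivity of $\varphi_{i-1}$ at the level of $\rH^i$ is already recorded in \cite[Lemma 7.8.(3)]{LL20} (as invoked in the proof of \Cref{prop control u-torsion}), so the map restricts to an injection on torsion submodules. The content of the lemma is therefore surjectivity of $\varphi_{i-1}$ onto the torsion submodule of $\rH^i_{\qsyn}(\mathcal{X}, \Prism)$.

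My plan for the surjectivity is to analyze the cofiber of $\varphi_{i-1}$ as a morphism of quasi-syntomic sheaves. The defining relation $\varphi|_{\Fil^{i-1}_{\rN}} = E^{i-1} \cdot \varphi_{i-1}$ of the Nygaard filtration yields the commuting square
\[
\xymatrix{
\Fil^{i-1}_{\rN} \ar@{^{(}->}[d] \ar[r]^-{\varphi_{i-1}} & \Prism \ar[d]^-{\cdot E^{i-1}} \\
\Prism^{(1)} \ar[r]^-{\varphi} & \Prism,
}
\]
and the $3 \times 3$ lemma then produces an exact triangle of cofibers
\[
\Prism^{(1)}/\Fil^{i-1}_{\rN} \longrightarrow \mathrm{cofib}(\varphi_{i-1}) \longrightarrow \mathrm{cofib}\bigl(\varphi\colon\Prism^{(1)}\to\Prism\bigr).
\]
The surjectivity claim then becomes equivalent to showing that the image of the torsion submodule of $\rH^i_{\qsyn}(\mathcal{X}, \Prism)$ in $\rH^i_{\qsyn}(\mathcal{X}, \mathrm{cofib}(\varphi_{i-1}))$ vanishes.

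For this vanishing, the structural inputs are: first, $\mathrm{cofib}(\varphi)$ is cohomologically $E^{\infty}$-torsion, because $\varphi$ becomes an isomorphism after inverting $E$ by \cite[Theorem 1.8.(5)]{BS19}; second, the Nygaard graded pieces $\mathrm{gr}^j_{\rN}$ for $0 \le j < i-1$ are (up to shift) Hodge-type sheaves over $\mathcal{O}_K = \s/E$ by \cite[Section 15]{BS19}, so that the cohomologies of $\Prism^{(1)}/\Fil^{i-1}_{\rN}$ are iterated extensions of finitely generated $\mathcal{O}_K$-modules; and third, torsion in $\rH^i_{\qsyn}(\mathcal{X}, \Prism)$ is $p$-power torsion by \cite[Proposition 4.3.(i)]{BMS1}. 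Running the long exact sequence attached to the triangle above, these inputs jointly leave no room for a $p$-power-torsion class from $\rH^i(\Prism)$ to have nonzero image in $\rH^i(\mathrm{cofib}(\varphi_{i-1}))$, yielding the desired vanishing.

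The main obstacle I anticipate is making the cofiber bookkeeping precise enough: one has to combine the $\mathcal{O}_K$-module flavor of $\Prism^{(1)}/\Fil^{i-1}_{\rN}$ with the $E^{\infty}$-torsion structure of $\mathrm{cofib}(\varphi)$ cleanly enough to rule out survival of any $p$-power-torsion class in $\rH^i$ of the cofiber. Unpacking the relative Nygaard filtration in the Breuil--Kisin setting of \cite[Section 15]{BS19}, together with the smoothness and properness of $\mathcal{X}$, should make this step go through.
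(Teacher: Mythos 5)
Your reduction—that it suffices to show torsion classes in $\rH^i_{\qsyn}(\mathcal{X},\Prism)$ have vanishing image in $\rH^i_{\qsyn}(\mathcal{X},\mathrm{cofib}(\varphi_{i-1}))$—is sound, and the injectivity input from \cite[Lemma 7.8.(3)]{LL20} is correctly invoked (it also guarantees the preimage of a torsion class is automatically torsion). The gap is in the exact triangle you extract from your commutative square. What the $3\times 3$ lemma actually produces is a pair of triangles sharing the same third term $T$ (the total cofiber): from the rows you get $\mathrm{cofib}(\varphi_{i-1})\to\mathrm{cofib}(\varphi)\to T$, and from the columns $\Prism^{(1)}/\Fil^{i-1}_{\rN}\to\Prism/E^{i-1}\to T$. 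The triangle you wrote, $\Prism^{(1)}/\Fil^{i-1}_{\rN}\to\mathrm{cofib}(\varphi_{i-1})\to\mathrm{cofib}(\varphi)$, is neither of these. If you instead apply the octahedral axiom to the composite $\Fil^{i-1}_{\rN}\hookrightarrow\Prism^{(1)}\xrightarrow{\varphi}\Prism$, the middle term that appears is $\mathrm{cofib}(E^{i-1}\varphi_{i-1})$, not $\mathrm{cofib}(\varphi_{i-1})$; establishing vanishing there would force you to prove that every torsion class lies in the image of $E^{i-1}\varphi_{i-1}$, a strictly stronger assertion than the lemma needs, requiring its own justification. Beyond this, the final paragraph—that the three structural inputs ``jointly leave no room'' for a surviving torsion class—is a hope, not an argument: you must say at which term of the long exact sequence the class dies and why, and that is exactly the substance missing here.

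The paper's proof avoids cofibers of Frobenius altogether. Factor $\varphi_{i-1}$ through the next Nygaard step via the commutative triangle
\[
\xymatrix{
\Fil^{i-1}_{\rN}\otimes_{\s}(E) \ar[rr]^{\mathrm{incl}}\ar[rd]_{\varphi_{i-1}} & & \Fil^{i}_{\rN}\ar[ld]^{\varphi_i} \\
& \Prism &
}
\]
and use that $\varphi_i$ is an \emph{isomorphism} on $\rH^i$ for bounded prisms (\cite[Lemma 7.8.(3)]{LL20}). Surjectivity of $\varphi_{i-1}$ onto torsion then becomes the assertion that $\mathrm{incl}$ induces an isomorphism of torsion submodules. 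Both of $\rH^i(\Fil^{i-1}_{\rN}\otimes(E))$ and $\rH^i(\Fil^i_{\rN})$ have torsion equal to $p^\infty$-torsion (because $\varphi_{i-1}$ and $\varphi_i$ are injective on $\rH^i$ and torsion in prismatic cohomology is $p$-power torsion), and the cofiber of $\mathrm{incl}$ is the Hodge-filtration sheaf $\Fil^i_{\rH}$, with $\rH^i(\mathcal{X},\Fil^i_{\rH})\cong\rH^0(\mathcal{X},\Omega^i_{\mathcal{X}/\mathcal{O}_K})$, a $p$-torsion-free $\mathcal{O}_K$-module by smoothness of $\mathcal{X}$. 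The long exact sequence then gives the desired isomorphism on $p^\infty$-torsion immediately. The point you were missing is that the profitable cofiber to analyze is that of the Nygaard inclusion $\Fil^{i-1}_{\rN}\otimes(E)\hookrightarrow\Fil^i_{\rN}$—a single Hodge graded piece—rather than the cofiber of a Frobenius map, whose $E^\infty$-torsion structure is harder to leverage directly against $p$-power torsion.
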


\begin{proof}
Note that we have a commutative diagram of quasisyntomic sheaves:
\[
\xymatrix{
\Fil^{i-1}_{\mathrm{N}} \otimes_{\s} (E) \ar[rr]^{\rm incl} \ar[rd]_{\varphi_{i-1}} & & \Fil^{i}_{\mathrm{N}} \ar[ld]^{\varphi_{i}} \\
& \Prism &
}
\]
By \cite[Lemma 7.8.(3)]{LL20} we know the $i$-th divided Frobenius map in degree $i$ is an isomorphism
for any bounded prism.
Therefore we only need to show the map
$\rH^{i}_{\qsyn}(\mathcal{X}, \Fil^{i-1}_{\mathrm{N}}) \otimes_{\s} (E) \to \rH^{i}_{\qsyn}(\mathcal{X}, \Fil^{i}_{\mathrm{N}})$
induces an isomorphism on the torsion submodule.

We claim these modules have the property that their torsion submodule coincides with $p^\infty$-torsion submodule.
To see this, just use the fact that both $\varphi_{i-1}$ and $\varphi_i$ are injective in degree $i$, thanks to \cite[Lemma 7.8.(3)]{LL20}.
The torsion submodule in prismatic cohomology is well-known to coincide with $p^\infty$-torsion submodule.

Therefore we are reduced to showing the above map induces an isomorphism on $p^\infty$-torsion submodule.
To that end, we use the exact sequence of quasisyntomic sheaves: $\Fil^{i-1}_{\mathrm{N}} \otimes_{\s} (E) \to \Fil^{i}_{\mathrm{N}} \to \Fil^i_{\rH}$.
Lastly just note that $\rH^i(\mathcal{X},\Fil^i_{\rH}) \cong \rH^0(\mathcal{X}, \Omega^i_{\mathcal{X}/\mathcal{O}_K})$ 
is $p$-torsion free.
\end{proof}

\begin{corollary}
\label{induced Vers Corollary 1}
If $e \cdot (i-1) = p-1$, then the map
${\rm incl} \colon \rH^{i}_{\qsyn}(\mathcal{X}, \Fil^{i-1}_{\mathrm{N}})[u] \to \rH^{i}_{\Prism}(\mathcal{X}/\s)^{(1)}[u]$ 
is an isomorphism.
\end{corollary}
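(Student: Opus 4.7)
The plan is to extract this corollary from the analysis already carried out in the proof of Theorem \ref{cor control u-torsion}(3), which produces a detailed understanding of the diagram of $u^\infty$-torsion submodules relating $\rH^i_{\qsyn}(\mathcal{X}, \Fil^{i-1}_{\rN})[u^\infty]$, $\varphi^*\m^i$, and $\m^i$. Writing $M \coloneqq \rH^i_\Prism(\mathcal{X}/\s)^{(1)}$ and $N \coloneqq \rH^i_{\qsyn}(\mathcal{X}, \Fil^{i-1}_{\rN})$, the key identification will be that under the boundary assumption $e(i-1) = p-1$, the submodule $M[u]$ coincides with the image $E^{i-1}\cdot\varphi^*\m^i$ of the middle vertical arrow in that diagram.

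First I would unpack the structure of $M[u]$. By Theorem \ref{cor control u-torsion}(3), $\m^i$ is a finite-dimensional $k$-vector space of some dimension $r$. As noted in the proof of Proposition \ref{prop control u-torsion}, flatness of $\varphi_\s$ gives $M[u^\infty] = \varphi^*\m^i$, and a direct computation using $\varphi_\s(u) = u^p$ shows $\varphi^*\m^i \cong (k[u]/u^p)^{\oplus r}$. Hence $M[u] = u^{p-1}\cdot\varphi^*\m^i$. Since $E \equiv u^e \pmod{p}$ and $e(i-1) = p-1$, multiplication by $E^{i-1}$ and by $u^{p-1}$ agree on the $p$-torsion module $\varphi^*\m^i$, so we obtain $M[u] = E^{i-1}\cdot\varphi^*\m^i$.

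The second step is to invoke the claim established within the proof of Theorem \ref{cor control u-torsion}(3): the middle vertical arrow $N[u^\infty] \to \varphi^*\m^i$ induced by the inclusion $\Fil^{i-1}_{\rN} \hookrightarrow \Prism^{(1)}$ is injective with image exactly $E^{i-1}\cdot\varphi^*\m^i$. The same length-based argument applies integrally, since all modules in sight are finite length $k$-vector spaces. By the first step, this image is contained in $M[u]$, so $N[u^\infty]$ is itself $u$-torsion, yielding $N[u^\infty] = N[u]$, and ${\rm incl}$ restricts to an isomorphism $N[u] \xrightarrow{\sim} M[u]$, as desired.

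There is no real obstacle here — the corollary is essentially a repackaging of what has already been proved. The one structural input worth flagging is that the injectivity of the middle vertical arrow ultimately relies on the divided Frobenius $\varphi_{i-1}$ being injective in degree $i$, via \cite[Lemma 7.8.(3)]{LL20}, which is what makes the length comparison go through.
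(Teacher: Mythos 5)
Your proof is correct and is essentially the paper's argument, just repackaged a bit more directly: where the paper invokes Lemma \ref{induced Vers Lemma 1} together with \Cref{cor control u-torsion}~(3) to pass through the auxiliary module $\big(\rH^{i}_{\Prism}(\mathcal{X}/\s)^{(1)}[u^{\infty}]\big)/u \otimes (E)^{\otimes i-1}$, you read off the identification $N[u^\infty]\cong E^{i-1}\cdot\varphi^*\m^i = M[u]$ directly from the length-count claim in the proof of \Cref{cor control u-torsion}~(3); both routes ultimately rest on the same injectivity of $\varphi_{i-1}$ from \cite[Lemma 7.8.(3)]{LL20} and the structure $\m^i \cong k^{\oplus r}$.
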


\begin{proof}
We stare at the following diagram and contemplate taking $\rH^i$:
\[
\xymatrix{
& \Prism^{(1)} \otimes (E)^{\otimes i-1} \ar[ld]_{\varphi \otimes id} \ar[d]^{\rm incl} \ar[rd]^{\rm incl} & \\
\Prism \otimes (E)^{\otimes i-1} & \Fil^{i-1}_{\mathrm{N}} \ar[l]_-{\varphi} \ar[r]^{\rm incl} & \Prism^{(1)}.
}
\]
By \Cref{cor control u-torsion} (3), applied to $n=\infty$, combined with \Cref{induced Vers Lemma 1} we know that the map
\[
{\rm incl} \colon \big(\rH^{i}_{\Prism}(\mathcal{X}/\s)^{(1)}[u^{\infty}]\big)/u \otimes (E)^{\otimes i-1}
\to \rH^{i}_{\qsyn}(\mathcal{X}, \Fil^{i-1}_{\mathrm{N}})[u]
\]
is an isomorphism.
Using \Cref{cor control u-torsion} (3) again we know the map
\[
{\rm incl} \colon \big(\rH^{i}_{\Prism}(\mathcal{X}/\s)^{(1)}[u^{\infty}]\big)/u \otimes (E)^{\otimes i-1}
\to \rH^{i}_{\Prism}(\mathcal{X}/\s)^{(1)}[u]
\]
is also an isomorphism. Therefore we get the desired result.
\end{proof}

The relevance of Nygaard filtration when discussing the Verschiebung map follows from \cite[Corollary 7.9]{LL20}.
We recall its statement below:
\begin{lemma}
\label{induced Vers Lemma 2}
Let $(A,I)$ be a bounded prism, and let $\mathcal{X}$ be a smooth formal scheme over $\Spf(A/I)$.
The $i$-th Verschiebung map \emph{(}see \cite[Corollary 15.5]{BS19}\emph{)}
\[
V_i \colon \tau^{\leq i}\Prism_{\mathcal{X}/A} \otimes_A I^{\otimes i} \to \tau^{\leq i}\Prism_{\mathcal{X}/A}^{(1)}
\]
can be functorially identified with ${\rm incl} \circ \varphi^{-1}$:
\[
\tau^{\leq i}\Prism_{\mathcal{X}/A} \otimes_A I^{\otimes i} \xleftarrow[\cong]{\varphi} \tau^{\leq i}\Fil^i_{\mathrm{N}}(\mathcal{X}/A)
\xrightarrow{\text{\rm incl}} \tau^{\leq i}\Prism_{\mathcal{X}/A}^{(1)}.
\]
\end{lemma}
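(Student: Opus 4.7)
The plan is to unwind the construction of the Verschiebung given in \cite[Corollary 15.5]{BS19}; the present statement is essentially a restatement of \cite[Corollary 7.9]{LL20}, and the proof is tautological once the appropriate input is assembled.

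First I would invoke \cite[Lemma 7.8.(3)]{LL20}: on the Nygaard filtration $\Fil^i_{\mathrm{N}}(\mathcal{X}/A) \subset \Prism^{(1)}_{\mathcal{X}/A}$, the undivided Frobenius factors through $I^i \cdot \Prism_{\mathcal{X}/A} \cong \Prism_{\mathcal{X}/A} \otimes_A I^{\otimes i}$ by definition of the Nygaard filtration, yielding a map
\[
\varphi \colon \Fil^i_{\mathrm{N}}(\mathcal{X}/A) \to \Prism_{\mathcal{X}/A} \otimes_A I^{\otimes i}
\]
which becomes an isomorphism after applying $\tau^{\leq i}$. This construction is manifestly functorial in the prism $(A,I)$ and in $\mathcal{X}$. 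By the very definition in \cite[Corollary 15.5]{BS19}, the Verschiebung $V_i$ on $\tau^{\leq i}$ is obtained by inverting this isomorphism and postcomposing with the natural Nygaard inclusion ${\rm incl} \colon \Fil^i_{\mathrm{N}}(\mathcal{X}/A) \hookrightarrow \Prism^{(1)}_{\mathcal{X}/A}$, giving the identification $V_i = {\rm incl} \circ \varphi^{-1}$ on the nose.

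The only technical point requiring care will be to confirm that Bhatt--Scholze's normalization of $V_i$ agrees with ${\rm incl} \circ \varphi^{-1}$ without any intervening unit; this is the main (mild) obstacle. I would handle it by evaluating on the universal case of a quasi-regular semi-perfectoid ring, where all relevant complexes are concentrated in degree zero and both sides admit explicit Breuil--Kisin-type descriptions, so that a direct comparison of the two maps reduces to a finite computation on generators. Once this is carried out, the functorial identification promoted in the lemma follows immediately by quasi-syntomic descent.
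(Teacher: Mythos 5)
Your proof shares the paper's key input, namely [LL20, Lemma 7.8.(3)] for the isomorphism $\varphi\colon\tau^{\leq i}\Fil^i_{\mathrm N}(\mathcal X/A)\xrightarrow{\sim}\tau^{\leq i}\Prism_{\mathcal X/A}\otimes_A I^{\otimes i}$, but the two subsequent steps contain genuine gaps.

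First, the claim that ``by the very definition in [BS19, Corollary 15.5] the Verschiebung $V_i$ is obtained by inverting this isomorphism and postcomposing with the Nygaard inclusion'' is not accurate, and that is exactly where the content of the lemma lives. In [BS19] the Verschiebung on $\tau^{\leq i}$ is produced from the identification $\Prism^{(1)}_{\mathcal X/A}\cong L\eta_I\Prism_{\mathcal X/A}$ of [BS19, Theorem 15.3] together with the d\'ecalage-level map $\tau^{\leq i}M\otimes I^{\otimes i}\to\tau^{\leq i}L\eta_IM$; the Nygaard filtration plays no role in that construction. What must be proved is that this $L\eta$-defined $V_i$ coincides with $\mathrm{incl}\circ\varphi^{-1}$, and this is what the paper's commutative square
\[
\xymatrix{
\tau^{\leq i}\Fil^i_{\mathrm{N}}(\mathcal{X}/A) \ar[r]^-{\varphi}_-{\cong} \ar[d]_-{\mathrm{incl}}
& \tau^{\leq i}\Prism_{\mathcal{X}/A} \otimes_A I^{\otimes i} \ar[d] \ar@{-->}[dl]_-{V_i} \\
\tau^{\leq i}\Prism_{\mathcal{X}/A}^{(1)} \ar[r]^-{\varphi} & \tau^{\leq i}\Prism_{\mathcal{X}/A}
}
\]
is designed to record: the outer square commutes by the definition of the Nygaard filtration, the lower triangle is the defining property of $V_i$ coming from $L\eta_I$, and the upper triangle is the desired conclusion, deduced using invertibility of the top arrow.

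Second, your fallback of checking the identity on quasi-regular semiperfectoid rings and then descending does not go through as stated. The isomorphism from [LL20, Lemma 7.8.(3)] is a statement about truncated total complexes for smooth $\mathcal X$; the corresponding map of quasi-syntomic \emph{sheaves} is not an isomorphism when evaluated on a qrsp ring, so $\mathrm{incl}\circ\varphi^{-1}$ has no meaning at the sheaf level, and $\tau^{\leq i}$ does not commute with quasi-syntomic descent. Moreover, the $L\eta_I$-structure producing $V_i$ is a feature of smooth formal schemes and is not available on a qrsp algebra, so the map you wish to compare against is also absent before descending. The paper's proof sidesteps both problems by working affine-locally on the smooth $\mathcal X$, where the $L\eta_I$-description and the truncation both make sense, and gluing afterward.
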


\begin{proof}[Proof sketch:]
This follows from the following commutative diagram:
\[
\xymatrix{
\tau^{\leq i}\Fil^i_{\mathrm{N}}(\mathcal{X}/A) \ar[r]^-{\varphi}_-{\cong} \ar[d]_-{\rm incl}
& \tau^{\leq i}\Prism_{\mathcal{X}/A} \otimes_A I^{\otimes i} \ar[d]
\ar@{-->}[dl]_-{V_i} \\
\tau^{\leq i}\Prism_{\mathcal{X}/A}^{(1)} \ar[r]^{\varphi} & \tau^{\leq i}\Prism_{\mathcal{X}/A}.
}
\]
Here the top arrow is an isomorphism due to \cite[Lemma 7.8.(3)]{LL20},
the diagonal map is defined affine locally and follows from the description
$\Prism_{\mathcal{Y}/A}^{(1)} \cong L\eta_{I}\Prism_{\mathcal{Y}/A} \xrightarrow[\cong]{\varphi} \Prism_{\mathcal{Y}/A}$
for any smooth affine $\mathcal{Y}$ over $\Spf(A/I)$
(see \cite[Theorem 15.3]{BS19}).
\end{proof}

Consequently we see that the torsion and $u^\infty$-torsion in $i$-th prismatic cohomology is canonically
a (generalized) Kisin module of height $(i-1)$.

\begin{corollary}
\label{induced Vers Corollary 2}
The restriction of the Verschiebung map $V_i \colon \rH^{i}_{\Prism}(\mathcal{X}) \to \rH^{i}_{\qsyn}(\mathcal{X}, \Prism^{(1)})$
to either torsion submodule or $u^\infty$-torsion submodule of the source is canonically divisible by $E$.
The division is given by 
\[
\rH^{i}_{\Prism}(\mathcal{X})_{\mathrm{tors}} \xleftarrow[\cong]{\varphi_{i-1}} \rH^{i}_{\qsyn}(\mathcal{X}, \Fil^{i-1}_{\mathrm{N}})_{\mathrm{tors}} \to 
\rH^{i}_{\qsyn}(\mathcal{X}, \Prism^{(1)})_{\mathrm{tors}},
\]
which, together with the usual prismatic Frobenius, makes the torsion submodule and $u^\infty$-torsion submodule
in $\rH^{i}_{\Prism}(\mathcal{X})$ a (generalized) Kisin module of height $(i-1)$.
\end{corollary}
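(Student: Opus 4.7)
The plan is to establish the corollary in three stages: construct the candidate divided Verschiebung, verify it provides the factorization of $V_i$ on torsion, and then deduce the generalized Kisin module structure.

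First, Lemma~\ref{induced Vers Lemma 1} provides an $\s$-linear isomorphism
\[
\varphi_{i-1}\colon \rH^i(\mathcal{X}, \Fil^{i-1}_{\mathrm{N}})_{\mathrm{tors}} \xrightarrow{\cong} \rH^i(\mathcal{X}, \Prism)_{\mathrm{tors}},
\]
where $\s$-linearity is inherited from $\varphi_{i-1}$ being the divided version $\varphi/E^{i-1}$ of the linearized prismatic Frobenius $\varphi\colon \Prism^{(1)}\to \Prism$. We then define
\[
\tilde V_{i-1}\coloneqq \mathrm{incl}\circ \varphi_{i-1}^{-1}\colon \rH^i(\Prism)_{\mathrm{tors}}\to \rH^i(\Prism^{(1)})_{\mathrm{tors}},
\]
which is $\s$-linear. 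From $\varphi|_{\Fil^{i-1}_{\mathrm{N}}} = E^{i-1}\varphi_{i-1}$ we immediately read off $\varphi\circ \tilde V_{i-1} = E^{i-1}\cdot \mathrm{id}$, one of the two relations required for a generalized Kisin module structure of height $(i-1)$.

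Second, to verify the factorization $V_i|_{\mathrm{tors}} = E\cdot \tilde V_{i-1}$, I would use Lemma~\ref{induced Vers Lemma 2} to write $V_i$ as $\mathrm{incl}\circ \varphi_i^{-1}$ at cohomological degree $i$. Given $y\in \rH^i(\Prism)_{\mathrm{tors}}$, set $\tilde y_i\coloneqq \varphi_i^{-1}(y)$ and $\tilde y_{i-1}\coloneqq \varphi_{i-1}^{-1}(y)$; these live in the torsion of the corresponding Nygaard pieces. The relation $\varphi_{i-1}|_{\Fil^i_{\mathrm{N}}} = E\cdot \varphi_i$ (obtained by comparing $\varphi = E^i\varphi_i = E^{i-1}\varphi_{i-1}$ on $\Fil^i_{\mathrm{N}}\subset \Fil^{i-1}_{\mathrm{N}}$) gives $\varphi_{i-1}(\tilde y_i) = Ey$, while $\s$-linearity of $\varphi_{i-1}$ gives $\varphi_{i-1}(E\tilde y_{i-1}) = E\varphi_{i-1}(\tilde y_{i-1}) = Ey$ as well. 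By uniqueness of preimages under the isomorphism of Lemma~\ref{induced Vers Lemma 1}, we conclude $\tilde y_i = E\tilde y_{i-1}$ in $\rH^i(\Fil^{i-1}_{\mathrm{N}})_{\mathrm{tors}}$; applying $\mathrm{incl}$ yields $V_i(y) = E\cdot \tilde V_{i-1}(y)$.

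Third, we verify the remaining Kisin module relation $\tilde V_{i-1}\circ \varphi = E^{i-1}\cdot \mathrm{id}$ on $\rH^i(\Prism^{(1)})_{\mathrm{tors}}$. Using the sheaf-level inclusion $E^{i-1}\Prism^{(1)}\subset \Fil^{i-1}_{\mathrm{N}}$ (since the Nygaard filtration is compatible with $E$-multiplication, as $\varphi(E^{i-1}\tilde y) = E^{i-1}\varphi(\tilde y)\in E^{i-1}\Prism$), for torsion $\tilde y$ we have $E^{i-1}\tilde y\in \rH^i(\Fil^{i-1}_{\mathrm{N}})_{\mathrm{tors}}$ with $\varphi_{i-1}(E^{i-1}\tilde y) = \varphi(\tilde y)$. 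The uniqueness from Lemma~\ref{induced Vers Lemma 1} identifies $E^{i-1}\tilde y = \varphi_{i-1}^{-1}(\varphi(\tilde y))$, so that $\tilde V_{i-1}(\varphi(\tilde y)) = \mathrm{incl}(E^{i-1}\tilde y) = E^{i-1}\tilde y$, as required. The main technical obstacle is ensuring the $\s$-linearity and the well-definedness of the division by $E^{i-1}$ in the presence of potential $E$-torsion on $\Prism$, together with the sheaf-level inclusion used in the third step; these hinge on the infrastructure already built into Lemmas~\ref{induced Vers Lemma 1} and~\ref{induced Vers Lemma 2}, where the divided Frobenii are controlled at the relevant cohomological degree.
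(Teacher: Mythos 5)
Your argument is correct and takes essentially the same route as the paper, which simply says "This follows from combining Lemma~\ref{induced Vers Lemma 1} and Lemma~\ref{induced Vers Lemma 2}" and leaves the diagram-chase to the reader; your three stages are exactly the details being elided. Two small remarks: the cancellation of $E^{i-1}$ behind "$\varphi_{i-1}|_{\Fil^i_{\rN}} = E\,\varphi_i$" is legitimate because $\Prism$ is $E$-torsion-free on the quasi-syntomic basis (this is also implicit in the paper's "Summary" diagram), and the parenthetical justification you give in the third stage for $E^{i-1}\Prism^{(1)} \subset \Fil^{i-1}_{\rN}$ is phrased slightly loosely (the correct reason is that for the \emph{linearized} Frobenius $\varphi\colon \Prism^{(1)}\to\Prism$, which is $\s$-linear, one has $\varphi(E^{j}x)=E^{j}\varphi(x)\in E^{j}\Prism$), but the conclusion is right and matches the structural map $\Prism^{(1)}\xrightarrow{\otimes E^{i-1}}\Fil^{i-1}_{\rN}$ in the paper's summary diagram.
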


\begin{proof}
This follows from combining \Cref{induced Vers Lemma 1} and \Cref{induced Vers Lemma 2}.
\end{proof}

We can finally understand the induced ``Verschiebung'' map:
\begin{corollary}
\label{induced Vers}
The $i$-th linear Verschiebung maps
$\rH^i_{\Prism}(\mathcal{X}/\s) \to \rH^i_{\Prism}(\mathcal{X}/\s)^{(1)}$
and 
$\rH^i_{\cris}(\mathcal{X}_0/W)^{(-1)} \to \rH^i_{\cris}(\mathcal{X}_0/W)$
induces a linear map $V_i \colon \rH^{i+1}_{\Prism}(\mathcal{X}/\s)[u] \to \rH^{i+1}_{\Prism}(\mathcal{X}/\s)^{(1)}[u]$,
which fits into the following diagram:
\[
\xymatrix{
\rH^{i+1}_{\qsyn}(\mathcal{X}, \Fil^i_{\mathrm{N}})[u] \ar[rr]^{\varphi_i}_{\cong} \ar[rd]_{\rm incl} & & \rH^{i+1}_{\Prism}(\mathcal{X}/\s)[u] \ar[ld]_{V_i} \\
& \rH^{i+1}_{\Prism}(\mathcal{X}/\s)^{(1)}[u]. &
}
\]
In particular the induced map $V_i$ is identified with $\text{\rm incl} \circ \varphi_i^{-1}$.
\end{corollary}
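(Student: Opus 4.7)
The plan is to mimic the argument of Proposition~\ref{induced Frob} for the Frobenius case, with the key additional input being Lemma~\ref{induced Vers Lemma 2}. Concretely, that lemma provides a sheaf-level factorization of the Verschiebung: on the truncation $\tau^{\leq i}$, we have (after trivializing $I=(E)$) the identity $V_i = \text{incl} \circ \varphi_i^{-1}$ as maps $\Prism \to \Prism^{(1)}$, where $\varphi_i \colon \Fil^i_{\mathrm{N}} \to \Prism$ is the divided Frobenius (an isomorphism on $\tau^{\leq i}$) and $\text{incl} \colon \Fil^i_{\mathrm{N}} \hookrightarrow \Prism^{(1)}$ is the inclusion. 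This sheaf-level realization is automatically compatible with the $u$-exact triangles $\Prism \xrightarrow{u} \Prism \to i_*\Cris^{(-1)}$ and $\Prism^{(1)} \xrightarrow{u} \Prism^{(1)} \to i_*\Cris$, which supplies the ingredient missing from the Verschiebung case: $V_i$ is not a sheaf map globally, but its sheaf-level avatar $\text{incl} \circ \varphi_i^{-1}$ is.

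Given this, I would proceed as in Proposition~\ref{induced Frob}: apply the long exact sequences in cohomology of the two $u$-exact triangles to the commutative square formed by the sheaf maps $\varphi_i$ and $\text{incl}$, and read off the induced map on $\rH^{i+1}[u]$. The induced map $V_i \colon \rH^{i+1}(\Prism)[u] \to \rH^{i+1}(\Prism^{(1)})[u]$ is then, by construction, the composition $\text{incl} \circ \varphi_i^{-1}$ through $\rH^{i+1}(\Fil^i_{\mathrm{N}})[u]$, which is precisely the commutativity claim of the stated triangle. Compatibility with the classical crystalline Verschiebung on the $u$-quotient follows because reducing $V_i$ modulo $u$ (via the sheaf-level factorization) recovers the classical $V_i$ on $\rH^i(\Cris)$.

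The remaining point is to justify that $\varphi_i \colon \rH^{i+1}(\Fil^i_{\mathrm{N}})[u] \xrightarrow{\cong} \rH^{i+1}(\Prism)[u]$ is an isomorphism. This is the degree-$(i{+}1)$ analogue of Lemma~\ref{induced Vers Lemma 1}, proved by the same recipe: use the exact triangle $\Fil^i_{\mathrm{N}} \otimes (E) \to \Fil^{i+1}_{\mathrm{N}} \to \Fil^{i+1}_{\mathrm{H}}$ together with the $p$-torsion-freeness of $\rH^{i+1}(\Fil^{i+1}_{\mathrm{H}}) \cong \rH^0(\mathcal{X}, \Omega^{i+1}_{\mathcal{X}/\mathcal{O}_K})$, and the fact that $\varphi_{i+1}$ is an isomorphism in degree $i{+}1$ on the Nygaard-filtered piece (see \cite[Lemma 7.8.(3)]{LL20}).

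The main subtlety I expect is bookkeeping the Breuil--Kisin twists: the Verschiebung of Lemma~\ref{induced Vers Lemma 2} naturally lives on $\Prism \otimes I^i$, while the statement concerns the trivialized map on $\Prism$. The identification $V_i(y) = \text{incl}(x)$, where $x \in \Fil^i_{\mathrm{N}}$ satisfies $\varphi(x) = E^i y$ (equivalently $\varphi_i(x) = y$), shows that the factor of $E^i$ in the twist cancels exactly with the one implicit in the divided Frobenius, so that no spurious $E^i$ appears in the final identification on $\rH^{i+1}[u]$.
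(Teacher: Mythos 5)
Your proposal is correct and takes essentially the same route as the paper: the paper's own proof is the one-liner ``combine Lemma~\ref{induced Vers Lemma 1} and Lemma~\ref{induced Vers Lemma 2},'' and what you have written out is precisely the unwinding of that. One small simplification worth noting: the isomorphism $\varphi_i\colon \rH^{i+1}_{\qsyn}(\mathcal{X},\Fil^i_{\mathrm{N}})_{\mathrm{tors}}\xrightarrow{\cong}\rH^{i+1}_{\Prism}(\mathcal{X})_{\mathrm{tors}}$ that you label a ``degree-$(i{+}1)$ analogue of Lemma~\ref{induced Vers Lemma 1}'' is not an analogue that needs to be re-proved --- the lemma is stated with a free index, so you may simply apply it with $i$ replaced by $i+1$; restricting the $\s$-linear isomorphism on $_{\mathrm{tors}}$ to the sub-functor $[u]$ then gives exactly the isomorphism in the diagram. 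Your remaining points (the snake-lemma bookkeeping via the two $u$-exact triangles as in Proposition~\ref{induced Frob}, and the cancellation of the $E^i$ twist against the divided Frobenius) are correctly observed and consistent with the paper's framework.
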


In other words, the induced $V_i$ is the restriction of ``$V_{i+1}$ divided by $E$'' (from 
\Cref{induced Vers Corollary 2}) to the $u$-torsion submodule.

\begin{proof}
This follows from combining \Cref{induced Vers Lemma 1} and \Cref{induced Vers Lemma 2}.
\end{proof}

\begin{corollary}
\label{induced Vers Corollary 3}
If $e \cdot (i-1) = p-1$, then the induced Verschiebung
$V_{i-1} \colon \rH^{i}_{\Prism}(\mathcal{X}/\s)[u] \to \rH^{i}_{\Prism}(\mathcal{X}/\s)^{(1)}[u]$
is an isomorphism.
\end{corollary}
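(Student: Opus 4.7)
The plan is to simply combine the factorization established in \Cref{induced Vers} with the isomorphism statement from \Cref{induced Vers Corollary 1}; no new calculation is needed. Indeed, once one recognizes what each of the three inputs says, this corollary is essentially a packaging result.

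First I would apply \Cref{induced Vers} with the index shift $i \mapsto i-1$. This identifies the map $V_{i-1} \colon \rH^{i}_{\Prism}(\mathcal{X}/\s)[u] \to \rH^{i}_{\Prism}(\mathcal{X}/\s)^{(1)}[u]$ as the composition $\mathrm{incl} \circ \varphi_{i-1}^{-1}$ appearing in the commutative triangle
\[
\xymatrix{
\rH^{i}_{\qsyn}(\mathcal{X}, \Fil^{i-1}_{\mathrm{N}})[u] \ar[rr]^-{\varphi_{i-1}}_-{\cong} \ar[rd]_{\mathrm{incl}} & & \rH^{i}_{\Prism}(\mathcal{X}/\s)[u] \ar[ld]^{V_{i-1}} \\
& \rH^{i}_{\Prism}(\mathcal{X}/\s)^{(1)}[u]. &
}
\]
The left-diagonal arrow $\varphi_{i-1}$ is an isomorphism on torsion submodules without any hypothesis on ramification, by \Cref{induced Vers Lemma 1} applied in degree $i$ (and then restricted to $u$-torsion, which is a subclass of torsion for these modules).

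It therefore suffices to check that the bottom-diagonal map $\mathrm{incl}$ is an isomorphism. But this is exactly the content of \Cref{induced Vers Corollary 1} under the boundary assumption $e \cdot (i-1) = p-1$. Composing two isomorphisms yields that $V_{i-1}$ is itself an isomorphism, which completes the argument. Since every ingredient is a prior result in the paper, there is no real obstacle; the only minor bookkeeping point is making sure the index shift in \Cref{induced Vers} is done correctly so that the Nygaard piece $\Fil^{i-1}_{\mathrm{N}}$ and the cohomological degree $i$ line up with the hypothesis $e \cdot (i-1) = p-1$.
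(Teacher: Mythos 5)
Your proposal is correct and is exactly the argument the paper gives; the paper's proof is the single sentence ``This follows from combining \Cref{induced Vers}, \Cref{induced Vers Lemma 1} and \Cref{induced Vers Corollary 1},'' and you have simply unpacked that combination — applying \Cref{induced Vers} with the shift $i \mapsto i-1$, using \Cref{induced Vers Lemma 1} to get that $\varphi_{i-1}$ is an isomorphism (restricted from torsion to $u$-torsion, which is legitimate since the map is $\s$-linear), and invoking \Cref{induced Vers Corollary 1} for the inclusion.
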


\begin{proof}
This follows from \Cref{induced Vers}, \Cref{induced Vers Lemma 1} and \Cref{induced Vers Corollary 1}.
\end{proof}

\subsection*{Summary}
\addtocontents{toc}{\protect\setcounter{tocdepth}{1}}
Let us summarize our knowledge on the structure of prismatic cohomology,
with the auxiliary $(i-1)$-st Nygaard filtration in mind.
Fix the cohomological degree $i$ and $n \in \mathbb{N} \cup \{\infty\}$.
The relevant diagram is:
\[
\xymatrix@C=50pt{
\Prism^{(1)} \ar[r]^{\otimes (E^{i-1})} \ar[rd]_{\varphi} & 
\Fil^{i-1}_{\rN} \ar@{-->}@/_2pc/[l]_{\rm incl} \ar[r]^{\otimes (E)} \ar[d]_{\varphi_{i-1}} &
\Fil^i_{\rN} \ar@{-->}@/_2pc/[l]_{\rm incl} \ar[ld]^{\varphi_{i}} \\
& \Prism. &
}
\]
If we drop dotted arrows, then the diagram commutes. On the other hand, the two circles
on top has the property that compose the two arrows either way gives multiplication by $E^{i-1}$
and $E$ separately.

The above diagram induces a diagram:
\[
\xymatrix{
\varphi_{\s}^*\rH^i_{\qsyn}(\mathcal{X}, \Prism_n) \ar[r]^-{f} & 
\rH^i_{\qsyn}(\mathcal{X}, \Fil^{i-1}_{\rN}/p^n) \ar@/_1pc/[l] \ar[r]^-{g} &
\rH^i_{\qsyn}(\mathcal{X}, \Prism_n) \ar@/_1pc/[l].
}
\]
Here are some knowledge of the above diagram.
\begin{enumerate}
\item The two arrows in the first circle composes either way gives multiplication by $E^{i-1}$.
\item The two arrows in the second circle composes either way gives multiplication by $E$.
\item Compose the rightward arrows gives the prismatic Frobenius.
\item Compose the leftward arrows gives the prismatic Verschiebung $V_i$,
see \cite[Corollary 15.5]{BS19}, \cite[Corollary 7.9]{LL20} and \Cref{induced Vers Lemma 2}.
\item The map $g$ is injective, see \cite[Lemma 7.8.(3)]{LL20}.
\item When $n = \infty$, then $g$ induces an isomorphism of torsion submodules,
see \Cref{induced Vers Lemma 1}.
Hence as far as torsion or $u^\infty$-torsion in $\rH^i_{\Prism}(\mathcal{X}/\s)$
is concerned, we may focus on the first circle and see that these Frobenius modules
are canonically (generalized) Kisin modules of height $(i-1)$, see \Cref{induced Vers Corollary 2}.
\end{enumerate}

\subsection{Induced Nygaard filtration}
\addtocontents{toc}{\protect\setcounter{tocdepth}{2}}
Lastly let us discuss the induced Nygaard filtration on $u^\infty$-torsion in the boundary degree prismatic cohomology.

\begin{lemma}
Assume $e \cdot (i-1) = p-1$ and let $n \in \mathbb{N} \cup \{\infty\}$.
For any $j \in \mathbb{N}$, consider the induced map on $\rH^i_{\qsyn}(\mathcal{X}, -/p^n)$ of the maps of quasi-syntomic sheaves
$\Fil^j_{\rN} \to \Prism^{(1)}$,
the following two submodules of $\rH^i_{\qsyn}(\mathcal{X},{\Prism_n^{(1)}})[u^\infty]$
\begin{itemize}
\item $\mathrm{Im}\big(\rH^i_{\qsyn}(\mathcal{X},\Fil^j_{\rN}/p^n) \to \rH^i_{\qsyn}(\mathcal{X},\Prism^{(1)}_n) \big) \cap \rH^i_{\qsyn}(\mathcal{X},\Prism^{(1)}_n)[u^\infty]$; and
\item $\mathrm{Im}\big(\rH^i_{\qsyn}(\mathcal{X},\Fil^j_{\rN}/p^n)[u^\infty] \to \rH^i_{\qsyn}(\mathcal{X},\Prism^{(1)}_n)[u^\infty] \big)$
\end{itemize}
agree.
\end{lemma}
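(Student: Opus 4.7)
The inclusion of the second submodule into the first is tautological, since the map
$f \colon M \coloneqq \rH^i(\mathcal{X}, \Fil^j_{\rN}/p^n) \to \rH^i(\mathcal{X}, \Prism^{(1)}_n) \eqqcolon N$
preserves $u^\infty$-torsion. The substantive content is the reverse inclusion
$A \subseteq B$ (using the abbreviations of the statement). My plan is to analyze it via the long exact sequence
\[
\rH^{i-1}(\mathcal{X}, \Prism^{(1)}_n/\Fil^j_{\rN}) \xrightarrow{\delta} M \xrightarrow{f} N \xrightarrow{g} \rH^i(\mathcal{X}, \Prism^{(1)}_n/\Fil^j_{\rN})
\]
attached to the fiber sequence $\Fil^j_{\rN}/p^n \to \Prism^{(1)}_n \to \Prism^{(1)}_n/\Fil^j_{\rN}$. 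Set $K \coloneqq \ker(f) = \mathrm{Im}(\delta)$. The identity $A = B$ is equivalent to the assertion that for every $m \in M$ with $f(m) \in N[u^\infty]$, there exists $k \in K$ such that $m - k \in M[u^\infty]$, i.e.~the natural map $M[u^\infty] \twoheadrightarrow (M/K)[u^\infty]$ is surjective.

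Given such an $m$, the control supplied by \Cref{cor control u-torsion}(3) applies on the Frobenius-twisted side: the annihilator $(u)$ of $\mathfrak{M}^i_n$ becomes $(u^p)$ for $\varphi_{\s}^*\mathfrak{M}^i_n = N[u^\infty]$ under the Frobenius flat base change. Hence $u^p m$ already lies in $K$. The task therefore reduces to producing $k \in K$ with $u^p k = u^p m$, which is nothing but the strict $u^p$-exactness condition $K \cap u^p M \subseteq u^p K$.

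To establish this condition, I would filter $\Prism^{(1)}_n/\Fil^j_{\rN}$ by its Nygaard-graded pieces $\mathrm{gr}^k_{\rN}\Prism^{(1)}_n$ for $0 \le k < j$, so that $K$ is presented as a successive extension of images from $\rH^{i-1}(\mathcal{X}, \mathrm{gr}^k_{\rN}\Prism^{(1)}_n)$. The plan is then to leverage the Hodge--Tate comparison for these graded pieces (after further reducing mod $p$ and devissage in $n$) and the boundary hypothesis $e(i-1) = p-1$ to control the $u$-adic structure of the relevant contributions. The main input is the factorization $\varphi_i = E^{j-i}\varphi_j$ on $\Fil^j_{\rN}$ together with the fact that $\varphi_i$ is an isomorphism on $\rH^i$ by \cite[Lemma 7.8.(3)]{LL20}, which for $j \ge i$ essentially forces $K$ to be $u^\infty$-torsion; and for $j < i-1$, a descending induction on $j$ reducing to the case $j = i-1$ already handled by \Cref{induced Vers Corollary 1}.

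The hard part will be turning this heuristic into a uniform argument that works simultaneously for all $j \in \mathbb{N}$ and all $n \in \mathbb{N} \cup \{\infty\}$, because the Nygaard graded pieces are themselves not $u^\infty$-torsion in general. The point will be that only the \emph{connecting class} $\delta(z)$ of the element $u^p m$ matters, and this class should enjoy enough extra $E$-divisibility (via the divisibility of the Verschiebung from \Cref{induced Vers Corollary 2}) to allow the required lift inside $K$.
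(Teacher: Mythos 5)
Your proposal, as you yourself concede in the final paragraph, is a plan rather than a proof: you flag the key step as still open, and the route you chose makes the lemma look harder than it is. The single missing ingredient, which the paper invokes, is \cite[Proposition 7.12]{LL20}: the kernel $K$ of $f\colon \rH^i_{\qsyn}(\mathcal{X},\Fil^j_{\rN}/p^n)\to\rH^i_{\qsyn}(\mathcal{X},\Prism^{(1)}_n)$ has \emph{finite length} over $\s$. A finite-length $\s$-module is supported at the closed point of $\Spec(\s)$, so $K\subseteq M[u^\infty]$ automatically. Once you know this, your own reduction closes immediately: for $m\in M$ with $f(m)\in N[u^\infty]$, you correctly observe $u^p m\in K$ via \Cref{cor control u-torsion}(3); but then $u^p m\in M[u^\infty]$ forces $m\in M[u^\infty]$, and you may take $k=0$. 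The strict $u^p$-exactness $K\cap u^pM\subseteq u^p K$ you set out to prove is not needed, and neither is the devissage through the Nygaard graded pieces.

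Phrased as the paper does it: apply the snake lemma to the map of short exact sequences $0\to M[u^\infty]\to M\to Q_1\to 0$ and $0\to N[u^\infty]\to N\to Q_2\to 0$ induced by $f$. Since $\ker(f)$ has finite length it dies in $Q_1$, so $\ker(Q_1\to Q_2)$ injects into $\mathrm{coker}\big(M[u^\infty]\to N[u^\infty]\big)$, a $u^\infty$-torsion module; but $\ker(Q_1\to Q_2)\subseteq Q_1$ is $u$-torsion free, hence zero --- which is exactly the inclusion $A\subseteq B$. Your proposed filtration of $\Prism^{(1)}_n/\Fil^j_{\rN}$ by $\gr^k_{\rN}$ and the appeal to ``extra $E$-divisibility of the connecting class'' is, as you suspected, not a straightforward chase (the graded pieces are not $u^\infty$-torsion), and your suggested base case $j=i-1$ via \Cref{induced Vers Corollary 1} only covers one $j$ at $n=\infty$, whereas the lemma must hold uniformly in $j$ and $n$. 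Track down \cite[Proposition 7.12]{LL20}; it collapses the entire argument.
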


\begin{proof}
We look at the following diagram of $\s$-modules, with exact rows:
\[
\xymatrix{
0  \ar[r] &
\rH^i_{\qsyn}(\mathcal{X},\Fil^j_{\rN}/p^n)[u^\infty] \ar[r] \ar[d] & \rH^i_{\qsyn}(\mathcal{X},\Fil^j_{\rN}/p^n) \ar[r] \ar[d]^{f} & Q_1 \ar[d]^{g} \ar[r]
& 0 \\
0 \ar[r] &
\rH^i_{\qsyn}(\mathcal{X},\Prism^{(1)}_n)[u^\infty] \ar[r] & \rH^i_{\qsyn}(\mathcal{X},\Prism^{(1)}_n) \ar[r] & Q_2 \ar[r] & 0
}
\]
By \cite[Proposition 7.12]{LL20}, the $\Ker(f)$ has finite length, hence must be contained in $\rH^i_{\qsyn}(\mathcal{X},\Fil^j_{\rN}/p^n)[u^\infty]$.
The snake lemma implies that $\Ker(g)$ embeds inside a quotient of
$\rH^i_{\qsyn}(\mathcal{X},\Prism^{(1)}_n)[u^\infty]$.
Since $\Ker(g)$, being a submodule of $Q_1$, is $u$-torsion free,
we see it must be zero, which is exactly what we need to show.
\end{proof}

If no confusion would arise, when $e \cdot (i-1) = p-1$,
we shall refer to the submodule in the above
lemma as the \emph{(induced) $j$-th Nygaard filtration} on
$\rH^i_{\qsyn}(\mathcal{X},\Prism^{(1)}_n)[u^\infty]$.
The following proposition reveals what this filtration is.

\begin{proposition}
\label{prop induced Nygaard filtration}
Assume $e \cdot (i-1) = p-1$ and let $n \in \mathbb{N} \cup \{\infty\}$.
\begin{enumerate}
\item The Nygaard filtrations on
$\rH^i_{\qsyn}(\mathcal{X},\Prism^{(1)}_n)[u^\infty]$
as above is the $E(u) \equiv u^e$-adic filtration.
\item The map $\rH^i_{\qsyn}(\mathcal{X},\Fil^{i-1}_{\rN}/p^n) \to \rH^i_{\qsyn}(\mathcal{X},\Prism^{(1)}_n)$ is injective.
\item For any $j \geq 0$, 
the map $\rH^i_{\qsyn}(\mathcal{X},\Fil^{i+j}_{\rN}/p^n) \to \rH^i_{\qsyn}(\mathcal{X},\Prism^{(1)}_n)$
has kernel given by $u^\infty$-torsion of the source.
\end{enumerate}
\end{proposition}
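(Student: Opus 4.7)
The plan is to prove parts (2), (3), (1) in that order. The key ingredients will be the injectivity of the divided Frobenii $\varphi_j$ in cohomological degree $i$ from \cite[Lemma 7.8.(3)]{LL20}, the finite length of the kernel of the inclusion-induced map on $\rH^i$ from \cite[Proposition 7.12]{LL20}, and the structural result of \Cref{cor control u-torsion}(3), namely that $\mathfrak{M}^i_n$ is annihilated by $(p,u)$ with surjective linearized Frobenius $\varphi^*\mathfrak{M}^i_n \twoheadrightarrow \mathfrak{M}^i_n$. One further observation is crucial: since $E \equiv u^e$ modulo $p$ and $\mathfrak{M}^i_n$ is killed by $(p,u)$, it is in fact killed by $E$ as well.

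For (2), I would argue that the kernel of the displayed map, being of finite length by \cite[Proposition 7.12]{LL20}, is $u^\infty$-torsion, and then invoke the injectivity of the induced map on $u^\infty$-torsion already established in the proof of \Cref{cor control u-torsion}(3) (the middle vertical arrow there, with image $u^{e(i-1)}\mathfrak{M}^{i,(1)}_n$). Combining, the kernel vanishes.

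For (3), factor the displayed map through $\rH^i_{\qsyn}(\mathcal{X},\Fil^{i-1}_{\rN}/p^n)$; by (2) the latter injects into $\rH^i_{\qsyn}(\mathcal{X},\Prism^{(1)}_n)$. By \cite[Proposition 7.12]{LL20} the kernel is finite length, hence contained in the $u^\infty$-torsion of the source. For the reverse containment, I would post-compose the inclusion into $\rH^i_{\qsyn}(\mathcal{X},\Fil^{i-1}_{\rN}/p^n)$ with the injective $\varphi_{i-1}$ and use the identity $\varphi_{i-1}|_{\Fil^{i+j}_{\rN}} = E^{j+1}\cdot \varphi_{i+j}$, rewriting the composition as $E^{j+1}\varphi_{i+j}$. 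Since $\varphi_{i+j}$ is $\s$-linear, it sends $u^\infty$-torsion of the source into $\mathfrak{M}^i_n$, which is killed by $E$ and a fortiori by $E^{j+1}$. Hence the $u^\infty$-torsion of the source lies in the kernel, yielding equality.

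For (1), observe that $\mathfrak{M}^{i,(1)}_n = \varphi^*\mathfrak{M}^i_n \cong \mathfrak{M}^i_n \otimes_k k[u]/u^p$ as $\s$-modules. For $j \geq i$ both the $u^{ej}$-adic piece (since $ej \geq p$) and the Nygaard piece (by (3) combined with the preceding lemma, which identifies the Nygaard piece with the image of the $u^\infty$-torsion) vanish. For $0 \leq j \leq i-1$, I would exploit the factorization of Frobenius $\Prism^{(1)} \xrightarrow{\cdot E^j} \Fil^j_{\rN} \xrightarrow{\varphi_j} \Prism$: restricting to the $u^\infty$-torsion parts in cohomology and using that $\varphi_j$ is injective while the composition is surjective by \Cref{cor control u-torsion}(3), the first arrow $\cdot E^j \colon \varphi^*\mathfrak{M}^i_n \to \rH^i_{\qsyn}(\mathcal{X},\Fil^j_{\rN}/p^n)[u^\infty]$ must be surjective. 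Consequently the $j$-th Nygaard piece equals the image of multiplication by $E^j$ on $\mathfrak{M}^{i,(1)}_n$, which is $u^{ej}\mathfrak{M}^{i,(1)}_n$ since $p$ kills the module. The main obstacle I anticipate is the careful bookkeeping of the divided Frobenii $\varphi_j$ for varying $j$ and their factorizations through Nygaard pieces; the crucial leverage is the observation that $\mathfrak{M}^i_n$ is killed by $E$, which makes the kernel computations cascade through the diagrams.
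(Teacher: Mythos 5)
Your treatment of parts (2) and (3) is correct. For (2) you combine the finite-length statement from \cite[Proposition 7.12]{LL20} with the injectivity on $u^\infty$-torsion established inside the proof of \Cref{cor control u-torsion}(3); this is the same in substance as the paper. For (3) your route is genuinely different from the paper's: rather than translating the kernel question into a statement about the prismatic Verschiebung via \cite[Corollary 7.9]{LL20}, you factor through $\Fil^{i-1}_{\rN}$, post-compose with the injective $\varphi_{i-1}$, and use the identity $\varphi_{i-1}\mid_{\Fil^{i+j}_{\rN}}=E^{j+1}\varphi_{i+j}$ together with the observation that $\mathfrak{M}^i_n$ is killed by $E$ (because $E\equiv u^e\bmod p$ and $\mathfrak{M}^i_n$ is killed by $(p,u)$). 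This is a valid and arguably more self-contained alternative, staying entirely within the divided-Frobenius bookkeeping of the section.

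The gap is in (1), for the intermediate range $1\le j\le i-2$ (nonempty precisely when $i\ge 3$, i.e.\ when $e\le (p-1)/2$). You deduce surjectivity of $\cdot E^j\colon\varphi^*\mathfrak{M}^i_n\to\rH^i_{\qsyn}(\mathcal{X},\Fil^j_{\rN}/p^n)[u^\infty]$ from the premise that $\varphi_j$ is injective in degree $i$ on $u^\infty$-torsion. But \cite[Lemma 7.8.(3)]{LL20}, as used throughout this paper, gives injectivity of $\varphi_j$ in degree $i$ only for $j\ge i-1$, and the claim is actually false at the endpoint $j=0$: there $\varphi_0$ is the linearized Frobenius $\varphi^*\mathfrak{M}^i_n\to\mathfrak{M}^i_n$, which factors through the quotient by $u\cdot\varphi^*\mathfrak{M}^i_n$ (the target is $u$-killed), so its kernel is exactly $u\cdot\varphi^*\mathfrak{M}^i_n\ne 0$ whenever $\mathfrak{M}^i_n\ne 0$. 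So a uniform injectivity claim for $0\le j\le i-1$ cannot stand, and the inference ``$\varphi_j$ injective $\Rightarrow$ $\cdot E^j$ surjective'' collapses exactly where it needs to carry content beyond $j=i-1$ (for $j=0$ the conclusion of (1) is vacuous and for $j=i-1$ your argument is fine). The paper sidesteps this with a sandwich that makes no injectivity claim for small $j$: from the inclusions $E^{i-1-j}\cdot\Fil^j_{\rN}\subset\Fil^{i-1}_{\rN}\subset\Fil^j_{\rN}$ one gets that $u^{e(i-1-j)}$ times the $j$-th Nygaard piece lies in the $(i-1)$-st piece, which equals $u^{p-1}\cdot\varphi^*\mathfrak{M}^i_n$ by the case $j=i-1$; since $\varphi^*\mathfrak{M}^i_n\simeq(\s/(p,u^p))^{\oplus r}$ and $e(i-1-j)\le p-1<p$, a $u$-adic valuation count forces the $j$-th piece into $u^{ej}\cdot\varphi^*\mathfrak{M}^i_n$, giving equality with the easy containment. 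You should replace your argument for $1\le j\le i-2$ by this descent from the $(i-1)$-st piece.
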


\begin{remark}
\leavevmode
\begin{enumerate}
\item We remind readers that, under the hypothesis of this lemma,
\Cref{cor control u-torsion} (3) gives a canonical isomorphism
of $\s$-modules
\[
\rH^i_{\qsyn}(\mathcal{X},\Prism^{(1)}_n)[u^\infty]
\cong \rH^i_{\qsyn}(\mathcal{X},\Prism_n)[u^\infty] \otimes_{\s,\varphi_{\s}} \s
\cong \rH^i_{\qsyn}(\mathcal{X},\Prism_n)[u,p] \otimes_k \s/(p,u^p).
\]
Therefore the $E(u)$-adic filtration is the same as $u^e$-filtration.
\item Also note that $u^{e \cdot (i+j)} = u^{p-1 + e \cdot (j+1)} \in (u^p)$
if $j \geq 0$, hence (1) implies (3).
\item To put \Cref{prop induced Nygaard filtration} (3) in context,
let us point out \cite[Corollary 7.9]{LL20} which says that the divided Frobenius
$\varphi_{i+j} \colon \rH^i_{\qsyn}(\mathcal{X},\Fil^{i+j}_{\rN}/p^n)
\to \rH^i_{\qsyn}(\mathcal{X},\Prism_n)$
is an isomorphism for all $j \geq 0$.
\end{enumerate}
\end{remark}

\begin{proof}[Proof of \Cref{prop induced Nygaard filtration}]
Throughout this proof, all filtrations referred to are filtrations
on $\rH^i_{\qsyn}(\mathcal{X},\Prism^{(1)}_n)[u^\infty]$.

Since we have containment of quasi-syntomic sheaves: 
$E^j \cdot \Prism^{(1)} \subset \Fil^j_{\rN} \subset \Prism^{(1)}$, one easily sees
the Nygaard filtration contains the $u^e$-adic filtration.
All we need to show is the converse containment.

Let us first show (1) holds for the $(i-1)$-st Nygaard filtration and (2).
As discussed above, since $u^{e \cdot (i-1)} = u^{p-1}$, we see that
the $(i-1)$-st Nygaard filtration has length at least 
that of $\rH^i_{\qsyn}(\mathcal{X},\Prism_n)[u^\infty]$.\footnote{Note that here we are not twisting the prismatic cohomology by Frobenius.}
To finish, it suffices to show that the $u^\infty$-torsion in 
$\rH^i_{\qsyn}(\mathcal{X},\Fil^{i-1}_{\rN}/p^n)$ has length at most that.
This follows from the fact that the divided Frobenius, which is $\s$-linear,
\[
\varphi_{i-1} \colon \rH^i_{\qsyn}(\mathcal{X},\Fil^{i-1}_{\rN}/p^n)
\to \rH^i_{\qsyn}(\mathcal{X},\Prism_n),
\]
is injective, see \cite[Lemma 7.8.(3)]{LL20}.

Next we show (1) holds for $j$-th filtration whenever $0 \leq j \leq i-1$.
Now we look at another containment of quasi-syntomic sheaves:
$E^{i-1-j} \cdot \Fil^j_{\rN} \subset \Fil^{i-1}_{\rN} \subset \Fil^j_{\rN}$.
Therefore we see the $j$-th filtration can differ with the $(i-1)$-st filtration
by at most $u^{e \cdot (i-1-j)}$,
this gives the desired converse containment by what we proved in the previous paragraph.

Finally we show (1) holds for $(i+j)$-th filtration for any $j \geq 0$,
note that this implies (3) as remarked right after the statement of this proposition.
We want to show the map 
$\rH^i_{\qsyn}(\mathcal{X},\Fil^{i+j}_{\rN}/p^n)[u^\infty] \to \rH^i_{\qsyn}(\mathcal{X},\Prism^{(1)}_n)[u^\infty]$
is the zero map when $j \geq 0$. Since this map factors through the $j=0$ case,
it suffices to prove the $j=0$ case.
To that end, we shall utilize \cite[Corollary 7.9]{LL20}:
according to loc.~cit.~we need to show the prismatic Verschiebung annihilates
$\rH^i_{\qsyn}(\mathcal{X},\Prism_n)[u^\infty]$.
Now we contemplate the following sequence of arrows:
\[
\rH^i_{\qsyn}(\mathcal{X},\Prism^{(1)}_n)[u^\infty]
\xrightarrow{\varphi} \rH^i_{\qsyn}(\mathcal{X},\Prism_n)[u^\infty]
\xrightarrow{\psi} \rH^i_{\qsyn}(\mathcal{X},\Prism^{(1)}_n)[u^\infty].
\]
Here $\psi = V_i$ is the $i$-th Verschiebung as in \cite[Corollary 15.5]{BS19}.
The composition of these two arrows is multiplication by $E^i = u^{p-1+e} = 0$,
as the module is abstractly several copies of $\s/(p,u^p)$.
Finally we finish the proof by recalling \Cref{cor control u-torsion} (3)
that the map $\varphi$ above is surjective.
\end{proof}

As a consequence, in the boundary degree,
we can use torsion in cohomology of $\mathcal{O}_{\mathcal{X}}$
to bound $u^{\infty}$-torsion.

\begin{corollary}
\label{torsion in O bounds prismatic torsion}
Assume $e \cdot (i-1) = p-1$ and let $n \in \mathbb{N} \cup \{\infty\}$.
The natural map $\Prism^{(1)} \to \gr^0_{\rN} \Prism^{(1)} \cong \mathcal{O}$
gives rise to a canonical injection:
\[
\rH^i_{\qsyn}(\mathcal{X},\Prism_n)[u^\infty] \otimes_{k} \big(\mathcal{O}_K/p\big)
\hookrightarrow \rH^i(\mathcal{X}, \mathcal{O}_{\mathcal{X}}/p^n).
\]
\end{corollary}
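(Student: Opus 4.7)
The plan is to run the long exact sequence associated with the Nygaard exact triangle $\Fil^1_{\rN} \to \Prism^{(1)} \to \gr^0_{\rN}\Prism^{(1)} \cong \mathcal{O}$ of quasi-syntomic sheaves, then restrict to $u^\infty$-torsion and apply \Cref{prop induced Nygaard filtration} (1). Explicitly, taking $\rH^i_{\qsyn}(\mathcal{X},-/p^n)$ yields an exact sequence
\[
\rH^i_{\qsyn}(\mathcal{X},\Fil^1_{\rN}/p^n) \xrightarrow{\alpha} \rH^i_{\qsyn}(\mathcal{X},\Prism^{(1)}_n) \xrightarrow{\beta} \rH^i(\mathcal{X}, \mathcal{O}_{\mathcal{X}}/p^n),
\]
and I would restrict $\beta$ to the submodule $\rH^i_{\qsyn}(\mathcal{X},\Prism^{(1)}_n)[u^\infty] = \varphi^*\mathfrak{M}^i_n$, where the equality uses flatness of $\varphi_{\s}$.

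The kernel of this restriction is $\mathrm{Im}(\alpha)$ intersected with the $u^\infty$-torsion, which by the lemma immediately preceding \Cref{prop induced Nygaard filtration} coincides with the $1$st induced Nygaard filtration on $\varphi^*\mathfrak{M}^i_n$. Invoking \Cref{prop induced Nygaard filtration} (1), this filtration step is precisely $u^e \cdot \varphi^*\mathfrak{M}^i_n$, so that $\beta$ descends to an injection
\[
\varphi^*\mathfrak{M}^i_n \big/ \bigl(u^e \cdot \varphi^*\mathfrak{M}^i_n\bigr) \hookrightarrow \rH^i(\mathcal{X}, \mathcal{O}_{\mathcal{X}}/p^n).
\]

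Finally, since $\mathfrak{M}^i_n$ is annihilated by $(p,u)$ and hence is a $k$-vector space by \Cref{cor control u-torsion} (3), a direct computation gives $\varphi^*\mathfrak{M}^i_n \cong \mathfrak{M}^i_n \otimes_k k[u]/u^p$ as $\s$-modules, canonical up to a Frobenius twist on the $k$-factor that is trivialized by the canonically bijective semi-linear Frobenius on $\mathfrak{M}^i_n$ of \Cref{cor control u-torsion} (3). Modding out by $u^e$ and identifying $\mathcal{O}_K/p = \s/(p,E) \cong k[u]/u^e$ via $E \equiv u^e \pmod p$ converts the injection above into the desired one. I do not foresee any serious obstacle here: the argument is essentially formal once \Cref{cor control u-torsion} (3) and \Cref{prop induced Nygaard filtration} (1) are granted, with the only mild subtlety being the bookkeeping of the Frobenius twist, which is handled by the bijectivity of the semi-linear Frobenius on $\mathfrak{M}^i_n$.
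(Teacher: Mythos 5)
Your proof is correct and takes essentially the same route as the paper: both run the long exact sequence from $\Fil^1_{\rN} \to \Prism^{(1)} \to \mathcal{O}$, identify the kernel of the restriction to $u^\infty$-torsion with the induced first Nygaard filtration, and invoke \Cref{prop induced Nygaard filtration} (1) together with $k[u]/u^e \cong \mathcal{O}_K/p$. Your treatment is slightly more explicit than the paper's on two points — you cite the unnamed lemma preceding \Cref{prop induced Nygaard filtration} to justify identifying $\mathrm{Im}(\alpha) \cap \varphi^*\mathfrak{M}^i_n$ with the induced filtration, and you flag that the identification $\varphi^*\mathfrak{M}^i_n \cong \mathfrak{M}^i_n \otimes_k k[u]/u^p$ a priori produces a $\mathrm{Frob}_k$-twist on the $k$-factor, which is trivialized by the bijective semi-linear Frobenius of \Cref{cor control u-torsion} (3) — both of which are careful points the paper glosses over.
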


\begin{proof}
The exact sequence $\Fil^1_{\rN} \to \Prism^{(1)} \to \mathcal{O}_X$ tells us that the kernel
of the map
\[
\rH^i_{\qsyn}(\mathcal{X},\Prism^{(1)}_n)[u^\infty] \cong
\rH^i_{\qsyn}(\mathcal{X},\Prism_n)[u^\infty] \otimes_{k} \bar{k}[u]/(u^p) \to \rH^i(\mathcal{O}_{\mathcal{X}}/p^n)
\]
is given by the induced first Nygaard filtration on the source, which we know is exactly $u^e$ times the source,
thanks to \Cref{prop induced Nygaard filtration} (1).
Notice that, as an $\mathcal{O}_K$-algebra, we have $\bar{k}[u]/(u^e) \cong \mathcal{O}_K/p$.
Therefore we get the desired injection
\[
\rH^i_{\qsyn}(\mathcal{X},\Prism_n)[u^\infty] \otimes_{k} \bar{k}[u]/(u^e) \cong
\rH^i_{\qsyn}(\mathcal{X},\Prism_n)[u^\infty] \otimes_{k} \big(\mathcal{O}_K/p\big)
\hookrightarrow \rH^i(\mathcal{O}_{\mathcal{X}}/p^n).
\]
\end{proof}

\section{Geometric applications}
\label{geometric applications}
\subsection{The discrepancy of Albanese varieties}
\label{Albanese subsection}
In this subsection,
we give a geometric interpretation of $u$-torsion in the second Breuil--Kisin
prismatic cohomology.
Our main application in this subsection has partly been obtained by Raynaud in \cite{Ray79},
our method is of course quite different.
Without loss of generality, we assume our smooth proper (formal) scheme
$\mathcal{X}$ has an $\mathcal{O}_K$-point.
This can be arranged after an unramified extension of $\mathcal{O}_K$.

The generic fibre of $\mathcal{X}$ is a smooth proper rigid space $X$ over $\mathrm{Sp}(K)$
admitting a $K$-point.
Specialize the main result of \cite{HL00} to our case where $X$ has a smooth proper formal model,
we know the $\mathrm{Pic}^0(X)$ is an abeloid variety which has good reduction,
namely it is the rigid generic fibre of a formal abelian scheme over $\mathcal{O}_K$.
In the algebraic situation, the existence of abelian scheme integral model
follows from Serre--Tate's generalization \cite{ST68} of the N\'{e}ron--Ogg--Shafarevich's criterion.
For the general theory of N\'{e}ron model of abeloid variety, we refer
readers to \cite{Lut95}.
Now we can form the Albanese of $X$, which is a universal map
\[
g_K \colon X \to A
\]
from $X$ to abeloid varieties, see \cite[Section 4]{HL20}.
Since in this case $A$ is the dual of $\mathrm{Pic}^0(X)$, we know it also has good reduction:
namely the N\'{e}ron model of $A$ is a formal abelian scheme $\mathcal{A}$ over $\mathcal{O}_K$.
Lastly since $\mathcal{X}$ is smooth over $\mathcal{O}_K$, the N\'{e}ron mapping property
implies that the map $X \to A$ extends uniquely to
\[
g \colon \mathcal{X} \to \mathcal{A}
\]
over $\mathcal{O}_K$.
Take the special fibre of the above map, we get 
\[
g_0 \colon \mathcal{X}_0 \to \mathcal{A}_0.
\]
Now the Albanese theory tells us the above map factors:
\[
\xymatrix{
\mathcal{X}_0 \ar[rr]^{g_0} \ar[rd]^{h} & & \mathcal{A}_0 \\
& \mathrm{Alb}(\mathcal{X}_0) \ar[ru]^{f} &
}
\]
where $\mathrm{Alb}(\mathcal{X}_0)$ is the Albanese of $\mathcal{X}_0$.
Therefore, out of a pointed smooth proper formal scheme $\mathcal{X}$ over $\mathcal{O}_K$, we can cook
up a map $f \colon \mathrm{Alb}(\mathcal{X}_0) \to \mathcal{A}_0$ of abelian varieties over $k$.
What can we say about this map?

\begin{proposition}
The map $f \colon \mathrm{Alb}(\mathcal{X}_0) \to \mathcal{A}_0$ above is an isogeny
of $p$-power degree.
\end{proposition}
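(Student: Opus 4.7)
The plan is to show that for every prime $\ell \neq p$, the induced map $f^*$ on $\ell$-adic first \'etale cohomology is an isomorphism; this forces $f$ to be an isogeny whose kernel has no $\ell$-torsion for any $\ell \neq p$, hence of $p$-power degree.

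First I would reduce the claim about $f$ to a statement about cohomology of the special fibres. Since $\mathrm{Alb}(\mathcal{X}_0)$ is the Albanese of $\mathcal{X}_0$ (and $\mathcal{X}_0$ carries a rational point coming from the chosen $\mathcal{O}_K$-point of $\mathcal{X}$), the universal property of the classical Albanese gives that $h^* \colon \rH^1_{\et}(\mathrm{Alb}(\mathcal{X}_0),\mathbb{Z}_\ell) \to \rH^1_{\et}(\mathcal{X}_0,\mathbb{Z}_\ell)$ is an isomorphism for every $\ell \neq p$. Using $g_0 = f \circ h$, the map $f^*$ on $\rH^1_{\et}(-,\mathbb{Z}_\ell)$ is an isomorphism if and only if the pullback $g_0^*$ is.

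Next I would verify that $g_0^*$ is an isomorphism on $\rH^1(-,\mathbb{Z}_\ell)$ for $\ell \neq p$ by propagating through the smooth proper base change theorem. The map $g \colon \mathcal{X} \to \mathcal{A}$ of smooth proper formal $\mathcal{O}_K$-schemes induces on the one hand $g_0$ on special fibres and on the other hand $g_K \colon X \to A$ on rigid generic fibres. For $\ell \neq p$, smooth and proper base change supplies functorial isomorphisms $\rH^1_{\et}(\mathcal{X}_0,\mathbb{Z}_\ell) \xrightarrow{\sim} \rH^1_{\et}(X_C,\mathbb{Z}_\ell)$ and $\rH^1_{\et}(\mathcal{A}_0,\mathbb{Z}_\ell) \xrightarrow{\sim} \rH^1_{\et}(A_C,\mathbb{Z}_\ell)$. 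The universal property of $g_K$ as the Albanese of the pointed rigid space $X$ yields that $g_K^*$ induces an isomorphism on $\rH^1_{\et}(-,\mathbb{Z}_\ell)$; chasing around the square tells us $g_0^*$, and hence $f^*$, is an isomorphism on $\rH^1(-,\mathbb{Z}_\ell)$.

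Finally, I would conclude: since $f$ is a morphism of abelian varieties over the perfect field $k$ inducing an isomorphism on $\rH^1_{\et}(-,\mathbb{Q}_\ell)$ for some $\ell \neq p$, source and target have the same dimension and $f$ has finite kernel, so $f$ is an isogeny. Applying the same for each $\ell \neq p$, the dual Tate module sequence $0 \to T_\ell \mathrm{Alb}(\mathcal{X}_0) \to T_\ell \mathcal{A}_0 \to \ker(f)[\ell^\infty](\bar k) \to 0$ shows that $\ker(f)$ has trivial $\ell$-primary part for every $\ell \neq p$, so the order of $\ker(f)$ is a power of $p$.

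The only place where care is required is a clean invocation of smooth proper base change for the formal scheme $\mathcal{X}/\mathcal{O}_K$ relating $\mathcal{X}_0$ and $X_C$; this is standard (e.g.\ via algebraization of $\mathcal{X}$ using ampleness coming from an embedding into a projective space after spreading out, or directly via the nearby cycles for smooth proper formal schemes), and the analogous input for $\mathcal{A}$ is already explicitly used in the Serre--Tate theorem that produced $\mathcal{A}$ in the first place.
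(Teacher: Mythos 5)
Your proposal is correct and takes essentially the same route as the paper: reduce to showing $f^*$ is an isomorphism on $\rH^1_{\et}(-,\mathbb{Z}_\ell)$ for $\ell \neq p$, invoke the Albanese universal property for both $h$ and $g_K$, and bridge special and generic fibres via smooth and proper base change. The paper's proof is just a terser version of the same argument, using the factorization $h^* \circ f^* = g_0^*$ to conclude.
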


\begin{proof}
It suffices to show that $f$ induces an isomorphism of the first $\ell$-adic \'{e}tale cohomology
for all primes $\ell \not= p$.
From now on we fix such an $\ell$.
Usual Albanese theory tells us that the Albanese maps
$h \colon \mathcal{X}_0 \to \mathrm{Alb}(\mathcal{X}_0)$
and $g_K \colon X \to A$
induces an isomorphism of the first $\ell$-adic \'{e}tale cohomology.
To finish the proof, we just use the smooth and proper base change theorems in
\'{e}tale cohomology theory to see that the map $g_0$,
being reduction of the ``smooth proper model'' $g$ of $g_K$,
also induces an isomorphism of the first $\ell$-adic \'{e}tale cohomology.
Since $h^* \circ f^* = g_0^*$ and both $h^*$ and $g_0^*$ induces an isomorphism
of the first $\ell$-adic \'{e}tale cohomology, we conclude that $f^*$ also does.
\end{proof}

Let us denote the finite $p$-power order group scheme $\ker(f)$ by $G$.
The Dieudonn\'{e} module of $G$ is related to $\mathcal{X}$ in the following way.

\begin{theorem}
\label{defect of Albanese}
We have an isomorphism of $W$-modules
\[
\mathbb{D}(G) \cong \rH^2_{\Prism}(\mathcal{X}/\s)^{(1)}[u].
\]
Under this identification, the semi-linear Frobenius $F$ on the left hand side
and the semi-linear Frobenius $\varphi$ on the left hand side
are related via $F = u^{p-1} \cdot \varphi$,
and the linear Verschiebung on the left hand side can be understood as
\[
\rH^2_{\Prism}(\mathcal{X}/\s)[u]
\xleftarrow[\cong]{\varphi_1} \rH^2_{\qsyn}\Fil^1_{\mathrm{N}}(\mathcal{X}/\s)[u]
\xrightarrow{\text{incl}} \rH^2_{\Prism}(\mathcal{X}/\s)^{(1)}[u].
\]
\end{theorem}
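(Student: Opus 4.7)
The plan is to derive the isomorphism from the Bockstein long exact sequence attached to the short exact sequence of quasi-syntomic sheaves
\[
0 \to \Prism^{(1)} \xrightarrow{\cdot u} \Prism^{(1)} \to i_*\Cris \to 0
\]
recalled just before \Cref{induced Frob}. Since $\rH^1_{\Prism}(\mathcal{X}/\s)^{(1)}$ is $u$-torsion free by \Cref{cor control u-torsion}(1) applied with $i=1$ (where the hypothesis $e\cdot 0 < p-1$ is trivially satisfied), the resulting long exact sequence collapses to a short exact sequence
\[
0 \to \rH^1_{\Prism}(\mathcal{X}/\s)^{(1)}/u \to \rH^1_{\cris}(\mathcal{X}_0/W) \to \rH^2_{\Prism}(\mathcal{X}/\s)^{(1)}[u] \to 0.
\]
The remaining task is to match this sequence with the contravariant Dieudonn\'{e} realization of $0 \to G \to \mathrm{Alb}(\mathcal{X}_0)[p^\infty] \to \mathcal{A}_0[p^\infty] \to 0$, the short exact sequence of $p$-divisible groups over $k$ arising from the isogeny $f$.

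For the middle term, classical Dieudonn\'{e} theory combined with the Albanese isomorphism $h^*\colon \rH^1_{\cris}(\mathrm{Alb}(\mathcal{X}_0)/W) \xrightarrow{\cong} \rH^1_{\cris}(\mathcal{X}_0/W)$ yields $\rH^1_{\cris}(\mathcal{X}_0/W) \cong \mathbb{D}(\mathrm{Alb}(\mathcal{X}_0)[p^\infty])$. For the left term, I identify $\rH^1_{\Prism}(\mathcal{X}/\s)$ as the Breuil--Kisin module that Kisin's theory associates to the $p$-divisible group $\mathcal{A}[p^\infty]$: its \'{e}tale realization is $\rH^1_{\et}(X_{\bar K},\mathbb{Z}_p) \cong T_p(A)^\vee$ via the \'{e}tale comparison of prismatic cohomology together with the Albanese isomorphism in degree one, and Kisin's equivalence between Breuil--Kisin modules of height $\leq 1$ and $p$-divisible groups over $\mathcal{O}_K$ pins down the corresponding $p$-divisible group as $\mathcal{A}[p^\infty]$; its mod-$u$ reduction, via the crystalline comparison, is then $\mathbb{D}(\mathcal{A}_0[p^\infty])$. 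By functoriality of all relevant comparisons along the N\'{e}ron-mapping map $g\colon \mathcal{X}\to \mathcal{A}$, the first arrow of the Bockstein sequence is the contravariant Dieudonn\'{e} realization of $f$; exactness of $\mathbb{D}$ on the $p$-divisible group sequence then identifies its cokernel $\rH^2_{\Prism}(\mathcal{X}/\s)^{(1)}[u]$ with $\mathbb{D}(G)$.

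The Frobenius compatibility $F = u^{p-1}\cdot \varphi$ is a direct consequence of \Cref{induced Frob}(2), which computes the induced semi-linear Frobenius on $\rH^2_{\Prism}(\mathcal{X}/\s)^{(1)}[u]$ from the Frobenius operators on $\rH^1_{\Prism}(\mathcal{X}/\s)^{(1)}$ and on $\rH^1_{\cris}(\mathcal{X}_0/W)$; naturality matches this induced operator with the Dieudonn\'{e}-module Frobenius on $\mathbb{D}(G)$. The Verschiebung description $V = \mathrm{incl}\circ \varphi_1^{-1}$ through the first step of the Nygaard filtration is exactly the content of \Cref{induced Vers}.

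The main obstacle I anticipate is the identification of $\rH^1_{\Prism}(\mathcal{X}/\s)$ with the Breuil--Kisin module of $\mathcal{A}[p^\infty]$ as a Frobenius module over $\s$ (not merely at the level of $G_K$-representations). While the match of \'{e}tale realizations is immediate from Albanese theory and the \'{e}tale comparison, matching the $\s$-module and $\varphi$-structure requires appealing to the full strength of Kisin's equivalence and its compatibility with the crystalline comparison; a perhaps cleaner route is to invoke functoriality of prismatic cohomology along $g$ to reduce to the analogous statement for the abelian scheme $\mathcal{A}$, where everything is well-understood.
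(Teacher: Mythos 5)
Your proposal is correct and coincides with the paper's argument in all structural respects: the key short exact sequence
\[
0 \to \rH^1_{\Prism}(\mathcal{X}/\s)^{(1)}/u \to \rH^1_{\cris}(\mathcal{X}_0/W) \to \rH^2_{\Prism}(\mathcal{X}/\s)^{(1)}[u] \to 0,
\]
the identification of the middle term through Illusie's comparison of crystalline cohomology with that of the Albanese, the identification of the left term with $\mathbb{D}(\mathcal{A}_0[p^\infty])$, and the appeal to \Cref{induced Frob}(2) and \Cref{induced Vers} for the Frobenius and Verschiebung descriptions.

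The one place where your primary route diverges from the paper's is in establishing that $\rH^1_{\Prism}(\mathcal{X}/\s)^{(1)}/u$ sits inside $\rH^1_{\cris}(\mathcal{X}_0/W)$ exactly as $g_0^*\colon \rH^1_{\cris}(\mathcal{A}_0/W) \to \rH^1_{\cris}(\mathcal{X}_0/W)$. You propose to identify $\rH^1_{\Prism}(\mathcal{X}/\s)$ with the Breuil--Kisin module of $\mathcal{A}[p^\infty]$ by matching \'{e}tale realizations and invoking Kisin's classification of $p$-divisible groups by height-$\leq 1$ modules. The paper instead shows directly that $g^*\colon \rH^1_{\Prism}(\mathcal{A}/\s)^{(1)} \to \rH^1_{\Prism}(\mathcal{X}/\s)^{(1)}$ is an isomorphism of (finite free) Kisin modules, using only full faithfulness of the \'{e}tale realization on finite free Kisin modules --- this avoids the need to produce the $p$-divisible group and the $\psi$-structure explicitly, and automatically supplies the compatibility of the left vertical arrow in the comparison diagram. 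You correctly flag this route yourself in your final paragraph (``invoke functoriality of prismatic cohomology along $g$''); it is indeed the cleaner and is the one taken in the text. What the paper's route buys is that one never has to check that the height-$\leq 1$ hypothesis and the Verschiebung data line up on $\rH^1_{\Prism}(\mathcal{X}/\s)$ --- it suffices to know that it is finite free (\Cref{finite free in low degree and ramification} or via torsion-freeness of crystalline cohomology of abelian varieties) and then propagate $g^*$ being an isomorphism from the \'{e}tale side.
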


\begin{proof}
The Dieudonn\'{e} module of $G$ in our situation is given by
\[
\mathbb{D}(G) \cong 
\mathrm{Coker}\left(f^* \colon \rH^1_{\cris}(\mathcal{A}_0/W) \to \rH^1_{\cris}(\mathrm{Alb}(\mathcal{X}_0)/W)\right),
\]
so we need to understand the above map $f^*$.

We want to relate everything to $\mathcal{X}$.
First by \cite[Remarque 3.11.2]{Ill79} we know the map 
\[
h^* \colon \rH^1_{\cris}(\mathrm{Alb}(\mathcal{X}_0)/W)
\to \rH^1_{\cris}(\mathcal{X}_0/W)
\]
is an isomorphism.
Therefore by composing with $h^*$ we have
\[
\mathbb{D}(G) \cong 
\mathrm{Coker}\left(g_0^* \colon \rH^1_{\cris}(\mathcal{A}_0/W) \to \rH^1_{\cris}(\mathcal{X}_0/W)\right).
\]
Next we use the crystalline comparison of prismatic cohomology \cite[Theorem 1.8.(1)]{BS19}, and get the following diagram 
\[
\xymatrix{
\rH^1_{\Prism}(\mathcal{A}/\s)^{(1)} \ar@{->>}[r] \ar[d]^{g^*}_-{\cong} & 
\rH^1_{\Prism}(\mathcal{A}/\s)^{(1)}/u \ar[r]^-{\cong} \ar[d]^{g^*}_-{\cong} & 
\rH^1_{\cris}(\mathcal{A}_0/W) \ar[d]^{g_0^*} \\
\rH^1_{\Prism}(\mathcal{X}/\s)^{(1)} \ar@{->>}[r] & 
\rH^1_{\Prism}(\mathcal{X}/\s)^{(1)}/u \ar@{^{(}->}[r] &
\rH^1_{\cris}(\mathcal{X}_0/W).
}
\]
We postpone the proof of the left (and therefore the middle) vertical arrow being an isomorphism of $\varphi$-modules over $\s$
to the next Proposition.
The right horizontal arrows are injective because of the standard sequence
$0 \to \rH^i_{\qsyn}(\Prism^{(1)})/u \to \rH^i_{\qsyn}(\Prism^{(1)}/u) \to \rH^{i+1}_{\qsyn}(\Prism^{(1)})[u] \to 0$.
The top right horizontal arrow is an isomorphism, as the (Breuil--Kisin) prismatic cohomology
of abelian schemes are finite free, which in turn follows from the torsion-freeness
of the crystalline cohomology of abelian varieties and \Cref{finite free follows from crystalline torsion free}.
The above diagram and sequence tells us that $g_0^*$ is injective with cokernel given by
$\rH^2_{\Prism^{(1)}}(\mathcal{X}/\s)[u]$.
The description of (the semi-linear) Frobenius follows from \Cref{induced Frob} (2),
and the description of the linear Verschiebung follows from \Cref{induced Vers}.
\end{proof}

The following Proposition was summoned in the above proof.

\begin{proposition}
\leavevmode
\begin{enumerate}
\item The underlying $\s$-module of $\rH^1_{\Prism}(\mathcal{X}/\s)$ is finite free; and
\item The map $g^* \colon \rH^1_{\Prism}(\mathcal{A}/\s) \to \rH^1_{\Prism}(\mathcal{X}/\s)$ is an isomorphism of Kisin modules.
Therefore the Frobenius-twisted version
$g^* \colon \rH^1_{\Prism}(\mathcal{A}/\s)^{(1)} \to \rH^1_{\Prism}(\mathcal{X}/\s)^{(1)}$
is also an isomorphism.
\end{enumerate}
\end{proposition}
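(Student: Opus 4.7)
For part (1), I invoke \Cref{finite free in low degree and ramification} with $i=1$. Since the inequality $e\cdot(1-1)=0<p-1$ is automatic, that result supplies an abstract isomorphism of $\s$-modules
\[
\rH^1_{\Prism}(\mathcal{X}/\s)\simeq \rH^1_{\et}(\mathcal{X}_C,\mathbb{Z}_p)\otimes_{\mathbb{Z}_p}\s.
\]
Since $\rH^1_{\et}(\mathcal{X}_C,\mathbb{Z}_p)$ is the $p$-adic Tate module of the Albanese abeloid of $X_C$, it is torsion free over $\mathbb{Z}_p$, so the tensor product is a finite free $\s$-module. An equivalent route is to combine Illusie's identification $\rH^1_{\cris}(\mathcal{X}_0/W)\cong \rH^1_{\cris}(\mathrm{Alb}(\mathcal{X}_0)/W)$ with torsion-freeness of the first crystalline cohomology of an abelian variety, and then invoke \Cref{finite free follows from crystalline torsion free}.

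For part (2), part (1) applied both to $\mathcal{X}$ and to the abelian scheme $\mathcal{A}$ shows that both $M_1:=\rH^1_{\Prism}(\mathcal{A}/\s)$ and $M_2:=\rH^1_{\Prism}(\mathcal{X}/\s)$ are finite free $\s$-modules of common rank $2g$, where $g=\dim A$. In particular, $g^*\colon M_1\to M_2$ is an iso iff $\det(g^*)\in\s^\times$, equivalently iff its image in $\s/(p,u)=k$ is nonzero. The plan is to check the stronger statement that $g^*\otimes_\s \mathcal{O}_K$ is already an iso, by invoking the Hodge--Tate comparison for Breuil--Kisin prismatic cohomology.

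For any smooth proper formal $\mathcal{Y}/\mathcal{O}_K$, Hodge--Tate comparison provides a natural two-step filtration on $\rH^1_{\Prism}(\mathcal{Y}/\s)\otimes_\s \mathcal{O}_K$ whose graded pieces are $\rH^{1}(\mathcal{Y},\mathcal{O}_{\mathcal{Y}})$ and $\rH^{0}(\mathcal{Y},\Omega^1_{\mathcal{Y}/\mathcal{O}_K})\{-1\}$. Applied to $\mathcal{Y}=\mathcal{A}$ and $\mathcal{Y}=\mathcal{X}$, both graded pieces are finite free of rank $g$ over $\mathcal{O}_K$ (for $\mathcal{X}$, $\rH^1(\mathcal{O}_{\mathcal{X}})$ is the Lie algebra of $\mathrm{Pic}^0(\mathcal{X}/\mathcal{O}_K)\cong\mathcal{A}^\vee$, while $\rH^0(\Omega^1_{\mathcal{X}/\mathcal{O}_K})$ matches the cotangent space at the origin of $\mathcal{A}$), and the pullback $g^*$ preserves the filtration and is iso on each graded piece by standard functoriality of the Albanese map (pullback of invariant $1$-forms is iso, and the Lie-algebra map corresponds to the identity on the common Picard scheme). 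Hence $g^*\otimes_\s\mathcal{O}_K$ is iso, so the image of $\det(g^*)$ in $\mathcal{O}_K$ is a unit; reducing further modulo $\pi$ (equivalently $u$) yields $\det(g^*)\bmod(p,u)\in k^\times$, whence $\det(g^*)\in\s^\times$ and $g^*$ is iso. The Frobenius-twisted version then follows from flatness of $\varphi_\s$. The main obstacle is verifying the identification $\mathrm{Pic}^0(\mathcal{X}/\mathcal{O}_K)\cong\mathcal{A}^\vee$ as formal abelian schemes, and lifting the classical generic-fiber Albanese-iso of Hodge groups to the formal scheme level; both should be routine under the smooth proper hypothesis together with the defining property of $\mathcal{A}$ as the N\'eron model, but care is needed.
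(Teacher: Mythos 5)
Part (1) is fine and matches the paper; you even offer both of the two proofs that the paper mentions. Part (2), however, has a genuine gap, and the paper takes a fundamentally different route.

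Your plan for (2) is to show that $g^*\otimes_\s\mathcal{O}_K$ is already an isomorphism by comparing Hodge--Tate graded pieces. This fails for two compounding reasons, both of which are exactly the pathologies this paper is designed to detect. First, the claim that $\rH^1_{\Prism}(\mathcal{Y}/\s)\otimes_\s\mathcal{O}_K$ carries a two-step filtration with graded pieces $\rH^1(\mathcal{O}_{\mathcal{Y}})$ and $\rH^0(\Omega^1_{\mathcal{Y}/\mathcal{O}_K})\{-1\}$ confuses $\rH^1_{\Prism}/E$ with $\rH^1_{\rm HT}$. From the exact triangle $\Prism\xrightarrow{E}\Prism\to\bar{\mathcal{O}}_{\Prism}$ one only gets a short exact sequence $0\to\rH^1_{\Prism}/E\to\rH^1_{\rm HT}\to\rH^2_{\Prism}[E]\to0$; the third term is precisely the $u$-torsion whose presence the whole paper is about, and it need not vanish for $\mathcal{X}$. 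Second, even on the actual Hodge--Tate cohomology the claimed isomorphism on graded pieces is \emph{false integrally}: in the example of \Cref{subsection example}, $g$ factors through an isogeny $\mathcal{E}_{\mathcal{O}_K}\to\mathcal{A}$ with kernel $\mu_{p^n}$, so $g^*\colon\rH^0(\Omega^1_{\mathcal{A}/\mathcal{O}_K})\to\rH^0(\Omega^1_{\mathcal{X}/\mathcal{O}_K})$ is an injection of free rank-one $\mathcal{O}_K$-modules with cokernel $\omega_{\mu_{p^n}}\cong\mathcal{O}_K/p^n\neq0$, not an isomorphism. Likewise $\mathrm{Pic}^0(\mathcal{X}/\mathcal{O}_K)\cong\mathcal{A}^\vee$ fails whenever $\mathrm{Pic}^0(\mathcal{X}_0)$ is non-reduced, which is the interesting case (see the paper's remark after \Cref{defect controlled by ramification index}). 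So the ``standard functoriality'' you invoke only gives injectivity, not surjectivity; the two defects ($\rH^2_{\Prism}[E]\neq0$ and nontrivial cokernel on $\rH^0(\Omega^1)$) happen to cancel so that the conclusion is still true, but your argument does not show this.

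The paper avoids all of this by never passing to Hodge--Tate cohomology. Instead it uses that the \'etale realization functor on finite free Kisin modules is fully faithful (\cite[Proposition 2.1.12]{KisinFcrystal}, \cite[Theorem 7.2]{BS21}), so $g^*$ is an isomorphism once its \'etale realization is. Faithfully flat base change $\s\to A_{\inf}$ and the \'etale comparison \cite[Theorem 1.8.(4)]{BS19} reduce this to showing $g_K^*\colon\rH^1_{\et}(A_C,\mathbb{Z}_p)\to\rH^1_{\et}(X_C,\mathbb{Z}_p)$ is an isomorphism, which in turn uses the Kummer sequence plus the crucial input that $\mathrm{Pic}^0(X)$ is an abeloid (from \cite{HL00}, guaranteed by the existence of a smooth proper formal model). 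If you want to salvage a ``reduce mod maximal ideal of $\s$'' argument, you would need direct control on $\rH^1_{\Prism}\otimes_\s k$, which again runs into $\rH^2_{\Prism}[u]$; the \'etale fully-faithfulness route sidesteps this cleanly.
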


\begin{proof}
(1): this follows from \Cref{finite free in low degree and ramification}.
Alternatively we can prove this using
\Cref{finite free follows from crystalline torsion free}, \cite[Remarque 3.11.2]{Ill79}
and the fact that crystalline cohomology of abelian varieties are torsion-free.

(2): Since \'{e}tale realization of finite free Kisin modules is fully faithful,
see \cite[Proposition 2.1.12]{KisinFcrystal} and also \cite[Theorem 7.2]{BS21},
we are reduced to checking the \'{e}tale realization of $g^*$ is an isomorphism.
Since the map $\s \to A_{\inf}$ sending $u$ to $[\underline{\pi}]$ is $p$-completely faithfully flat,
it remains so after $p$-completely inverting $u$ and $[\underline{\pi}]$ respectively.
Therefore we are further reduced to proving it for the $W(C^{\flat})$-\'{e}tale realizations.
Now the \'{e}tale comparison \cite[Theorem 1.8.(4)]{BS19} translates the above
to the statement that $g_K$ induces an isomorphism of first $p$-adic \'{e}tale cohomology,
which follows from the usual Kummer sequence together with the fact that
the Picard variety of $X$ is an abeloid.\footnote{Note that in general Albanese of smooth proper
rigid spaces (granting its existence) always induces an injective but not necessarily surjective
map of first \'{e}tale cohomology, no matter $\ell$-adic or $p$-adic,
see \cite[Proposition 4.4, Example 5.2 and Example 5.8]{HL20}.
The surjectivity is equivalent to the Picard variety being an abeloid (assuming $p$ is invertible
in the ground non-archimedean field).}
\end{proof}


We get two consequences from \Cref{defect of Albanese}.

\begin{corollary}
The finite group scheme $G$ is connected.
\end{corollary}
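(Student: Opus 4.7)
The plan is to translate the connectedness of $G$ into a nilpotency statement via Dieudonn\'{e} theory, and then read this off the explicit description of $\mathbb{D}(G)$ afforded by \Cref{defect of Albanese}. Since $G$ is a finite commutative $p$-power group scheme over the perfect field $k$, being connected is equivalent to the group scheme Frobenius $F_G$ being nilpotent, which in turn (via the contravariant Dieudonn\'{e} convention used here, as witnessed by the identification $\mathbb{D}(G) \cong \mathrm{Coker}(f^*)$ via $\rH^1_{\cris}$) amounts to the nilpotency of the semi-linear Frobenius $F$ on $\mathbb{D}(G)$.

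By \Cref{defect of Albanese}, the operator $F$ is identified with $u^{p-1} \cdot \varphi$ on $\rH^2_{\Prism}(\mathcal{X}/\s)^{(1)}[u]$, where $\varphi$ denotes the ambient semi-linear Frobenius on $\rH^2_{\Prism}(\mathcal{X}/\s)^{(1)}$. I would iterate, using the relation $\varphi(u \cdot y) = u^p \cdot \varphi(y)$, to obtain
\[
F^n \;=\; u^{(p-1)(1+p+\cdots+p^{n-1})} \cdot \varphi^n \;=\; u^{p^n-1} \cdot \varphi^n
\]
for every $n \geq 0$, and then argue that the right hand side vanishes on $u$-torsion once $n$ is sufficiently large.

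The key input for the vanishing is that $\mathfrak{M} \coloneqq \rH^2_{\Prism}(\mathcal{X}/\s)^{(1)}[u^\infty]$ is a finitely generated $\s$-submodule of $\rH^2_{\Prism}(\mathcal{X}/\s)^{(1)}$, containing $\rH^2_{\Prism}(\mathcal{X}/\s)^{(1)}[u]$; being finitely generated and $u$-power torsion, $\mathfrak{M}$ is annihilated by some fixed $u^N$, and it is preserved by $\varphi$ (since $u^a x = 0$ forces $u^{pa} \varphi(x) = 0$). Granting this, for any $x \in \rH^2_{\Prism}(\mathcal{X}/\s)^{(1)}[u]$ one has $\varphi^n(x) \in \mathfrak{M}$, so $F^n(x) = u^{p^n-1} \varphi^n(x) = 0$ as soon as $p^n - 1 \geq N$. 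This shows $F$ is nilpotent on $\mathbb{D}(G)$ and concludes that $G$ is connected. The one point needing care is pinning down the Dieudonn\'{e} convention so that nilpotency of the operator written as $F$ on $\mathbb{D}(G)$ genuinely corresponds to connectedness of $G$; this is forced by the way $\mathbb{D}(G)$ is produced from $\rH^1_{\cris}$ in \Cref{defect of Albanese}.
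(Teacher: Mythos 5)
Your proof is correct and follows essentially the same approach as the paper: both arguments deduce nilpotency of $F$ on $\mathbb{D}(G)$ from the identity $F = u^{p-1}\varphi$ of \Cref{defect of Albanese}, iterating to gain $u$-divisibility and using that the $u^\infty$-torsion submodule of $\rH^2_{\Prism}(\mathcal{X}/\s)^{(1)}$ is finitely generated and hence killed by a fixed power of $u$. Your write-up is slightly more explicit about the exponent bookkeeping and the Dieudonn\'{e} convention, but the idea and the key inputs are identical.
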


\begin{proof}
Since the induced Frobenius on $\mathbb{D}(G)$, when identified with 
$\rH^2_{\Prism}(\mathcal{X}/\s)[u] \subset \rH^2_{\Prism}(\mathcal{X}/\s)[u^{\infty}]$, is divisible by $u^{p-1}$,
powers of Frobenius will gain more and more $u$-divisibility.
We see the Frobenius is nilpotent as 
$\rH^2_{\Prism}(\mathcal{X}/\s)[u] \subset \rH^2_{\Prism}(\mathcal{X}/\s)[u^{\infty}]$
and there is a power of $u$ which kills the latter.
Now \Cref{defect of Albanese} implies the Frobenius on $\mathbb{D}(G)$ is nilpotent,
therefore $G$ is connected.
\end{proof}

\begin{remark}
The above fact can actually be seen directly.
Let us quotient out $\mathrm{Alb}(\mathcal{X}_0)$ by the neutral component subgroup scheme of $G$,
denoted by $\mathcal{A}_0'$.
Then we get a factorization $\mathcal{X}_0 \to \mathcal{A}_0' \xrightarrow{f'_0} \mathcal{A}_0$ of $g_0$.
Now $f'_0$ is finite \'{e}tale by construction.
Hence deformation theory implies the above sequence lift to
$\mathcal{X} \to \mathcal{A}' \xrightarrow{f'} \mathcal{A}$ with $\mathcal{A}'$
being a formal abelian scheme finite \'{e}tale above $\mathcal{A}$.
Now the composition of the above map is the universal map from $\mathcal{X}$ to formal abelian schemes
as pointed formal schemes,\footnote{We use the N\'{e}ron mapping
property and the fact that the generic fibre map being the Albanese map.} we conclude that the map $f'$ has to be an isomorphism,
hence the neutral component subgroup scheme of $G$ is $G$ itself.
\end{remark}

Combining \Cref{defect of Albanese}, \Cref{cor control u-torsion} (with $i = 2$) and \Cref{induced Vers Corollary 3},
we immediately yield the following result.
\begin{corollary}
\label{defect controlled by ramification index}
\leavevmode
\begin{enumerate}
\item If $e < p-1$ then the map $f \colon \mathrm{Alb}(\mathcal{X}_0) \to \mathrm{Alb}(X)_0$ is an isomorphism.
\item If $e < 2(p-1)$ then $\ker(f)$ is $p$-torsion.
\item If $e = p-1$ then $\ker(f)$ is $p$-torsion and of multiplicative type, hence must be a form of several copies of $\mu_p$.
Moreover there is a canonical injection of $\mathcal{O}_K$-modules
$\mathbb{D}(\ker(f)) \otimes_k \big(\mathcal{O}_K/p\big) \hookrightarrow \rH^2(\mathcal{X}, \mathcal{O}_{\mathcal{X}})$.
\end{enumerate}
\end{corollary}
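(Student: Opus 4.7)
The plan is a direct chaining of \Cref{defect of Albanese}, \Cref{cor control u-torsion} (applied with $i=2$), \Cref{induced Vers Corollary 3}, and \Cref{torsion in O bounds prismatic torsion}. Setting $G := \ker(f)$, \Cref{defect of Albanese} identifies $\mathbb{D}(G) \cong \rH^2_{\Prism}(\mathcal{X}/\s)^{(1)}[u]$ and gives explicit descriptions of both Frobenius ($F = u^{p-1}\cdot \varphi$) and the linear Verschiebung in terms of prismatic data. Hence the entire structure of $G$ is controlled by the $u$-torsion in the Frobenius twist of $\mathfrak{M}^2_{\infty}$, and by flatness of $\varphi_{\s}$ this in turn is controlled by $\mathfrak{M}^2_{\infty}$ itself.

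For (1), $e < p-1$ gives $e\cdot(i-1) < p-1$ for $i=2$, so \Cref{cor control u-torsion}(1) yields $\mathfrak{M}^2_{\infty}=0$, hence $\mathbb{D}(G)=0$ and $G=0$. For (2), \Cref{cor control u-torsion}(2) gives $\mathrm{Ann}(\mathfrak{M}^2_{\infty}) + (u) \supset (p,u)$: writing $p = a + ub$ with $a \in \mathrm{Ann}(\mathfrak{M}^2_{\infty})$ and Frobenius-pushing forward, $\varphi_{\s}(a) = p - u^p\varphi_{\s}(b)$ annihilates the Frobenius twist, so for any $x \in \mathfrak{M}^{2,(1)}_{\infty}[u]$ we compute $p\cdot x = u^p\varphi_{\s}(b)x = u^{p-1}\varphi_{\s}(b)(u\cdot x)=0$; thus $p$ kills $\mathbb{D}(G)$.

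For (3), \Cref{cor control u-torsion}(3) directly gives $\mathrm{Ann}(\mathfrak{M}^2_{\infty}) \supset (p,u)$, making $\mathfrak{M}^2_{\infty}$ a $k$-vector space, which by the same computation as in (2) forces $\mathbb{D}(G)$ to be $p$-torsion. For the multiplicative type claim, I invoke \Cref{induced Vers Corollary 3} to conclude that the induced Verschiebung $V_1\colon \rH^2_{\Prism}[u] \to \rH^2_{\Prism}^{(1)}[u]$ is an isomorphism; by the description of $V$ in \Cref{defect of Albanese}, this is precisely the linear Verschiebung on $\mathbb{D}(G)$. Since $p = FV = 0$ on $\mathbb{D}(G)$ and $V$ is bijective, $F=0$ on $\mathbb{D}(G)$, which by classical Dieudonn\'e theory says $G$ is of multiplicative type; its Cartier dual is \'etale $p$-torsion, hence a form of $(\Z/p)^r$, so $G$ itself is a form of $\mu_p^r$.

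For the injection in (3), \Cref{torsion in O bounds prismatic torsion} with $i=2, n=\infty$ yields $\mathfrak{M}^2_{\infty} \otimes_k (\mathcal{O}_K/p) \hookrightarrow \rH^2(\mathcal{X},\mathcal{O}_{\mathcal{X}})$. It remains to identify $\mathbb{D}(G) \otimes_k (\mathcal{O}_K/p)$ with $\mathfrak{M}^2_{\infty} \otimes_k (\mathcal{O}_K/p)$: since $\mathfrak{M}^2_{\infty}$ is a $k$-vector space, $\mathfrak{M}^{2,(1)}_{\infty} \cong \mathfrak{M}^2_{\infty} \otimes_k (k[u]/u^p)$ and its $u$-torsion is $u^{p-1}\cdot\mathfrak{M}^{2,(1)}_{\infty}$, canonically $\mathfrak{M}^2_{\infty}$ as a $k$-module up to the Frobenius of $k$ (which is a bijection on the perfect field); after tensoring with $\mathcal{O}_K/p$ over $k$ this twist is immaterial, and we are done. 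The main piece of bookkeeping is the Frobenius twist and $u^{p-1}$ shift relating $\mathbb{D}(G)$ to $\mathfrak{M}^2_{\infty}$; once these are handled, all three parts follow almost immediately from the cited results.
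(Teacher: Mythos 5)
Your proposal is correct and follows exactly the same route as the paper: identify $\mathbb{D}(\ker f)$ with $\rH^2_{\Prism}(\mathcal{X}/\s)^{(1)}[u]$ via \Cref{defect of Albanese}, then feed in \Cref{cor control u-torsion} (with $i=2$), \Cref{induced Vers Corollary 3}, and \Cref{torsion in O bounds prismatic torsion}. The paper's own proof is extremely terse (``follows from\dots''), and your write-up usefully supplies the details it leaves implicit: in particular the computation in (2) showing that $p\in\mathrm{Ann}(\mathfrak{M}^2)+(u)$ really does force $p$ to kill the $u$-torsion of the Frobenius twist, via $\varphi_\s(a)=p-u^p\varphi_\s(b)$ and the extra factor $u^{p-1}$. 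In (3), the detour through $F=0$ is unnecessary — bijectivity of $V$ already characterizes multiplicative type over a perfect field — but the logic is sound. The Frobenius-(un)twist bookkeeping in the last paragraph is slightly loose (it is $\mathrm{Frob}_k^*(\mathfrak{M}^2)$, not $\mathfrak{M}^2$, whose tensor with $k[u]/u^p$ gives $\mathfrak{M}^{2,(1)}$), but this mirrors the same imprecision already present in the statement and proof of \Cref{torsion in O bounds prismatic torsion}, so you are not worse off than the paper.
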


\begin{proof}
(1) and (2) follows from \Cref{defect of Albanese} and \Cref{cor control u-torsion} (with $i = 2$) (1) and (2) respectively.
As for the multiplicativity claim in (3):
recall that a finite flat group scheme over $k$ is of multiplicative type if and only
if its Dieudonn\'{e} module has bijective Verschiebung,
hence the claim follows from \Cref{defect of Albanese} and \Cref{induced Vers Corollary 3}.
The last sentence follows from \Cref{torsion in O bounds prismatic torsion}.
\end{proof}

When $e = 1$ and  $p=2$ the above says that although the $f$ need not be an isomorphism, the kernel is always a
$2$-torsion,
such an interesting example can be found in \cite[Subsection 2.1]{BMS1},
and one can check directly that the example there does satisfy our prediction here.
In fact the $f$ for their example can be identified with the relative Frobenius
of an ordinary elliptic curve (which is the reduction of the $E$ in their notation)
over $\mathbb{F}_2$.
For a generalization of this example to the case when $p \not= 2$, we refer readers to our \Cref{subsection example},
and specifically our \Cref{rmk stacky example} and \Cref{rmk example} (3).

\begin{remark}
If $\mathrm{Pic}^0(\mathcal{X}_0)$ is reduced, then the relative (formal) Picard scheme of $\mathcal{X}/\mathcal{O}_K$
is a formal abelian scheme which is the N\'{e}ron model of the Picard variety of $X/K$.
Base change property of relative Picard functor now guarantees that the $f$ we have been studying is an isomorphism
in this case. Therefore combining with \Cref{defect of Albanese} we see that
$\mathcal{X}_0$ having reduced Picard scheme implies the second prismatic cohomology of $\mathcal{X}$ has no $u$-torsion.
\end{remark}

The dual question of what we discussed here was studied by Raynaud \cite{Ray79}, below
we recall some of the main results in loc.~cit.~and compare with ours.

\begin{remark}
\label{translation to Picard remark}
Using determinant construction \cite{KM76}, 
the universal line bundle on $\mathcal{X}_K \times_K \mathrm{Pic}^0(\mathcal{X}_K)$
extends to a line bundle on $\mathcal{X} \times_{\mathcal{O}_K} \mathcal{P}$,
where $\mathcal{P}$ is the formal N\'{e}ron model of $\mathrm{Pic}^0(\mathcal{X}_K)$
which is itself a formal abelian scheme over $\mathcal{O}_K$.
Here we used the regularity of $\mathcal{X} \times_{\mathcal{O}_K} \mathcal{P}$
so that any coherent sheaf on it can be presented as a perfect complex,
in order to perform the determinant construction.
Moreover if we rigidify using the given point $x \in \mathcal{X}(\mathcal{O}_K)$,
then the extension as a rigidified line bundle is unique.
Taking the special fibre, we get an induced map $\mathcal{P}_0 \to \mathrm{Pic}^0(\mathcal{X}_0)$
which necessarily factors through the reduced subvariety of the target
$f^{\vee} \colon \mathcal{P}_0 \to \mathrm{Pic}^0(\mathcal{X}_0)_{\mathrm{red}}$.
By construction, the map $f^{\vee}$ is dual to the map $f$ we considered before.

Raynaud has studied the question of whether $f^{\vee}$ is an isomorphism in \cite{Ray79}.
His main result says:
\begin{enumerate}
\item{\cite[Th\`{e}or\'{e}me 4.1.3.(2)]{Ray79}} When $e < p-1$, then $f^{\vee}$ is an isomorphism.
\item{\cite[Th\`{e}or\'{e}me 4.1.3.(3)]{Ray79}} When $e=p-1$, then $\ker(f^{\vee})$ is $p$-torsion and unramified.
\end{enumerate}
We see that his results are the same as \Cref{defect controlled by ramification index} (1) and first half of (3),
our slight improvement is \Cref{defect controlled by ramification index} (2) and second half of (3): 
We prove the map $f^{\vee}$ has $p$-torsion kernel in a larger range of ramifications,
and when $e = p-1$ the second cohomology of structure sheaf needs to have ``actual''
$p$-torsion in order for $\ker(f)$ to be nonzero.
On the other hand, Raynaud's result allows $\mathcal{X}$ to be singular:
for instance he just needs $\mathcal{X}_0$ to be normal.
Our method crucially relies on prismatic theory, which seems to only work well with 
local complete intersection singularities.
Whether our \Cref{defect controlled by ramification index} can be extended to
the generality considered by Raynaud remains unclear and interesting to us.
\end{remark}

\begin{remark}
One of the key ingredient letting Raynaud to prove the aforementioned results in \cite{Ray79}
is an earlier result of his \cite{Ray74} concerning prolongations of finite flat commutative
group schemes.
In the end of this paper, \Cref{Raynaud subsection}, we shall see a way to go backward:
applying these structural results on Picard/Albanese varieties to a marvelous
construction due to Bhatt--Morrow--Scholze \cite{BMS1}, one deduces
Raynaud's prolongation theorem.
\end{remark}

\subsection{The $p$-adic specialization maps}
\label{cospecialization subsection}

Another reason why one might care about $u^\infty$-torsion is because it appears naturally in understanding
the specialization map of $p$-adic \'{e}tale cohomology or, phrased differently, the $p$-adic vanishing cycle.

Let us introduce some notations. Fix a complete algebraically closed non-archimedean extension $C$ of $K$,
with ring of integers $\mathcal{O}_C$. Denote the perfect prism associated with $\mathcal{O}_C$,
which is known to be oriented, by $(A_{\inf}, (\xi))$.
Given $p$-adic formal scheme $\mathcal{X}$ over $\Spf(\mathcal{O}_K)$,
we denote its base change to $\mathcal{O}_C$ (resp.~$C$) as $\mathcal{X}_{\mathcal{O}_C}$ (resp.~$\mathcal{X}_C$).
Denote the central fibre of $\mathcal{X}_{\mathcal{O}_C}$ by $\mathcal{X}_{\bar{k}}$.
We keep assuming $\mathcal{X}$ to be smooth and proper over $\Spf(\mathcal{O}_K)$.

Recall the proper base change theorem gives, for any prime $\ell$, a specialization map~\cite[\href{https://stacks.math.columbia.edu/tag/0GJ2}{Tag 0GJ2}]{stacks-project}
\[
\mathrm{Sp} \colon \mathrm{R\Gamma}_{\et}(\mathcal{X}_{\bar{k}}, \mathbb{Z}_\ell) \to \mathrm{R\Gamma}_{\et}(\mathcal{X}_C, \mathbb{Z}_\ell).
\]
The cone of specialization map above is called the vanishing cycle (of $\mathbb{Z}_\ell$).
The smooth base change theorem says that the above map is an isomorphism for any $\ell \not= p$~\cite[\href{https://stacks.math.columbia.edu/tag/0GKD}{Tag 0GKD}]{stacks-project},
in other words $\ell$-adic vanishing cycle vanishes in our setting.
On the other hand, one may ask what happens when $\ell = p$. Fix a cohomological degree $i$ and 
$n \in \mathbb{N} \cup \{\infty\}$, let us look at
\[
\mathrm{Sp}_n^i \colon \rH^i_{\et}(\mathcal{X}_{\bar{k}}, \mathbb{Z}/p^n) \to \rH^i_{\et}(\mathcal{X}_C, \mathbb{Z}/p^n),
\]
when $n=\infty$ the above means $\mathbb{Z}_p$ coefficient and we will simply write $\mathrm{Sp}^i$.
It is well-known that $\mathrm{Sp}^i$ is almost never surjective unless for trivial reasons
such as the target being $0$.
We shall contemplate with $\ker(\mathrm{Sp}^i)$ in this subsection.

In \cite[Section 9]{BS19} one finds a prismatic interpretation of the $p$-adic specialization map:
\begin{theorem}[{\cite[Theorem 9.1 and Remark 9.3]{BS19}}]
\label{etale comparison}
There are canonical identifications:
\[
\mathrm{R\Gamma}_{\et}(\mathcal{X}_C, \mathbb{Z}/p^n) \cong 
\big((\mathrm{R\Gamma}_{\Prism}(\mathcal{X}_{\mathcal{O}_C}/A_{\inf})[1/\xi])^{\widehat{}}/p^n\big)^{\varphi = 1},
\]
and
\[
\mathrm{R\Gamma}_{\et}(\mathcal{X}_{\bar{k}}, \mathbb{Z}/p^n) \cong 
\big(\mathrm{R\Gamma}_{\Prism}(\mathcal{X}_{\mathcal{O}_C}/A_{\inf})/p^n\big)^{\varphi = 1},
\]
fitting in the following diagram, which is commutative up to coherent homotopy:
\[
\xymatrix{
\mathrm{R\Gamma}_{\et}(\mathcal{X}_{\bar{k}}, \mathbb{Z}/p^n) \ar[r]^-{\cong} \ar[d]_-{\mathrm{Sp}} &
\big(\mathrm{R\Gamma}_{\Prism}(\mathcal{X}_{\mathcal{O}_C}/A_{\inf})/p^n\big)^{\varphi = 1} \ar[d]^-{\mathrm{incl}} \\
\mathrm{R\Gamma}_{\et}(\mathcal{X}_C, \mathbb{Z}/p^n) \ar[r]^-{\cong} & 
\big((\mathrm{R\Gamma}_{\Prism}(\mathcal{X}_{\mathcal{O}_C}/A_{\inf})[1/\xi])^{\widehat{}}/p^n\big)^{\varphi = 1}.
}
\]
\end{theorem}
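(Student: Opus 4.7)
Since this statement is directly attributed to \cite[Theorem 9.1 and Remark 9.3]{BS19}, the plan is to recall their strategy rather than produce an independent argument. Both identifications will arise, in a uniform manner, from Artin--Schreier-style short exact sequences of sheaves of complexes on the quasi-syntomic site of $\mathcal{X}_{\mathcal{O}_C}$, and the diagram will be functorial in the choice of such a sequence.

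For the first identification, the key ingredient is the exact sequence of pro-\'{e}tale sheaves on the adic generic fiber
\[
0 \to \underline{\mathbb{Z}/p^n} \to \Prism[1/\xi]^{\wedge}/p^n \xrightarrow{\varphi-1} \Prism[1/\xi]^{\wedge}/p^n \to 0.
\]
It suffices to check exactness on a quasi-syntomic cover by perfectoid rings $R$, in which case the relative prism is the perfect prism $(W(R^{\flat}), (\xi))$ and the middle terms become $W_n(R^{\flat})[1/\xi]^{\wedge}$; surjectivity of $\varphi-1$ here is the main technical point and relies on the almost mathematics of perfectoid spaces. Passing to cohomology over $\mathcal{X}_C$ yields the first isomorphism.

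For the second identification, the plan is to establish an analogous exact sequence at the prismatic level without inverting $\xi$,
\[
0 \to \underline{\mathbb{Z}/p^n} \to \Prism/p^n \xrightarrow{\varphi-1} \Prism/p^n \to 0,
\]
whose exactness reduces, on a perfectoid cover, to the classical Artin--Schreier--Witt sequence $0 \to \mathbb{Z}/p^n \to W_n(R^{\flat}) \xrightarrow{F-1} W_n(R^{\flat}) \to 0$; here surjectivity of $F-1$ is automatic because $R^{\flat}$ is a perfect $\mathbb{F}_p$-algebra. Taking $\mathcal{X}_{\mathcal{O}_C}$-cohomology one obtains
\[
\big(\mathrm{R\Gamma}_{\Prism}(\mathcal{X}_{\mathcal{O}_C}/A_{\inf})/p^n\big)^{\varphi=1} \simeq \mathrm{R\Gamma}_{\et}(\mathcal{X}_{\mathcal{O}_C}, \mathbb{Z}/p^n),
\]
and the right-hand side is identified with $\mathrm{R\Gamma}_{\et}(\mathcal{X}_{\bar{k}}, \mathbb{Z}/p^n)$ by proper base change for \'{e}tale cohomology over the $p$-adically complete base $\mathcal{O}_C$.

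For the commutativity of the diagram, I expect this to follow formally from the construction: both identifications are presented as Frobenius-fixed parts of a mod-$p^n$ prismatic complex, and the vertical map on the right is induced by the natural inclusion $\Prism/p^n \hookrightarrow \Prism[1/\xi]^{\wedge}/p^n$, which is manifestly compatible with the two Artin--Schreier sequences (the constant sheaf $\underline{\mathbb{Z}/p^n}$ maps to itself, while the ambient sheaf is enlarged). The diagram then commutes up to coherent homotopy by $\infty$-categorical functoriality of the fiber sequence $\mathrm{fib}(\varphi-1)$. The main obstacle without shortcut is the surjectivity of $\varphi-1$ on $\Prism[1/\xi]^{\wedge}/p^n$ needed for the first Artin--Schreier sequence; this is the genuine content of \cite[Theorem 9.1]{BS19}, while the second sequence and the compatibility are considerably softer.
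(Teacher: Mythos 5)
Your proposal captures the essential ingredients---Artin--Schreier--Witt on perfectoid covers as the core of both identifications, proper base change to relate $\mathcal{X}_{\mathcal{O}_C}$ and $\mathcal{X}_{\bar{k}}$---but there is a genuine gap in the treatment of the commutative diagram, and that is precisely where the paper's actual work lies.

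Your two Artin--Schreier fiber sequences live over different topoi: the first on the pro-\'{e}tale site of the adic generic fiber $\mathcal{X}_C$, the second on the quasi-syntomic (or flat/\'{e}tale) site of the formal scheme $\mathcal{X}_{\mathcal{O}_C}$. The constant sheaf $\underline{\mathbb{Z}/p^n}$ does not ``map to itself'' across this change of site; relating the two is exactly the specialization map (nearby cycles), which is the nontrivial content of the statement. Calling the inclusion $\Prism/p^n \hookrightarrow \Prism[1/\xi]^{\wedge}/p^n$ ``manifestly compatible with the two Artin--Schreier sequences'' therefore begs the question, and the appeal to ``$\infty$-categorical functoriality of $\mathrm{fib}(\varphi-1)$'' does not by itself produce a coherent homotopy commuting the square, because the two fiber sequences have not been placed in a single topos where such functoriality could apply.

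The paper's proof sidesteps this by a different device: it observes that both $\mathrm{R\Gamma}_{\et}((-)_{\bar{k}},\mathbb{Z}/p^n)$ and $(\Prism_{-/A_{\inf}}/p^n)^{\varphi=1}$ are arc-sheaves on $\mathrm{fSch}_{/\Spf(\mathcal{O}_C)}$ (the former by \cite[Theorem 5.4]{BM21}, the latter via \cite[Lemma 9.2]{BS19} which identifies $\Prism$ with its perfection on $\varphi$-fixed points, then \cite[Corollary 8.10]{BS19}), and that these sheaves are arc-locally concentrated in degree $0$. This forces the relevant mapping spaces to have contractible components, so the identifications and the square exist and commute for purely formal reasons, with no need to explicitly construct a map of fiber sequences across sites. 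If you want to salvage your route you would need to either introduce a nearby-cycles morphism of topoi and push one sequence to the other, or adopt the paper's uniform arc-sheaf ambient category; the latter is cleaner and is what actually underlies \cite[Remark 9.3]{BS19}.
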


Here $(\mathrm{R\Gamma}_{\Prism}(\mathcal{X}_{\mathcal{O}_C}/A_{\inf})[1/\xi])^{\widehat{}}$ denotes the $p$-completion
of the localization, which is only relevant in the statement when $n=\infty$.
This theorem is true without smooth or proper assumption on $\mathcal{X}$:
one may safely replace $\mathcal{X}_{\mathcal{O}_C}$ over
$\Spf(\mathcal{O}_C)$ with any $p$-adic formal scheme $\mathcal{Y}$ over a perfectoid base ring
as in loc.~cit.

\begin{proof}[Sketch of proof following that of loc.~cit.]
The first identification is \cite[Theorem 9.1]{BS19}, the second identification
is \cite[Remark 9.3]{BS19} with details left to readers, so let us fill in some details.

We follow the proof of \cite[Theorem 9.1]{BS19}.
First we see that both of $\mathrm{R\Gamma}_{\et}((-)_{\bar{k}}, \mathbb{Z}/p^n)$
and $\big(\Prism_{-/A_{\inf}}/p^n\big)^{\varphi = 1}$
are arc-sheaves (see \cite{BM21} for more details on this notion) on $\mathrm{fSch}_{/\Spf(\mathcal{O}_C)}$.
The former is \cite[Theorem 5.4]{BM21}, the latter follows from the same argument as in loc.~cit.:
using \cite[Lemma 9.2]{BS19} one has an identification
$\big(\Prism_{-/A_{\inf}}/p^n\big)^{\varphi = 1} \cong \big(\Prism_{-/A_{\inf},\mathrm{perf}}/p^n\big)^{\varphi = 1}$,
then one again uses \cite[Corollary 8.10]{BS19} to see the latter is an arc-sheaf.

Since everything involved is an arc-sheaf  and is arc-locally
supported in cohomological degree $0$, the relevant maps (of arc-sheaves)
live in mapping spaces with contractible components.
Altogether we get the following diagram which commutes up to coherent homotopy:
\[
\xymatrix{
\mathrm{R\Gamma}_{\et}((-)_{\bar{k}}, \mathbb{Z}/p^n) \ar[r] \ar[d]_-{\mathrm{Sp}} &
\big(\Prism_{-/A_{\inf}}/p^n\big)^{\varphi = 1} \ar[d]^-{\mathrm{incl}} \\
\mathrm{R\Gamma}_{\et}((-)_C, \mathbb{Z}/p^n) \ar[r]^-{\cong} & 
\big((\Prism_{-/A_{\inf}}[1/\xi])^{\widehat{}}/p^n\big)^{\varphi = 1}.
}
\]
Lastly we need to show the top horizontal arrow is an isomorphism.
We may localize in the arc-topology, reducing to the case of $\Spf$ of a perfectoid ring $S$,
which follows from applying Artin--Schreier--Witt and the fact that perfection  does not change the \'{e}tale site
(of a characteristic $p$ scheme).
\end{proof}

In order to pass from the derived statement above to concrete cohomology groups, we need the following:
\begin{lemma}
\label{Frob-id lemma}
For any $i$ and $n$, the $\mathbb{Z}_p$-linear operator $\varphi - 1$ is surjective on
both $\rH^i\big((\mathrm{R\Gamma}_{\Prism}(\mathcal{X}_{\mathcal{O}_C}/A_{\inf})[1/\xi])^{\widehat{}}/p^n\big)$
and $\rH^i\big(\mathrm{R\Gamma}_{\Prism}(\mathcal{X}_{\mathcal{O}_C}/A_{\inf})/p^n\big)$.
\end{lemma}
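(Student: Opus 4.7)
The plan is to first reduce to the case $n = 1$ via devissage, then analyze each of the two modules using structural properties of $\varphi$-modules over the appropriate characteristic-$p$ perfectoid rings.

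For the reduction, the short exact sequence of quasi-syntomic sheaves
\[
0 \to \Prism/p \xrightarrow{p^{n-1}} \Prism/p^n \to \Prism/p^{n-1} \to 0
\]
and its $\xi$-inverted analogue induce $\varphi$-equivariant long exact sequences on cohomology. A routine diagram chase shows that surjectivity of $\varphi - 1$ on $\rH^i$ for $n=1$ and for $n-1$ (all $i$) together imply surjectivity for $n$, so induction reduces us to the case $n=1$.

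For the first module at $n = 1$: the ring $(A_{\inf}[1/\xi])^{\widehat{}}/p$ identifies with $C^\flat$, an algebraically closed perfect field of characteristic $p$, and the prismatic Frobenius becomes a genuine isomorphism after inverting $\xi$. Hence each $\rH^i$ is a finite-dimensional étale $\varphi$-module over $C^\flat$; by separable closedness it trivializes as $V \otimes_{\mathbb{F}_p} C^\flat$ for some finite $\mathbb{F}_p$-vector space $V$, and $\varphi - 1$ acts as $1 \otimes (\mathrm{Frob} - 1)$. Surjectivity reduces to solvability of the Artin--Schreier equation $x^p - x = a$ in $C^\flat$, which holds by algebraic closedness.

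For the second module at $n = 1$: each $M := \rH^i(\mathrm{R\Gamma}_{\Prism}(\mathcal{X}_{\mathcal{O}_C}/A_{\inf})/p)$ is finitely generated over $\mathcal{O}_C^\flat$, with semi-linear Frobenius that becomes bijective after inverting the pseudo-uniformizer $\pi^\flat$ (image of $\xi$ modulo $p$). I would produce a functorial exact sequence $0 \to M^{\mathrm{nil}} \to M \to M^{\mathrm{et}} \to 0$ via a Fitting-style decomposition, with $\varphi$ topologically nilpotent on $M^{\mathrm{nil}}$ and becoming bijective on $M^{\mathrm{et}}$ after inverting $\pi^\flat$. On $M^{\mathrm{nil}}$ the operator $1-\varphi$ is invertible as a geometric series. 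On $M^{\mathrm{et}}$, the equation $\varphi(x) - x = y$ can be solved integrally by first finding an $x_0$ modulo the maximal ideal via algebraic closedness of the residue field, then iteratively improving $x_0$ using the identity $(x_0+t)^p = x_0^p + t^p$ in characteristic $p$; the successive errors have valuations growing by a factor of $p$, so the process converges $\pi^\flat$-adically. The main obstacle will be the careful setup of the Fitting decomposition over the non-noetherian valuation ring $\mathcal{O}_C^\flat$. A potentially cleaner alternative is to use arc-descent: the Artin--Schreier sequence $0 \to \mathbb{F}_p \to \Ga \xrightarrow{F-1} \Ga \to 0$ is exact in the arc topology on characteristic-$p$ schemes, and combining this with the arc-local description of $\Prism/p$ featured in the proof of \Cref{etale comparison} should deduce surjectivity on cohomology more directly.
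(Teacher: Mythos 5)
Your overall architecture is genuinely different from the paper's and has two gaps, one of which is fatal as written and one of which you already flag.

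The fatal gap is the devissage to $n=1$. It is not a routine diagram chase: given a short exact sequence of complexes $0\to K'\to K\to K''\to 0$, surjectivity of $\varphi-1$ on all $\rH^i(K')$ and all $\rH^i(K'')$ does \emph{not} formally imply surjectivity on $\rH^i(K)$. Concretely, if $b\in\rH^i(K)$ has image $\bar b\in\rH^i(K'')$ and you choose $\bar c$ with $(\varphi-1)\bar c=\bar b$, then $\bar c$ need not lift to $\rH^i(K)$: its image under the connecting map $\partial\colon\rH^i(K'')\to\rH^{i+1}(K')$ is only forced to lie in $\ker(\varphi-1)$, not to vanish. To kill that obstruction you would need $\partial$ to restrict to a surjection $\rH^i(K'')^{\varphi=1}\twoheadrightarrow\mathrm{Im}(\partial)\cap\rH^{i+1}(K')^{\varphi=1}$, which is exactly the sort of statement the lemma is trying to establish. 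So the reduction to $n=1$ does not come for free. Your second gap you already name: the Fitting decomposition into $M^{\mathrm{nil}}\oplus M^{\text{\'et}}$ is not available over the non-noetherian valuation ring $\mathcal{O}_C^\flat$ for a general finitely generated module.

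The paper sidesteps both problems with one observation that handles every $n$ uniformly: since $\varphi([a])=[a^p]$ for $a\in\mathfrak{m}_C^\flat$, the operator $\varphi$ is topologically nilpotent on $W(\mathfrak{m}_C^\flat)\cdot\rH^i(\ldots/p^n)$, so $1-\varphi$ is invertible there by a geometric series. Surjectivity of $\varphi-1$ then reduces to the quotient module, which is finitely generated over $W_n(\bar k)$ (for the $A_{\inf}$-module) or already over $W_n(C^\flat)$ (for the $\xi$-inverted one), and both $\bar k$ and $C^\flat$ are algebraically closed; the paper then cites \cite[Expos\'e III, Lemma 3.3]{CL98}, which proves $\varphi-1$ surjective on finitely generated $W_n(k)$-modules for $k$ algebraically closed, all $n$ at once. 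Your ``iterative improvement'' for the \'etale part is in fact a disguised version of this same contraction principle, and your \'etale-$\varphi$-module argument over $C^\flat$ at $n=1$ is correct; what is missing is the mechanism for gluing the two pieces and for passing to $n>1$, and the paper's single topological-nilpotence observation supplies both. The arc-descent alternative you float has a similar circularity: arc-exactness of Artin--Schreier gives an exact triangle, but turning that into surjectivity of $\varphi-1$ on cohomology groups requires knowing the resulting long exact sequence degenerates, which is again the content of the lemma.
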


\begin{proof}
We observe that, since $\varphi([a]) = [a]^p$ for any $a \in \mathfrak{m}_C^{\flat}$, we know
$\varphi$ acts topologically nilpotently on 
\[
[\mathfrak{m}_C^{\flat}] \cdot \rH^i\big(\mathrm{R\Gamma}_{\Prism}(\mathcal{X}_{\mathcal{O}_C}/A_{\inf})/p^n\big).
\]
Therefore to check surjectivity of $\varphi - 1$ on $\rH^i\big(\mathrm{R\Gamma}_{\Prism}(\mathcal{X}_{\mathcal{O}_C}/A_{\inf})/p^n\big)$
we may quotient out $W(\mathfrak{m}_C^{\flat}) \cdot \rH^i\big(\mathrm{R\Gamma}_{\Prism}(\mathcal{X}_{\mathcal{O}_C}/A_{\inf})/p^n\big)$.
Since $\mathcal{X}$ is smooth and proper over $\mathcal{O}_K$, we know the relevant groups are
finitely generated modules over $W(C^{\flat})$ and $W(\bar{k})$.
Both of $C^{\flat}$ and $\bar{k}$ are algebraically closed field of characteristic $p$, hence we are reduced to
\cite[Expos\'{e} III, Lemma 3.3]{CL98}.
\end{proof}

Using the same proof, we may identify $p$-adic \'{e}tale cohomology of $\mathcal{X}_{\bar{k}}$ 
as Frobenius fixed points in various prismatic cohomology of $\mathcal{X}$, after suitably base changing to $W(\bar{k})$.

\begin{porism}
\label{porism etale in Frob twisted}
Consider the $\s$-algebra $W(\bar{k})[\![u]\!]$.
We have an identification of $G_k$-modules:
\[
\rH^i_{\et}(\mathcal{X}_{\bar{k}}, \mathbb{Z}/p^n) \cong 
\Big(\rH^i_{\qsyn}(\mathcal{X}, \Prism_n) \otimes_{\s} W(\bar{k})[\![u]\!] \Big)^{\varphi = 1}
\cong
\Big(\rH^i_{\qsyn}(\mathcal{X}, \Prism^{(1)}_n) \otimes_{\s} W(\bar{k})[\![u]\!] \Big)^{\varphi = 1}.
\]
\end{porism}

\begin{proof}
As showed in the proof of \Cref{Frob-id lemma}, we may compute Frobenius fixed points after
quotient out $W(\mathfrak{m}_C^{\flat})$ (for the $A_{\inf}$-module) or $u$ for the Frobenius module appearing
in this porism.
Now the first identification is reduced to \Cref{etale comparison} and an equality of $\s$-algebras:
$A_{\inf}/W(\mathfrak{m}_C^{\flat}) \cong W(\bar{k}) \cong W(\bar{k})[\![u]\!]/(u)$.
The second identification is reduced to the fact that given a Frobenius module $M$ on $W(\bar{k})$,
then the natural map $M \to M \otimes_{W(\bar{k}), \varphi} W(\bar{k})$
given by $m \mapsto m \otimes 1$ induces an isomorphism of Frobenius fixed points.
\end{proof}


\begin{remark}
\label{etale to O map}
Assume that the residue field $\bar{k}$ of $\mathcal{O}_K$ is separably closed.
The above \Cref{porism etale in Frob twisted} induces a map 
\[
\rH^i_{\et}(\mathcal{X}_{\bar{k}}, \mathbb{Z}/p^n) \cong
\Big(\rH^i_{\qsyn}(\mathcal{X}, \Prism^{(1)}_n)\Big)^{\varphi = 1}
\hookrightarrow \rH^i_{\qsyn}(\mathcal{X}, \Prism^{(1)}_n)
\to \rH^i(\mathcal{O}_{\mathcal{X}}/p^n).
\]
This map can be seen at the level of \'{e}tale-sheaves on $\mathrm{fSch}_{/\Spf(\mathcal{O}_K)}$:
$\mathbb{Z}_p/p^n \to \Prism^{(1)}_n \to \mathcal{O}_{\mathcal{X}}/p^n$.
Therefore we get a canonical map
\[
\rH^i_{\et}(\mathcal{X}_{\bar{k}}, \mathbb{Z}/p^n) \otimes_{\mathbb{Z}_p} W
\to \rH^i(\mathcal{O}_{\mathcal{X}}/p^n).
\]
In general, we just base change along $W(k) \to W(\bar{k})$
and get a $G_k$-equivariant map
\[
\rH^i_{\et}(\mathcal{X}_{\bar{k}}, \mathbb{Z}/p^n) \otimes_{\mathbb{Z}_p} W(\bar{k})
\to \rH^i(\mathcal{O}_{\mathcal{X}}/p^n) \otimes_{W} W(\bar{k}).
\]
\end{remark}

Later in \Cref{control ker Cosp} (3) we shall see a peculiar result concerning this map in the boundary degree.
Now we come back to the relation between kernel of specialization map and $u^\infty$-torsion
in prismatic cohomology.

\begin{theorem}
\label{ker Cosp and u-torsion}
Let $\mathcal{X}$ be a smooth proper formal scheme over $\Spf(\mathcal{O}_K)$.
Recall $\m^i_n \coloneqq \rH^i_{\qsyn}(\mathcal{X}, \Prism_n)[u^\infty]$.
There is a canonical isomorphism of $G_k$-modules
\[
\ker(\mathrm{Sp}_n^i) \cong (\m^i_n \otimes_{\s} A_{\inf}\big)^{\varphi = 1}
\cong \big(\m^i_n/u \otimes_{W(k)} W(\bar{k})\big)^{\varphi = 1}
\]
for any $n \in \mathbb{N} \cup \{\infty\}$.
\end{theorem}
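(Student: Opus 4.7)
The plan is to translate everything to prismatic cohomology via the \'{e}tale comparison. Applying \Cref{etale comparison} together with \Cref{Frob-id lemma} degree-by-degree, and using compatibility of prismatic cohomology with base change along the prism map $(\s, (E)) \to (A_{\inf}, (\xi))$, I would identify the commutative diagram underlying $\mathrm{Sp}_n^i$ with
\[
\iota^* \colon (M^i_n \otimes_{\s} A_{\inf})^{\varphi=1} \longrightarrow (M^i_n \otimes_{\s} W(C^\flat))^{\varphi=1},
\]
where $M^i_n \coloneqq \rH^i_\qsyn(\mathcal{X}, \Prism_n)$, $W(C^\flat) = A_{\inf}[1/\xi]^{\wedge}_p$, and $\iota\colon A_{\inf} \to W(C^\flat)$ is the canonical map. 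In particular,
\[
\ker(\mathrm{Sp}_n^i) = \big(\ker\big(M^i_n \otimes_{\s} A_{\inf} \to M^i_n \otimes_{\s} W(C^\flat)\big)\big)^{\varphi=1}.
\]

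To compute this underlying kernel, I would use the short exact sequence $0 \to \m^i_n \to M^i_n \to N^i_n \to 0$ with $N^i_n$ the $u$-torsion-free quotient, which stays exact after (flat) base change to $A_{\inf}$. First, $\m^i_n \otimes_{\s} W(C^\flat) = 0$: indeed, $\m^i_n$ is annihilated by some $u^N$ and $u \mapsto [\upi]$ is a unit in $W(C^\flat)$ (being the Teichm\"uller lift of a non-zero element of the field $C^\flat$). Second, $N^i_n \otimes_{\s} A_{\inf}$ is $\xi$-torsion-free; by preservation of regular elements under flat base change this reduces to $N^i_n$ being $E$-torsion-free. For finite $n$, any $E$-torsion element in $N^i_n$ would generate a copy of $\s/(E, p^n) \cong \mathcal{O}_K/p^n$, which is entirely $u$-torsion (as $\pi$ is nilpotent modulo $p^n$), contradicting $u$-torsion-freeness. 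For $n = \infty$, I would invoke \cite[Proposition 4.3]{BMS1} to split $N^i_\infty$ into a torsion submodule (which is $p^N$-torsion and $u$-torsion-free, to which the previous argument applies) and a torsion-free quotient (which embeds into a free $\s$-module, hence is manifestly $E$-torsion-free). Combining, the kernel equals $\m^i_n \otimes_{\s} A_{\inf}$.

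For the second isomorphism, I would use that $\varphi$ acts topologically nilpotently on $W(\mathfrak{m}_C^\flat) = \ker(A_{\inf} \twoheadrightarrow W(\bar k))$, since $\varphi([a]) = [a^p]$ with $a^p \to 0$ for $a \in \mathfrak{m}_C^\flat$. Combined with $\m^i_n$ having finite length, this makes $\varphi - 1$ bijective on $\m^i_n \otimes_{\s} W(\mathfrak{m}_C^\flat)$, and a standard lifting argument yields
\[
(\m^i_n \otimes_{\s} A_{\inf})^{\varphi=1} \cong (\m^i_n \otimes_{\s} W(\bar k))^{\varphi=1} \cong \big((\m^i_n/u) \otimes_{W(k)} W(\bar k)\big)^{\varphi=1},
\]
the last step using that the composite $\s \to A_{\inf} \twoheadrightarrow W(\bar k)$ sends $u \mapsto 0$. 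The $G_k$-equivariance is transparent on the right, with $G_K$ acting on $W(\bar k)$ through its quotient $G_k$ and trivially on $\m^i_n/u$.

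The main anticipated obstacle is handling the case $n = \infty$ rigorously: I must verify that $(p,u)$-complete base change of prismatic cohomology commutes with taking $H^i$ in the relevant degree (so that $M^i_\infty \otimes_{\s} A_{\inf}$ is the right object to study), and also that $N^i_\infty \otimes_{\s} A_{\inf}$ injects into the $p$-completion of its $\xi$-localization, not merely the uncompleted localization. Both should follow from finite generation of the prismatic cohomology modules and $p$-adic completeness of $A_{\inf}$, but the bookkeeping requires some care.
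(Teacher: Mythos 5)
Your proposal follows the paper's argument in all essential respects: both reduce via the étale comparison (\Cref{etale comparison}) and the surjectivity of $\varphi-1$ (\Cref{Frob-id lemma}) to computing the kernel of the natural map from $A_{\inf}$-prismatic cohomology to its $\xi$-localization (completed, when $n = \infty$), both invoke $p$-complete flat base change and \cite[Proposition 4.3]{BMS1} to translate this into a statement about $\m^i_n$, and both finish with the topological nilpotence of $\varphi$ on the relevant ideal of $A_{\inf}$. Your explicit short exact sequence $0 \to \m^i_n \to M^i_n \to N^i_n \to 0$ and the direct verification that $N^i_n$ has no $E$-torsion is a slightly more hands-on packaging of what the paper extracts from \cite[Proposition 4.3]{BMS1}, and the two subtleties you flag at the end (commutation of $\rH^i$ with base change, and injectivity into the $p$-completed $\xi$-localization for $n = \infty$) are exactly the points the paper addresses.
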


\begin{proof}
Combining \Cref{etale comparison} and \Cref{Frob-id lemma}, we get the following diagram with exact rows:
\[
\xymatrix{
0 \ar[r] & 
\rH^i_{\et}(\mathcal{X}_{\bar{k}}, \mathbb{Z}/p^n) \ar[d]^-{\mathrm{Sp}_n^i} \ar[r] & 
\rH^i\big(\mathrm{R\Gamma}_{\Prism}(\mathcal{X}_{\mathcal{O}_C}/A_{\inf})/p^n\big) \ar[d]^-{\mathrm{incl}} \ar[r]^-{\varphi - 1} & 
\rH^i\big(\mathrm{R\Gamma}_{\Prism}(\mathcal{X}_{\mathcal{O}_C}/A_{\inf})/p^n\big) \ar[d]^-{\mathrm{incl}} \ar[r] & 
0 \\
0 \ar[r] & 
\rH^i_{\et}(\mathcal{X}_C, \mathbb{Z}/p^n) \ar[r] & 
\rH^i\big((\mathrm{R\Gamma}_{\Prism}(\mathcal{X}_{\mathcal{O}_C}/A_{\inf})[1/\xi])^{\widehat{}}/p^n\big) \ar[r]^-{\varphi - 1} &
\rH^i\big((\mathrm{R\Gamma}_{\Prism}(\mathcal{X}_{\mathcal{O}_C}/A_{\inf})[1/\xi])^{\widehat{}}/p^n\big) \ar[r] & 
0.
}
\]
We shall apply the snake lemma to the above. 
First, we claim
\[
\rH^i\big(\mathrm{R\Gamma}_{\Prism}(\mathcal{X}_{\mathcal{O}_C}/A_{\inf})/p^n\big)[\xi^{\infty}] \cong
\ker(\rH^i\big(\mathrm{R\Gamma}_{\Prism}(\mathcal{X}_{\mathcal{O}_C}/A_{\inf})/p^n\big) \xrightarrow{\mathrm{incl}}
\rH^i\big((\mathrm{R\Gamma}_{\Prism}(\mathcal{X}_{\mathcal{O}_C}/A_{\inf})[1/\xi])^{\widehat{}}/p^n\big).
\]
When $n \in \mathbb{N}$ the map is localization with respect to $\xi$, hence tautological.
We need to see this when $n = \infty$, namely we need to show injectivity of
$\rH^i\big(\mathrm{R\Gamma}_{\Prism}(\mathcal{X}_{\mathcal{O}_C}/A_{\inf})[1/\xi]\big)
\to \rH^i(\mathrm{R\Gamma}_{\Prism}(\mathcal{X}_{\mathcal{O}_C}/A_{\inf})[1/\xi])^{\widehat{}}$.
Here the latter completion is the classical $p$-adic completion: 
our assumption implies all cohomology groups $\rH^i_{\Prism}(\mathcal{X}_{\mathcal{O}_C}/A_{\inf})$
have bounded $p$-torsion, hence derived $p$-completion agrees with derived $p$-completion.
Since $\rH^i_{\Prism}(\mathcal{X}_{\mathcal{O}_C}/A_{\inf})$ are finitely presented over $A_{\inf}$,
its localization with respect to $\xi$ has separated $p$-adic topology, hence the $p$-adic completion
map is injective.

Next, applying the base change property of prismatic cohomology to the $p$-completely faithfully flat
map $\s \to A_{\inf}$ and \cite[Proposition 4.3]{BMS1}, we get an identification of Frobenius modules:
\[
\m^i_n \otimes_{\s} A_{\inf} \cong
\rH^i\big(\mathrm{R\Gamma}_{\Prism}(\mathcal{X}_{\mathcal{O}_C}/A_{\inf})/p^n\big)[\xi^{\infty}].
\]
Now we get the first identification.
To finish, just observe that $\varphi([a]) = [a]^p$, for any $a \in \mathfrak{m}_C^{\flat}$,
which acts nilpotently on $\rH^i\big(\mathrm{R\Gamma}_{\Prism}(\mathcal{X}_{\mathcal{O}_C}/A_{\inf})/p^n\big)[\xi^{\infty}]$.
Hence the map $\varphi - 1$ is necessarily an isomorphism (of $\mathbb{Z}_p$-modules) on
$[\mathfrak{m}^{\flat}] \cdot \rH^i\big(\mathrm{R\Gamma}_{\Prism}(\mathcal{X}_{\mathcal{O}_C}/A_{\inf})/p^n\big)[\xi^{\infty}]$.
Therefore we may quotient this part, as far as Frobenius fixed points are concerned, which leads
to the second identification.
\end{proof}

\begin{corollary}
\label{control ker Cosp}
Let $\mathcal{X}$ be a smooth proper formal scheme over $\Spf(\mathcal{O}_K)$ with ramification index $e$,
let $i \in \mathbb{N}$ and $n \in \mathbb{N} \cup \{\infty\}$.
We have the following understanding of the kernel of the specialization map
$\mathrm{Sp}_n^i \colon \rH^i_{\et}(\mathcal{X}_{\bar{k}}, \mathbb{Z}/p^n) \to \rH^i_{\et}(\mathcal{X}_C, \mathbb{Z}/p^n)$.
\begin{enumerate}
\item If $e \cdot (i-1) < p-1$, then $\mathrm{Sp}_n^i$ is injective.
\item If $e \cdot (i-1) < 2(p-1)$, then $\ker(\mathrm{Sp}_n^i)$ is annihilated by $p^{i-1}$.
\item If $e \cdot (i-1) = p-1$, then $\ker(\mathrm{Sp}_n^i)$ is $p$-torsion,
and corresponds to the \'{e}tale-$\varphi$ module $\m^i_n$ over $k$.
Moreover the natural $G_k$-equivariant map in \Cref{etale to O map}
\[
\rH^i_{\et}(\mathcal{X}_{\bar{k}}, \mathbb{Z}/p^n) \otimes_{\mathbb{Z}_p} W(\bar{k})
\to \rH^i(\mathcal{O}_{\mathcal{X}}/p^n) \otimes_{W} W(\bar{k})
\]
induces a $G_k$-equivariant injection:
\[
\ker(\mathrm{Sp}^i_n) \otimes_{\mathbb{F}_p} \big(\mathcal{O}_K \otimes_W W(\bar{k})\big)/p
\hookrightarrow \rH^i(\mathcal{O}_{\mathcal{X}}/p^n) \otimes_{W} W(\bar{k}).
\]
\end{enumerate}
\end{corollary}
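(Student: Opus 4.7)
The proof proceeds by applying \Cref{ker Cosp and u-torsion}, which reduces every assertion about $\ker(\mathrm{Sp}^i_n)$ to a statement about $\m^i_n$. Recall that lemma gives a canonical $G_k$-equivariant isomorphism
\[
\ker(\mathrm{Sp}_n^i) \cong \bigl(\m^i_n/u \otimes_{W(k)} W(\bar{k})\bigr)^{\varphi = 1}.
\]
With this in hand, parts (1) and (2) are immediate consequences of \Cref{cor control u-torsion}(1) and (2), respectively. Indeed, for (1) we have $\m^i_n = 0$, so its Frobenius fixed points after any base change also vanish. For (2), \Cref{cor control u-torsion}(2) says $p^{i-1} \in \mathrm{Ann}(\m^i_n) + (u)$, which means $\m^i_n/u$ is annihilated by $p^{i-1}$; any submodule of $\m^i_n/u \otimes_{W(k)} W(\bar k)$, in particular the Frobenius fixed points, inherits this annihilation.

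For part (3), \Cref{cor control u-torsion}(3) tells us that $\m^i_n$ is killed by $(p,u)$ and carries a bijective semi-linear Frobenius. Thus $\m^i_n/u = \m^i_n$ is a finite-dimensional $k$-vector space with bijective $\varphi$, i.e.~an \'{e}tale $\varphi$-module over $k$. By the classical Dieudonn\'{e}--Fontaine equivalence, such an object corresponds via $M \mapsto (M \otimes_{k} \bar{k})^{\varphi=1}$ to a finite $\mathbb{F}_p$-representation of $G_k$; that representation is precisely $\ker(\mathrm{Sp}^i_n)$ by the displayed identification above. In particular, $\ker(\mathrm{Sp}^i_n)$ is $p$-torsion and unramified as a $G_K$-representation.

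It remains to construct the injection at the end of (3). The plan is to combine \'{e}tale trivialization with \Cref{torsion in O bounds prismatic torsion}. The equivalence above upgrades to a $\varphi$- and $G_k$-equivariant isomorphism of $\bar{k}$-modules
\[
\ker(\mathrm{Sp}^i_n) \otimes_{\mathbb{F}_p} \bar{k} \;\xrightarrow{\sim}\; \m^i_n \otimes_{k} \bar{k}.
\]
Tensoring with the flat $W$-algebra $W(\bar{k})$, the injection of \Cref{torsion in O bounds prismatic torsion}
\[
\m^i_n \otimes_{k} \bigl(\mathcal{O}_K/p\bigr) \hookrightarrow \rH^i(\mathcal{X}, \mathcal{O}_{\mathcal{X}}/p^n)
\]
yields
\[
\ker(\mathrm{Sp}^i_n) \otimes_{\mathbb{F}_p} \bigl(\mathcal{O}_K \otimes_W W(\bar{k})\bigr)/p \hookrightarrow \rH^i(\mathcal{O}_{\mathcal{X}}/p^n) \otimes_W W(\bar{k}),
\]
after identifying the source with $\m^i_n \otimes_{k} (\mathcal{O}_K/p) \otimes_W W(\bar{k})$ via the isomorphism above. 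The remaining task, which I view as the main bookkeeping obstacle, is to check that this injection is indeed induced by the natural map of \Cref{etale to O map}; this is a diagram-chase tracing the two maps back to their common origin, the map of quasi-syntomic sheaves $\mathbb{Z}_p/p^n \to \Prism^{(1)}_n \to \mathcal{O}_{\mathcal{X}}/p^n$, and using that the Frobenius-fixed-point identification in \Cref{ker Cosp and u-torsion} arises from the same étale comparison that underlies \Cref{etale to O map}.
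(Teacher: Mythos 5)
Your treatment of (1) and (2) matches the paper exactly and is fine.

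For (3), the underlying ingredients are the same (\Cref{ker Cosp and u-torsion}, \Cref{cor control u-torsion}, and the Nygaard filtration computation ultimately encoded in \Cref{torsion in O bounds prismatic torsion}/\Cref{prop induced Nygaard filtration}), but you organize them differently from the paper. You first produce an \emph{abstract} injection by applying the Dieudonn\'{e}--Fontaine equivalence to identify $\m^i_n\otimes_k\bar k\cong \ker(\mathrm{Sp}^i_n)\otimes_{\mathbb{F}_p}\bar k$ and then tensoring \Cref{torsion in O bounds prismatic torsion} up to $W(\bar k)$; only afterwards do you claim (without carrying it out) that this injection agrees with the one induced by \Cref{etale to O map}. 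The paper instead \emph{first} base-changes $\mathcal{O}_K$ to $\mathcal{O}_K\otimes_W W(\bar k)$, reducing to algebraically closed residue field. This move makes the Dieudonn\'{e}--Fontaine step trivial (an \'{e}tale $\varphi$-module over $\bar k$ is canonically $(\,\cdot\,)^{\varphi=1}\otimes_{\mathbb{F}_p}\bar k$), so one can directly follow the natural map $\rH^i_{\et}(\mathcal{X}_{\bar k},\mathbb{Z}/p^n)\to\rH^i(\mathcal{O}_{\mathcal{X}}/p^n)$ through $(\rH^i_{\qsyn}(\mathcal{X},\Prism^{(1)}_n))^{\varphi=1}$, linearize it over $\s$, and read off the kernel from \Cref{prop induced Nygaard filtration}(1). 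That route never produces two maps to be compared, so there is no residual ``bookkeeping obstacle'' at the end.

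The compatibility check you defer is not a triviality: it requires unwinding the Frobenius-fixed-point identification of \Cref{ker Cosp and u-torsion}, the canonical isomorphism $\rH^i_{\qsyn}(\mathcal{X},\Prism^{(1)}_n)[u^\infty]\cong\m^i_n\otimes_{\s,\varphi_\s}\s$, and the map $\Prism^{(1)}\to\gr^0_{\rN}\Prism^{(1)}\cong\mathcal{O}$ that feeds both \Cref{torsion in O bounds prismatic torsion} and \Cref{etale to O map}, and showing they fit into one commuting diagram. Your sketch names the right common source but does not establish the commutativity. I would recommend simply adopting the paper's opening reduction to $k=\bar k$: after that reduction your argument and the paper's coincide, the equivalence you invoke becomes the identity on Frobenius fixed points, and the compatibility you were worried about disappears because you never leave the single chain of maps
\[
\ker(\mathrm{Sp}^i_n)\hookrightarrow
\rH^i_{\qsyn}(\mathcal{X},\Prism^{(1)}_n)[u^\infty]\longrightarrow\rH^i(\mathcal{O}_{\mathcal{X}}/p^n),
\]
whose $\s$-linearization and kernel are then computed exactly as in \Cref{torsion in O bounds prismatic torsion}.
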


\begin{proof}
All but the last statement immediately follow from \Cref{cor control u-torsion} and \Cref{ker Cosp and u-torsion}.
The last statement is a Galois-theoretic analog of \Cref{torsion in O bounds prismatic torsion}.
To prove this, we may base change $\mathcal{X}$ from $\mathcal{O}_K$ to $\mathcal{O}_K \otimes_W W(\bar{k})$
and it suffices to prove the statement there.
Hence it suffices to assume that $\mathcal{O}_K$ has algebraically closed residue field $\bar{k}$.

Let us analyze the sequence of maps of $\s$-modules
\[
\rH^i_{\et}(\mathcal{X}_{\bar{k}}, \mathbb{Z}/p^n) \cong
\Big(\rH^i_{\qsyn}(\mathcal{X}, \Prism^{(1)}_n)\Big)^{\varphi = 1}
\hookrightarrow \rH^i_{\qsyn}(\mathcal{X}, \Prism^{(1)}_n)
\to \rH^i(\mathcal{O}_{\mathcal{X}}/p^n).
\]
By \Cref{control ker Cosp} (3), we see the first map induces an isomorphism:
\[
\ker(\mathrm{Sp}^i_n) \otimes_{\mathbb{F}_p} \bar{k}[u]/(u^p) \cong \rH^i_{\qsyn}(\mathcal{X}, \Prism^{(1)}_n)[u^\infty].
\]
The exact sequence $\Fil^1_{\rN} \to \Prism^{(1)} \to \mathcal{O}_{\mathcal{X}}$ tells us that the kernel
of the map
\[
\ker(\mathrm{Sp}^i_n) \otimes_{\mathbb{F}_p} \bar{k}[u]/(u^p) \to \rH^i(\mathcal{O}_{\mathcal{X}}/p^n)
\]
is given by the induced first Nygaard filtration on the source, which we know is exactly $u^e$ times the source,
thanks to \Cref{prop induced Nygaard filtration} (1).
Notice that, as an $\mathcal{O}_K$-algebra, we have $\bar{k}[u]/(u^e) \cong \mathcal{O}_K/p$.
Therefore we get the desired injection
\[
\ker(\mathrm{Sp}^i_n) \otimes_{\mathbb{F}_p} \bar{k}[u]/(u^e) \hookrightarrow \rH^i(\mathcal{O}_{\mathcal{X}}/p^n).
\]
\end{proof}

We refer readers to \Cref{subsection example},
especially \Cref{rmk stacky example} and \Cref{rmk example} (4),
for a related interesting example.

\subsection{Revisiting the integral Hodge--de Rham spectral sequence}
In this subsection, we revisit the question discussed in \cite{Li20}:
what mild condition on $\mathcal{X}$ guarantees that the Hodge numbers of the generic fibre $X$
can be read off from the special fibre $\mathcal{X}_0$?

Let us introduce a notation, which is the threshold of cohomological degree
for which we can say something about the integral Hodge--de Rham spectral sequence,
based on knowledge of the integral Hodge--Tate spectral sequence.

\begin{notation}
Let $T$ be the largest integer such that $e \cdot (T-1) \leq p-1$.
\end{notation}

The main result in this subsection is:
\begin{theorem}[{Improvement of \cite[Theorem 1.1]{Li20}}]
\label{improving Li20}
Let $\mathcal{X}$ be a smooth proper $p$-adic formal scheme over $\Spf(\mathcal{O}_K)$.
\begin{enumerate}
\item Assume there is a lift of $\mathcal{X}$ to $\s/(E^2)$,
then for all $i,j$ satisfying $i + j < T$, we have equalities
\[
h^{i,j}(X) = \mathfrak{h}^{i,j}(\mathcal{X}_0)
\]
where the latter denotes virtual Hodge numbers of $\mathcal{X}_0$, defined as in \cite[Definition 3.1]{Li20}.
\item Assume furthermore that $e \cdot (\dim{\mathcal{X}_0}-1) \leq p-1$.
Then the special fibre $\mathcal{X}_0$ knows the Hodge numbers of the rigid generic fibre $X$.
\end{enumerate}
\end{theorem}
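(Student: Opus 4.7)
The approach is to revisit the proof of \cite[Theorem 1.1]{Li20} and feed in our sharper control of $u^\infty$-torsion from \Cref{cor control u-torsion} (1) in place of the weaker vanishing input from \cite[Theorem 0.1]{Min20} used in loc.~cit. The argument in \cite{Li20} reduces each equality $h^{i,j}(X) = \mathfrak{h}^{i,j}(\mathcal{X}_0)$ to two inputs: (a) degeneration of the integral Hodge--Tate spectral sequence in the cohomological degree $n = i+j$, which the $\s/(E^2)$-lifting hypothesis supplies through a Deligne--Illusie-type device developed in \cite{Li20}; and (b) vanishing of the $u^\infty$-torsion submodule $\mathfrak{M}^n$ in the relevant range, which allows one to move freely between the integral prismatic cohomology, its reduction modulo $u$, and the de Rham cohomology of the rigid generic fibre. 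Since \Cref{cor control u-torsion} (1) enlarges the vanishing range from $e \cdot n < p-1$ to $e \cdot (n-1) < p-1$, and the latter is precisely the condition $n \leq T$, transporting the \cite{Li20} argument with our improved vanishing fed in yields part (1) for all $(i,j)$ with $i + j < T$.

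For part (2), the supplementary assumption $e \cdot (\dim \mathcal{X}_0 - 1) \leq p-1$ reads $\dim \mathcal{X}_0 \leq T$, so part (1) immediately yields $h^{i,j}(X) = \mathfrak{h}^{i,j}(\mathcal{X}_0)$ for every bidegree with $i + j < \dim \mathcal{X}_0 =: d$. The plan is to propagate this to all bidegrees in two steps. First, Serre duality on the smooth proper rigid space $X$ gives $h^{i,j}(X) = h^{d-i, d-j}(X)$, and an analogous duality holds for the virtual Hodge numbers of $\mathcal{X}_0$ (inherited from Poincar\'e duality for the conjugate filtration on crystalline cohomology), which propagates the identification to $i + j > d$. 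Second, the anti-diagonal $i + j = d$ is pinned down by the Euler characteristics $\chi(\mathcal{X}, \Omega^i_{\mathcal{X}/\mathcal{O}_K})$: by flatness of $\Omega^i_{\mathcal{X}/\mathcal{O}_K}$ over $\mathcal{O}_K$ these equal $\chi(\mathcal{X}_0, \Omega^i_{\mathcal{X}_0/k})$, hence depend only on $\mathcal{X}_0$; combined with the values already known in the off-diagonal ranges, this determines each $h^{i, d-i}(X)$, and the parallel Euler characteristic identity for virtual Hodge numbers closes the loop.

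The principal obstacle is essentially bookkeeping: one must confirm that the proof in \cite{Li20} is genuinely modular in the vanishing input, so that no auxiliary step needs to be reworked when substituting Min's bound with ours. A secondary concern for part (2) is verifying that the virtual Hodge numbers $\mathfrak{h}^{i,j}(\mathcal{X}_0)$ satisfy Serre duality and the requisite Euler characteristic identity; this should be a formal consequence of their definition through the conjugate filtration on $\rH^{\bullet}_{\cris}(\mathcal{X}_0/W)$ together with Poincar\'e duality for crystalline cohomology of smooth proper varieties.
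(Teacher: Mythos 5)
Your proposal is essentially the paper's approach. For part (1) the paper organizes the argument through an intermediate result (\Cref{integral HdRSS}), which establishes degeneration and splitting of the integral Hodge--de Rham spectral sequence up to total degree $T$ by combining Hodge--Tate degeneration (from the $\s/(E^2)$-lift) with the sharper $u$-torsion control, and then invokes \cite[Lemma 2.16]{Li20}; your sketch collapses this into ``replay Li20 with the improved vanishing,'' which is the right idea, though you should be aware that the length comparison between Hodge--Tate and de Rham cohomology in degree $m < T$ also probes $\mathfrak{M}^{m+1}$, so the full force of \Cref{cor control u-torsion} (including part (3), not just the vanishing in part (1)) and \Cref{length lemma} is what actually closes that step. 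For part (2) your argument matches the paper's exactly, except that you needlessly try to establish a Serre-type duality for the virtual Hodge numbers $\mathfrak{h}^{i,j}(\mathcal{X}_0)$ — this is not required. The statement is only that $\mathcal{X}_0$ determines the $h^{i,j}(X)$: part (1) gives $h^{i,j}(X)$ below the middle row directly, Serre duality on $X$ alone then gives the values above the middle row (no matching duality on the $\mathfrak{h}$-side is needed), and flatness of the $\Omega^i_{\mathcal{X}/\mathcal{O}_K}$ pins down the middle row via Euler characteristics; your ``secondary concern'' thus evaporates.
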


For instance, in the unramified case $e = 1$, condition (1) is automatic and condition (2)
says we allow $\mathcal{X}$ to be at most dimension $p$.
From the proof,
we shall see that the Hodge numbers of $X$ can be computed using the virtual Hodge numbers of $\mathcal{X}_0$
(see \cite[Subsection 3.2]{Li20}) together with Euler characteristics of $\Omega^i_{\mathcal{X}_0}$'s
in an algorithmic way.

We largely follow the proof of \cite[Theorem 1.1]{Li20}. Just like there, we need to first
analyze the integral Hodge--de Rham spectral sequence, hence the title of this subsection.

\begin{theorem}
\label{integral HdRSS}
Let $\mathcal{X}$ be a smooth proper $p$-adic formal scheme over $\Spf(\mathcal{O}_K)$ 
liftable to $\s/(E^2)$.
Let $n \in \mathbb{N} \cup \{\infty\}$.
\begin{enumerate}
\item The Hodge--de Rham spectral sequence for $\mathcal{X}_n$ has no nonzero differentials
with source of total degree $< T$.
\item If $e > 1$, then $\mathfrak{M}^T_n \coloneqq \rH^T_{\qsyn}(\mathcal{X}, \Prism_n)[u^\infty] = 0$.
In particular, the prismatic cohomology $\rH^m_{\Prism}(\mathcal{X}/\s) \simeq M_m \otimes_{\mathbb{Z}_p} \s$
is of the shape of a $\mathbb{Z}_p$-module $M_m$ for all $m \leq T$.
\item If $e=1$, The induced Hodge filtrations $\rH^i(\mathcal{X},\Fil^j_\rH) \subset \rH^i_{\dR}(\mathcal{X})$
are split for any $i \leq p$ and any $j$.
\item If $e>1$, The induced Hodge filtrations $\rH^i(\mathcal{X},\Fil^j_\rH) \subset \rH^i_{\dR}(\mathcal{X})$
are split for any $i < T$ and any $j$.
\end{enumerate}
\end{theorem}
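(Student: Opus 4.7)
The proof will rely on combining three main inputs: the control of $u^\infty$-torsion from \Cref{cor control u-torsion}, the lifting hypothesis $\mathcal{X}$ to $\s/(E^2)$ (which serves as a first-order Frobenius lift), and the structure of the Nygaard filtration on $\Prism^{(1)}_n$ paired with its Hodge--de Rham realization modulo $u$.

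For part (1), I plan to compare the Hodge--de Rham spectral sequence for $\mathcal{X}_n$ with the ``Nygaard--Prism'' spectral sequence converging to $\rH^\ast_{\qsyn}(\mathcal{X}, \Prism^{(1)}_n)$, whose $E_1$-page involves $\gr^\bullet_{\rN}\Prism^{(1)}_n$ and which maps to Hodge cohomology after quotienting by $u$. In the range of total degrees less than $T$, \Cref{cor control u-torsion}(1) ensures the relevant prismatic cohomology modules have no $u^\infty$-torsion and are base changes from $\mathbb{Z}_p$-modules by \Cref{finite free in low degree and ramification}; the lift to $\s/(E^2)$ then plays the role of a Deligne--Illusie type Frobenius lift that splits the Nygaard-associated graded and hence kills the Hodge--de Rham differentials in this range.

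For part (2) with $e>1$, the case $e(T-1) < p-1$ is immediate from \Cref{cor control u-torsion}(1), so the interesting case is the boundary $e(T-1) = p-1$. There \Cref{cor control u-torsion}(3) tells us $\mathfrak{M}^T_n$ is annihilated by $(p,u)$ with bijective semi-linear Frobenius, while \Cref{torsion in O bounds prismatic torsion} gives an embedding $\mathfrak{M}^T_n\otimes_k(\mathcal{O}_K/p) \hookrightarrow \rH^T(\mathcal{X}, \mathcal{O}_{\mathcal{X}}/p^n)$. My plan is to use the lift to $\s/(E^2)$, together with the analysis from part (1), to show that this image must land in a piece of the Hodge filtration that is incompatible with the bijectivity of the induced Frobenius, thereby forcing $\mathfrak{M}^T_n = 0$. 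The ``in particular'' clause will then follow by applying \Cref{prop Zp module structure} to the now $u$-torsion-free modules $\rH^m_{\qsyn}(\mathcal{X}, \Prism_n)$ in all degrees $m \leq T$. Parts (3) and (4) follow formally: once the Hodge--de Rham spectral sequence degenerates in the specified range, the filtration pieces $\rH^i(\mathcal{X}, \Fil^j_\rH)$ are $\mathcal{O}_K$-direct summands of $\rH^i_{\dR}(\mathcal{X})$, provided one checks appropriate flatness of the graded pieces. The extra degree allowed in (3) for $e=1$ reflects the Fontaine--Laffaille boundary and will require an additional input from the crystalline comparison developed in \Cref{arithmetic applications}.

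The principal obstacle is the boundary case $e(T-1) = p-1$ in part (2). Without the lifting hypothesis, the example in \Cref{subsection example} shows that nontrivial $u$-torsion genuinely occurs at this boundary, so the lift must enter the argument essentially. The delicate task is to translate the first-order deformation encoded by $\s/(E^2)$ into a concrete incompatibility with the $\varphi$-bijective, $u$-nilpotent boundary structure afforded by \Cref{cor control u-torsion}(3); this is where I expect the technical heart of the proof to lie.
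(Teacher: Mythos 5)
Your proposal diverges substantially from the paper's argument, and the core mechanism it misses is what actually makes the proof work.

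The paper's route for part (1) is not a direct spectral-sequence comparison at all. It is a \emph{length-counting} argument: the lift to $\s/(E^2)$ implies that the Hodge--Tate spectral sequence degenerates and splits through degree $p-1$ (a known consequence of the lifting hypothesis from \cite{BS19}, \cite{ALB21}, \cite{LL20}), so $\ell(\rH^m_{\mathrm{HT}}(\mathcal{X}_n))$ equals the sum of lengths of Hodge cohomologies. The key lemma is then \Cref{length lemma}: writing Hodge--Tate cohomology as $\rH^m_{\qsyn}(\mathcal{X},\Prism_n)\otimes_{\s}\mathcal{O}_K$ plus a $\mathrm{Tor}_1^{\s}(\mathfrak{M}^{m+1}_n,\mathcal{O}_K)$ correction, and de Rham cohomology the same way but with $\varphi_{\s,*}\mathcal{O}_K$ in place of $\mathcal{O}_K$, one quantifies the discrepancy in terms of $u$-adic torsion of $\mathcal{O}_K$ versus $u^p$-adic torsion of $\varphi_{\s,*}\mathcal{O}_K$. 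Combined with the vanishing from \Cref{cor control u-torsion}(1), this gives $\ell(\rH^m_{\mathrm{HT}})\le\ell(\rH^m_{\dR})$ for $m<T$, forcing degeneration. You gesture instead at a ``Nygaard--Prism'' spectral sequence and a ``Deligne--Illusie type'' splitting of the Nygaard graded, but this is both vague (the lift to $\s/(E^2)$ only directly controls the Hodge--Tate filtration, not the Nygaard filtration) and, in the paper's logical ordering, circular: the Nygaard--Prism degeneration (\Cref{Nygaard--Prism degeneration}) is proved \emph{after} and \emph{using} the present theorem.

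For part (2), your plan to derive a contradiction from Frobenius bijectivity together with the embedding of \Cref{torsion in O bounds prismatic torsion} is not the mechanism, and it is not clear it can be made to work: the bijectivity of the semi-linear Frobenius on $\mathfrak{M}^T_n$ is a genuine feature of the boundary case (and is realized nontrivially in the example of \Cref{subsection example}), so it cannot by itself be ``incompatible'' with the Hodge filtration. What kills $\mathfrak{M}^T_n$ when $e>1$ is again \Cref{length lemma}: part (2) of that lemma gives a \emph{strict} inequality $\ell(\mathrm{Tor}_1^{\s}(k,\mathcal{O}_K))<\ell(\mathrm{Tor}_1^{\s}(k,\varphi_{\s,*}\mathcal{O}_K))$ precisely when $e>1$, so if $\mathfrak{M}^T_n\ne0$ one would get $\ell(\rH^{T-1}_{\mathrm{HT}})<\ell(\rH^{T-1}_{\dR})$, contradicting the Hodge--Tate degeneration equality established in (1). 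The lift enters only through (1), not through any deeper Frobenius analysis. Finally, your treatment of (3) and (4) as ``formal'' undersells the content: (3) requires \Cref{split filtration lemma}, which upgrades mod-$p^n$ injectivity for all $n$ to an integral splitting (and does \emph{not} need any input from \Cref{arithmetic applications}, contrary to your expectation — the extra degree $i=p$ is available because (1) controls differentials with source degree $<T=p$, which suffices for injectivity of $\rH^p(\Fil^j)\to\rH^p_{\dR}$); (4) relies on the vanishing of $\mathfrak{M}^T_n$ from (2) together with the argument of \cite[Corollary 3.9]{Li20}.
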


Here $\mathcal{X}_n$ denotes the mod $p^n$ fibre.
We do not know if the split statement in (3) above holds at the mod $p^n$ level.
Mimicking the terminology in \cite{Li20}, we may say the Hodge--de Rham sequence for $\mathcal{X}_n$
is split degenerate up to degree $T$.
We need some preparations.

\begin{lemma}
\label{length lemma}
\leavevmode
\begin{enumerate}
\item If $e=1$, we have
$\ell\big( \mathrm{Tor}_1^{\s}(k, \mathcal{O}_K) \big) = \ell\big( \mathrm{Tor}_1^{\s}(k, \varphi_{\s,*}\mathcal{O}_K) \big)$.
\item If $e > 1$, we have
$\ell\big( \mathrm{Tor}_1^{\s}(k, \mathcal{O}_K) \big) < \ell\big( \mathrm{Tor}_1^{\s}(k, \varphi_{\s,*}\mathcal{O}_K) \big)$.
\item Let $M$ be a finitely generated $p^{\infty}$-torsion $\s$-module without $u$-torsion, then
\[
\ell\big( M \otimes_{\s} \mathcal{O}_K \big) = \ell\big( M \otimes_{\s, \varphi_{\s}} \mathcal{O}_K \big).
\]
\end{enumerate}
\end{lemma}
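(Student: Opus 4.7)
The plan for parts (1) and (2) is to use the length-$1$ free resolution $0 \to \s \xrightarrow{E} \s \to \mathcal{O}_K \to 0$ together with the flat-base-change identity $\mathrm{Tor}^{\s}_{\bullet}(k, \varphi_{\s,*}N) \cong \mathrm{Tor}^{\s}_{\bullet}(\varphi_{\s}^{*} k, N)$, which comes from flatness of $\varphi_{\s}$. Tensoring the resolution with $k = \s/(p,u)$ gives the two-term complex $k \xrightarrow{E \bmod (p,u)} k$; since $E$ is Eisenstein, $E \in (p,u)$, so the map vanishes and $\mathrm{Tor}^{\s}_{1}(k,\mathcal{O}_K) \cong k$ has length $1$. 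For the twisted version, note that $\varphi_{\s}^{*} k = \s/(p,u^p) = k[u]/(u^p)$ and that $E \equiv u^e \bmod p$, so tensoring the resolution produces the complex $k[u]/(u^p) \xrightarrow{u^e} k[u]/(u^p)$; its kernel is $u^{\max(p-e,0)} k[u]/(u^p)$, of length $\min(e,p)$. Part (1) now follows from $\min(1,p) = 1$, and part (2) from $\min(e,p) \geq 2 > 1$ whenever $e > 1$.

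For part (3), the first step is to show that $E$ is a nonzerodivisor on any finitely generated, $p^{\infty}$-torsion, $u$-torsion free $\s$-module $N$. Writing $E = u^e + ph$ for some $h \in \s$ and assuming $En = 0$ gives $u^e n = -ph \cdot n$; iterating $k$ times yields $u^{ke} n = (-ph)^k n$, which is zero once $p^k N = 0$, so the $u$-torsion free hypothesis forces $n = 0$. This applies to $M$ directly, and also to $\varphi^{*} M = M \otimes_{\s,\varphi_\s} \s$: the latter is still finitely generated and $p^{\infty}$-torsion because $\varphi_\s$ fixes $p$, while flatness of $\varphi_\s$ combined with the identity $\varphi^{*}(u \cdot \mathrm{id}_M) = u^p \cdot \mathrm{id}_{\varphi^{*} M}$ forces $u^p$, hence $u$, to act injectively on $\varphi^{*} M$. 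Using the length-$1$ resolution of $\mathcal{O}_K$, both $\mathrm{Tor}^{\s}_{1}(M, \mathcal{O}_K)$ and $\mathrm{Tor}^{\s}_{1}(\varphi^{*} M, \mathcal{O}_K)$ therefore vanish.

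Next I filter $M$ by its $p$-power torsion submodules, $0 \subset M[p] \subset M[p^2] \subset \cdots \subset M[p^n] = M$. Each successive quotient $M_i \coloneqq M[p^i]/M[p^{i-1}]$ is killed by $p$ and remains $u$-torsion free: a class $\bar m$ with $u \bar m = 0$ would satisfy $um \in M[p^{i-1}]$, hence $u(pm) = 0$ in $M$, forcing $pm = 0$ and $\bar m = 0$. Therefore each $M_i$ is finite free over $\ku$, say $M_i \cong \ku^{r_i}$. Since $\varphi_\s$ fixes $p$ and $\varphi^{*}$ is exact, the same filtration transports to $\varphi^{*} M$ with successive quotients $\varphi^{*}(\ku^{r_i}) \cong \ku^{r_i}$ of identical ranks. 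Combined with the $\mathrm{Tor}_1$-vanishing, the functor $-\otimes_{\s} \mathcal{O}_K$ is exact along both filtrations, and since $\ku \otimes_{\s} \mathcal{O}_K \cong k[u]/(u^e)$ has length $e$, both $\ell(M \otimes_{\s} \mathcal{O}_K)$ and $\ell(\varphi^{*}M \otimes_{\s} \mathcal{O}_K)$ equal $e \cdot \sum_i r_i$, proving (3). The principal technical point is the stability of $u$-torsion freeness under the operations $M \mapsto M[p^i]/M[p^{i-1}]$ and $M \mapsto \varphi^{*} M$, which simultaneously underpins the Tor vanishing and the compatibility of the two filtrations.
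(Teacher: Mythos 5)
Your proof is correct and follows essentially the same route as the paper's. For (1) and (2), you identify the two $\mathrm{Tor}_1$'s as $E$-torsion (equivalently $u^e$-torsion) in $k$ and $\s/(p,u^p)$ respectively via the length-one resolution of $\mathcal{O}_K$ and flat base change, which is exactly the paper's computation written out more explicitly; for (3), filtering by $p$-power torsion into graded pieces finite free over $\s/p \cong k[\![u]\!]$ and invoking $\varphi_{\s}^{*}(\s/p)\cong\s/p$ is the paper's argument, with your preliminary Tor-vanishing via the nonzerodivisor $E$ being a slightly redundant but harmless step (what is actually needed and used is $\mathrm{Tor}_1$-vanishing for the graded pieces, which you also supply since each is free over $\ku$).
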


Here $\ell(-)$ denotes length of the $\mathcal{O}_K$-module.

\begin{proof}
For (1) and (2): Simply note that $\mathrm{Tor}_1^{\s}(k, \mathcal{O}_K)$ is the $u^e$-torsion in $k = \s/(p,u)$,
whereas the module $\mathrm{Tor}_1^{\s}(k, (\varphi_{\s})_* \mathcal{O}_K)$ is the $u^e$-torsion in $k \otimes_{\s, \varphi_{\s}} \s = \s/(p,u^p)$.

For (3): It is easy to see that the condition guarantees a finite filtration on $M$ with graded pieces given by $\s/p \cong k[\![u]\!]$.
Indeed we just make an induction on the exponent of powers of $p$ that annihilates $M$ and contemplate with the sequence
\[
0 \to M[p] \to M \to M/M[p] \to 0.
\]
Hence the equality of lengths follows from the equality of $\s/p \otimes_{\s, \varphi_{\s}} \s \simeq \s/p$.
\end{proof}

\begin{lemma}
\label{split filtration lemma}
Let $F \subset M$ be an inclusion of finitely generated $W(k)$-modules.
If the induced maps $F/p^n \to M/p^n$ are injective for any $n \in \mathbb{N}$,
then $F$ is a direct summand in $M$.
\end{lemma}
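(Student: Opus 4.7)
The plan is to translate the hypothesis into a cleaner form and then construct a splitting of the short exact sequence $0 \to F \to M \to T \to 0$, where $T \coloneqq M/F$. First, I would observe that the condition that $F/p^n \to M/p^n$ is injective for every $n$ is equivalent to the equality of submodules $F \cap p^n M = p^n F$ for all $n \geq 1$: the kernel of $F/p^n F \to M/p^n M$ is exactly $(F \cap p^n M)/p^n F$.

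Next, I would invoke the structure theorem for finitely generated modules over the complete discrete valuation ring $W = W(k)$, which decomposes $T = W^s \oplus \bigoplus_{k=1}^u W/p^{c_k}$ with $c_1 \leq \cdots \leq c_u$ finite. My goal is to construct a $W$-linear section $\sigma \colon T \to M$ of the quotient map $\pi \colon M \to T$. For the free part $W^s$, I simply choose lifts of a basis; such lifts automatically have infinite order since they surject onto elements of infinite order. For each torsion generator $\bar t_k$ of order $p^{c_k}$, the key step is to produce a lift $t_k \in M$ whose order equals exactly $p^{c_k}$. Given an arbitrary lift $m_k \in M$, we have $p^{c_k} m_k \in F$ (since its image in $T$ is $0$) and obviously $p^{c_k} m_k \in p^{c_k} M$. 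By the reformulated hypothesis, $F \cap p^{c_k} M = p^{c_k} F$, so $p^{c_k} m_k = p^{c_k} f$ for some $f \in F$; then $t_k \coloneqq m_k - f$ is the desired lift, whose order divides $p^{c_k}$ and, via $\pi$, is at least $p^{c_k}$, hence is exactly $p^{c_k}$.

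Finally, I would assemble these lifts into the $W$-linear section $\sigma$: the agreement of orders between the generators $\bar t_k \in T$ and their lifts $t_k \in M$ ensures that $\sigma$ is well-defined on each cyclic summand, and on the free part there is no obstruction. Since $\pi \circ \sigma = \mathrm{id}_T$ by construction, we obtain $M = F \oplus \sigma(T)$, exhibiting $F$ as a direct summand. There is no real obstacle in this proof; the only mildly subtle point is recognizing that the hypothesis $F \cap p^n M = p^n F$ is precisely what is needed to correct an arbitrary lift of a torsion generator of $M/F$ into one of the correct $p$-adic order, and that the ambient ring $W$ being a complete DVR is what makes the structure theorem and the order-matching argument available.
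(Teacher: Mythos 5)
Your proof is correct and follows essentially the same strategy as the paper's: the paper phrases the argument in terms of $\mathrm{Ext}^1$ (showing the class restricts to zero on $C_{\mathrm{tor}}$ by producing sections on each cyclic summand, then noting $\mathrm{Ext}^1(C/C_{\mathrm{tor}},F)=0$ since $C/C_{\mathrm{tor}}$ is free), while you build the same section of $M \to M/F$ directly via the structure theorem. The key observation in both cases is identical — the hypothesis (in your reformulation $F \cap p^n M = p^n F$, in the paper's via the snake lemma $M[p^n] \twoheadrightarrow C[p^n]$) is exactly what lets you lift a torsion generator of $M/F$ to an element of the same $p$-adic order, and the free part costs nothing.
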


\begin{proof}
Denote $M/F$ by $C$, the condition implies that 
$M[p^n] \twoheadrightarrow C[p^n]$ for all $n$.
Write the torsion submodule $C_{\mathrm{tor}}$
as direct sums of cyclic torsion $W(k)$-modules,
and use the condition, we see that each cyclic summand admits a section
back to $M$.
This way we see that the extension class restricted to $0$
in $\mathrm{Ext}^1_{W(k)}(C_{\mathrm{tor}}, F)$,
hence it must come from a class in 
$\mathrm{Ext}^1_{W(k)}(C/C_{\mathrm{tor}}, F)$.
But now $C/C_{\mathrm{tor}}$ is finitely generated torsion free $W(k)$-module,
which is well-known to be free $W(k)$-module,
hence the extension group is $0$.
\end{proof}

\begin{proof}[Proof of \Cref{integral HdRSS}]
Let us show (1) and (2).
The case of $n = \infty$ follows from the finite $n$ case:
for (1) this is by left exactness of taking inverse limit,
for (2) this follows from \Cref{prop Zp module structure}.
Now we assume $n \in \mathbb{N}$, the degeneration statement is equivalent to equality of lengths
\[
\ell(\rH^m_{\dR}(\mathcal{X}_n)) = \sum_{i+j = m} \ell(\rH^{i,j}(\mathcal{X}_n)),
\]
for any $m < T$.
Note that by the mere existence of the Hodge--de Rham spectral sequence, we have the inequality
\[
\ell(\rH^m_{\dR}(\mathcal{X}_n)) \leq \sum_{i+j = m} \ell(\rH^{i,j}(\mathcal{X}_n))
\]
for free for any $m$. Below we shall try to show the converse inequality for $m<T$.

To that end, by the same argument as in the first paragraph of \cite[Proof of Theorem 1.1]{Li20},
the liftability condition implies that the Hodge--Tate spectral sequence degenerates up to degree $p-1$,
see \cite[Remark 4.13 and Proposition 4.14]{BS19}, \cite[Proposition 3.2.1]{ALB21},
and \cite[Corollary 4.23]{LL20}.
In particular, since $T-1 \leq p-1$ we have a splitting of $\mathcal{O}_K$-modules:
$\rH^m_{\mathrm{HT}}(\mathcal{X}_n) \simeq \bigoplus_{i + j = m} \rH^{i,j}(\mathcal{X}_n)$
for any $m < T$.
Here the Hodge--Tate cohomology of $\mathcal{X}_n$ is defined to be the quasi-syntomic
cohomology of the mod $p^n$ of the Hodge--Tate sheaf $\overline{\mathcal{O}}_{\Prism}$.
What remains to be shown is an inequality of length: 
\[
\ell(\rH^m_{\mathrm{HT}}(\mathcal{X}_n)) \leq \ell(\rH^m_{\dR}(\mathcal{X}_n)).
\]
By the Hodge--Tate and de Rham comparisons of prismatic cohomology \cite[Theorem 4.10 and Corollary 15.4]{BS19}, 
we have equalities:
\[
\ell(\rH^m_{\mathrm{HT}}(\mathcal{X}_n)) = \ell\big( \rH^m_{\qsyn}(\mathcal{X}, \Prism_n) \otimes_{\s} \mathcal{O}_K \big)
+ \ell\big( \mathrm{Tor}_1^{\s}(\mathfrak{M}^{m+1}_n, \mathcal{O}_K) \big)
\]
and
\[
\ell(\rH^m_{\mathrm{dR}}(\mathcal{X}_n)) = \ell\big( \rH^m_{\qsyn}(\mathcal{X}, \Prism_n) \otimes_{\s, \varphi_{\s}} \mathcal{O}_K \big)
+ \ell\big( \mathrm{Tor}_1^{\s}(\mathfrak{M}^{m+1}_n, (\varphi_{\s})_* \mathcal{O}_K) \big).
\]
Now the desired inequality between length of Hodge--Tate and de Rham cohomology follows from
the definition of $T$, the inequality $m < T$, the \Cref{cor control u-torsion}, and the \Cref{length lemma}.
This finishes the proof of (1).

Next we turn to (2), note that by \Cref{cor control u-torsion} (3), 
if $\mathfrak{M}^{T}_n$ were nonzero, it would necessarily be a direct sum of $k$
as an $\s$-module.
Then \Cref{length lemma} (2) shows that when $e > 1$, the strict inequality
\[
\ell(\rH^{T-1}_{\mathrm{HT}}(\mathcal{X}_n)) < \ell(\rH^{T-1}_{\dR}(\mathcal{X}_n))
\]
holds, which violates the fact that the left hand side is the same as sum of
length of Hodge cohomology groups whereas the right hand side is at most that sum.
Hence we arrive at a contradiction.
The vanishing of $\mathfrak{M}^{m}_n$ when $m < T$ already follows from \Cref{cor control u-torsion} (1).
The statement concerning structure of prismatic cohomology now follows from \Cref{prop Zp module structure}.

Now we turn to (3): $e=1$, hence $T = p$. 
In this case, the statement (1) we proved above implies that
for any $i \leq p$ and any $j$, the map
$\rH^i(\mathcal{X}_n, \Fil^j_{\rH}) \to \rH^i_{\dR}(\mathcal{X}_n)$
is injective.
Hence the submodule $\rH^i(\mathcal{X}, \Fil^j_{\rH}) \subset \rH^i_{\dR}(\mathcal{X})$
has the property that it induces an injection modulo any $p^n$.
The desired splitness follows from \Cref{split filtration lemma}.

Lastly we show (4): when $e > 1$.
We follow the argument of \cite[Corollary 3.9]{Li20}.
Using the vanishing statement established in (2),
it follows that we have abstract isomorphism $\rH^m_{\mathrm{HT}}(\mathcal{X}) \simeq \rH^m_{\dR}(\mathcal{X})$
whenever $m < T$.
Hence the argument of loc.~cit.~shows that in the range $m < T$, splitting of the Hodge--Tate filtration
on $\rH^m_{\mathrm{HT}}(\mathcal{X})$
is equivalent to the splitting of the Hodge filtration on $\rH^m_{\dR}(\mathcal{X})$.
We can then finish our proof, as liftability to $\s/(E^2)$ gives the desired splitting of the Hodge--Tate
filtration in the range $m < T \leq p$.
\end{proof}


\begin{remark}
Comparing our \Cref{integral HdRSS}(1) with what Fontaine--Messing obtained \cite[II.2.7.(i)]{FontaineMessing}
(assuming the existence of a lifting over $W$), we seemingly get a stronger statement:
namely loc.~cit~only claims degeneration statement when the differential has target of degree $< p$
whereas ours allow the differential has source of degree $< p$ (so the target can have degree $p$).
However this is due to Fontaine and Messing not trying to squeeze their method to the most optimal,
which is understandable given how many indices they needed to take care of.
Indeed, their \cite[II.2.6.(ii)]{FontaineMessing} implies the map in next degree (following their notation)
$\oplus_{r=1}^t \rH^{m+1}(J_n^{[r]}) \to \oplus_{r=0}^t \rH^{m+1}(J_n^{[r]})$ is injective,
which can be used to strengthen their \cite[II.2.7.(i)]{FontaineMessing},
hence also gaining the extra degeneration statement we obtained here.
\end{remark}

%

Now it is time to prove the main theorem in this subsection.

\begin{proof}[Proof of {\Cref{improving Li20}}]
Fix an $m < T$, and a $j \in \mathbb{N}$.
We consider the map of two $\mathcal{O}_K$-complexes
\[
\mathrm{R\Gamma}(\mathcal{X},\Fil^j_{\rH}) \to \mathrm{R\Gamma_{dR}}(\mathcal{X}/\mathcal{O}_K).
\]
Our \Cref{integral HdRSS} (1), (3) and (4) implies that this map in degree $m$
satisfies the assumption of \cite[Lemma 2.16]{Li20} (with our $m$ being the $n$ in loc.~cit.).
We finish the proof of (1) by combining the conclusion of \cite[Lemma 2.16]{Li20} 
with the definition of Hodge numbers of $X$ and virtual Hodge numbers of $\mathcal{X}_0$.

The fact that (1) implies (2) is rather a brain teaser. In the Hodge diamond of $X$,
all numbers below the middle row, which is the row with total degree given by $\dim(X)$ ($\leq T$ by assumption),
are given by the corresponding virtual Hodge number of $\mathcal{X}_0$.
Hodge symmetry implies that $\mathcal{X}_0$ also knows all numbers above the middle row.
Now for the middle row, simply use the fact that Euler characteristic is locally constant
for any flat family of coherent sheaves.
\end{proof}

For the rest of this subsection, let us specialize to the case of $e = 1$. 
Using knowledge on the Hodge--de Rham spectral sequence,
we have a similar degeneration of the ``Nygaard--Prism'' spectral sequence
up to cohomological degree $p$.

\begin{theorem}
\label{Nygaard--Prism degeneration}
Assume $e=1$ \emph{(}so $\mathcal{O}_K = W$\emph{)}, and let $n \in \mathbb{N} \cup \{\infty\}$.
The map
\[
\rH^i_{\qsyn}(\mathcal{X}, \Fil^j_{\rN}/p^n) \to \rH^i_{\qsyn}(\mathcal{X}, \Prism^{(1)}_n)
\]
is injective, when $i < p$ or $i = p, j \leq p-1$.
\end{theorem}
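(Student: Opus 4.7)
The plan is to proceed by induction on $j \geq 0$. The base case $j = 0$ is immediate, since $\Fil^0_{\rN}\Prism^{(1)} = \Prism^{(1)}$ makes the map the identity. For the inductive step, denote the map in question by $\iota_{i,j}$, and contemplate the distinguished triangle of quasi-syntomic sheaves
\[
\Fil^{j+1}_{\rN}/p^n \to \Fil^{j}_{\rN}/p^n \to \gr^{j}_{\rN} \Prism^{(1)}_n,
\]
together with its long exact sequence. Granting $\iota_{i,j}$ injective, the kernel of $\iota_{i,j+1}$ is identified with the portion of the image of the boundary map
\[
\delta \colon \rH^{i-1}_{\qsyn}(\mathcal{X}, \gr^{j}_{\rN} \Prism^{(1)}_n) \to \rH^i_{\qsyn}(\mathcal{X}, \Fil^{j+1}_{\rN}/p^n)
\]
that survives mapping forward into $\rH^i_{\qsyn}(\mathcal{X}, \Prism^{(1)}_n)$. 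It therefore suffices to show that this composite---a ``Nygaard--Prism differential'' entering cohomological degree $i$ from a Nygaard graded piece of weight $j$---vanishes.

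The bridge to the Hodge--de Rham side is furnished by the de Rham comparison for prismatic cohomology. When $e=1$, so that $\mathcal{O}_K = W$ and $E(u) = u - p$, one has a natural filtered isomorphism
\[
\Prism^{(1)} \otimes^L_{\s} \s/E \;\simeq\; \Omega^\bullet_{\mathcal{X}/W}
\]
under which the Nygaard filtration on the left matches the Hodge filtration on the right; combined with the divided-Frobenius comparison of \cite[Section 15]{BS19}, each $\gr^k_{\rN}\Prism^{(1)}$ base-changes mod $E$ to (a twist of) $\Omega^k_{\mathcal{X}/W}[-k]$. Under these identifications the putative obstruction $\iota_{i,j+1}\circ\delta$ reduces modulo $E$ to a Hodge--de Rham differential whose source sits in total cohomological degree $i - 1$. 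In both cases of our range---$i < p$, or $i = p$ with $j \leq p-1$---we have $i-1 \leq p-1 < T = p$, so \Cref{integral HdRSS}(1) forces the vanishing of this reduced differential.

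The main obstacle will be upgrading this mod-$E$ vanishing to an integral vanishing of $\iota_{i,j+1}\circ\delta$. I would address this by controlling $E$-torsion in the relevant prismatic groups: since $e=1$, we have $E \equiv u \pmod{p}$, so \Cref{cor control u-torsion}(1) supplies vanishing of $u$-torsion---hence of $E$-torsion modulo $p$---in $\rH^m_{\qsyn}(\mathcal{X}, \Prism_n)$ for $m < p$, and a short d\'evissage along the powers of $p$ propagates this to arbitrary $n$. In our entire range the groups feeding into $\delta$ live in cohomological degrees $m \leq p-1$, so $E$-torsion freeness persists, whence base change along $\s \to \s/E$ commutes with taking cohomology and the mod-$E$ vanishing lifts to an integral statement. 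For the boundary case $i = p$, $j \leq p-1$, where $\rH^p_{\qsyn}(\mathcal{X},\Prism^{(1)}_n)$ itself may carry $u$-torsion, I would additionally invoke the splitting of the Hodge filtration on $\rH^p_{\rm dR}(\mathcal{X})$ established in \Cref{integral HdRSS}(3) to close the argument via a length/Euler-characteristic comparison between the Nygaard-filtered and Hodge-filtered sides.
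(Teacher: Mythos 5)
The proposal has a genuine logical gap in the very first reduction, and the later steps do not repair it.

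You set up the long exact sequence for $\Fil^{j+1}_{\rN} \to \Fil^{j}_{\rN} \to \gr^{j}_{\rN}\Prism^{(1)}$ and claim that, granting $\iota_{i,j}$ injective, ``it suffices to show that this composite $\iota_{i,j+1}\circ\delta$ vanishes.'' But this composite vanishes \emph{tautologically}: $\iota_{i,j+1}$ factors through the natural map $a\colon\rH^i(\Fil^{j+1}_{\rN}/p^n)\to\rH^i(\Fil^{j}_{\rN}/p^n)$, and $a\circ\delta=0$ by exactness of the very long exact sequence you invoked. What you actually need (and what does follow from the induction hypothesis) is that $\ker(\iota_{i,j+1})=\mathrm{Im}(\delta)$, so the real task is to show $\delta=0$ itself, not the vacuous statement about the composite.

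The mod-$E$ argument you then sketch does not close this gap. You propose to show $\delta$ reduces to zero modulo $E$, then lift using $E$-torsion freeness. But $\delta\equiv 0 \pmod{E}$ only tells you $\mathrm{Im}(\delta)\subseteq E\cdot\rH^i(\Fil^{j+1}_{\rN}/p^n)$. To then conclude $\delta=0$ you would want $\rH^i(\Fil^{j+1}_{\rN}/p^n)$ to have no $E$-torsion --- but if you knew that, you would already be done directly, since $\mathrm{Im}(\delta)$ is automatically $E$-torsion (it comes from an $\s/E$-module). Establishing $E$-torsion freeness of $\rH^i(\Fil^{j+1}_{\rN}/p^n)$ is essentially equivalent to the statement you are trying to prove, so this route is circular. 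Note also that \Cref{cor control u-torsion}(1) applies to $\rH^m(\Prism_n)$ (and equivalently $\Prism^{(1)}_n$), not directly to cohomology of the Nygaard filtration.

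The paper avoids all of this by working with a different cofiber sequence, namely $\Fil^{j}_{\rN}\otimes_{\s}(E) \to \Fil^{j+1}_{\rN} \to \Omega^{\geq j+1}_{\mathcal{X}/W}$, which maps compatibly to $\Prism^{(1)}\otimes_{\s}(E)\to\Prism^{(1)}\to\dR_{\mathcal{X}/W}$. In the resulting $3\times 2$ diagram, the right vertical arrow $\rH^i(\Omega^{\geq j+1}/p^n)\to\rH^i_{\dR}$ is injective by \Cref{integral HdRSS}(1), the left vertical arrow is injective by induction, and then a diagram chase reduces everything to $E$-torsion-freeness of $\rH^i(\Prism^{(1)}_n)$ from \Cref{cor control u-torsion}(1). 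This decomposition is what lets the Hodge--de Rham degeneration do real work without circularity. For the boundary case $i=p$, $j\leq p-1$, the paper additionally tracks the $(u,p)$-torsion element through $\Fil^{p-1}_{\rN}$ and uses \Cref{prop induced Nygaard filtration}(1)--(2) very precisely; your ``length/Euler-characteristic comparison via \Cref{integral HdRSS}(3)'' is too vague to constitute a proof and does not match the actual mechanism.
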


Recall that when $i=p$ and $j \geq p$, kernels of these maps have been studied in \Cref{prop induced Nygaard filtration} (3).

\begin{proof}
We shall induct on $j$, the case of $j=0$ being trivial.
We need to stare at the following diagram:
\[
\xymatrix{
\rH^i_{\qsyn}(\mathcal{X}, \Fil^j_{\rN}/p^n) \otimes_{\s} (E) \ar[r] \ar@{^{(}->}[d] & 
\rH^i_{\qsyn}(\mathcal{X}, \Fil^{j+1}_{\rN}/p^n) \ar[r] \ar[d] & 
\rH^i(\mathcal{X}, \Omega^{\geq j+1}_{\mathcal{X}/W}/p^n) \ar@{^{(}->}[d]\\
\rH^i_{\qsyn}(\mathcal{X}, \Prism^{(1)}_n) \otimes_{\s} (E) \ar[r] & 
\rH^i_{\qsyn}(\mathcal{X}, \Prism^{(1)}_n) \ar[r] & 
\rH^i_{\dR}(\mathcal{X}_n/W_n).
}
\]
The rows are exact as they are part of long exact sequences, coming from exact sequences of sheaves on $\mathcal{X}_\qsyn$.
The right vertical arrow is injective for all $i \leq p$ thanks to \Cref{integral HdRSS} (1), note that $T=p$ as $e=1$.
The left vertical arrow is injective by induction hypothesis.

Let us first show the statement for $i < p$. Take an element in the kernel of the middle vertical arrow,
by diagram chasing we see that the element comes from an element $\alpha$ in $\rH^i_{\qsyn}(\mathcal{X}, \Fil^j_{\rN}/p^n) \otimes_{\s} (E)$.
Now it suffices to show the image of $\alpha$ in $\rH^i_{\qsyn}(\mathcal{X}, \Prism^{(1)}_n) \otimes_{\s} (E)$ is zero.
Lastly we note that the further image in $\rH^i_{\qsyn}(\mathcal{X}, \Prism^{(1)}_n)$ is zero,
therefore it suffices to know $\rH^i_{\qsyn}(\mathcal{X}, \Prism^{(1)}_n)$ has no $E$-torsion, or equivalently it has no $u$-torsion,
thanks to \Cref{cor control u-torsion} (1).

Finally let us show the statement when $i = p$, and let $j+1 \leq p-1$.
Argue as the previous paragraph, we are reduced to showing: Given an element $\beta_j \in \rH^p_{\qsyn}(\mathcal{X}, \Fil^j_{\rN}/p^n)$
whose image $\gamma$ in $\rH^p_{\qsyn}(\mathcal{X}, \Prism^{(1)}_n)$ is an $E$-torsion, then the image of $\beta_j \otimes E$
in $\rH^p_{\qsyn}(\mathcal{X}, \Fil^{j+1}_{\rN}/p^n)$ is already zero.
To that end, we need the help of another diagram:
\[
\xymatrix{
\rH^p_{\qsyn}(\mathcal{X}, \Fil^{p-1}_{\rN}/p^n) \otimes_{\s} (E) \ar[r] \ar[d] & 
\rH^p_{\qsyn}(\mathcal{X}, \Fil^{p-1}_{\rN}/p^n) \ar[d]
\\
\rH^p_{\qsyn}(\mathcal{X}, \Fil^j_{\rN}/p^n) \otimes_{\s} (E) \ar[r] \ar@{^{(}->}[d] & 
\rH^p_{\qsyn}(\mathcal{X}, \Fil^{j+1}_{\rN}/p^n)\ar[d]\\
\rH^p_{\qsyn}(\mathcal{X}, \Prism^{(1)}_n) \otimes_{\s} (E) \ar[r] & 
\rH^p_{\qsyn}(\mathcal{X}, \Prism^{(1)}_n). \\
}
\]
Although it will not be used, we point out that two vertical arrows in the top square are both injective
because of \Cref{prop induced Nygaard filtration} (2).
Since $\gamma$ is an $E$-torsion, we know it is $(u,p)$-torsion, see \Cref{cor control u-torsion} (3).
Therefore we see $\gamma$ is the image of a $(u,p)$-torsion $\beta_{p-1}$ in $\rH^p_{\qsyn}(\mathcal{X}, \Fil^{p-1}_{\rN}/p^n)$
thanks to \Cref{prop induced Nygaard filtration} (1).
By induction hypothesis, we see the image of $\beta_{p-1}$ in $\rH^p_{\qsyn}(\mathcal{X}, \Fil^j_{\rN}/p^n)$ is precisely $\beta_j$.
Now we are done as $E \cdot \beta_{p-1} = 0$ in $\rH^p_{\qsyn}(\mathcal{X}, \Fil^{p-1}_{\rN}/p^n)$.
\end{proof}

\section{Crystalline cohomology in boundary degree}
\label{arithmetic applications}
\begin{notation}
Throughout this section let us fix $n \in \mathbb{Z} \cup \{\infty\}$,
and fix $e,i$ such that $e \cdot i = p-1$.
Let $S$ be the PD envelope of $\s \twoheadrightarrow \mathcal{O}_K$,
let $c_1 = \varphi(E)/p \in S^{\times}$.
Denote $\s_n \coloneqq \s/p^n$ and $S_n \coloneqq S/p^n$.
Let $\mathcal{X}$ be a smooth proper formal scheme over $\Spf(\mathcal{O}_K)$.
Let $\mathfrak{M} \coloneqq \rH^i_{\qsyn}(\mathcal{X}, \Prism^{(1)}_n)$,
let $\mathcal{M} \coloneqq \rH^i_{\cris}(\mathcal{X}_n/S_n, \mathcal{O}_{\cris}) \cong \rH^i_{\qsyn}(\mathcal{X}, \dR_{-/\s}/p^n)$,
and finally let $V \coloneqq \rH^{i+1}_{\qsyn}(\mathcal{X}, \Prism_n)[u^\infty]$.
We use $\mathrm{Frob}_k$ to denote the Frobenius on $k$.
\end{notation}

Recall that, by \Cref{cor control u-torsion}, the module $\mathfrak{M}$ is $u$-torsion free and
the Frobenius $\s$-module $V$ is an \'{e}tale $\varphi$-module over $k$. 
Also recall \cite[Theorem 5.2]{BS19} and \cite[Theorem 3.5 and Lemma 7.16]{LL20} that,
we have a short exact sequence of Frobenius $S$-modules:
\[
\label{the sequence}
\tag{\epsdice{2}}
0 \to \mathfrak{M} \otimes_{\s_n} S_n \to \mathcal{M} \to 
\mathrm{Tor}_1^{\s_n}(V, \varphi_* S_n) \cong \mathrm{Tor}_1^{\s_1}(V, \varphi_* S_1) \eqqcolon \overline{M} \to 0,
\]
where the last equality follows from the fact that $S_1 = \s_1 \otimes^{\mathbb{L}}_{\s_n} S_n$.
Here by assumption on $\mathcal{X}$, we know $\mathfrak{M}$
is finitely generated over $\s$ and we can replace completed tensor with tensor to ease
notation a little bit.

Let us give a functorial description of $\overline{M}$.
\begin{lemma}
\label{identify Tor1}
Let $N$ be an $\s_1$-module, then we have identifications of $\s_1$-modules:
\begin{enumerate}
\item $\mathrm{Tor}_1^{\s_1}(V, N) \cong V \otimes_k \big(N[u]\big)$; and
\item $\mathrm{Tor}_1^{\s_1}(V, \varphi_* N) \cong \mathrm{Frob}^*_k(V) \otimes_k \big(N[u^p]\big)$.
\end{enumerate}
Here the $\s$-module structures on right hand sides are via the second factor.
\end{lemma}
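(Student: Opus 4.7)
The plan is to exploit the fact that $V$ is a $k$-vector space, which follows from \Cref{cor control u-torsion}(3) applied in cohomological degree $i+1$ (since $e \cdot ((i+1)-1) = e \cdot i = p-1$): that result gives $(p,u) \subset \mathrm{Ann}(V)$. After picking any $k$-basis $\{v_\alpha\}_{\alpha \in I}$ of $V$, we obtain a two-term free $\s_1$-resolution
\begin{equation*}
0 \longrightarrow \s_1^{\oplus I} \xrightarrow{\,\cdot u\,} \s_1^{\oplus I} \longrightarrow V \longrightarrow 0,
\end{equation*}
sending the $\alpha$-th basis vector $e_\alpha$ to $v_\alpha$. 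Both parts of the lemma will be computed from this single resolution.

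For part (1), tensoring over $\s_1$ with $N$ yields the complex $N^{\oplus I} \xrightarrow{\,\cdot u\,} N^{\oplus I}$; its kernel is $N[u]^{\oplus I}$, which visibly matches $V \otimes_k N[u]$ under the chosen basis, with the $\s_1$-structure coming from the $N$-factor as required.

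For part (2), the same resolution tensored with $\varphi_* N$ gives the complex $N^{\oplus I} \xrightarrow{\,\cdot u^p\,} N^{\oplus I}$, because multiplication by $u \in \s_1$ acts on $\varphi_* N$ as $\varphi(u) = u^p$ does on $N$. Hence $\Tor_1^{\s_1}(V, \varphi_* N) \cong N[u^p]^{\oplus I}$ as abelian groups. To identify this with $\mathrm{Frob}^*_k(V) \otimes_k N[u^p]$, the key step will be to compare the two ways of reading off the $k$-action on the Tor (which factors through $k$ since $u$ acts by zero): from the $V$-side, $a \in k$ acts via $a \cdot (e_\alpha \otimes n) = (a e_\alpha) \otimes n$, while from the $\varphi_* N$-side, $a$ acts as $e_\alpha \otimes (a^p n)$ (since $a \in k \hookrightarrow \s_1$ acts on $\varphi_* N$ through $\varphi(a) = a^p$). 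Reconciling these two actions is exactly the $\mathrm{Frob}_k$-twist built into the right-hand side: the bijection $(v_\alpha \otimes 1) \otimes n \longmapsto e_\alpha \otimes n$ realizes the required isomorphism $\mathrm{Frob}^*_k(V) \otimes_k N[u^p] \cong \Tor_1^{\s_1}(V, \varphi_* N)$ of $\s_1$-modules, with the $\s_1$-structure on the left supplied by $N[u^p]$.

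There is no substantial obstacle; the whole computation is linear algebra over $\s_1 = k[\![u]\!]$. The only thing demanding care is the bookkeeping around the embedding $k \hookrightarrow \s_1$ together with the identity $\varphi|_k = \mathrm{Frob}_k$, which together force the Frobenius twist on the right-hand side of (2).
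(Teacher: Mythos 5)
Your resolution-based computation reaches the same conclusion, but it is a genuinely different, more hands-on route than the paper's. The paper never picks a basis of $V$: it factors $V \otimes^{\LL}_{\s_1,\varphi}N$ through the residue field $k$, computes the one-step base change $k\otimes^{\LL}_{\s_1,\varphi}\s_1\cong\s_1/u^p$ together with its Frobenius-twisted $k$-module structure, and then lets the twist migrate from the auxiliary $\s_1/u^p$ onto $V$ by the tautology $V\otimes_k W_{\mathrm{twisted}}\cong\mathrm{Frob}_k^*(V)\otimes_k W$. You instead resolve $V$ itself over $\s_1$ (your resolution is precisely $I$ copies of the one the paper implicitly uses for $k$) and recover the twist from the observed Frobenius-semilinearity of the $k$-action after tensoring. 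Each approach buys something: yours is more elementary and makes the Tor group visible as a concrete kernel; the paper's is coordinate-free, and by isolating the twist on $\s_1/u^p$ rather than on $V$ it keeps the $\s_1$-module bookkeeping a little more transparent.

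On that last point, one small caution in the final step. The $k$-action you compute, under which $a$ acts through $a^p$ on the $N$-coordinates, is the canonical bifunctorial $\s_1$-action on $\mathrm{Tor}_1^{\s_1}(V,\varphi_*N)$ (for which $u$ kills everything, since $uV=0$). This is \emph{not} the $\s_1$-structure ``via the second factor'' that the lemma stipulates on $\mathrm{Frob}_k^*(V)\otimes_k N[u^p]$, under which $a$ and $u$ act directly on $N[u^p]\subset N$; the two structures differ by $\varphi$, and the Frobenius twist placed on $V$ does not convert one into the other. Your bijection $(v_\alpha\otimes1)\otimes n\mapsto e_\alpha\otimes n$ is $\s_1$-linear once both sides carry the second-factor structure (which the Tor group does carry, because $N\mapsto\varphi_*N$ is functorial in $\s_1$-modules $N$, and which is the structure actually used downstream, e.g.\ in identifying $\overline{M}$), so the conclusion is fine; but the ``reconciling two actions'' passage should be rephrased so as not to suggest that the $\mathrm{Frob}_k$-twist accounts for that second-factor structure.
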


In particular we have $\overline{M} \cong \mathrm{Frob}^*_k(V) \otimes_k S_1[u^p]$.

\begin{proof}
Let us prove (2) here as the proof of (1) follows a similar argument.
Note that
\[
V \otimes^{\LL}_{\s_1, \varphi} N = V \otimes^{\LL}_{k, id} k \otimes^{\LL}_{\s_1, \varphi} N
= V \otimes^{\LL}_{k, id} k \otimes^{\LL}_{\s_1, \varphi} \s_1 \otimes^{\LL}_{\s_1} N.
\]
Then one simply computes
\[
k \otimes^{\LL}_{\s_1, \varphi} \s_1 \cong \s_1/u^p,
\]
with $k$ module structure via Frobenius on $k$.
Therefore the above derived tensor becomes
\[
\mathrm{Frob}_k^* V \otimes_k \mathrm{Tor}_1^{\s_1}(\s_1/u^p, N) \cong \mathrm{Frob}^*_k V \otimes_k \big(N[u^p]\big)
\]
\end{proof}

In the following we shall describe the induced filtrations, divided Frobenii and connections
on all terms of the sequence \ref{the sequence}.

\subsection{Understand filtrations}
Recall \cite[Theorem 4.1]{LL20} (and references thereof) we have filtered isomorphisms:
\[
\mathrm{R\Gamma}(\mathcal{X},\Fil^{\bullet}_{\rH}\dR^{\wedge}_{-/\s}) \xrightarrow{\cong}  \mathrm{R\Gamma}_{\cris}(\mathcal{X}/S, \mathcal{I}_{\cris}^{\bullet}).
\]
By the above identification, we need to understand the Hodge filtration on the derived de Rham cohomology of $\mathcal{X}/\s$.

\begin{lemma}
\label{Fil i injective}
We have the following.
\begin{enumerate}
\item The map $\rH^i_{\qsyn}(\mathcal{X}, \Fil^i_{\rN}\Prism^{(1)}_n) \to \mathfrak{M}$ is injective.
\item The map $\rH^i(\mathcal{X}, \Fil^i_{\rH}\dR^{\wedge}_{-/\s}/p^n) \to \mathcal{M}$ is injective.
\end{enumerate}
\end{lemma}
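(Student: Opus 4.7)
For part (1), the key observation is that the divided Frobenius $\varphi_i\colon \Fil^i_{\rN} \Prism^{(1)}/p^n \to \Prism/p^n$ is, by construction, the composite
\[
\Fil^i_{\rN} \Prism^{(1)}/p^n \xhookrightarrow{\iota} \Prism^{(1)}/p^n \xrightarrow{\varphi} \Prism/p^n,
\]
with image contained in $E^i\cdot \Prism/p^n$. By \cite[Lemma 7.8.(3)]{LL20}, this $\varphi_i$ induces an isomorphism on $\rH^i$. Since injectivity of a composite forces injectivity of its first factor, the map $\iota_\ast\colon \rH^i(\Fil^i_{\rN}\Prism^{(1)}/p^n) \to \mathfrak{M}$ is injective, which is (1).

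For part (2), the plan is to descend the injectivity of (1) across the crystalline comparison $\dR^{\wedge}_{\mathcal{X}/\s}/p^n \cong \Prism^{(1)}/p^n \otimes^{\LL}_{\s_n} S_n$, under which $\Fil^i_{\rH}$ corresponds to $\Fil^i_{\rN}\Prism^{(1)}$ derived-tensored with $S_n$ (and $\iota$ to the inclusion tensored with the identity). Applying the Tor-spectral sequence together with \cite[Theorem 3.5 and Lemma 7.16]{LL20} to the Nygaard-filtered side yields a short exact sequence
\begin{equation*}
0 \to \rH^i(\Fil^i_{\rN}/p^n)\otimes_{\s_n}S_n \to \rH^i(\Fil^i_{\rH}\dR^{\wedge}/p^n) \to \mathrm{Tor}_1^{\s_n}\!\bigl(\rH^{i+1}(\Fil^i_{\rN}/p^n)[u^\infty],\ \varphi_\ast S_n\bigr) \to 0,
\end{equation*}
which maps via $\iota$ to the sequence \ref{the sequence}. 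A snake-lemma chase then reduces the injectivity of the middle vertical map to that of the outer two vertical maps in this diagram of short exact sequences.

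The leftmost vertical $\iota\otimes\id$ is injective once one verifies the vanishing of $\mathrm{Tor}_1^{\s_n}$ against $S_n$ of the cokernel $\mathfrak{M}/\iota(\rH^i(\Fil^i_{\rN}/p^n))$; this cokernel sits inside $\rH^i(\Prism^{(1)}/\Fil^i_{\rN}/p^n)$, and its Tor-analysis should proceed by filtering $\Prism^{(1)}/\Fil^i_{\rN}$ by the graded pieces $\gr^j_{\rN}$ with $j<i$ and appealing to the resulting Hodge-theoretic description. The main obstacle will be the injectivity of the rightmost vertical, i.e.~the map
\[
\rH^{i+1}(\Fil^i_{\rN}/p^n)[u^\infty] \to \rH^{i+1}(\Prism^{(1)}/p^n)[u^\infty]
\]
on boundary-degree $u^\infty$-torsion; to handle this one must invoke the structural results on the $u^\infty$-torsion in the boundary degree from \Cref{cor control u-torsion} together with the explicit identification of the induced Nygaard filtration thereon from \Cref{prop induced Nygaard filtration}, so as to verify that the map on $u^\infty$-torsion components is compatible and injective in the range $e(i+1) - 1 \geq p-1$ that we are forced into.
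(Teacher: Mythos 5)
Your argument for part (1) contains a concrete error. You assert that the divided Frobenius $\varphi_i$ \emph{is} the composite $\varphi\circ\iota\colon\Fil^i_{\rN}\Prism^{(1)}/p^n\hookrightarrow\Prism^{(1)}/p^n\to\Prism/p^n$. This is false: that composite equals $E^i\cdot\varphi_i$, not $\varphi_i$ (indeed, if $\varphi_i$ were the composite and also induced an isomorphism on $\rH^i$, its image would be dense in $E^i\cdot\Prism/p^n$ and simultaneously be everything, forcing $E$ to be a unit). Once one corrects this, the ``injectivity of a composite implies injectivity of its first factor'' step requires injectivity of $E^i\varphi_i$ on $\rH^i$, and for that you need to know that multiplication by $E^i$ is injective on $\rH^i_{\qsyn}(\mathcal{X},\Prism_n)$ --- equivalently, that this module has no $u$-torsion. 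That structural input (\Cref{cor control u-torsion}(1), valid here because $e(i-1)<p-1$) is precisely what your proof never invokes, and it is the nontrivial content. The paper uses the same two ingredients ($\varphi_i$ an isomorphism and $u$-torsion-freeness of the target) but packages them differently, combining the observation that the source then has no finite-length submodule with the fact (\cite[Proposition 7.12]{LL20}) that the kernel in question is necessarily of finite length.

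Part (2) is more seriously off. Your plan hinges on identifying $\Fil^i_{\rH}\dR^{\wedge}_{-/\s}$ with $\Fil^i_{\rN}\Prism^{(1)}\otimes^{\LL}_{\s}S$ under the crystalline comparison, and the proposed short exact sequence for $\rH^i(\Fil^i_{\rH}\dR^{\wedge}/p^n)$ depends on it. But the Hodge filtration is the Day convolution $\Fil^i_{\rH}=\sum_{0\le j\le i}\mathcal{I}^{[i-j]}\hat{\otimes}_{\s}\Fil^j_{\rN}\Prism^{(1)}$ (this is \cite[Remark 4.24]{LL20}, which the paper uses repeatedly in this section), which is strictly larger than $S\hat{\otimes}_{\s}\Fil^i_{\rN}\Prism^{(1)}$; so the base-change you want is simply not available, and the Tor spectral sequence you set up does not compute $\rH^i(\Fil^i_{\rH})$. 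The paper's proof of (2) is much shorter and sidesteps this: the quotient sheaves $\Prism^{(1)}/\Fil^i_{\rN}$ and $\dR^{\wedge}/\Fil^i_{\rH}$ become isomorphic after derived mod $p^n$ (\cite[Corollary 4.23]{LL20}; cf.~\Cref{identify quotient of filtrations}), so the two long exact sequences at degree $i$ share their connecting term, and a diagram chase shows the kernel in (1) surjects onto the kernel in (2). You should adopt that reduction rather than try to compute $\rH^i(\Fil^i_{\rH})$ directly.
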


This fact has appeared in the proof of \cite[Theorem 7.22]{LL20},
for the convenience of readers let us reproduce its proof below.
The key point is that the inequality $e \cdot (i-1) < p-1$ implies the $i$-th prismatic cohomology is $u$-torsion free,
which in turn guarantee injectivity.

\begin{proof}
By \cite[Corollary 4.23]{LL20} and diagram chasing, we know the kernel of
\[
\rH^i_{\qsyn}(\mathcal{X}, \Fil^i_{\rN}\Prism^{(1)}_n) \to \mathfrak{M}
\]
surjects onto the kernel of 
\[
\rH^i(\mathcal{X}, \Fil^i_{\rH}\dR^{\wedge}_{-/\s}/p^n) \to \mathcal{M}.
\]
Hence it suffices to prove (1).

By \cite[Lemma 7.8]{LL20} we know the $i$-th divided Frobenius 
$\varphi_i \colon \rH^i_{\qsyn}(\mathcal{X}, \Fil^i_{\rN}\Prism^{(1)}_n) \to 
\rH^i_{\qsyn}(\mathcal{X}, \Prism_n)$ is an isomorphism.
Combining with \Cref{cor control u-torsion} (1) we see that the cohomology
of Nygaard filtration has no finite length sub-$\s$-module.
Finally \cite[Proposition 7.12]{LL20} says the kernel of the map in (1) must be a finite length sub-$\s$-module,
hence zero.
\end{proof}

\begin{notation}
\label{filtration notation}
We denote the image of above injections by $\Fil^i \mathfrak{M}$ and $\Fil^i \mathcal{M}$ respectively.
\end{notation}

The submodule $\Fil^i \mathcal{M} \subset \mathcal{M}$ induces filtrations
on the first and the third term in the sequence \ref{the sequence}.
For instance 
\[
\Fil^i \big(\mathfrak{M} \otimes_{\s_n} S_n\big) \coloneqq \big(\mathfrak{M} \otimes_{\s_n} S_n\big) \cap \Fil^i \mathcal{M}
\]
where the intersection happens inside $\mathcal{M}$, and 
\[
\Fil^i \overline{M} \coloneqq \mathrm{Im}(\Fil^i \mathcal{M} \to \overline{M}).
\]
Let us investigate these filtrations.

Let $\mathcal{I}^{[i]} \subset S$ be the $i$-th PD filtration ideal,
which is $p$-completely generated by $\geq i$-th divided powers of $E(u)$ in $S$.
Note that the quotient $S/\mathcal{I}^{[i]}$ is $p$-torsion free, hence the ideal 
$\mathcal{I}^{[i]}_n \coloneqq \mathcal{I}^{[i]}/p^n \subset S_n$
can be regarded as the $i$-th PD filtration ideal on $S_n$.

Recall \cite[\S 4]{LL20} that we have a commutative diagram of sheaves on $\big(\mathcal{O}_K\big)_{\qsyn}$:
\[
\xymatrix{
(E(u)^j) \otimes_{\s} \Prism^{(1)} \ar[r] \ar[d] &
\Fil^j_{\rN} \Prism^{(1)} \ar[r] \ar[d] &
\Prism^{(1)} \ar[d] \\
\mathcal{I}^{[j]} \otimes_{S} \dR^{\wedge}_{-/\s} \ar[r] &
\Fil^j_{\rH} \dR^{\wedge}_{-/\s}  \ar[r] &
\dR^{\wedge}_{-/\s}. \\
}
\]
\begin{lemma}
\label{identify quotient of filtrations}
The diagram above induces the following commutative diagram of sheaves on $\big(\mathcal{O}_K\big)_{\qsyn}$:
\[
\xymatrix{
0 \ar[r] & 
\frac{\Fil^j_{\rN} \Prism^{(1)}}{(E(u)^j) \otimes_{\s} \Prism^{(1)}} \ar[r] \ar[d] &
\frac{\Prism^{(1)}}{(E(u)^j) \otimes_{\s} \Prism^{(1)}} \ar[r] \ar[d] &
\frac{\Prism^{(1)}}{\Fil^j_{\rN} \Prism^{(1)}} \ar[r] \ar[d] & 0 \\
0 \ar[r] & 
\frac{\Fil^j_{\rH} \dR^{\wedge}_{-/\s}}{\mathcal{I}^{[j]} \hat{\otimes}_{S} \dR^{\wedge}_{-/\s}} \ar[r] &
\frac{\dR^{\wedge}_{-/\s}}{\mathcal{I}^{[j]} \hat{\otimes}_{S} \dR^{\wedge}_{-/\s}} \ar[r] &
\frac{\dR^{\wedge}_{-/\s}}{\Fil^j_{\rH} \dR^{\wedge}_{-/\s}} \ar[r] & 0
}
\]
which has short exact rows, and vertical arrows are isomorphisms if $j \leq p$ and remains so after derived mod $p^n$.
\end{lemma}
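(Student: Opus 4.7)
The plan is to reduce to a local computation on a quasi-syntomic cover by qrsp rings, where $\Prism^{(1)}$ and $\dR^{\wedge}_{-/\s}$ become concentrated in degree zero. The essential content will then be a purely algebraic comparison between $\s/(E^j)$ and $S/\mathcal{I}^{[j]}$.

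First, I would note that the short exactness of each row is essentially formal. Both rows have the shape $0 \to B/A \to C/A \to C/B \to 0$ arising from a chain of sub-sheaves $A \subset B \subset C$, where the containments $(E^j)\otimes_{\s} \Prism^{(1)} \subset \Fil^j_{\rN} \Prism^{(1)} \subset \Prism^{(1)}$ and $\mathcal{I}^{[j]}\hat{\otimes}_S \dR^{\wedge}_{-/\s} \subset \Fil^j_{\rH} \dR^{\wedge}_{-/\s} \subset \dR^{\wedge}_{-/\s}$ are provided by the displayed diagram preceding the lemma. Exactness at the left reduces on qrsp inputs to $E$-torsion freeness of $\Prism^{(1)}$, which is standard (cf.~\cite{BS19}); the analogous statement for the bottom row is handled identically.

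The key content is the vertical isomorphisms, and here the middle vertical arrow is the cleanest. Using Bhatt's crystalline comparison in its Frobenius-twisted form, locally on a qrsp cover one has $\dR^{\wedge}_{R/\s} \cong \Prism^{(1)}_R \hat{\otimes}_{\s} S$. Under this identification the middle vertical arrow becomes $\Prism^{(1)}_R \hat{\otimes}_{\s}$ applied to the base-change map $\s/(E^j) \to S/\mathcal{I}^{[j]}$, and the crucial algebraic input is that this latter map is an \emph{on-the-nose} isomorphism precisely when $j \leq p$. Indeed, both $\s/(E^j)$ and $S/\mathcal{I}^{[j]}$ are free $\mathcal{O}_K$-modules of rank $j$, with bases $\{E^n\}_{n=0}^{j-1}$ and $\{\gamma_n(E)\}_{n=0}^{j-1}$ respectively, and the map is the diagonal matrix $\mathrm{diag}(0!, 1!, \ldots, (j-1)!)$, whose entries are units in $\mathcal{O}_K$ exactly when $j-1 < p$. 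For the right vertical arrow one invokes the Nygaard-filtered refinement of the crystalline comparison (cf.~\cite[Thm.~15.2]{BS19}), identifying $\Prism^{(1)}/\Fil^j_{\rN} \cong \dR^{\wedge}/\Fil^j_{\rH}$ in the range $j \leq p$. With the middle and right vertical arrows being isomorphisms and the rows short exact, the five lemma yields the left vertical isomorphism.

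For the derived mod $p^n$ compatibility, the decisive observation is that $\s/(E^j) \to S/\mathcal{I}^{[j]}$ is an isomorphism of the underlying $\s$-modules, with no hidden derived content, so applying $-\otimes^{\LL}_{\mathbb{Z}} \mathbb{Z}/p^n$ preserves it and the middle vertical arrow survives. The right vertical arrow survives for a parallel reason, since the Nygaard-filtered comparison is itself $p$-complete. I expect the main obstacle to be not calculational but rather careful book-keeping of the filtered Nygaard-versus-Hodge comparison: specifically, to make the right vertical iso work in the derived mod $p^n$ setting, one must verify that the relevant higher $\mathrm{Tor}$'s vanish in the stated range, a fact built into the standard filtered machinery but worth unpacking.
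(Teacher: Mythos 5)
Your approach is essentially the same as the paper's: identify the middle vertical arrow with $\Prism^{(1)}\hat{\otimes}_{\s}$ applied to the ring map $\s/(E^j) \to S/\mathcal{I}^{[j]}$, observe this is an isomorphism because the factorials $0!,\ldots,(j-1)!$ are units precisely when $j\leq p$, cite the literature for the right vertical arrow, and deduce the left one. The paper cites \cite[Corollary 4.23]{LL20} for the right arrow rather than \cite[Theorem 15.2]{BS19}; the former is the precise Nygaard-versus-Hodge statement in the $\qsyn$-sheaf setting that is being invoked, and your reference is in the right neighborhood but not quite the statement used.

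Two small points of caution. Your phrasing of the middle-arrow computation in terms of free $\mathcal{O}_K$-modules with bases $\{E^n\}$ and $\{\gamma_n(E)\}$ is a bit loose: $\s/(E^j)$ and $S/\mathcal{I}^{[j]}$ are not naturally $\mathcal{O}_K$-modules (since $\mathcal{O}_K$ is a quotient, not a subring), but they carry finite filtrations with graded pieces isomorphic to $\mathcal{O}_K\cdot E^n$ and $\mathcal{O}_K\cdot\gamma_n(E)$, and on graded pieces the map is multiplication by $n!$, which gives the same conclusion. More significantly, your final paragraph about the derived mod $p^n$ compatibility is overcomplicated: once the vertical arrows are isomorphisms of sheaves and the rows are exact triangles, applying the exact functor $(-)\otimes^{\mathbb L}_{\Z}\Z/p^n$ automatically preserves both, with no $\mathrm{Tor}$-vanishing or ``hidden derived content'' to worry about. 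This is precisely the one-sentence reduction the paper opens with.
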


\begin{proof}
The derived mod $p^n$ statement follows from the fact that derived mod $p^n$ is exact.
It suffices to show two of the three vertical arrows are isomorphisms.

Using $\dR^{\wedge}_{-/\s} \cong S \hat{\otimes}_{\s} \Prism^{(1)}$, 
see \cite[Theorem 5.2]{BS19} and \cite[Theorem 3.5]{LL20},
the middle vertical arrow is identified with 
\[
\Prism^{(1)} \hat{\otimes}_{\s} \left (\frac{\s}{(E(u)^j)} \longrightarrow \frac{S}{\mathcal{I}^{[j]}}\right ),
\]
hence it suffices to note that the ring map $\frac{\s}{(E(u)^j)} \to \frac{S}{\mathcal{I}^{[j]}}$ is an isomorphism.

The right vertical arrow is an isomorphism (thanks to \cite[Corollary 4.23]{LL20}).
\end{proof}

\begin{proposition}
\label{residual filtration}
The map $\Fil^i \mathcal{M} \to \overline{M}$ is surjective.
Hence $\Fil^i \overline{M} = \overline{M}$.
\end{proposition}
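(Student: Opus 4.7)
The plan is to use the comparison between the Nygaard and Hodge filtrations furnished by \Cref{identify quotient of filtrations}, to translate the claim into a surjectivity statement between certain quotient modules, and then to exploit the boundary-degree structure of the $u^\infty$-torsion in $\rH^{i+1}_{\qsyn}$ to establish surjectivity.

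First, I would recast the statement. Let $\Fil^i\mathfrak{M}$ denote the Nygaard analogue of \Cref{filtration notation}, namely the image of $\rH^i(\mathcal{X},\Fil^i_{\rN}\Prism^{(1)}/p^n) \to \mathfrak{M}$. Since $\Prism^{(1)} \to \dR^\wedge_{-/\s}$ carries $\Fil^i_{\rN}$ into $\Fil^i_{\rH}$ (itself a consequence of \Cref{identify quotient of filtrations}), the image of $\Fil^i\mathfrak{M}\cdot S_n$ lies in $\Fil^i\mathcal{M}$. Applying the snake lemma to the short exact sequence \eqref{the sequence} endowed with the filtrations induced by $\Fil^i\mathcal{M}$, one sees that the proposition is equivalent to the surjectivity of the natural $S_n$-linear map
\[
\pi\colon (\mathfrak{M}/\Fil^i\mathfrak{M}) \otimes_{\s_n} S_n \to \mathcal{M}/\Fil^i\mathcal{M}.
\]

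Second, invoking \Cref{identify quotient of filtrations} with $j=i$, which is valid since $i \le e\cdot i = p-1 \le p$, gives an isomorphism
\[
\Theta\colon \rH^i(\Prism^{(1)}/\Fil^i_{\rN}\Prism^{(1)}/p^n) \xrightarrow{\sim} \rH^i(\dR^\wedge/\Fil^i_{\rH}\dR^\wedge/p^n) \eqqcolon H.
\]
The long exact sequences attached to $\Fil^i_{\rN} \subset \Prism^{(1)}$ and $\Fil^i_{\rH} \subset \dR^\wedge$ embed $\mathfrak{M}/\Fil^i\mathfrak{M}$ and $\mathcal{M}/\Fil^i\mathcal{M}$ into $H$ as the kernels of connecting homomorphisms $\delta_\rN\colon H \to \rH^{i+1}(\Fil^i_{\rN}\Prism^{(1)}/p^n)$ and $\delta_\rH\colon H \to \rH^{i+1}(\Fil^i_{\rH}\dR^\wedge/p^n)$ respectively. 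Naturality of these long exact sequences gives a containment $\mathfrak{M}/\Fil^i\mathfrak{M} \subseteq \mathcal{M}/\Fil^i\mathcal{M}$ inside $H$, and the surjectivity of $\pi$ then becomes equivalent to the statement that $\mathcal{M}/\Fil^i\mathcal{M}$ is generated as $S_n$-submodule of $H$ by $\mathfrak{M}/\Fil^i\mathfrak{M}$.

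The hard part will be this last generation statement. My strategy is to leverage the divided Frobenius isomorphism of \Cref{induced Vers Lemma 1}, together with \Cref{induced Vers Corollary 2}, to identify the torsion in $\rH^{i+1}(\Fil^i_{\rN}\Prism^{(1)}/p^n)$ with the torsion in $\rH^{i+1}(\Prism/p^n)$. Combining this with the Tor-term description of $\overline{M}$ from \Cref{identify Tor1} pinpoints $\overline{M}$ to the $u^\infty$-torsion module $V \subseteq \mathfrak{M}^{i+1}$. Since \Cref{cor control u-torsion}\,(3) establishes that in our boundary-degree setting $V$ is annihilated by both $u$ and $p$ and is purely étale as a $\varphi$-module, each element of $\ker(\delta_\rH)$ should be expressible as an $S_n$-combination of classes coming from $\mathfrak{M}/\Fil^i\mathfrak{M}$. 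The main delicacy lies in carefully tracking the Frobenius twists and verifying that the natural map $\rH^{i+1}(\Fil^i_{\rN}\Prism^{(1)}/p^n) \to \rH^{i+1}(\Fil^i_{\rH}\dR^\wedge/p^n)$ kills exactly $\mathrm{Im}(\delta_\rN)$, for which I would additionally rely on \Cref{prop induced Nygaard filtration} to control the position of Nygaard-filtered pieces inside their Hodge counterparts.
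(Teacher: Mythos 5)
Your global strategy — comparing the Nygaard and Hodge filtrations via \Cref{identify quotient of filtrations}, passing to the associated long exact sequences, and leaning on \Cref{prop induced Nygaard filtration} — does have the same shape as the paper's proof, and your initial reduction (surjectivity of $\pi$ suffices) is fine, if slightly indirect. But your ``hard part'' is not actually resolved, and moreover the statement you propose to verify at the end is logically pointed the wrong way.

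Write $\delta_\rN \colon H \to \rH^{i+1}_{\qsyn}(\mathcal{X},\Fil^i_{\rN}\Prism^{(1)}/p^n)$ and $\delta_\rH \colon H \to \rH^{i+1}_{\qsyn}(\mathcal{X},\Fil^i_{\rH}\dR^\wedge_{-/\s}/p^n)$ for the two connecting maps, as you do. Then $\mathfrak{M}/\Fil^i\mathfrak{M} = \ker(\delta_\rN)$ and $\mathcal{M}/\Fil^i\mathcal{M} = \ker(\delta_\rH)$ inside $H$, and since $\delta_\rH$ factors through $\delta_\rN$, the containment $\ker(\delta_\rN) \subseteq \ker(\delta_\rH)$ is automatic; what you need is the reverse containment, i.e.\ that the comparison map $\rH^{i+1}(\Fil^i_{\rN}\Prism^{(1)}/p^n) \to \rH^{i+1}(\Fil^i_{\rH}\dR^\wedge/p^n)$ be \emph{injective} on $\mathrm{Im}(\delta_\rN)$. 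You instead propose to show this map ``kills exactly $\mathrm{Im}(\delta_\rN)$,'' which is the opposite condition and would, if true with $\mathrm{Im}(\delta_\rN)\neq 0$, actually refute the claim. The auxiliary lemmas you bring in here (\Cref{induced Vers Lemma 1}, \Cref{induced Vers Corollary 2}, \Cref{identify Tor1}, \Cref{cor control u-torsion}(3)) are not what is needed either — in particular \Cref{induced Vers Lemma 1} concerns torsion in the integral cohomology, not the mod $p^n$ groups you are working with.

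The clean route, which the paper takes, is to show $\delta_\rN = 0$ outright, i.e.\ $\mathfrak{M} \twoheadrightarrow H$; then there is nothing left to compare. Extending the long exact sequence one step, $\delta_\rN = 0$ is exactly the injectivity of
\[
\rH^{i+1}_{\qsyn}(\mathcal{X},\Fil^i_{\rN}\Prism^{(1)}/p^n) \longrightarrow \rH^{i+1}_{\qsyn}(\mathcal{X},\Prism^{(1)}/p^n),
\]
and this is precisely \Cref{prop induced Nygaard filtration}(2) applied in degree $i+1$, since $e\cdot\big((i+1)-1\big) = e\cdot i = p-1$. You cited \Cref{prop induced Nygaard filtration} in passing but did not extract this one-line application, which is the entire content of the step. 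Once $\mathfrak{M}\twoheadrightarrow H$ is known, the summation map $\Fil^i\mathcal{M}\oplus\mathfrak{M}\to\mathcal{M}$ is surjective, and projecting to $\overline{M}$ (which kills $\mathfrak{M}$) gives the proposition.
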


\begin{proof}
We stare at the following map between long exact sequences:
\[  
\xymatrix{ 
\rH ^i_{\qsyn}(\mathcal{X}, \Fil^i_{\rm N}\Prism^{(1)}/p^n) \ar[r] \ar[d] & 
\mathfrak{M} \ar[d] ^\iota \ar[r] &
\rH^{i}_{\qsyn} (\mathcal{X}, \big(\Prism^{(1)}/ \Fil^i_{\rm N} \Prism^{(1)} \big)/p^n ) \ar[d]^{\simeq} \\
\rH ^i_{\qsyn} (\mathcal{X}, \Fil^i_{\rm H} \dR^\wedge _{-/ \s}/p^n) \ar[r] & 
\mathcal{M} \ar[r] &
\rH^{i}_{\qsyn} (\mathcal{X}, \big(\dR^\wedge_{-/\s}/ \Fil^i_{\rm H} \dR ^\wedge _{-/ \s} \big)/p^n ) \\
}
\]
Chasing diagram, we see that it suffices to show the top right horizontal arrow is a surjection.
Indeed, granting the surjectivity assertion, we get that the summation map
\[
\Fil^i \mathcal{M} \oplus \mathfrak{M} \to \mathcal{M}
\]
is a surjection. Projection further to $\overline{M}$ kills the second factor above, hence we get the desired surjectivity.

Lastly we prolong the top long exact sequence:
\[
\rH^{i}_{\qsyn} (\mathcal{X}, \big(\Prism^{(1)}/ \Fil^i_{\rm N} \Prism^{(1)} \big)/p^n )
\to \rH ^{i+1} _{\qsyn}(X_n, \Fil^i_{\rm N}\Prism^{(1)}/p^n) \xrightarrow{\iota} 
\rH ^{i+1}_{\qsyn}(\mathcal{X}, \Prism^{(1)}/p^n).
\]
We are reduced to showing $\iota$ is injective, which is exactly \Cref{prop induced Nygaard filtration} (2):
note that $e \cdot \big( (i+1) - 1\big) = e \cdot i = p-1$.
\end{proof}


Using what is proved in the above proposition, 
we can also understand $\Fil^i \big(\mathfrak{M} \otimes_{\s_n} S_n\big)$. 
The diagram before \Cref{identify quotient of filtrations} implies 
that we have a natural map
$\mathfrak{M} \otimes_{\s_n} \mathcal{I}^{[i]}_n \to 
\Fil^i \big(\mathfrak{M} \otimes_{\s_n} S_n\big)$.
Since the map $\Prism^{(1)} \to \dR^{\wedge}_{-/\s}$ of quasi-syntomic sheaves
is filtered, we also have a natural map 
$\Fil^i \mathfrak{M} \to \Fil^i \big(\mathfrak{M} \otimes_{\s_n} S_n\big)$.
We remind readers \Cref{filtration notation} that the source denotes $\rH^i$ of $i$-th mod $p^n$ Nygaard filtration.

\begin{proposition}
\label{Fil on tensor product}
The summation map $\Fil^i \mathfrak{M} \oplus 
\big(\mathfrak{M} \otimes_{\s_n} \mathcal{I}^{[i]}_n\big) \to \Fil^i \big(\mathfrak{M} \otimes_{\s_n} S_n\big)$
is surjective.
\end{proposition}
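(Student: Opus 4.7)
The plan is to reduce the surjectivity claim to comparing two submodules of $\mathfrak{M}/E^i\mathfrak{M}$ by means of ``residue maps'' coming from the long exact sequences for the Nygaard and Hodge filtrations. The required inputs are \Cref{Fil i injective} (characterizing $\Fil^i$ as the kernel of a residue map), \Cref{identify quotient of filtrations} (identifying the two residue targets), and \Cref{cor control u-torsion} (for $u$-torsion freeness of $\mathfrak{M}$).

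First I would set up residue maps coming from the long exact sequences associated to the monomorphisms of sheaves $\Fil^i_{\rN}\Prism^{(1)} \hookrightarrow \Prism^{(1)}$ and $\Fil^i_{\rH}\dR^{\wedge}_{-/\s} \hookrightarrow \dR^{\wedge}_{-/\s}$:
\[
\alpha \colon \mathfrak{M} \longrightarrow \rH^i_{\qsyn}(\mathcal{X},\Prism^{(1)}/\Fil^i_{\rN}/p^n), \quad \beta \colon \mathcal{M} \longrightarrow \rH^i_{\qsyn}(\mathcal{X},\dR^{\wedge}_{-/\s}/\Fil^i_{\rH}\dR^{\wedge}_{-/\s}/p^n),
\]
whose kernels are $\Fil^i\mathfrak{M}$ and $\Fil^i\mathcal{M}$ respectively by \Cref{Fil i injective}. \Cref{identify quotient of filtrations} (applied with $j = i \leq p-1$) identifies the targets of $\alpha$ and $\beta$, compatibly with the map $\mathfrak{M} \to \mathcal{M}$. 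Since $(E^i)\otimes_{\s}\Prism^{(1)} \subset \Fil^i_{\rN}\Prism^{(1)}$, we have $E^i\mathfrak{M} \subset \Fil^i\mathfrak{M}$, so $\alpha$ descends to a map
\[
\bar\alpha \colon \mathfrak{M}/E^i\mathfrak{M} \longrightarrow \rH^i_{\qsyn}(\mathcal{X},\Prism^{(1)}/\Fil^i_{\rN}/p^n)
\]
with $\ker\bar\alpha = \Fil^i\mathfrak{M}/E^i\mathfrak{M}$.

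Next I would identify $\beta|_{\mathfrak{M}\otimes_{\s_n}S_n}$: since this composition is $S_n$-linear and its target is annihilated by $\mathcal{I}^{[i]}_n$ (as $S/\mathcal{I}^{[i]} \cong \s/(E^i)$), it factors through $\mathfrak{M}\otimes_{\s_n}(S_n/\mathcal{I}^{[i]}_n) = \mathfrak{M}/E^i\mathfrak{M}$, and by the sheaf-level compatibility it agrees with $\bar\alpha$ via the identification above. Separately, the $u$-torsion freeness of $\mathfrak{M}$, guaranteed by \Cref{cor control u-torsion}(1) since $e(i-1) = p-1-e < p-1$, implies (as $E^i$ is a non-zero-divisor in $\s_n$) that $\mathrm{Tor}_1^{\s_n}(\mathfrak{M},\s_n/E^i) = \mathfrak{M}[E^i] = 0$, yielding the exact sequence
\[
0 \longrightarrow \mathfrak{M}\otimes_{\s_n}\mathcal{I}^{[i]}_n \longrightarrow \mathfrak{M}\otimes_{\s_n}S_n \longrightarrow \mathfrak{M}/E^i\mathfrak{M} \longrightarrow 0.
\]

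To conclude, take $x \in \Fil^i(\mathfrak{M}\otimes_{\s_n}S_n)$ and let $\bar x$ be its image in $\mathfrak{M}/E^i\mathfrak{M}$. By construction $\beta(x)=0$, hence $\bar\alpha(\bar x) = 0$, so $\bar x$ lifts to some $z \in \Fil^i\mathfrak{M}$. Writing $\iota\colon \mathfrak{M} \to \mathfrak{M}\otimes_{\s_n}S_n$ for $m \mapsto m\otimes 1$, the element $x - \iota(z)$ has zero image in $\mathfrak{M}/E^i\mathfrak{M}$, hence lies in $\mathfrak{M}\otimes_{\s_n}\mathcal{I}^{[i]}_n$; moreover the commutative diagram just before \Cref{identify quotient of filtrations} ensures $\iota(\Fil^i\mathfrak{M}) \subset \Fil^i(\mathfrak{M}\otimes_{\s_n}S_n)$, giving the desired decomposition of $x$.

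The main technical point I foresee is Step 3, verifying that the induced map on $\mathfrak{M}/E^i\mathfrak{M}$ from $\beta|_{\mathfrak{M}\otimes S_n}$ truly matches $\bar\alpha$ under the identification of \Cref{identify quotient of filtrations}; this amounts to unwinding the sheaf-level compatibility between the Nygaard and Hodge quotients. Everything else is diagram chasing.
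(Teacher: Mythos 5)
Your proof is correct and follows essentially the same strategy as the paper's: decompose an element of $\Fil^i\bigl(\mathfrak{M}\otimes_{\s_n}S_n\bigr)$ using the quotient to $\mathfrak{M}/E^i\mathfrak{M}\cong\mathfrak{M}\otimes_{\s_n}(S_n/\mathcal{I}^{[i]}_n)$, with the key step being the compatibility of the Nygaard and Hodge residue maps under \Cref{identify quotient of filtrations}---the paper delegates this to a chase of the diagram appearing in the proof of \Cref{residual filtration}, while you carry out the same chase explicitly via the factored maps $\bar\alpha$ and $\beta$. One side remark: the short exact sequence with $\Tor_1^{\s_n}(\mathfrak{M},\s_n/E^i)=0$ is superfluous for surjectivity (right-exactness of tensor already identifies $\ker\bigl(\mathfrak{M}\otimes_{\s_n}S_n\to\mathfrak{M}/E^i\mathfrak{M}\bigr)$ with the image of $\mathfrak{M}\otimes_{\s_n}\mathcal{I}^{[i]}_n$, which is all you use), and the parenthetical justification is slightly off---that $E^i$ is a non-zero-divisor in $\s_n$ gives $\Tor_1=\mathfrak{M}[E^i]$ but not the vanishing, which instead requires deducing $E$-torsion freeness from $u$-torsion freeness (e.g.\ by passing to the $\s_1$-module $\mathfrak{M}[p]$, where $E\equiv u^e$).
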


\begin{proof}
Note that 
\[
\frac{\mathfrak{M} \otimes_{\s_n} S_n}{\mathfrak{M} \otimes_{\s_n} \mathcal{I}^{[i]}_n}
= \mathfrak{M} \otimes_{\s_n} \frac{S_n}{\mathcal{I}^{[i]}_n}
= \mathfrak{M}/(E^i).
\]
In the last equality, we use the fact that $i < p$ implies
$S_n/\mathcal{I}^{[i]}_n = \s_n/(E^i)$.
Therefore any element $x$ in $\mathfrak{M} \otimes_{\s_n} S_n$
can be written as $x = y + z$ with $y \in \mathfrak{M}$ and $z$ is in the image 
of $\mathfrak{M} \otimes_{\s_n} \mathcal{I}^{[i]}_n$.
Hence we have 
\[
\Fil^i \big(\mathfrak{M} \otimes_{\s_n} S_n\big) = 
\Big(\Fil^i \big(\mathfrak{M} \otimes_{\s_n} S_n\big) \cap \mathfrak{M}\Big)
+ \mathrm{Im}(\mathfrak{M} \otimes_{\s_n} \mathcal{I}^{[i]}_n).
\]
It suffices to show
\[
\Fil^i \big(\mathfrak{M} \otimes_{\s_n} S_n\big) \cap \mathfrak{M}
= \Fil^i \mathfrak{M} \coloneqq \rH ^i_{\qsyn}(\mathcal{X}, \Fil^i_{\rm N}\Prism^{(1)}/p^n),
\]
which exactly follows from chasing the diagram in the proof of \Cref{residual filtration}.
\end{proof}

\begin{corollary}
\label{degree p-1 is Breuil}
Let $e=1$ and $i=p-1$.
Then the triple $(\mathcal{M}, \Fil^{i}\mathcal{M}, \varphi_i)$
is an object in $\Mod_{S, \tor}^{\varphi, p-1}$.
\end{corollary}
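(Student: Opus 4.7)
The plan is to bootstrap from the short exact sequence \ref{the sequence}
\[
0 \to \mathfrak{M} \otimes_{\s_n} S_n \to \mathcal{M} \to \overline{M} \to 0
\]
to verify the Breuil module axioms for $(\mathcal{M}, \Fil^i\mathcal{M}, \varphi_i)$. The preliminary facts are essentially in hand: injectivity $\Fil^i\mathcal{M} \hookrightarrow \mathcal{M}$ is \Cref{Fil i injective}; the containment $\Fil^iS \cdot \mathcal{M} \subset \Fil^i\mathcal{M}$ follows from the sheaf inclusion $\mathcal{I}^{[i]} \cdot \dR^{\wedge}_{-/\s} \subset \Fil^i_{\rH}\dR^{\wedge}_{-/\s}$; and $\varphi_i$ inherits well-definedness from the divided Frobenius on the Hodge-filtered crystalline complex. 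What remains is (a) showing that $\varphi_i(\Fil^i\mathcal{M})$ generates $\mathcal{M}$, and (b) realizing $\mathcal{M}$ as a successive extension of $S_1$-free Breuil modules.

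For (b), I would first upgrade \ref{the sequence} to a short exact sequence in $^\sim\Mod^{\varphi,p-1}_S$ by giving $\overline{M}$ the filtration $\Fil^i\overline{M} = \overline{M}$ (legitimate by \Cref{residual filtration}) and giving $\mathfrak{M} \otimes_{\s_n}S_n$ the pulled-back filtration. Since $\Fil^i(\mathfrak{M}\otimes S_n) = (\mathfrak{M}\otimes S_n)\cap\Fil^i\mathcal{M}$ by definition, the associated filtration sequence is automatically short exact. Next, I would verify each end lies in $\Mod^{\varphi,p-1}_{S,\tor}$ and invoke closure under extensions to conclude the same for $\mathcal{M}$. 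For the left end: with $e=1$, $i=p-1$, the inequality $e(i-1) = p-2 < p-1$ and \Cref{cor control u-torsion}\,(1) imply that $\mathfrak{M}/p^m$ is $u$-torsion free for every $m$, so by \Cref{prop Zp module structure}-style arguments $\mathfrak{M}$ is a successive extension of finitely generated $\s_1$-free classical Kisin modules, and applying $\underline{\calM}$ produces a successive extension by $S_1$-free objects. For the right end: by \Cref{identify Tor1}\,(2) we have $\overline{M} = \mathrm{Frob}_k^*V \otimes_k S_1[u^p]$; since $e = 1$ forces $u^p = 0$ in $S_1$ (as $p! = 0$ there, so $u^p = E^p \equiv p!\cdot\gamma_p(E) \equiv 0$), we get $\overline{M} = \mathrm{Frob}_k^*V\otimes_k S_1$, a finite free $S_1$-module of rank $\dim_k V$.

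For (a), I would argue by diagram chase once the generation is established for both ends. On $\mathfrak{M}\otimes S_n$: by \cite[Lemma 7.8.(3)]{LL20} the divided Frobenius $\varphi_i \colon \Fil^i\mathfrak{M} \to \rH^i_{\qsyn}(\mathcal{X},\Prism_n)$ is an isomorphism, so $\varphi_i(\Fil^i\mathfrak{M})$ already $\s_n$-generates the untwisted prismatic cohomology, hence $S_n$-generates $\mathfrak{M}\otimes S_n$ after base change. On $\overline{M}$: by \Cref{cor control u-torsion}\,(3) the Frobenius on $V$ is bijective, and this bijectivity propagates through the Tor formula to make the induced $\varphi_i \colon \overline{M}\to\overline{M}$ bijective, hence $S_1$-generating. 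Combining these via the sequence: given $x\in\mathcal{M}$, project to $\bar{x}\in\overline{M}$, write $\bar{x} = \sum s_j \varphi_i(\bar{y}_j)$ with $\bar{y}_j\in\overline{M} = \Fil^i\overline{M}$, lift each $\bar{y}_j$ to $y_j\in\Fil^i\mathcal{M}$ (using the surjectivity of \Cref{residual filtration}), and observe that $x - \sum s_j\varphi_i(y_j)$ lies in $\mathfrak{M}\otimes S_n$, where the generation on the left end takes over.

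The main obstacle is pinning down precisely how the induced $\varphi_i$ on $\overline{M}$ is related to the étale Frobenius on $V$ through the Tor construction and the connecting map of \ref{the sequence}. Once one checks that this induced Frobenius does genuinely come from the $\varphi$ on $V$ via the identification of \Cref{identify Tor1}\,(2)---so that bijectivity of $\varphi$ on $V$ yields bijectivity of $\varphi_i$ on $\overline{M}$---the rest of the argument is formal.
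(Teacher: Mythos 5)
Your overall strategy mirrors the paper's: reduce generation to the two ends of the short exact sequence~\ref{the sequence}, handle $\mathfrak{M}\otimes_{\s_n}S_n$ via the Nygaard filtration isomorphism \cite[Lemma~7.8.(3)]{LL20}, and handle $\overline{M}$ separately. The left end is argued correctly, and I note that you also verify the ``successive extension of $S_1$-free objects'' condition in the definition of $\Mod^{\varphi,p-1}_{S,\tor}$, which the paper's proof leaves implicit. However, your treatment of the right end has a genuine gap that you partly acknowledge but underestimate.

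The claim that ``bijectivity of $F$ on $V$ propagates through the Tor formula to make $\overline{\varphi_i}$ bijective'' is not a formal step waiting to be checked; it is false in general. The residual divided Frobenius $\overline{\varphi_i}$ is \emph{not} the Frobenius on $V$ tensored up via \Cref{identify Tor1}\,(2) --- it is the quotient of the divided Frobenius $\varphi_i$ on $\Fil^i\mathcal{M}$, and a priori it could be anything. Indeed \Cref{compute residual divided Frobenius} shows that $\overline{\varphi_i}\equiv 0$ whenever $e>1$, even though $F$ on $V$ remains bijective by \Cref{cor control u-torsion}\,(3); so the ``propagation'' heuristic fails outright in the adjacent cases. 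The correct content is the identity $\overline{\varphi_i}=F\otimes c_1^{p-1}\varphi_{S_1}$ in the $e=1$ case, which is the output of a nontrivial computation: one factors $\varphi_i$ as $\alpha$ followed by $\mathrm{id}\otimes\varphi_i$ (display~\ref{factorize divided Frobenius}), computes $\overline{\alpha}=\tilde F\otimes u^{p-1}$ (\Cref{computing bar alpha}) by resolving the linearized Frobenius, and then evaluates $\varphi_i(u^{ep-1})$ explicitly in $S_1$ (\Cref{compute divided Frobenius on ideal}), where the dichotomy $e=1$ versus $e>1$ appears. Your proof cannot close without \Cref{compute residual divided Frobenius} or an equivalent explicit computation; the diagram-chase assembling the two ends is fine, but the hypothesis you feed into it on the $\overline{M}$ side is unjustified as written.
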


\begin{proof}
Note that the map $\Fil^i\mathcal{M} \to \mathcal{M}$ is injective by \Cref{Fil i injective}.
We need to show admissibility, i.e.~the image $\varphi_i$ generates $\mathcal{M}$.
To that end, we shall explain why both images of 
$\varphi_i \colon \Fil^i\big(\mathfrak{M} \otimes_{\s_n} S_n\big) \to \Fil^i\big(\mathfrak{M} \otimes_{\s_n} S_n\big)$
and
$\overline{\varphi_i} \colon \Fil^{i} \overline{M} = \overline{M} \to \overline{M}$
generates the target.
For the latter, it follows from the $e=1$ case of 
\Cref{compute residual divided Frobenius}.
For the former, just note that the Nygaard filtration,
a.k.a.~$\Fil^i\mathfrak{M}$ (see \Cref{filtration notation})
already has its image of $\varphi_i$ generating the module,
thanks to \cite[Lemma 7.8.(3)]{LL20}.
\end{proof}
\subsection{Compute divided Frobenius}
Next we discuss the divided Frobenius on $\Fil^i$ of terms in the sequence \ref{the sequence}.
We will use $\varphi_i$ to denote the divided Frobenius on both Nygaard and Hodge filtrations,
hopefully readers can tell them apart by looking at the source of the arrow to see which divided 
Frobenius we are using.

Recall \cite[Remark 4.24]{LL20} that when $j \leq p-1$, the semi-linear Frobenius $\varphi$ on $\dR^{\wedge}_{-/\s}$
becomes uniquely divisible by $p^j$ when restricted to the sub-quasi-syntomic sheaf 
$\Fil^i_{\rH}\dR^{\wedge}_{-/\s}$ (c.f.~\cite[p.~10]{Bre98}), which we denote by $\varphi_j$.
The divided Frobenius on Nygaard and Hodge filtrations are related by:
\[
\xymatrix{
\Fil^j_{\rN}\Prism^{(1)} \ar[r]^-{\varphi_j} \ar[d]^-{\iota} & 
\Prism \ar[d]^-{1 \otimes c_1^{j}} \\
\Fil^j_{\rH}\dR^{\wedge}_{-/\s} \ar[r]^-{\varphi_j} &
\dR^{\wedge}_{-/\s} \cong \Prism \hat{\otimes}_{\s, \varphi} S,
}
\]
as one computes: $\frac{\varphi}{\varphi(E)^j} \cdot (\frac{\varphi(E)}{p})^{j} = \frac{\varphi}{p^j}$.
Restricting further to $\mathcal{I}^{[j]} \hat{\otimes}_{\varphi, \s} \Prism \subset \Fil^j_{\rH}\dR^{\wedge}_{-/\s}$,
the divided Frobenius is related to the (semi-linear) prismatic Frobenius via:
\[
\xymatrix{
\mathcal{I}^{[j]} \hat{\otimes}_{\varphi, \s} \Prism \ar[r]^-{\varphi_j \otimes \varphi} \ar[d]^-{\iota} & 
S \hat{\otimes}_{\varphi, \s} \Prism \ar[d]^-{\cong} \\
\Fil^j_{\rH}\dR^{\wedge}_{-/\s} \ar[r]^-{\varphi_j} &
\dR^{\wedge}_{-/\s},
}
\]
where the $\varphi_j$ and $\varphi$ on the top arrow are respectively the divided Frobenius on $\mathcal{I}^{[j]} \subset S$
and the semi-linear Frobenius on $\Prism$.
Since we assumed $e \cdot i = p-1$, in particular $i \leq p-1$.
From the discussion, we immediately get the following.
\begin{lemma}
\label{divided Frobenius descends}
Restricting the divided Frobenius $\varphi_i \colon \Fil^i \mathcal{M} \to \mathcal{M}$
to $\Fil^i\big(\mathfrak{M} \otimes_{\s_n} S_n\big)$, the image lands in the submodule $\mathfrak{M} \otimes_{\s_n} S_n$.
\end{lemma}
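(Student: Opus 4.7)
The plan is to invoke \Cref{Fil on tensor product}, which expresses $\Fil^i(\mathfrak{M}\otimes_{\s_n}S_n)$ as the sum of the image of $\Fil^i \mathfrak{M}$ and the image of $\mathfrak{M}\otimes_{\s_n}\mathcal{I}^{[i]}_n$. By additivity of $\varphi_i$, it therefore suffices to check separately that each of these two types of generators is carried into the submodule $\mathfrak{M}\otimes_{\s_n}S_n \subset \mathcal{M}$. For both checks I would use the two commutative diagrams of quasi-syntomic sheaves displayed immediately before the lemma, which relate the Hodge divided Frobenius to the Nygaard divided Frobenius on $\Prism^{(1)}$ and to the divided Frobenius on the PD ideal $\mathcal{I}^{[i]}\subset S$ respectively.

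For the first generator, take $x \in \Fil^i \mathfrak{M} = \rH^i_{\qsyn}(\mathcal{X}, \Fil^i_{\rN}\Prism^{(1)}_n)$. The first diagram gives on cohomology the identity $\varphi_i^{\mathrm{Hdg}}(\iota(x)) = c_1^i \cdot \bigl(\varphi_i^{\mathrm{Nyg}}(x)\otimes 1\bigr)$ inside $\mathcal{M} \cong \rH^i_{\qsyn}(\mathcal{X}, \Prism_n \hat\otimes_{\s,\varphi} S/p^n)$. Since $\varphi_i^{\mathrm{Nyg}}(x)$ lives in $\rH^i_{\qsyn}(\mathcal{X},\Prism_n)$, its image under the natural map to $\mathcal{M}$ factors through the Frobenius twist as $\rH^i(\Prism_n)\to \varphi^*_\s \rH^i(\Prism_n)\otimes_{\s_n} S_n = \mathfrak{M}\otimes_{\s_n}S_n \hookrightarrow \mathcal{M}$. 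Because $c_1\in S^\times$, multiplication by $c_1^i$ preserves the submodule $\mathfrak{M}\otimes_{\s_n}S_n$, so we land where we want.

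For the second generator, any element of $\mathfrak{M}\otimes_{\s_n}\mathcal{I}^{[i]}_n$ lifts, via flatness of $\varphi_\s$ and the identification $\dR^\wedge_{-/\s}\cong \Prism\hat\otimes_{\s,\varphi}S$, to a cohomology class that comes from the subsheaf $\mathcal{I}^{[i]}\hat\otimes_{\varphi,\s}\Prism$ of $\Fil^i_{\rH}\dR^\wedge_{-/\s}$. The second diagram displayed before the lemma states that $\varphi_i^{\mathrm{Hdg}}$ restricted to this subsheaf factors as $\varphi_i^{\mathcal{I}}\otimes \varphi$, landing in $S\hat\otimes_{\varphi,\s}\Prism$; but this target is the same as $\Prism^{(1)}\hat\otimes_\s S$, so on $\rH^i$ the image is automatically in $\mathfrak{M}\otimes_{\s_n}S_n$.

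The only real obstacle I anticipate is making the cohomology-level bookkeeping match the sheaf-level diagrams: specifically, verifying that a class in $\mathfrak{M}\otimes_{\s_n}\mathcal{I}^{[i]}_n$, viewed inside $\Fil^i\mathcal{M}$, can be represented by a sheaf-theoretic class in $\rH^i_{\qsyn}(\mathcal{X}, \mathcal{I}^{[i]}_n\hat\otimes_{\varphi,\s_n}\Prism_n)$, so that the sheaf-level factorization of $\varphi_i^{\mathrm{Hdg}}$ literally applies. This should follow from flatness of $\varphi_\s$ together with the fact that $\mathcal{I}^{[i]}_n$ has Tor-amplitude $\leq 1$ over $\s_n$, so that no higher derived functor obstructs the identification; once this is set up, the verification is diagrammatic.
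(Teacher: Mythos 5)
Your proposal is correct and is essentially the paper's own argument: the paper packages the same two cases into a single commutative square with top arrow $(\varphi_i \otimes c_1^i) \oplus (\varphi_i \otimes \varphi)$ and then cites \Cref{Fil on tensor product} for the surjectivity of the left vertical map, which is logically the same as your case-by-case treatment of the two generator types via the two displayed sheaf diagrams.

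One small remark on your closing worry: the concern about Tor-amplitude of $\mathcal{I}^{[i]}_n$ is not actually needed here. You do not need to identify $\mathfrak{M}\otimes_{\s_n}\mathcal{I}^{[i]}_n$ with $\rH^i_{\qsyn}(\mathcal{X}, \mathcal{I}^{[i]}_n\hat\otimes_{\varphi,\s_n}\Prism_n)$; you only need the natural ``tensor-out'' map $\mathcal{I}^{[i]}_n\otimes_{\s_n,\varphi}\rH^i_{\qsyn}(\mathcal{X},\Prism_n) \to \rH^i_{\qsyn}(\mathcal{X}, \mathcal{I}^{[i]}_n\hat\otimes_{\varphi,\s_n}\Prism_n)$, which exists unconditionally and makes the relevant square commute by naturality. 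Since the lemma only claims the image lands \emph{inside} $\mathfrak{M}\otimes_{\s_n}S_n$, not an identification of modules, this one-directional map is all that is needed, and no flatness or Tor-amplitude input is required.
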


\begin{proof}
By the above discussion, we have a commutative diagram:
\[
\xymatrix{
\Fil^i \mathfrak{M} \oplus \big(\mathcal{I}^{[i]}_n \otimes_{\s_n} \mathfrak{M}\big) \ar[r] \ar[d] & 
\mathfrak{M} \otimes_{\s_n} S_n \ar[d]^-{\iota} \\
\Fil^i \mathcal{M} \ar[r]^-{\varphi_i} &
\mathcal{M},
}
\]
where the top arrow is given by $\big(\varphi_i \otimes c_1^i\big) \oplus \big(\varphi_i \otimes \varphi\big)$.
Our claim follows from \Cref{Fil on tensor product} which says the image of the left vertical arrow is precisely $\Fil^i\big(\mathfrak{M} \otimes_{\s_n} S_n\big)$.
\end{proof}

Consequently the divided Frobenius $\varphi_i \colon \Fil^i \mathcal{M} \to \mathcal{M}$ descends 
to a semi-linear map 
$\Fil^i \overline{M} = \overline{M} \to \overline{M}$ (see \Cref{residual filtration}), which we refer to as the \emph{residual divided Frobenius}.
Our next task is to relate this residual divided Frobenius with the Frobenius on $V$.

To that end, we factorize the divided Frobenius on $i$-th Hodge filtration as:
\[
\label{factorize divided Frobenius}
\tag{\epsdice{3}}
\Fil^{i}_{\rH} \dR^{\wedge}_{-/\s} \xrightarrow{\alpha} 
\Prism \widehat{\otimes}_{\s} \mathcal{I}^{[i]} \xrightarrow{\mathrm{id} \otimes \varphi_{i}}
\Prism \widehat{\otimes}_{\s} \varphi_* S.
\]
Here $\alpha$ is $S$-linear and is defined at the level of sheaves in $\big(\mathcal{O}_K\big)_{\qsyn}$:
Recall \cite[Remark 4.24]{LL20} that on the basis of large quasi-syntomic algebras, we know
\[
\Fil^{i}_{\rH} \dR^{\wedge}_{-/\s} = \sum_{0 \leq j \leq i} \mathcal{I}^{[i-j]} \hat{\otimes}_{\s} \Fil^j_{\rN}{\Prism}^{(1)}.
\]
Therefore the linear Frobenius
\[
\Prism \widehat{\otimes}_{\s} \varphi_* S \cong \dR^{\wedge}_{-/\s} \xrightarrow{\beta} \Prism \widehat{\otimes}_{\s} S
\]
restricted to the $i$-th Hodge filtration lands in $\Prism \widehat{\otimes}_{\s} \mathcal{I}^{[i]}$,
and compose further with the $i$-th divided Frobenius on the second factor gives the semi-linear divided Frobenius.

\begin{lemma}
The map $\Fil^{i}_{\rH} \dR^{\wedge}_{-/\s} \xrightarrow{\alpha} 
\Prism \widehat{\otimes}_{\s} \mathcal{I}^{[i]}$ induces a commutative diagram:
\[
\xymatrix{
\Fil^i\big(\mathfrak{M} \otimes_{\s_n} S_n\big) \ar[r] \ar[d] & \rH^i_{\qsyn}(\mathcal{X}, \Prism_n) \otimes_{\s_n} \mathcal{I}^{[i]}_n \ar[d] \\
\Fil^i \mathcal{M} \ar[r]^-{\alpha} & \rH^i_{\qsyn}(\mathcal{X}, \Prism_n \hat{\otimes}_{\s_n} \mathcal{I}^{[i]}_n).
}
\]
\end{lemma}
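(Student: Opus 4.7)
The task is to construct the top horizontal arrow and verify commutativity of the square. At the level of quasi-syntomic sheaves, the map $\alpha$ is built from the linear Frobenius $\beta$ restricted along the Hodge filtration. My plan is to first reduce to checking that $\alpha$, when applied to the image of $\Fil^i(\mathfrak{M} \otimes_{\s_n} S_n)$ inside $\Fil^i \mathcal{M}$, lands in the submodule $\rH^i_{\qsyn}(\mathcal{X}, \Prism_n) \otimes_{\s_n} \mathcal{I}^{[i]}_n$ of the target; once this factorization is established, the top arrow is defined and the commutativity of the square is automatic, since both composites agree by construction at the sheaf level.

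By \Cref{Fil on tensor product}, the submodule $\Fil^i(\mathfrak{M} \otimes_{\s_n} S_n)$ is the sum of the images of $\Fil^i \mathfrak{M}$ and of $\mathfrak{M} \otimes_{\s_n} \mathcal{I}^{[i]}_n$, so it suffices to check the desired factorization on each of these two pieces separately. For a class arising from $\mathfrak{M} \otimes_{\s_n} \mathcal{I}^{[i]}_n$: at the sheaf level, the composite
\[
\Prism^{(1)} \hat{\otimes}_{\s} \mathcal{I}^{[i]} \hookrightarrow \Fil^i_{\rH}\dR^{\wedge}_{-/\s} \xrightarrow{\alpha} \Prism \hat{\otimes}_{\s} \mathcal{I}^{[i]}
\]
is simply the linear Frobenius $\varphi \colon \Prism^{(1)} \to \Prism$ tensored with the identity on $\mathcal{I}^{[i]}$; taking $\rH^i$ and composing with the natural cup-product map into $\rH^i_{\qsyn}(\mathcal{X}, \Prism_n \hat{\otimes}_{\s_n} \mathcal{I}^{[i]}_n)$ visibly factors through $\rH^i_{\qsyn}(\mathcal{X}, \Prism_n) \otimes_{\s_n} \mathcal{I}^{[i]}_n$. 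For a class arising from $\Fil^i \mathfrak{M}$: the defining property of the Nygaard filtration is that the linear Frobenius $\varphi$ sends $\Fil^i_{\rN}\Prism^{(1)}$ into $E^i \cdot \Prism$. Since $E^i \in \mathcal{I}^{[i]}$ (indeed $E^i = i! \cdot \gamma_i(E)$), the composite
\[
\Fil^i_{\rN}\Prism^{(1)} \hookrightarrow \Fil^i_{\rH}\dR^{\wedge}_{-/\s} \xrightarrow{\alpha} \Prism \hat{\otimes}_{\s} \mathcal{I}^{[i]}
\]
factors through $\Prism \otimes_{\s} (E^i) \hookrightarrow \Prism \hat{\otimes}_{\s} \mathcal{I}^{[i]}$; applying $\rH^i$ again yields a factorization through $\rH^i_{\qsyn}(\mathcal{X}, \Prism_n) \otimes_{\s_n} \mathcal{I}^{[i]}_n$, as desired.

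The main work is carefully unwinding the sheaf-level construction of $\alpha$ as a restriction of $\beta$, and tracing through the identifications built into the description $\Fil^i_{\rH} \dR^{\wedge}_{-/\s} = \sum_{0 \le j \le i} \mathcal{I}^{[i-j]} \hat{\otimes}_{\s} \Fil^j_{\rN}\Prism^{(1)}$ recalled from \cite[Remark 4.24]{LL20}. The subtle point I expect to be the main obstacle is ensuring the two factorizations constructed on the two generating pieces glue consistently on their overlap; this is, however, forced because both factorizations are obtained by restricting the same sheaf map $\alpha$ and because the receiving submodule $\rH^i_{\qsyn}(\mathcal{X}, \Prism_n) \otimes_{\s_n} \mathcal{I}^{[i]}_n \to \rH^i_{\qsyn}(\mathcal{X}, \Prism_n \hat{\otimes}_{\s_n} \mathcal{I}^{[i]}_n)$ is uniquely determined.
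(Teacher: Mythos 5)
Your proposal is correct and follows essentially the same route the paper takes: the paper's own proof reduces to invoking Proposition \ref{Fil on tensor product} to decompose $\Fil^i(\mathfrak{M} \otimes_{\s_n} S_n)$ into the images of $\Fil^i\mathfrak{M}$ and $\mathfrak{M} \otimes_{\s_n} \mathcal{I}^{[i]}_n$ and then checking each piece at the sheaf level (the paper writes ``This is proved exactly the same way as \Cref{divided Frobenius descends}''). Your explicit verifications on the two pieces --- that $\alpha$ restricted to $\mathcal{I}^{[i]}\hat\otimes_{\varphi,\s}\Prism$ is $\mathrm{id}\otimes\varphi$, and that on $\Fil^i_{\rN}\Prism^{(1)}$ it lands in $\Prism\otimes_\s(E^i)\subset\Prism\hat\otimes_\s\mathcal{I}^{[i]}$ because $E^i = i!\,\gamma_i(E)$ --- are exactly the sheaf-level diagrams the paper records just before \Cref{divided Frobenius descends}, so the argument is faithful rather than merely parallel.
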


The content of this lemma is that when we first derived mod $\alpha$ by $p^n$, then take $\rH^i_{\qsyn}(\mathcal{X},-)$,
and finally restrict it to the submodule $\Fil^i\big(\mathfrak{M} \otimes_{\s_n} S_n\big)$, it lands in the submodule
$\rH^i_{\qsyn}(\mathcal{X}, \Prism_n) \otimes_{\s_n} \mathcal{I}^{[i]}_n$ of the target.
This is proved exactly the same way as \Cref{divided Frobenius descends} so let us omit it.
From the above lemma, we know the map $\alpha$ descends to a map
\[
\Fil^i \overline{M} = \overline{M} = \mathrm{Frob}^*_k(V) \otimes_k S_1[u^p]
\xrightarrow{\overline{\alpha}} \mathrm{Tor}_1^{\s_n}(V, \mathcal{I}^{[i]}_n) = V \otimes_{k} \mathcal{I}^{[i]}_1[u].
\]
Here we use $\mathcal{I}^{[i]}_n \otimes^{\mathbb{L}}_{\s_n} \s_1 = \mathcal{I}^{[i]}_1$ and \Cref{identify Tor1} (1)
to obtain the identification of target.

\begin{proposition}
\label{computing bar alpha}
Let $F \colon V \to V$ denote the semi-linear prismatic Frobenius on $V$,
which induces linearized Frobenius
$\tilde{F} \colon \mathrm{Frob}^*_k(V) \to V$.
Then the map
\[
\mathrm{Frob}^*_k(V) \otimes_k S_1[u^p] \xrightarrow{\overline{\alpha}} V \otimes_{k} \mathcal{I}^{[i]}_1[u]
\]
is given by $\tilde{F} \otimes u^{p-1}$.
\end{proposition}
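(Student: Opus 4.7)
The plan is to identify $\overline{\alpha}$ with the Tor-part of the map on cohomology induced by the linear Frobenius $\beta\colon \dR^\wedge_{-/\s} \to \Prism \hat\otimes_\s S$, and then compute this Tor map via a chain-map lift between explicit free resolutions of $V$ and $\varphi^* V$.

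First I would unfold the sheaf-level picture: by the factorization displayed just above the proposition, $\alpha$ is the restriction of
\[
\beta\colon \dR^\wedge_{-/\s} \cong \Prism \hat\otimes_\s \varphi_* S \cong \Prism^{(1)} \hat\otimes_\s S \longrightarrow \Prism \hat\otimes_\s S
\]
(which is $\tilde{F}_{\Prism} \otimes \mathrm{id}_S$, with $\tilde{F}_{\Prism}\colon \Prism^{(1)} \to \Prism$ the linearized prismatic Frobenius) to $\Fil^i_\rH$, whose image automatically lies in $\Prism \hat\otimes_\s \mathcal{I}^{[i]}$. At the level of $\mathrm{H}^i$ of derived mod-$p^n$ reductions, $\beta$ sends the two-term universal-coefficient filtration defining $\mathcal{M}$ to the analogous filtration on $\mathrm{H}^i_\qsyn(\mathcal{X}, \Prism_n \hat\otimes_\s S_n)$; the induced map on the Tor quotient is $\mathrm{Tor}_1^{\s_n}(\tilde{F}_V, \mathrm{id})$, where $\tilde{F}_V\colon \varphi^* V \to V$ is the $\s_n$-linearization of $F$. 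Since $\alpha$ further lands in $\Prism \hat\otimes_\s \mathcal{I}^{[i]}$, this identifies $\overline{\alpha}$ with the factorization of this Tor map through the injection $\mathrm{Tor}_1^{\s_1}(V, \mathcal{I}^{[i]}_1) \hookrightarrow \mathrm{Tor}_1^{\s_1}(V, S_1)$, which under \Cref{identify Tor1} is the obvious inclusion $V \otimes_k \mathcal{I}^{[i]}_1[u] \hookrightarrow V \otimes_k S_1[u]$.

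Second, I would compute $\mathrm{Tor}_1^{\s_1}(\tilde{F}_V, \mathrm{id}_{S_1})$ explicitly. A $k$-basis $\{e_j\}_{j=1}^r$ of $V$ yields free $\s_1$-resolutions
\[
0 \to \s_1^r \xrightarrow{u \cdot I} \s_1^r \to V \to 0, \qquad 0 \to \s_1^r \xrightarrow{u^p \cdot I} \s_1^r \to \varphi^* V \to 0,
\]
using that $\varphi^* V \cong \mathrm{Frob}_k^*(V) \otimes_k \s_1/(u^p)$ has $u^p$-annihilator and basis $\{1 \otimes e_j\}$. If $F$ denotes the matrix of $\tilde{F}\colon \mathrm{Frob}_k^* V \to V$ in these bases, then the chain-map lift of $\tilde{F}_V$ is forced to be $F$ in degree $0$ and $u^{p-1} F$ in degree $-1$; commutativity is the tautology $u \cdot u^{p-1} F = u^p F = F \cdot u^p$ (entries of $F$ lying in $k$ and hence commuting with $u$). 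Tensoring with $S_1$ and taking kernels in degree $-1$ recovers the identifications of \Cref{identify Tor1}, and the induced map on $\mathrm{Tor}_1$ reads off as $\tilde{F} \otimes u^{p-1}\colon \mathrm{Frob}_k^* V \otimes_k S_1[u^p] \to V \otimes_k S_1[u]$.

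Finally, I would check that this formula lands automatically in $V \otimes_k \mathcal{I}^{[i]}_1[u]$: since $e \cdot i = p-1$ and $E(u) \equiv u^e \pmod p$, one has $u^{p-1} = u^{ei} \equiv E(u)^i \pmod p$, so $u^{p-1} \in \mathcal{I}^{[i]}_1$, and hence $u^{p-1} s \in \mathcal{I}^{[i]}_1 \cap S_1[u] = \mathcal{I}^{[i]}_1[u]$ for any $s \in S_1[u^p]$. Combined with the first paragraph, this establishes $\overline{\alpha} = \tilde{F} \otimes u^{p-1}$. The main obstacle will be the bookkeeping in the first paragraph, specifically the identification of the Tor-summand of the cohomology map induced by $\beta$ with $\mathrm{Tor}_1^{\s_1}(\tilde{F}_V, \mathrm{id})$; once this is pinned down, the chain-map computation is essentially forced, and the $u^{p-1}$ factor records exactly the difference in $u$-annihilators between $V$ and $\varphi^* V$.
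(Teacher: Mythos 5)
Your proposal is correct and follows essentially the same route as the paper: both reduce $\overline{\alpha}$ to the linear Frobenius $\beta$ via the commutative square of sheaves (using that the left and right vertical arrows become isomorphisms on the Tor quotients in our boundary case), and both then compute the induced map on $\mathrm{Tor}_1$ by lifting the linearized Frobenius $\varphi^*V \to V$ along the two-term free resolutions coming from the $u^p$- and $u$-annihilators, forcing the degree $-1$ chain map to be $u^{p-1}\tilde F$. The only difference is cosmetic: you work with a chosen basis and matrices, while the paper writes the same resolution and chain map in a basis-free form $\mathrm{Frob}_k^*(V)\otimes_k\s_1 \to V\otimes_k\s_1$.
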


Note that given a $u^p$-torsion in $S_1$, multiplying with $u^{p-1}$ gives us a $u$-torsion in $S_1$,
implicitly in the statement we have used the fact that the inclusion $\mathcal{I}^{[i]}_1[u] \subset S_1[u]$
is a bijection because $i \leq ep-1$.

\begin{proof}
We stare at the following commutative diagram of sheaves on $\big(\mathcal{O}_K\big)_{\qsyn}$:
\[
\xymatrix{
\Fil^{i}_{\mathrm{H}} \dR^{\wedge}_{-/\s} \ar[d] \ar[r]^-{\alpha} & \Prism \widehat{\otimes}_{\s} \mathcal{I}^{[i]} \ar[d]
\\
\Prism \widehat{\otimes}_{\s} \varphi_* S \cong \dR^{\wedge}_{-/\s} \ar[r]^-{\beta} & \Prism \widehat{\otimes}_{\s} S
}
\]
which induces the following commutative diagram:
\[
\xymatrix{
\Fil^{i} \overline{M} \ar[d]_{\cong} \ar[r]^-{\overline{\alpha}} & V \otimes_{k} \mathcal{I}^{[i]}_1[u] \ar[d]^{\cong} \\
\overline{M} \ar[r]^-{\beta} & V \otimes_{k} S_1[u].
}
\]
The left vertical arrow is an isomorphism. As explained right after the statement, the right vertical arrow is also an isomorphism.
Therefore we are reduced to computing the effect on $\rH^{-1}$ of the map
\[
(V \otimes^{\mathbb{L}}_{\s_1, \varphi} \s_1) \otimes^{\mathbb{L}}_{\s_1} S_1 \to V \otimes^{\mathbb{L}}_{\s_1} S_1
\]
induced by the linearized Frobenius 
$V \otimes^{\mathbb{L}}_{\s_1, \varphi} \s_1 \cong \mathrm{Frob}_k^*(V) \otimes_k \s_1/u^p \xrightarrow{\tilde{F} \otimes \mathrm{proj}} V \otimes_k \s_1/u$.
We can choose the following explicit resolution of the above map of $\s_1$-modules:
\[
\xymatrix{
\mathrm{Frob}_k^*(V) \otimes_k \s_1 \ar[r]^-{\mathrm{id} \otimes u^p} \ar[d]_-{\tilde{F} \otimes u^{p-1}} &
\mathrm{Frob}_k^*(V) \otimes_k \s_1 \ar[r]^-{\mathrm{id} \otimes \mathrm{proj}} \ar[d]_-{\tilde{F} \otimes \mathrm{id}} &
\mathrm{Frob}_k^*(V) \otimes_k \s_1/u^p \ar[d]^-{\tilde{F} \otimes \mathrm{proj}} \\
V \otimes_k \s_1 \ar[r]^-{\mathrm{id} \otimes u} &
V \otimes_k \s_1 \ar[r]^-{\mathrm{id} \otimes \mathrm{proj}} &
 V \otimes_k \s_1/u.
}
\]
Tensor the above with $S_1$ over $\s_1$ and look at the induced map on $\rH^{-1}$ yields the conclusion.
\end{proof}

The effect of the second arrow in \ref{factorize divided Frobenius} is very easy to understand: 
we only need to understand the divided Frobenius
$\varphi_i \colon \mathcal{I}^{[i]}_1[u] \xrightarrow{\varphi_{i}} S_1[u^p]$.
Note that we assumed $e \cdot i = p-1$, hence $e=1$ means $i = p-1$.
\begin{lemma}
\label{compute divided Frobenius on ideal}
The $S_1$-module $\mathcal{I}^{[i]}_1[u] = S_1[u]$ is generated by $u^{ep-1}$,
and we have
\[
\varphi_i(u^{ep-1}) = 
\begin{cases}
c_1^{p-1} \in S_1 = S_1[u^p], \text{ when } e=1 \\
0, \text{ when } e>1.
\end{cases}
\]
\end{lemma}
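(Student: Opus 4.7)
The plan is to first unpack the structure of $S_1$ so as to identify the submodule $\mathcal{I}^{[i]}_1[u]$ explicitly, and then reduce the computation of $\varphi_i$ on it to the known formula $\varphi_i(E^i)=c_1^i$.

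First I would write $S_1 = k[u]/(u^{pe}) \otimes_{k} k[u_1,u_2,\ldots]/(u_j^p)$, where $u_j$ is the image of $\gamma_{p^j}(E)$. Under this description, an element is $u$-torsion iff its $k[u]/(u^{pe})$-factor lies in the socle $k\cdot u^{pe-1}$; thus $S_1[u]=u^{pe-1}\cdot k[u_1,u_2,\ldots]/(u_j^p)$, and as an $S_1$-module (multiplication by $u$ kills $u^{pe-1}$, so only the constant term of the $k[u]/(u^{pe})$-part acts) this is generated by $u^{pe-1}$. Next, since $E\equiv u^e\pmod{p}$, we have $E^i\equiv u^{ei}=u^{p-1}$ in $S_1$, which gives $u^{p-1}\in\mathcal{I}^{[i]}_1$, and therefore $u^{pe-1}=u^{p-1}\cdot u^{p(e-1)}\in\mathcal{I}^{[i]}_1$. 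Combined with the previous paragraph this forces the inclusion $S_1[u]\subset\mathcal{I}^{[i]}_1$, i.e., $\mathcal{I}^{[i]}_1[u]=S_1[u]$, generated by $u^{ep-1}$.

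It remains to compute $\varphi_i(u^{ep-1})$. The key identity $u^{p-1}\equiv E^i\pmod{p}$ allows us to lift the relation $u^{ep-1}=u^{p-1}\cdot u^{p(e-1)}$ to an equality $u^{ep-1}=E^i\cdot u^{p(e-1)}$ in $\mathcal{I}^{[i]}_1$, and then to use the multiplicativity property $\varphi_i(E^i\cdot y)=\varphi_i(E^i)\cdot\varphi(y)=c_1^i\cdot\varphi(y)$. This gives
\[
\varphi_i(u^{ep-1})=c_1^i\cdot\varphi(u^{p(e-1)})=c_1^i\cdot u^{p^2(e-1)}\quad\text{in }S_1.
\]
When $e>1$, the exponent $p^2(e-1)\geq pe$ (since $p(e-1)\geq e$ for $e\geq 2$), so $u^{p^2(e-1)}=0$ in $S_1$ and the result vanishes. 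When $e=1$, the formula $u^{ep-1}=u^{p-1}=E^i$ gives directly $\varphi_i(u^{p-1})=c_1^{p-1}$.

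For the case $e=1$, as a sanity cross-check I would verify via the divided-power route: $\gamma_{p-1}(E)=E^{p-1}/(p-1)!$ reduces mod $p$ to $-E^{p-1}\equiv -u^{p-1}$ by Wilson's theorem, and $\varphi_{p-1}(\gamma_{p-1}(E))=p^{-(p-1)}\varphi(\gamma_{p-1}(E))=c_1^{p-1}/(p-1)!\equiv -c_1^{p-1}$ in $S_1$, which is consistent with $\varphi_{p-1}(u^{p-1})=c_1^{p-1}$. I don't expect any real obstacle here; the only point requiring care is the bookkeeping of factorials mod $p$ in the $e=1$ case, which is the only case where the answer is nonzero.
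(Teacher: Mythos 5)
Your proof is correct and follows essentially the same strategy as the paper: first identify the $u$-torsion of $S_1$ via the explicit description $S_1 \cong k[u]/(u^{pe}) \otimes_k k[u_1,u_2,\ldots]/(u_j^p)$, then compute $\varphi_i$ on a chosen lift of $u^{ep-1}$ to $\mathcal{I}^{[i]} \subset S$. The one difference is cosmetic: you lift to $E^i \cdot u^{p(e-1)}$, so that $\varphi_i(E^i \cdot u^{p(e-1)}) = c_1^i \cdot u^{p^2(e-1)}$ and the vanishing for $e>1$ comes from the $u$-power exceeding $pe$; the paper instead lifts to $E^{p-1}\cdot u^{e-1}$, getting $p^{p-1-i} c_1^{p-1} u^{ep-p}$, so the vanishing for $e>1$ comes from the leftover power of $p$. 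Both lifts are valid because $\mathcal{I}^{[i]} \cap pS = p\mathcal{I}^{[i]}$ (as $S/\mathcal{I}^{[i]}$ is $p$-torsion-free), so any two lifts differ by an element of $p\mathcal{I}^{[i]}$, on which $\varphi_i$ vanishes mod $p$. Your cross-check via $\gamma_{p-1}(E)$ and Wilson's theorem is a nice consistency verification, though not needed.
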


\begin{proof}
The description of $\mathcal{I}^{[i]}_1[u]$ is well-known.
It follows from the explicit description of $\mathcal{I}^{[i]}_1 \subset S_1$,
given in the proof of \Cref{Fil on tensor product}.

Let us choose a lift of $u^{ep-1} \equiv E(u)^{p-1} \cdot u^{e-1}$ to $\mathcal{I}^{[i]}$ and compute
\[
\varphi_i(E(u)^{p-1} \cdot u^{e-1}) = c_1^{p-1} \cdot p^{p-1-i} \cdot u^{ep-p}.
\]
After reducing mod $p$, the right hand side is $0$ if $0 < p-1-i$ which is equivalent to $e > 1$,
and when $e=1$, the right hand side is $c_1^{p-1}$.
\end{proof}

Putting everything together, we arrive at the following:
\begin{proposition}
\label{compute residual divided Frobenius}
The divided Frobenius $\Fil^{i} \mathcal{M} \to \mathcal{M}$
descends to a residual divided Frobenius
\[
\overline{\varphi_i} \colon \Fil^{i} \overline{M} = \overline{M} \to \overline{M}.
\]
After identifying $\overline{M} \cong \mathrm{Frob}_k^*(V) \otimes_k S_1[u^p]$,
we have 
\[
\overline{\varphi_i} =
\begin{cases}
F \otimes c_1^{p-1} \cdot \varphi_{S_1}, \text{ when } e=1 \\
0, \text{ when } e>1.
\end{cases}
\]
%
\end{proposition}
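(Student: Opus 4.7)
The plan is to exploit the factorization \eqref{factorize divided Frobenius} of the divided Hodge Frobenius. The very existence of $\overline{\varphi_i}$ as a semi-linear operator on $\overline{M}$ is already secured by \Cref{divided Frobenius descends} combined with the identity $\Fil^i \overline{M} = \overline{M}$ from \Cref{residual filtration}; what remains is to compute it. Concretely, I would push the factorization through $\rH^i_{\qsyn}(\cX,-/p^n)$ and then pass to the residual quotients. The first arrow $\alpha$ descends to $\overline{\alpha}$, which \Cref{computing bar alpha} identifies with $\tilde F \otimes u^{p-1}$. The second arrow $\mathrm{id} \otimes \varphi_i$ is induced by the $\s_n$-linear map $\varphi_i\colon \mathcal{I}^{[i]}_n \to \varphi_*S_n$, hence descends to the map $\mathrm{Tor}_1^{\s_1}(V,\mathcal{I}^{[i]}_1) \to \mathrm{Tor}_1^{\s_1}(V,\varphi_*S_1)$. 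Under the identifications of \Cref{identify Tor1}, this becomes a map $V \otimes_k \mathcal{I}^{[i]}_1[u] \to \mathrm{Frob}_k^*(V) \otimes_k S_1[u^p]$ that, on a simple tensor $v \otimes x$, just applies $\varphi_i$ in the second factor.

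With these ingredients the computation becomes mechanical. Present $S_1 \cong (k[u]/u^{ep}) \otimes_k R$ with $R = k[z_1,z_2,\ldots]/(z_j^p)$, where $z_j$ is the reduction of $\gamma_{p^j}(E)$; then $\mathcal{I}^{[i]}_1[u] = S_1[u] = u^{ep-1} R$. For $s \in S_1[u^p]$, writing $s = \sum_{j=0}^{p-1} u^{p(e-1)+j} r_j$ with $r_j \in R$, one computes $u^{p-1} s = u^{ep-1} r_0$. By the Leibniz rule for divided Frobenius, $\varphi_i(u^{ep-1} r_0) = \varphi_i(u^{ep-1}) \cdot \varphi_{S_1}(r_0)$. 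When $e > 1$, \Cref{compute divided Frobenius on ideal} gives $\varphi_i(u^{ep-1}) = 0$, so $\overline{\varphi_i}$ vanishes. When $e = 1$ we have $S_1[u^p] = S_1$, $r_0$ is the $u$-constant part of $s$, and $\varphi_i(u^{p-1}) = c_1^{p-1}$; since $\varphi_{S_1}$ annihilates both $u$ and each $z_j$ in $S_1$ (as $u^p = 0$ and $z_j^p = 0$), we get $\varphi_{S_1}(r_0) = \varphi_{S_1}(s)$. Combining with \Cref{computing bar alpha} yields $\overline{\varphi_i}(v \otimes s) = \tilde F(v) \otimes c_1^{p-1} \varphi_{S_1}(s)$, which repackages as $F \otimes c_1^{p-1} \cdot \varphi_{S_1}$ in the statement.

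The main technical obstacle is the bookkeeping for the descent of the factorization: one must verify that, after applying $\rH^i_{\qsyn}(\cX,-/p^n)$, the composite \eqref{factorize divided Frobenius} respects \eqref{the sequence} so as to produce a well-defined endomorphism of $\overline{M}$. For $\alpha$ this is already encoded in the diagram preceding \Cref{computing bar alpha}; for $\mathrm{id} \otimes \varphi_i$, $\s_n$-linearity of $\varphi_i\colon \mathcal{I}^{[i]}_n \to \varphi_*S_n$, together with the $u$-torsion-freeness of $\mathfrak{M}$ furnished by \Cref{cor control u-torsion}, ensures $\mathfrak{M} \otimes_{\s_n} \mathcal{I}^{[i]}_n$ maps into $\mathfrak{M} \otimes_{\s_n} \varphi_*S_n$, giving a commutative square that descends to the residual quotients. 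Everything else is routine linear algebra.
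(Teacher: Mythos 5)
Your proof is correct and follows the same route the paper takes: verify that $\overline{\varphi_i}$ exists using \Cref{divided Frobenius descends} and \Cref{residual filtration}, then run the residual composite from the factorization \ref{factorize divided Frobenius} and feed in \Cref{computing bar alpha} and \Cref{compute divided Frobenius on ideal}. The paper's own proof is exactly this, stated in two lines with the computation left implicit; you have simply written out the twisted multiplicativity $\varphi_i(u^{ep-1}r_0)=\varphi_i(u^{ep-1})\varphi_{S_1}(r_0)$ and the reduction $u^{p-1}s = u^{ep-1}r_0$ that the paper leaves to the reader. (Two cosmetic points: what you call the ``Leibniz rule'' is just $\varphi_i(ab)=\varphi_i(a)\varphi(b)$ for $a\in\Fil^i$, and the observation that $\varphi_{S_1}$ kills the $z_j$ is harmless but not needed --- $\varphi_{S_1}(u)=0$ already forces $\varphi_{S_1}(s)=\varphi_{S_1}(r_0)$.)
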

Here we abuse notation a little bit by writing the induced Frobenius on $\mathrm{Frob}_k^*(V)$
still as $F$.
\begin{proof}
The first sentence is \Cref{divided Frobenius descends}.
As for the computation of the residual divided Frobenius, we look at the sequence \Cref{factorize divided Frobenius},
which gives rise to
\[
\mathrm{Frob}_k^*(V) \otimes_k S_1[u^p] \xrightarrow{\overline{\alpha}} V \otimes_k \mathcal{I}^{[i]}_1[u]
\xrightarrow{\mathrm{id} \otimes \varphi_i} V \otimes_{k, \varphi} S_1[u^p].
\]
Combining \Cref{computing bar alpha} and \Cref{compute divided Frobenius on ideal} yields the result.
\end{proof}


\subsection{The connection}
In \cite[Subsection 5.1]{LL20} we explained how one gets a natural connection on the derived de Rham complex
relative to $\s$.
Consequently we see that there is a connection $\nabla \colon \mathcal{M} \to \mathcal{M}$
satisfying $\nabla(f \cdot m) = f' \cdot m + f \cdot \nabla(m)$
for any $f \in S$ and $m \in \mathcal{M}$.
In this section, we shall see that in a strong sense there is a unique such connection.
As a corollary, the connection $\nabla$ preserves the sequence~\ref{the sequence}.
Moreover the compatibility between $\nabla$ and divided Frobenius \cite[Subsection 5.2]{LL20}  will determine
the residual connection on $\overline{M}$.

\begin{notation}
Let $S[\epsilon] \coloneqq S[x]/(x^2)$ and let $S \xrightarrow{\iota_1} S[\epsilon]$
and $S \xrightarrow{\iota_2} S[\epsilon]$ be two ring homomorphisms defined as
$\iota_1(f) = f \otimes 1$ and $\iota_2(f) = f \otimes 1 + f' \otimes \epsilon$.
\end{notation}

\begin{proposition}
\label{identifying connection}
There is a unique $\mathbb{E}_{\infty}$-$S[\epsilon]$-algebra isomorphism
$\dR_{R/\s}^{\wedge} \otimes_{S, \iota_1} S[\epsilon] \to \dR_{R/\s}^{\wedge} \otimes_{S, \iota_2} S[\epsilon]$
which reduces to identity modulo $\epsilon$ and is functorial in formally smooth $\mathcal{O}_K$-algebra $R$.
\end{proposition}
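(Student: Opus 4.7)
The plan is to realize the desired isomorphism as arising from the crystal property of Hodge-completed derived de Rham cohomology.

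First, I would observe that the two composite maps $\tilde\iota_j := \iota_j \circ (\s \hookrightarrow S) \colon \s \to S[\epsilon]$ are related by $\tilde\iota_2 = \tilde\sigma \circ \tilde\iota_1$, where $\tilde\sigma \colon S[\epsilon] \to S[\epsilon]$ is defined by $\tilde\sigma(a + b\epsilon) = a + (a' + b)\epsilon$ for $a,b \in S$. A direct check using the Leibniz rule shows $\tilde\sigma$ is a ring automorphism; it is the unique extension of $\s[\epsilon] \to \s[\epsilon]$, $u \mapsto u + \epsilon$, to the PD-envelope $S[\epsilon]$ (well-defined on divided powers since $\gamma_n(E(u+\epsilon)) = \gamma_n(E(u)) + \gamma_{n-1}(E(u)) \cdot E'(u)\cdot \epsilon$ lies in $S[\epsilon]$). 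In particular $\tilde\sigma$ reduces to the identity modulo $\epsilon$ and satisfies $\iota_2 = \tilde\sigma \circ \iota_1$.

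Next, both pairs $(S[\epsilon], \iota_j)$ are objects of the crystalline site of $\Spec(\mathcal O_K)/\s_{\mathrm{PD}}$, and $\tilde\sigma$ provides a morphism between them compatible with the augmentations to $\mathcal O_K$. Since Hodge-completed derived de Rham cohomology defines a crystal of $\mathbb{E}_\infty$-algebras on the crystalline site of $\Spec(R)/\s_{\mathrm{PD}}$ (this being a reformulation of the base-change property of $\dR^\wedge_{-/\s}$, c.f.~the discussion at the start of \cite[Subsection 5.1]{LL20}), the morphism $\tilde\sigma$ induces a canonical $\mathbb{E}_\infty$-$S[\epsilon]$-algebra isomorphism
\[
\dR^\wedge_{R/\s} \otimes_{S, \iota_1} S[\epsilon] \xrightarrow{\ \sim\ } \dR^\wedge_{R/\s} \otimes_{S, \iota_2} S[\epsilon],
\]
which is natural in $R$ and reduces to identity modulo $\epsilon$. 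For concrete verification one can reduce to $R = \mathcal O_K[x_1,\ldots,x_n]$ by left Kan extension (using that $\dR^\wedge_{-/\s}$ is left Kan extended from polynomial $\mathcal O_K$-algebras), take the canonical lift $\tilde R = \s[x_1,\ldots,x_n]$, and realize the map by extending $\tilde\sigma$ trivially on the $x_i$'s on the explicit Hodge-completed PD de Rham model of $\dR^\wedge_{R/\s}$.

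For uniqueness, suppose $\phi,\psi$ are two such isomorphisms; then $\theta := \phi \circ \psi^{-1}$ is a natural $\mathbb{E}_\infty$-$S[\epsilon]$-algebra automorphism of $\dR^\wedge_{R/\s} \otimes_{S, \iota_1} S[\epsilon]$ reducing to the identity modulo $\epsilon$. Such automorphisms form a torsor over the group of natural $S$-linear $\mathbb{E}_\infty$-derivations of $\dR^\wedge_{-/\s}$ (into itself) as a functor on formally smooth $\mathcal O_K$-algebras. By left Kan extension, it suffices to rule out nonzero natural derivations on polynomial $\mathcal O_K$-algebras; equivalently, to show that the morphism space $\Hom_{\mathrm{Crys}(\mathcal O_K/\s_{\mathrm{PD}})}\big((S[\epsilon],\iota_1),(S[\epsilon],\iota_2)\big)$ is a singleton (namely $\{\tilde\sigma\}$). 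Any such morphism must restrict to identity modulo $\epsilon$ and satisfy the relation $\iota_2 = (-)\circ \iota_1$, which pins down the image of $u$ to be $u + \epsilon$, leaving no room for ambiguity. The main obstacle will be the bookkeeping to upgrade this site-theoretic uniqueness to a uniqueness of $\mathbb{E}_\infty$-isomorphism with its full higher-homotopical content, which requires checking that the relevant mapping spaces of $\mathbb{E}_\infty$-algebras are discrete (set-like) in a suitable sense, a standard but delicate input from derived crystalline theory.
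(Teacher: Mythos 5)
Your existence argument is sound and essentially equivalent to the paper's: the paper produces the isomorphism from the functorial connection of \cite[Subsection 5.1]{LL20}, and your crystal-property framing is just the standard reformulation (connection $\leftrightarrow$ descent to $S[\epsilon]$, via the morphism $\tilde\sigma$), so that part is fine.

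The uniqueness argument, however, has a genuine gap, and it is precisely the place where the paper's proof takes a very specific route that you have not reproduced. Two problems. First, the step ``equivalently, to show that the morphism space $\Hom_{\mathrm{Crys}(\mathcal O_K/\s_{\mathrm{PD}})}\big((S[\epsilon],\iota_1),(S[\epsilon],\iota_2)\big)$ is a singleton'' is asserted with no justification, and I do not think the asserted equivalence holds: the morphism space over the base $\mathcal O_K$ does not by itself control natural $\mathbb E_\infty$-derivations of $\dR^\wedge_{R/\s}\otimes S[\epsilon]$ for varying $R$; you would have to invoke the crystal property to translate one into the other, and that translation is exactly what needs an argument. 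Second, the claim that the morphism space is a singleton is itself incorrect as stated: the $\s$-linearity constraint forces $\sigma$ on the image of $\iota_1$ (equivalently pins down $\sigma(u)$), but nothing in the crystalline-site axioms pins down $\sigma(\epsilon)$ beyond $\sigma(\epsilon)\in(\epsilon)$, so there is a whole $S$-worth of candidate PD-morphisms, not a unique one. Finally, your closing sentence admits that the higher-homotopical coherence (discreteness of the relevant $\mathbb E_\infty$-mapping spaces) is left unverified; this is not an incidental bookkeeping issue but the crux. The paper sidesteps all of this by a quite different reduction: by quasi-syntomic descent and left Kan extension one may test uniqueness on quasi-regular semi-perfectoid algebras of the form $\mathcal O_K\langle X_j^{1/p^\infty}; j\in J\rangle$, where $\dR^\wedge_{R/\s}$ is concentrated in degree $0$ and the $\mathbb E_\infty$-mapping space collapses to an ordinary set of ring maps. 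On such an $R$ the functoriality in the $p^n$-th root maps $X\mapsto X^{1/p^n}$ forces $g(X) = g(X^{1/p^n})^{p^n}\equiv X\pmod{p^n}$ for every $n$, hence $g(X)=X$ by $p$-adic separatedness. This use of semi-perfectoid algebras (rather than polynomial algebras) and of $p$-power roots is the essential idea your proposal is missing.
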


\begin{proof}
One observes the formula $\nabla \mapsto \big(g(m \otimes 1) = m \otimes 1 + \nabla(m) \otimes \epsilon\big)$
gives a bijection between functorial connections on $\dR_{R/\s}^{\wedge}$ and said functorial isomorphisms.
Therefore the existence follows from \cite[Subsection 5.1]{LL20}.

To show uniqueness, we follow the same argument as in the proof of \cite[Theorem 3.13]{LL20}.
First by left Kan extension and quasi-syntomic descent, it suffices to check the uniqueness
when viewing both sides as quasi-syntomic sheaves of $S[\epsilon]$-algebras.
Secondly, by the same argument in loc.c~cit.,
one sees that restricting to the category of quasi-syntomic $\mathcal{O}_K$-algebras of the form
$\mathcal{O}_K\langle X_j^{1/p^\infty}; j \in J \rangle$ for some set $J$
determines such morphisms of $S[\epsilon]$-algebras.
Finally, when $\tilde{R} = \mathcal{O}_K\langle X_j^{1/p^\infty}; j \in J \rangle$, both of the source
and the target are given by $S[\epsilon]\langle X_j^{1/p^\infty}; j \in J \rangle$,
now we need to show $g(X)$ has to be $X$.

To that end, let us assume $g(X^{1/p^n}) = X^{1/p^n} + Y_n \otimes \epsilon$,
then we compute $g(X) = g(X^{1/p^n})^{p^n} = (X^{1/p^n} + Y_n \otimes \epsilon)^{p^n}
\equiv X$ modulo $p^n$. Therefore we conclude $g(X) - X$ is divided by arbitrary powers of $p$,
hence must be $0$ by $p$-adic separatedness of $S[\epsilon]\langle X_j^{1/p^\infty}; j \in J \rangle$.
\end{proof}

\begin{remark}
For any qcqs smooth formal scheme $\mathcal{Y}$ over $\Spf(\mathcal{O}_K)$, the crystal nature
of $\mathrm{R\Gamma}_{\cris}(\mathcal{Y}/S)$ gives a connection on $\mathrm{R\Gamma}_{\cris}(\mathcal{Y}/S)$,
see \cite[p.2~and Lemma 2.8]{BdJ11}.
Note that although in loc.~cit.~the authors were talking about crystals in quasi-coherent modules,
their argument works in our setting of crystals in perfect complexes as $\Omega^{1,pd}_{S/W}$ is finite free over $S$,
so there is no subtlety when derived tensoring it.
Consequently, one gets a connection on $\mathrm{R\Gamma}_{\cris}(\mathcal{Y}/S)$,
and when identifying $\mathrm{R\Gamma}_{\cris}(\mathcal{Y}/S) \cong \dR^{\wedge}_{\mathcal{Y}/\s}$,
our \Cref{identifying connection} shows the ``crystalline'' connection agrees with our ``derived de Rham'' connection.
\end{remark}

Below we explain yet another way to get the connection, via prismatic crystal nature
of prismatic cohomology.
Recall \cite[Construction 7.13]{BS21} that there is a cosimplicial prism
$\big(\s^{(\bullet)}, J^{(\bullet)}\big) \to \mathcal{O}_K \cong \s^{(\bullet)}/J^{(\bullet)}$.
Let $S^{(\bullet)} \to \mathcal{O}_K$ be the similarly defined cosimplicial ring
obtained by taking divided power envelopes of $\s^{\hat{\otimes}_{W} n} \twoheadrightarrow \mathcal{O}_K$
where $[n] \in \Delta$.
Note that there is a map of these cosimplicial rings induced by the Frobenius
$\varphi_{\s}^{\otimes \bullet} \colon \s^{\otimes \bullet} \to \s^{\otimes \bullet}$,
let us explicate this for $\bullet = 0,1$ as we will need it later:
\[
\xymatrix{
\s \cong W[\![u]\!] \ar[r]^-{\iota_1} \ar[d]_{u \mapsto u^p} & W[\![u,v]\!]\{\frac{u-v}{E(u)}\}^{\wedge}
\cong \s^{(1)} \cong W[\![u,v]\!]\{\frac{u-v}{E(v)}\}^{\wedge} \ar[d]_{u \mapsto u^p}^{v \mapsto v^p} &
W[\![v]\!] \cong \s \ar[l]_-{\iota_2} \ar[d]^{v \mapsto v^p} \\
S \cong W[\![u]\!]\langle\!\langle E(u) \rangle\!\rangle \ar[r]^-{\iota_1} &
W[\![u,v]\!]\langle\!\langle E(u), u-v \rangle\!\rangle
\cong S^{(1)} \cong W[\![u,v]\!]\langle\!\langle E(v), u-v \rangle\!\rangle &
W[\![v]\!]\langle\!\langle E(v) \rangle\!\rangle \cong S \ar[l]_-{\iota_2} \\
}
\]
where $\langle\!\langle - \rangle\!\rangle$ denotes $p$-completely adjoining divided powers of the designated
elements.
To see the middle arrow is well-defined we use the fact that $\varphi(E(u))$ and $\varphi(E(v))$ in $S^{(1)}$ is $p$ times a unit,
and adjoining $\varphi(u-v)/p$ as a $\delta$-ring is the same as adjoining divided powers of $u-v$,
see \cite[Corollary 2.39]{BS19}.

Now for any $p$-adically smooth $\mathcal{O}_K$-algebra $R$, we have a functorial isomorphism
of $\mathbb{E}_{\infty}$-$\s^{(1)}$-algebras:
\[
\Prism_{R/\s} \hat{\otimes}_{\s, \iota_1} \s^{(1)} \cong \s^{(1)} \hat{\otimes}_{\iota_2 ,\s} \Prism_{R/\s}
\]
by base change of prismatic cohomology.
Base change the above along the aforesaid map $\s^{(1)} \to S^{(1)}$ 
(and use either \cite[Theorem 5.2]{BS19} or \cite[Theorem 3.5]{LL20})
identifies the left (resp.~right) hand side with
\[
\Prism_{R/\s} \hat{\otimes}_{\s, \iota_1} \s^{(1)} \hat{\otimes}_{\s^{(1)}, \varphi} S^{(1)}
\cong \Prism_{R/\s} \hat{\otimes}_{\s, \varphi} S \hat{\otimes}_{S, \iota_1} S^{(1)}
\cong \dR^{\wedge}_{R/\s} \hat{\otimes}_{S, \iota_1} S^{(1)}
\]
respectively $S^{(1)} \hat{\otimes}_{\iota_2 , S} \dR^{\wedge}_{R/\s}$.
This gives rise another description of the ``crystalline'' connection:
\begin{proposition}
\label{identify crystals}
The following diagram commutes functorially in the $p$-adically smooth $\mathcal{O}_K$-algebra $R$:
\[
\xymatrix{
\Prism_{R/\s} \hat{\otimes}_{\s, \iota_1} \s^{(1)} \ar[r]^-{\cong} \ar[d]_-{\hat{\otimes}_{\s^{(1)}} S^{(1)}} &
\s^{(1)} \hat{\otimes}_{\iota_2 ,\s} \Prism_{R/\s} \ar[d]^-{\hat{\otimes}_{\s^{(1)}} S^{(1)}} \\
\dR^{\wedge}_{R/\s} \hat{\otimes}_{S, \iota_1} S^{(1)} \ar[r]^-{\cong} &
S^{(1)} \hat{\otimes}_{\iota_2 , S} \dR^{\wedge}_{R/\s}.
}
\]
\end{proposition}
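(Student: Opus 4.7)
The plan is to prove the commutativity by unwinding the construction of both horizontal arrows and appealing to the functoriality of the crystalline comparison isomorphism $\Prism_{R/A} \hat{\otimes}_{A,\varphi} D \cong \dR^{\wedge}_{R/A}$ (where $D$ is the $p$-completed PD envelope of $A \twoheadrightarrow R_0$) in the base prism $A$. The top horizontal arrow is the cocycle isomorphism built into the prismatic crystal $\Prism_{R/\s}$: by the base change property of prismatic cohomology applied to the two prism morphisms $\iota_j \colon (\s, (E)) \to (\s^{(1)}, J^{(1)})$, both $\Prism_{R/\s} \hat{\otimes}_{\s,\iota_1} \s^{(1)}$ and $\s^{(1)} \hat{\otimes}_{\iota_2,\s} \Prism_{R/\s}$ are canonically identified with $\Prism_{R/\s^{(1)}}$.

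Next I would further base change along the Frobenius $\s^{(1)} \to S^{(1)}$, obtaining two presentations of $\Prism_{R/\s^{(1)}} \hat{\otimes}_{\s^{(1)},\varphi} S^{(1)}$, and then invoke the crystalline comparison, first to identify this object with $\dR^{\wedge}_{R/\s^{(1)}}$, and second (now applied to $\s$ via each $\iota_j$) to identify $\dR^{\wedge}_{R/\s^{(1)}}$ with $\dR^{\wedge}_{R/\s} \hat{\otimes}_{S, \iota_j} S^{(1)}$, yielding the bottom edge. The commutativity of the square then amounts to the statement that these identifications fit into a coherent whole, which is essentially the naturality of the crystalline comparison in the base prism (implicit in the proof of \cite[Theorem 3.5]{LL20}). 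Alternatively, both routes around the diagram give an $\mathbb{E}_\infty$-$S^{(1)}$-algebra isomorphism $\dR^{\wedge}_{R/\s} \hat{\otimes}_{S, \iota_1} S^{(1)} \to S^{(1)} \hat{\otimes}_{\iota_2, S} \dR^{\wedge}_{R/\s}$ which is functorial in $R$ and reduces to the identity after the augmentation $S^{(1)} \to S$ sending $v \mapsto u$; one may then close the argument by proving uniqueness of such an isomorphism, in analogy with \Cref{identifying connection}.

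The main obstacle lies in the uniqueness argument, which is no longer a square-zero deformation as in \Cref{identifying connection} since the kernel of $S^{(1)} \to S$ contains the PD divided powers $\gamma_m(u-v)$. By left Kan extension and quasi-syntomic descent I may reduce to $R = \mathcal{O}_K\langle X_j^{1/p^\infty} \rangle$, whereupon any such $g$ restricts to a self-map of $S^{(1)}\langle X_j^{1/p^\infty}\rangle$ fixing $X_j$ modulo the augmentation ideal. Writing $g(X_j^{1/p^n}) = X_j^{1/p^n} + \eta_n$ with $\eta_n$ in that ideal and raising to the $p^n$-th power as in the proof of \Cref{identifying connection}, the binomial cross terms involve products of the form $\gamma_m(u-v)^{\ell}$; invoking the identity $(u-v)^{p^n} = (p^n)! \cdot \gamma_{p^n}(u-v)$ in $S^{(1)}$ and the $p$-adic valuation $v_p((p^n)!) = (p^n-1)/(p-1) \to \infty$, together with the fact that each $\gamma_m(u-v)$ has bounded $p$-adic denominator once absorbed into the PD completion, one sees the cross terms vanish $p$-adically, forcing $g(X_j) = X_j$ by $p$-adic separatedness of $S^{(1)}\langle X_j^{1/p^\infty}\rangle$.
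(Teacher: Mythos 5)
Your approach is correct and matches the paper's: reduce to showing that the quasi-syntomic sheaf of $S^{(1)}$-algebras $R \mapsto \dR^{\wedge}_{R/\s} \hat{\otimes}_{S,\iota_1} S^{(1)}$ admits no non-trivial functorial automorphism reducing to the identity after $S^{(1)} \twoheadrightarrow S$, and close the argument by a $p$-power root computation for $R = \mathcal{O}_K\langle X_j^{1/p^\infty}\rangle$. The paper simply defers this step to \cite[Theorem 3.13]{LL20}, so you are reconstructing the argument that citation carries, and you correctly pinpoint the obstacle that the kernel of $S^{(1)} \to S$ is a PD ideal rather than a square-zero one.

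The estimate in your final paragraph, however, is stated too narrowly to carry the argument as written: you only invoke $(u-v)^{p^n} = (p^n)!\,\gamma_{p^n}(u-v)$, whereas $\eta_n = g(X^{1/p^n}) - X^{1/p^n}$ is an arbitrary element of the $p$-complete ideal $K \cdot S^{(1)}\langle X_j^{1/p^\infty}\rangle$ with $K = \ker(S^{(1)}\to S)$, not a scalar multiple of $u-v$. The precise input you need is the standard PD fact that $\eta^p \in p\cdot K A$ for \emph{every} $\eta \in KA$ (mod $p$ this follows from Frobenius additivity and $\kappa^p = p!\,\gamma_p(\kappa)$ for $\kappa \in K$), which by iteration yields $v_p(\eta^{p^m}) \geq \frac{p^m-1}{p-1}$. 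Combined with $v_p\big(\binom{p^n}{k}\big) = n - v_p(k)$, one checks $v_p\big(\binom{p^n}{k}\eta_n^k\big) \geq n$ for all $1 \leq k \leq p^n$ (the minimum over $k \in [p^m, p^{m+1})$ of $(n-m) + \frac{p^m-1}{p-1}$ equals $n$, achieved at $m=0,1$), and only then does $g(X) - X \in p^n A$ for all $n$ follow. Your phrase ``each $\gamma_m(u-v)$ has bounded $p$-adic denominator once absorbed into the PD completion'' gestures at this but does not establish it; stating the general lemma about $\eta^p$ is the cleanest way to make the step airtight.
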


\begin{proof}
Base changing the top arrow along $\s^{(1)} \to S^{(1)}$ gives a potentially
different functorial isomorphism in the bottom.
Therefore it suffices to show that there is no non-trivial automorphism
of the quasi-syntomic sheaf of $S^{(1)}$-algebras
$R \mapsto \dR^{\wedge}_{R/\s} \hat{\otimes}_{S, \iota_1} S^{(1)}$.
The same argument as in \cite[Theorem 3.13]{LL20} does the job.
\end{proof}

As a consequence, we know the sequence \ref{the sequence} is stable under the connection.
In fact more generally we have the following.

\begin{corollary}
\label{descend connection}
For any $j \in \mathbb{N}$ and any $n \in \mathbb{N} \cup \{\infty\}$,
the connection on $\rH^j_{\qsyn}(\mathcal{X}, \dR^{\wedge}_{-/\s}/p^n)$
preserves the submodule $\rH^j_{\qsyn}(\mathcal{X}, \Prism^{(1)}/p^n) \otimes_{\s_n} S_n$.
\end{corollary}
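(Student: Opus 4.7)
The plan is to deduce the statement from \Cref{identify crystals}, which expresses the de Rham crystal structure on $\dR^{\wedge}_{-/\s}$ as the base change along $\s^{(1)} \to S^{(1)}$ of the prismatic crystal structure on $\Prism^{(1)}$. Recall that the connection $\nabla$ on $\rH^j_{\qsyn}(\mathcal{X}, \dR^{\wedge}_{-/\s}/p^n)$ is extracted from the bottom crystal isomorphism of \Cref{identify crystals} by specializing further along $S^{(1)} \to S[\epsilon]/(\epsilon^2)$ (sending $u \mapsto u$, $v \mapsto u + \epsilon$) and reading off the $\epsilon$-linear term modulo $\epsilon^2$. The case $n = \infty$ reduces to the finite $n$ case by derived $p$-completion, so I fix $n \in \mathbb{N}$.

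For $m \in \rH^j_{\qsyn}(\mathcal{X}, \Prism^{(1)}/p^n)$, with image $\alpha(m) \in \rH^j_{\qsyn}(\mathcal{X}, \dR^{\wedge}_{-/\s}/p^n)$, I would compute $\nabla(\alpha(m))$ via the commutativity of \Cref{identify crystals}. The de Rham crystal iso applied to $\alpha(m) \otimes 1$ factors as: first apply the prismatic crystal iso to $m \otimes 1 \in \rH^j(\Prism^{(1)}/p^n) \otimes_{\s_n, \iota_1} \s^{(1)}_n$ to obtain an element of $\rH^j(\Prism^{(1)}/p^n) \otimes_{\iota_2, \s_n} \s^{(1)}_n$; then base change along $\s^{(1)} \to S^{(1)}$ followed by specialization to $S_n[\epsilon]$; finally apply $\alpha \otimes \mathrm{id}$ to map into $\rH^j(\dR^{\wedge}/p^n) \otimes_{S_n, \iota_2} S_n[\epsilon]$. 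The intermediate element is built from elements of $\rH^j(\Prism^{(1)}/p^n)$ paired with scalars in $\s^{(1)}_n$, so after base change and application of $\alpha$ it lies in $F \otimes_{S_n, \iota_2} S_n[\epsilon]$, where $F \subset \rH^j_{\qsyn}(\mathcal{X}, \dR^{\wedge}/p^n)$ is the submodule in question. Extracting the $\epsilon$-linear part then places $\nabla(\alpha(m))$ inside $F$.

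Combined with the Leibniz rule for $\nabla$ with respect to the standard derivation $\partial_u$ on $S$, this will suffice: any generator of $F$ has the form $\alpha(m) \cdot s$ with $s \in S_n$, and $\nabla(\alpha(m) \cdot s) = \nabla(\alpha(m)) \cdot s + \alpha(m) \cdot \partial_u(s)$, both summands of which lie in $F$.

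The hard part will be making the above precise at the level of the individual cohomology groups $\rH^j$, since \Cref{identify crystals} is formulated for sheaves (and complexes) of $S^{(1)}$-algebras, and $\alpha$ is in general neither injective nor surjective. I plan to address this in the style of the proof of \Cref{identifying connection}: by left Kan extension and quasi-syntomic descent, reduce to the base case of quasi-syntomic algebras of the form $R = \mathcal{O}_K\langle X_j^{1/p^\infty}; j \in J \rangle$, whose prismatic and derived de Rham cohomology are concentrated in degree $0$, so that no Tor or higher-$\rH^*$ complications arise and the cohomology-level statement coincides verbatim with the sheaf-level content of \Cref{identify crystals}. The desired global statement for $\mathcal{X}$ then follows by descent.
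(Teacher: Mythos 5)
Your reduction in the last paragraph has a genuine gap. At the sheaf level, $\dR^{\wedge}_{-/\s} \cong \Prism^{(1)} \hat{\otimes}_{\s} S$ is an isomorphism of quasi-syntomic sheaves, so after restricting to perfectoid-type test rings $R = \mathcal{O}_K\langle X_j^{1/p^\infty}\rangle$ the ``submodule'' in question is the whole group and there is literally nothing to check. The nontrivial content of the statement arises only at the level of $\rH^j$ for a global $\mathcal{X}$, where the K\"unneth/Tor exact sequence makes $\rH^j_{\qsyn}(\mathcal{X}, \Prism^{(1)}_n) \otimes_{\s_n} S_n$ a proper submodule of $\rH^j_{\qsyn}(\mathcal{X}, \dR^{\wedge}_{-/\s}/p^n)$. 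Quasi-syntomic descent reconstructs the full complex $\mathrm{R\Gamma}_{\qsyn}$ from the perfectoid basis, but it does not transport a claim about submodules of individual $\rH^j$'s---the descent spectral sequence mixes cohomological degrees---so ``the global statement then follows by descent'' does not go through.

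The paper's proof sidesteps the issue with a flatness observation you are missing: both $\iota_1, \iota_2 \colon S \to S^{(1)}$ (and $\s \to \s^{(1)}$) are $p$-completely flat. This means the derived tensor products appearing in \Cref{identify crystals} commute with taking $\rH^j$ of $\mathrm{R\Gamma}_{\qsyn}(\mathcal{X},-)$, so applying $\rH^j$ term-by-term to the square in \Cref{identify crystals} produces a commutative square of cohomology groups in which the top row is the prismatic crystal isomorphism on $\rH^j_{\qsyn}(\mathcal{X}, \Prism_n) \otimes_{\s_n,\iota_i} \s^{(1)}_n$ and the bottom row is the de Rham crystal isomorphism on $\rH^j_{\qsyn}(\mathcal{X}, \dR^{\wedge}_{-/\s}/p^n) \otimes_{S_n,\iota_i} S^{(1)}_n$. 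Preservation of the submodule is then immediate from commutativity, and no Leibniz-rule bootstrapping is needed. Your overall strategy (use \Cref{identify crystals}, pass to cohomology, specialize to dual numbers) matches the paper's, but the technical device enabling the passage to cohomology is this flatness, not reduction to perfectoids.
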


\begin{proof}
Under the dictionary between connections and crystals \cite[Lemma 2.8]{BdJ11},
we need to show
the isomorphism (note that both of $\iota_i \colon S \to S^{(1)}$ are $p$-completely flat)
\[
\rH^j_{\qsyn}(\mathcal{X}, \dR^{\wedge}_{-/\s}/p^n) \otimes_{S_n, \iota_1} S^{(1)}_n
\cong  S^{(1)}_n \otimes_{\iota_2, S_n} \rH^j_{\qsyn}(\mathcal{X}, \dR^{\wedge}_{-/\s}/p^n)
\]
preserves the submodule $\rH^j_{\qsyn}(\mathcal{X}, \Prism^{(1)}/p^n) \otimes_{\s_n} S_n$.
This immediately follows from the following commutative diagram
\[
\xymatrix{
\rH^j_{\qsyn}(\mathcal{X}, \Prism/p^n) \hat{\otimes}_{\s_n, \iota_1} \s^{(1)}_n \ar[r]^-{\cong} 
\ar[d]_-{- \hat{\otimes}_{\s^{(1)}_n} S^{(1)}_n} &
\s^{(1)} \hat{\otimes}_{\iota_2 ,\s_n} \rH^j_{\qsyn}(\mathcal{X}, \Prism/p^n) \ar[d]^-{S^{(1)}_n \hat{\otimes}_{\s^{(1)}_n} -} \\
\rH^j_{\qsyn}(\mathcal{X}, \dR^{\wedge}_{-/\s}/p^n) \hat{\otimes}_{S_n, \iota_1} S^{(1)}_n \ar[r]^-{\cong} &
S^{(1)}_n \hat{\otimes}_{\iota_2 , S_n} \rH^j_{\qsyn}(\mathcal{X}, \dR^{\wedge}_{-/\s}/p^n).
}
\]
induced by \Cref{identify crystals}.
\end{proof}

Therefore we see that there is a \emph{residual connection} $\overline{\nabla} \colon \overline{M} \to \overline{M}$.
Recall \cite[Subsection 5.2]{LL20} that the connection $\nabla$ and divided Frobenius $\varphi_i$
are related by the following commutative diagram:
\[
\xymatrix{
\Fil^i \mathcal{M} \ar[r]^-{\varphi_i} \ar[d]_{E(u) \cdot \nabla} &
\mathcal{M} \ar[d]^{c_1 \cdot \nabla} \\
\Fil^i \mathcal{M} \ar[r]^-{u^{p-1} \varphi_i} & \mathcal{M}.
}
\]
Since all maps descend down to $\Fil^i \overline{M} = \overline{M}$, we have the following:
\begin{proposition}
\label{residual connection}
There is a commutative diagram:
\[
\xymatrix{
\overline{M} \ar[r]^-{\varphi_i} \ar[d]_{E(u) \cdot \overline{\nabla}} &
\overline{M} \ar[d]^{c_1 \cdot \overline{\nabla}} \\
\overline{M} \ar[r]^-{u^{p-1} \overline{\varphi_i}} & \overline{M}.
}
\]
Consequently when $e=1$, after identifying 
$\overline{M} \cong \mathrm{Frob}_k^*(V) \otimes_k S_1[u^p]$,
we have $\overline{\nabla}(v \otimes 1) = v \otimes \mathrm{d}\log(c_1)$.
\end{proposition}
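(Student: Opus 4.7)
The plan is first to establish the commutative diagram, which I would obtain by descending the analogous diagram for $(\Fil^i \mathcal{M}, \mathcal{M})$ recalled from \cite[Subsection 5.2]{LL20}. All four maps $E(u)\nabla$, $c_1\nabla$, $\varphi_i$, $u^{p-1}\varphi_i$ descend to $\overline{M}$: the divided Frobenius by \Cref{divided Frobenius descends}; the connection $\nabla$ by \Cref{descend connection}; multiplications by elements of $S_n$ trivially since $\overline{M}$ is an $S_n$-module. Hence the diagram descends to the one stated.

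For the explicit formula when $e=1$, I would first observe that taking $\pi = p$ gives $E(u) = u-p$, so in $S_1$ we have $E \equiv u$, and moreover $u^p = p!\cdot(u^p/p!) = 0$ since $p \mid p!$. Thus $S_1[u^p] = S_1$ and $\overline{M} \cong \mathrm{Frob}_k^*(V) \otimes_k S_1$. By \Cref{compute residual divided Frobenius}, the residual Frobenius is $\overline{\varphi_i}(v \otimes s) = F(v) \otimes c_1^{p-1}\varphi_{S_1}(s)$.

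Next, I would show that $\overline{\nabla}(v \otimes 1) = v \otimes \alpha$ for some universal $\alpha \in S_1$ independent of $v$. This should follow from functoriality of the connection in $V$: any $k$-linear endomorphism $\phi$ of $V$ (equivalently, an $\s_n$-linear endomorphism, as $V$ is killed by $u$ and $p$) induces an $S_1$-linear endomorphism $\phi \otimes \mathrm{id}$ of $\overline{M}$ which commutes with $\overline{\nabla}$. A matrix-unit argument applied to a basis $\{v_i\}$ of $\mathrm{Frob}_k^*(V)$ then forces the ``diagonal'' form. Assuming this, I would apply the diagram to $m = v \otimes 1$. The right-hand side equals
\[
u^{p-1}\overline{\varphi_i}(v \otimes u\alpha) = u^{p-1}F(v) \otimes c_1^{p-1} u^p \varphi_{S_1}(\alpha) = 0
\]
since $u^p = 0$ in $S_1$; while Leibniz gives $\overline{\nabla}(F(v) \otimes c_1^{p-1}) = F(v) \otimes ((p-1)c_1^{p-2}c_1' + c_1^{p-1}\alpha)$, so the left-hand side is $F(v) \otimes (c_1^p \alpha - c_1^{p-1}c_1')$. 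Equating and using that $c_1 \in S_1^\times$ together with the bijectivity of $F$ on $V$ forces $\alpha = c_1'/c_1 = \mathrm{d}\log(c_1)$.

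The principal obstacle is the functoriality step above, namely justifying that $\overline{\nabla}$ commutes with $k$-linear endomorphisms of $V$ applied to the first tensor factor. The cleanest route I see is to re-derive $\overline{\nabla}$ from the crystal description in \Cref{descend connection}: the $S^{(1)}_n$-algebra isomorphism
\[
\overline{M} \otimes_{S_n, \iota_1} S^{(1)}_n \xrightarrow{\sim} \overline{M} \otimes_{S_n, \iota_2} S^{(1)}_n
\]
inherited from the one on $\mathcal{M}$ should match the isomorphism obtained by applying $\mathrm{Tor}_1^{\s_n}(V, -)$ to the canonical crystal iso on $\varphi_* S_n$, which is manifestly $V$-functorial; a uniqueness argument analogous to \Cref{identifying connection} identifies them. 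This reinterpretation also provides an independent derivation of the $\mathrm{d}\log(c_1)$ formula: the $c_1$ twist is exactly the unit expressing $\varphi_S$ in terms of the $i$-th PD filtration, and the logarithmic derivative reflects how the Frobenius twist in $\varphi_* S$ interacts with the derivation on $S$.
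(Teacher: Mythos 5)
Your first half — descending all four maps of the \cite[Subsection 5.2]{LL20} diagram from $(\Fil^i\mathcal M,\mathcal M)$ to $\overline{M}$ — is exactly the paper's argument, so that part is fine.

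The second half has a genuine gap: you interpose a ``universality'' claim, namely that $\overline{\nabla}(v\otimes 1)=v\otimes\alpha$ for an $\alpha$ independent of $v$, and the argument you offer for it does not hold up. You assert that any $k$-linear endomorphism $\phi$ of $V$ induces $\phi\otimes\mathrm{id}$ on $\overline M$ commuting with $\overline\nabla$. But a connection does not commute with arbitrary linear endomorphisms of the underlying module — that would force the connection to be the tensor product of a connection on $V$ and one on $S_1$, which is not given. Moreover, your proposed $\Tor_1$-reinterpretation omits a crucial ingredient: the crystal structure on $\calM$ is obtained (\Cref{identify crystals}) by base-changing the \emph{prismatic} crystal isomorphism $\Prism\hat\otimes_{\s,\iota_1}\s^{(1)}\cong\s^{(1)}\hat\otimes_{\iota_2,\s}\Prism$, which is a nontrivial isomorphism. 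On the quotient $\overline M=\Tor_1^{\s_n}(V,\varphi_*S_n)$ this contributes a twist coming from the prismatic crystal structure on $\rH^{i+1}(\Prism_n)$ (hence on $V$), and there is no reason an arbitrary $\phi\colon V\to V$ should respect that structure. So the functoriality step, as stated, would need a real argument.

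The good news is that the universality claim is entirely unnecessary, and removing it both repairs and shortens your proof, landing you exactly on the paper's. The bottom route of the diagram applied to $v\otimes 1$ vanishes \emph{regardless} of what $\overline{\nabla}(v\otimes 1)\in\overline M$ is: since $E\equiv u$ in $S_1$, the element $E\cdot\overline\nabla(v\otimes 1)$ lies in $u\overline M$, and $\overline{\varphi_i}$ is $\varphi_{S_1}$-semilinear, so $\overline{\varphi_i}(u\cdot x)=u^p\overline{\varphi_i}(x)=0$ because $u^p=0$ in $S_1$ when $e=1$. Thus no parameterization is needed. Equating with the top route as you did (using \Cref{compute residual divided Frobenius} and Leibniz) gives the identity
\[
c_1^p\cdot\overline{\nabla}\bigl(F(v)\otimes 1\bigr)+F(v)\otimes(p-1)c_1^{p-1}c_1'=0,
\]
which one solves directly for $\overline{\nabla}(F(v)\otimes 1)=F(v)\otimes\mathrm{d}\log(c_1)$ using that $c_1\in S_1^\times$ and $p-1\equiv -1\pmod p$; then surjectivity of $F$ on $V$ (\Cref{cor control u-torsion}(3)) gives the formula for all $v$. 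The universality you wanted to assume comes out at the end, a consequence of the diagram rather than a prerequisite.
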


Here $\mathrm{d}\log(c_1) = \frac{c_1'}{c_1} = \frac{u^{p-1}}{c_1}$.

\begin{proof}
The existence of such a commutative diagram follows from the preceding discussion
and the fact that both of $\varphi_i$ and $\nabla$ descends to $\overline{M}$
by \Cref{compute residual divided Frobenius} and \Cref{descend connection} respectively.

Start with $v \otimes 1$ at the top left corner and compare the end results of 
the two routes, we arrive at an identity:
\[
\overline{\nabla}(F(v) \otimes 1) \cdot c_1^p + F(v) \otimes (p-1) c_1^{p-1} \cdot c_1' = 0,
\]
where we used the description of $\varphi_i$ in \Cref{compute residual divided Frobenius}.
Now we use the fact that $\overline{M}$ is $p$-torsion
and the fact that $F$ is a bijection to yield the desired conclusion.
\end{proof}

\begin{corollary}\label{cor-rep-Mbar} 
Let $e=1$ and $h = p-1$, then the quadruple $(\overline M, \Fil^{p-1}\overline M , \varphi _{p-1}, \nabla)$ is a Breuil module and 
there is a canonical isomorphism $T_S (\overline{M}) \xrightarrow{\cong} (V \otimes_{W(k)} W(\bar k))^{\varphi=1}$ of representation of $G_K$. 
In particular the resulting Galois representation $T_S (\overline{M})$ is the unramified $\mathbb{F}_p$-representation associated with 
the \'{e}tale $\varphi$-module $V$.
\end{corollary}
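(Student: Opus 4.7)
The plan is to verify Part 1 by directly checking the axioms of $\Mod^{\varphi, p-1, \nabla}_{S, \tor}$, and then tackle Part 2 by computing $T_S(\overline M)$ explicitly through an isomorphism $\overline M \otimes_S A_{\cris} \cong V \otimes_k A_{\cris, 1}$.

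For Part 1, the filtration $\Fil^{p-1}\overline M = \overline M$ (\Cref{residual filtration}) trivially contains $\Fil^{p-1}S \cdot \overline M$, with tautological inclusion. Admissibility, i.e., that $\varphi_{p-1}(\Fil^{p-1}\overline M)$ generates $\overline M$ over $S$, follows from the formula $\overline{\varphi_{p-1}} = F \otimes c_1^{p-1} \varphi_{S_1}$ (\Cref{compute residual divided Frobenius}), since $F$ is bijective (as $V$ is \'{e}tale by part (3) of \Cref{cor control u-torsion}) and $c_1 \in S_1^\times$. The connection compatibility diagram is precisely \Cref{residual connection}, and the axiom $E \nabla(\Fil^{p-1}\overline M) \subset \Fil^{p-1}\overline M$ is automatic since $\Fil^{p-1}\overline M = \overline M$.

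For Part 2, I would first identify $\overline M \otimes_S A_{\cris} \cong V \otimes_k A_{\cris, 1}$ using $\overline M \cong \mathrm{Frob}_k^* V \otimes_k S_1$ (from \Cref{identify Tor1}, noting $S_1[u^p] = S_1$ when $e=1$), $S_1 \otimes_S A_{\cris} = A_{\cris, 1}$, and the $k$-linear bijection $\mathrm{Frob}_k^* V \xrightarrow{F} V$ from the \'{e}tale structure. Under this identification, the extended divided Frobenius becomes $\varphi_{p-1}(v \otimes b) = \varphi_V(v) \otimes c_1^{p-1} \varphi(b)$. The central computation is to exhibit a ``period'' $\tau \in A_{\cris, 1}$ satisfying $\varphi(\tau) = c_1^{-(p-1)}\tau$ and show it spans a $1$-dimensional $\F_p$-subspace. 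Using $\mathfrak{c} = \prod_{n \geq 1}\varphi^n(E/E(0)) \in S^\times$ (which satisfies $\varphi(\mathfrak{c})/\mathfrak{c} = a_0/c_1$) together with a Teichm\"{u}ller lift of $\bar a_0^{-1}$, one constructs $\tau$ explicitly; uniqueness follows from $(A_{\cris, 1})^{\varphi=1} = \F_p$, which in turn follows since $\varphi$ annihilates $\Fil^1 A_{\cris, 1}$ and $(\O_C/p)^{\varphi=1} = \F_p$. Then, over a $\bar k$-basis $\{e_i\}$ of $V \otimes_k \bar k$ with $\varphi_V(e_i) = e_i$, the fixed-point equation decouples into $b_i = c_1^{p-1}\varphi(b_i)$ per coordinate, yielding $T_S(\overline M) \cong (V \otimes_k \bar k)^{\varphi=1} \otimes_{\F_p} \F_p\tau$ of the expected $\F_p$-dimension.

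The main obstacle will be verifying $G_K$-equivariance of this isomorphism. The $G_K$-action on $\overline M \otimes_S A_{\cris}$ is not the naive one on $A_{\cris}$, but the twisted action \eqref{eqn-G-action-by-nabla} involving the connection $\overline\nabla$. Although $\tau$ alone fails to be $G_K$-invariant (since $\sigma([\underline\pi]) \neq [\underline\pi]$ for $\sigma \notin G_\infty$ makes $\sigma(\mathfrak{c}) \neq \mathfrak{c}$), the connection term $\overline\nabla(v \otimes 1) = v \otimes d\log(c_1)$ (\Cref{residual connection}) should exactly cancel the Galois cocycle, so that the induced action on $T_S(\overline M)$ factors through $G_K \twoheadrightarrow G_k$ and agrees with the natural unramified $G_k$-action on $(V \otimes_{W(k)} W(\bar k))^{\varphi=1}$. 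Morally, this cancellation reflects the fact that the Tate twist $\chi^{p-1}$ is trivial modulo $p$ by Fermat's little theorem.
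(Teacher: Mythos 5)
Your Part 1 matches the paper's proof: verifying the Breuil module axioms for $(\overline M, \Fil^{p-1}\overline M, \varphi_{p-1}, \nabla)$ is done by invoking \Cref{identify Tor1}, \Cref{compute residual divided Frobenius} and \Cref{residual connection}, and your admissibility check via bijectivity of $F$ and invertibility of $c_1$ is correct.

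For Part 2 there is a genuine gap, and you have essentially located it yourself. Your ``explicit period'' strategy reduces to exhibiting $\tau \in A_{\cris,1}$ with $\varphi(\tau) = c_1^{-(p-1)}\tau$, diagonalizing $V \otimes_k \bar k$, and then arguing that the resulting $\F_p$-basis of fixed points is $G_K$-equivariant with respect to the twisted action \eqref{eqn-G-action-by-nabla}. You flag the cancellation of the Galois cocycle by the connection term $\overline\nabla$ as ``the main obstacle'' and assert that it ``should'' work, but you do not prove it. That cancellation is not automatic: $\sigma$ acts nontrivially on $\tau$ for $\sigma \notin G_\infty$, and the correction terms $\nabla^i(x)\otimes\gamma_i(\sigma([\upi])-[\upi])\sigma(a)$ must be shown to exactly compensate; this requires an actual computation which your sketch leaves open. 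The heuristic about $\chi^{p-1}$ being trivial mod $p$ does not substitute for this.

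The paper avoids the cocycle computation entirely by a different route. It reduces modulo $I_+ A_{\cris}$ (the ideal with $A_{\cris}/I_+ A_{\cris} = W(\bar k)$). Two observations make this work: (i) $\overline M \otimes_S I_+ A_{\cris}$ is stable under the $G_K$-action because every nonzero-index term of \eqref{eqn-G-action-by-nabla} lands in $I_+ A_{\cris}$, so the induced action on the quotient $\overline M/I_+ \otimes_k \bar k$ is the naive (unramified) one; (ii) $\varphi$ is topologically nilpotent on $I_+ A_{\cris}$ while $\varphi_h$ is bijective on the quotient (since $F$ is bijective and $a_0$ is a unit), so the passage to the quotient is an isomorphism on $\varphi_h$-fixed points. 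Identifying $\overline M/I_+$ with $\mathrm{Frob}_k^* V$ via $x \mapsto a_0(x\otimes 1)$ then finishes. In short: the paper's reduction-mod-$I_+$ argument structurally forces the $G_K$-equivariance, while your direct approach would need a separate and nontrivial verification that you have not supplied.
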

\begin{proof} The first part of statement follows from \Cref{identify Tor1},
Proposition \ref{compute residual divided Frobenius}, and Proposition \ref{residual connection}. 
To compute $T _S(\overline M)$, let $I_+ A_{\cris}\subset A_{\cris}$ be the ideal so that $I_+ A_{\cris}$ contains $W(\mathfrak m_{\O_\C ^\flat}) $ and $A_{\cris}/ I_+ A_{\cris} = W(\bar k)$. It is clear that $\varphi ^n (a) \to 0$ for any $a\in I_+ A_{\cris}$ and $I_+ A_{\cris}\cap S = I_+$. By \eqref{eqn-G-action-by-nabla},  $\overline M \otimes_S I_+A_{\cris} $ is stable under the $G_K$-action. 
So we have a canonical map of $G_K$-representations
\[ T_S(\overline M) =(\Fil^h \overline M \otimes_S A_{\cris})^{\varphi_h =1}=  (\overline M \otimes_{S} A _{\cris})^{\varphi_h =1}\to  (\overline M \otimes_A A_{\cris}/ I_+ A_{\cris})^{\varphi_h =1} = (\overline M / I_+ \otimes _{k} \bar k) ^{\varphi_h =1}. \] 
By Proposition \ref{compute residual divided Frobenius}, if we identity $\overline M = {\rm Frob }^* V \otimes _k S_1 [u ^p]$ then 
$\forall x\otimes 1 \in \overline M / I_+$, $\varphi (x\otimes 1) =  F(x)\otimes a_0 ^{p-1}$ with $a_0 = E(0)/ p\in W(k)^\times$.
So $\varphi_h : \overline M/I_+ \to \overline M/I_+$ is bijective.   Using that $\lim\limits_{n \to \infty}\varphi ^n (a) =  0, \forall a\in I_+ A_{\cris}$, we conclude that the above map is an isomorphism $T_S(\overline M) \xrightarrow{\cong} (\overline M / I_+ \otimes _{k} \bar k) ^{\varphi_h =1}$ of $G_K$-representations. Finally, we have to check that $\overline M /I_+ \simeq {\rm Frob }^* V$ as $\varphi$-modules.  Indeed ${\rm Frob }^* V \to  {\rm Frob }^* V \otimes_k S_1[u ^p]/ I_+S= \overline M / I_+$ via $x \mapsto  a_0 (x \otimes 1) $ is the required isomorphism of $\varphi$-modules. 
\end{proof}

\subsection{Fontaine--Laffaille and Breuil modules}
In this subsection we assume $e=1$. 
For simplicity we pick the uniformizer $p$, but all results in this subsection hold true with any other uniformizer.
We shall compare the two approaches of understanding \'{e}tale cohomology, as a Galois representation, from linear algebraic data
on certain crystalline cohomology, which are due to Fontaine--Messing--Kato, and Breuil--Caruso.

First we need a reminder on the filtered comparison between derived de Rham cohomology and crystalline cohomology,
see \cite[Theorem 4.1]{LL20} and references thereof.
\begin{remark}
Let $\mathcal{X}$ be a smooth $p$-adic formal scheme over $\Spf(W)$.
We have filtered isomorphisms:
\[
\mathrm{R\Gamma}(\mathcal{X},\Fil^{\bullet}_{\rH}\dR^{\wedge}_{-/W}) \xrightarrow{\cong}  \mathrm{R\Gamma}_{\cris}(\mathcal{X}/W, \mathcal{I}_{\cris}^{\bullet}),
\]
and
\[
\mathrm{R\Gamma}(\mathcal{X},\Fil^{\bullet}_{\rH}\dR^{\wedge}_{-/\s}) \xrightarrow{\cong}  \mathrm{R\Gamma}_{\cris}(\mathcal{X}/S, \mathcal{I}_{\cris}^{\bullet}).
\]
\end{remark}
In classical references by Fontaine--Messing, Kato and Breuil--Caruso, 
they were considering the right hand side objects of the above isomorphisms.
However we will be thinking about the derived de Rham side, as it is compatible with various techniques
developed by Bhatt--Morrow--Scholze and Bhatt--Scholze.

For the remaining of
this subsection we let $\mathcal{X}$ be a quasi-compact quasi-separated
$p$-adic formal scheme over $\Spf(W)$.
At the derived level, we have the following comparisons:
\begin{proposition}
\label{derived FL to B}
Consider the diagram:
\[
\xymatrix{
\mathcal{X} \ar[rd] \ar[d] \ar[rrd] & & \\
\Spf(W) \ar[r]^{u \mapsto p} & \Spf(\s) \ar[r] & \Spf(W).
}
\]
For any $n \in \mathbb{Z} \cup \{\infty\}$, we have
\begin{enumerate}
\item The canonical maps of $p$-complete cotangent complexes
$\mathbb{L}^{\wedge}_{\mathcal{X}/W} \to \mathbb{L}^{\wedge}_{\mathcal{X}/\s}$ (from right triangle)
and $\mathbb{L}^{\wedge}_{W/\s} \to \mathbb{L}^{\wedge}_{\mathcal{X}/\s}$
(from left triangle)
induces an isomorphism 
\[
\mathbb{L}^{\wedge}_{\mathcal{X}/W}/p^n \oplus \big(\mathbb{L}^{\wedge}_{W/\s} \hat{\otimes}_{W} \mathcal{O}_{\mathcal{X}}\big)/p^n
\xrightarrow{\cong} \mathbb{L}^{\wedge}_{\mathcal{X}/\s}/p^n,
\]
functorial in $\mathcal{X}/W$.
\item The canonical filtered maps of $p$-complete de Rham complexes
$\dR^{\wedge}_{\mathcal{X}/W} \to \dR^{\wedge}_{\mathcal{X}/\s}$ (from right triangle)
and $\dR^{\wedge}_{W/\s} \to \dR^{\wedge}_{\mathcal{X}/\s}$ (from left triangle)
induces a filtered isomorphism 
\[
\big(\dR^{\wedge}_{\mathcal{X}/W} \hat{\otimes}_{W} \dR^{\wedge}_{W/\s}\big)/p^n
\xrightarrow{\cong} \dR^{\wedge}_{\mathcal{X}/\s}/p^n,
\]
functorial in $\mathcal{X}/W$.
\item Moreover the identification in (2) is compatible with divided Frobenii $\varphi_j$ on $j$-th filtration
of both sides for any $j \leq p-1$.
\end{enumerate}
\end{proposition}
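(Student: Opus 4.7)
The plan is to first establish the direct-sum decomposition of cotangent complexes in (1), then promote it to the filtered tensor-product decomposition of derived de Rham complexes in (2) via the Hodge filtration and derived exterior powers, and finally derive (3) from the uniqueness characterization of the divided Frobenius in the range $j \le p-1$.

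For (1), I would start from the transitivity cofiber sequence attached to $\s \xrightarrow{u \mapsto p} W \to \mathcal{O}_{\mathcal{X}}$, namely
\[
\mathbb{L}^{\wedge}_{W/\s} \hat{\otimes}_{W} \mathcal{O}_{\mathcal{X}} \to \mathbb{L}^{\wedge}_{\mathcal{X}/\s} \to \mathbb{L}^{\wedge}_{\mathcal{X}/W}.
\]
The splitting is visible already at the integral level: the composition of ring maps $W \hookrightarrow \s \twoheadrightarrow W$ is the identity on $W$, so applying the parallel transitivity triangle for $W \to \s \to \mathcal{O}_{\mathcal{X}}$ produces a canonical map $\mathbb{L}^{\wedge}_{\mathcal{X}/W} \to \mathbb{L}^{\wedge}_{\mathcal{X}/\s}$ whose composition with $\mathbb{L}^{\wedge}_{\mathcal{X}/\s} \to \mathbb{L}^{\wedge}_{\mathcal{X}/W}$ is the identity. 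Since $u - p \in \s$ is a regular element cutting out $W$, the left term further simplifies as $\mathbb{L}^{\wedge}_{W/\s} \simeq W[1]$ (generated by the class of $d(u-p)$). Reducing modulo $p^n$ yields the claimed isomorphism.

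For (2), I would proceed by $p$-completed left Kan extension (or quasi-syntomic descent) to reduce to the case where $\mathcal{O}_{\mathcal{X}}$ is a $p$-completed polynomial algebra over $W$, where both sides of (2) are reasonably concrete. The argument then exhibits both sides together with their Hodge filtrations as the total complex of derived exterior powers of the cotangent complex, and compares graded pieces. Using the binomial decomposition of derived exterior powers of a direct sum together with (1), the $j$-th Hodge-graded piece of $\dR^{\wedge}_{\mathcal{X}/\s}/p^n$ decomposes as
\[
\bigoplus_{a+b=j}\bigl(\mathrm{L}\wedge^{a}\mathbb{L}^{\wedge}_{\mathcal{X}/W} \otimes_{\mathcal{O}_{\mathcal{X}}} \mathrm{L}\wedge^{b}(\mathbb{L}^{\wedge}_{W/\s}\hat{\otimes}_{W}\mathcal{O}_{\mathcal{X}})\bigr)/p^n\,[-j],
\]
matching term by term with the $j$-th graded piece of the convolution filtration on the tensor product $\dR^{\wedge}_{\mathcal{X}/W}\hat{\otimes}_{W}\dR^{\wedge}_{W/\s}$. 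A completion argument upgrades this graded identification to the desired filtered isomorphism modulo $p^n$.

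For (3), I would invoke the characterization of $\varphi_j$ on $\Fil^{j}_{\rH}\dR^{\wedge}_{-/\s}/p^n$ for $j \le p-1$ as the unique map lifting the linearized Frobenius in the sense that $p^j \varphi_j$ equals the restriction of the linearized Frobenius to the $j$-th Hodge filtration (see \cite[Remark 4.24]{LL20}). Under the decomposition from (2), the natural convolution
\[
\varphi_{j}^{\mathrm{conv}} \coloneqq \sum_{a+b=j}\varphi_{a}^{\mathcal{X}/W} \mid_{\Fil^a} \otimes\, \varphi_{b}^{W/\s} \mid_{\Fil^b}
\]
is well-defined, as all indices $a,b$ lie in the range $\le p-1$; it satisfies the same universal property, so by uniqueness it must coincide with $\varphi_j$ on $\dR^{\wedge}_{\mathcal{X}/\s}/p^n$. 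The main obstacle will be the filtered bookkeeping in (2): one must carefully verify that the Hodge filtration on the tensor product is the convolution of the factor filtrations, and that derived exterior powers of a direct sum decompose binomially after reducing modulo $p^n$, given that one summand $\mathbb{L}^{\wedge}_{W/\s}\hat{\otimes}_{W}\mathcal{O}_{\mathcal{X}} \simeq \mathcal{O}_{\mathcal{X}}[1]$ is a shift whose derived exterior powers involve derived divided powers with potential mixed-characteristic subtleties.
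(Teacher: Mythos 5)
Your part (1) agrees with the paper's one-line argument (transitivity triangles for both compositions), so no comment there.

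Your part (2) takes a genuinely different route from the paper. The paper factors through the base-changed scheme $\mathcal{X}_{\s}=\mathcal{X}\times_{W}\s$ and then invokes the K\"unneth formula together with base change for (filtered) derived de Rham complexes, getting the filtered isomorphism in two global steps. You instead unpack the Hodge filtration into graded pieces, use the binomial decomposition of derived exterior powers of a direct sum, and then reassemble. Both are valid; the K\"unneth route is cleaner because it avoids confronting derived divided powers of a shift and the question of whether a graded isomorphism promotes to a filtered one. If you keep your route, you should say explicitly that the promotion from graded pieces to a filtered map requires left Kan extension to reduce to smooth affine $\mathcal{X}$, where the Hodge filtration is bounded (the paper makes exactly this reduction in a remark before the proof).

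There is a genuine gap in your part (3). You appeal to ``the characterization of $\varphi_j$ on $\Fil^{j}_{\rH}\dR^{\wedge}_{-/\s}/p^n$ \ldots as the unique map lifting the linearized Frobenius in the sense that $p^j\varphi_j$ equals the restriction of $\varphi$'' but this uniqueness characterization does \emph{not} hold mod $p^n$: there $p^j$ is a zero divisor (and in fact $p^j=0$ whenever $j\ge n$), so the condition $p^j\varphi_j=\varphi|_{\Fil^j}$ gives essentially no constraint and can be satisfied by many maps. The unique-divisibility statement from \cite[Remark 4.24]{LL20} is about the \emph{integral} sheaf $\Fil^{j}_{\rH}\dR^{\wedge}_{-/\s}$, which is $p$-torsion free in the relevant degrees, and it is there that it pins down $\varphi_j$. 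The correct order of operations is: establish the compatibility of the two inclusion maps $\dR^{\wedge}_{\mathcal{X}/W}\to\dR^{\wedge}_{\mathcal{X}/\s}$ and $\dR^{\wedge}_{W/\s}\to\dR^{\wedge}_{\mathcal{X}/\s}$ with the divided Frobenii at the integral level (where your uniqueness argument, or the paper's, does apply), and only then reduce the resulting compatibility mod $p^n$. The paper's proof implicitly proceeds this way: it only asserts that those two integrally defined maps are $\varphi_j$-compatible, and the mod $p^n$ statement follows by reduction. As stated, your argument asserts uniqueness in a setting where it fails.
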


In case readers are worried that we do not put any smoothness assumption on $\mathcal{X}$,
just notice that both sides of these equalities are left Kan extended from smooth $\mathcal{X}$'s,
therefore it suffices to prove these statements for smooth affine $\mathcal{X}$'s.
That said, we will prove the statement without the smoothness assumption as the proof
just works in this generality.

\begin{proof}
The finitary $n$ cases follow from the case of $n = \infty$.
Henceforth, we assume $n = \infty$.

(1): This follows from exact triangle of cotangent complexes associated with a triangle of morphisms.

(2): Let $\mathcal{X}_{\s} \coloneqq \mathcal{X} \times_{\Spf(W)} \Spf(\s)$ be the base change.
Then we have $\mathcal{X} \cong \mathcal{X}_{\s} \times_{\Spf(\s)} {\Spf(W)}$.
These objects fit in a commutative diagram:
\[
\xymatrix{
\mathcal{X} \ar[d] \ar[r] & \mathcal{X}_{\s} \ar[d] \ar[r]  & \mathcal{X} \ar[d]\\
\Spf(W) \ar[r]^{u \mapsto p} & \Spf(\s) \ar[r] & \Spf(W).
}
\]
Using K\"{u}nneth formula for derived de Rham complex we obtain a filtered isomorphism:
\[
\dR^{\wedge}_{\mathcal{X}_{\s}/{\s}} \hat{\otimes}_{\s} \dR^{\wedge}_{W/{\s}} \xrightarrow{\cong}  \dR^{\wedge}_{\mathcal{X}/{\s}}.
\]
The base change formula for derived de Rham complex gives us a filtered isomorphism:
\[
\dR^{\wedge}_{\mathcal{X}/W}  \hat{\otimes}_{W} {\s} \xrightarrow{\cong} \dR^{\wedge}_{\mathcal{X}_{\s}/{\s}}.
\]
In both filtered isomorphisms above we put derived Hodge filtration on the derived de Rham complex,
and trivial filtration on the coefficient ring $W$ and $\s$.
Combining these two filtered isomorphisms gives our desired filtered isomorphism.

(3): This just follows from the fact that the two maps
in (2) is compatible with divided Frobenii.
\end{proof}

\begin{remark}
\label{Breuil ring as dR}
Since $\s \xrightarrow{u \mapsto p} W$ is a complete intersection, the $p$-adic derived de Rham complex
$\dR^{\wedge}_{W/\s} \cong S$ is given by Breuil's ring $S$ with the Hodge filtration given by divided powers
of $(u-p)$ and the usual Frobenius $u \mapsto u^p$.
Similarly the mod $p^n$ derived de Rham complex is $S/p^n$ with the induced filtration:
This is because the rings $S$ and $S/\mathcal{I}^{[j]}$ are all $p$-torsion free for any $j \in \mathbb{N}$.
\end{remark}

To obtain consequences at the level of cohomology groups, we need the following abstract lemma.
\begin{lemma}
\label{HTT fact}
Let $\mathcal{C}$ be a stable $\infty$-category.
Let $F \colon \mathbb{N}^{\mathrm{op}} \times \mathbb{N}^{\mathrm{op}} \to \mathcal{C}$
be a map of simplicial sets.
Then for any $0 < m \leq n$, we have a pushout diagram 
\[
\xymatrix{
F(n+1-m, m) \ar[r] \ar[d] & F(n+1-m, m-1) \ar[d] \\
\mathrm{colim}_{i+j \geq n, j \geq m} F(i,j) \ar[r] & \mathrm{colim}_{i+j \geq n, j \geq m-1} F(i,j).
}
\]
In particular, the two inclusions $F(i,j) \to F(i-1, j)$ and $F(i,j) \to F(i, j-1)$ gives rise to
a pushout diagram:
\[
\xymatrix{
\bigoplus_{i+j = n+1, i>0, j>0} F(i,j) \ar@<.4ex>[r] \ar@<-.4ex>[r] & \bigoplus_{i+j = n} F(i,j) \ar[r] & \mathrm{colim}_{i+j \geq n} F(i,j).
}
\]
\end{lemma}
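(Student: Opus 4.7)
Plan: I would view the indexing set as a subposet of $\mathbb{N}^{\mathrm{op}} \times \mathbb{N}^{\mathrm{op}}$ and establish the first pushout square via a cofinality argument combined with the fact that colimits commute with pushouts of indexing categories; the second statement will then follow by iteration. Write $I_m = \{(i,j) : i+j \geq n,\ j \geq m\}$. The new elements $I_{m-1} \setminus I_m = \{(i, m-1) : i \geq n+1-m\}$ form a chain whose terminal object in the opposite order is $(n+1-m, m-1)$.

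The plan for the first claim is to introduce $J := I_m \cup \{(n+1-m, m-1)\} \subset I_{m-1}$ and show (i) the inclusion $J \hookrightarrow I_{m-1}$ is cofinal, and (ii) $J$ is a pushout of indexing $\infty$-categories $I_m \sqcup_{[0]} [1]$, where $[0] \to I_m$ selects $(n+1-m, m)$ and $[0] \to [1]$ selects the source of the non-identity arrow. For (i), note that for any $(i, m-1) \notin J$ (so $i > n+1-m$) the slice $J_{(i, m-1)/}$ contains only $(n+1-m, m-1)$, hence is contractible. For (ii), observe that every arrow from $I_m$ into the new object $(n+1-m, m-1)$ requires $i \geq n+1-m$, and therefore factors through $(n+1-m, m)$, which is terminal in the subposet $\{(i,j) \in I_m : i \geq n+1-m\}$ under the opposite order. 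Combining these with the standard fact that taking colimits turns pushouts of indexing $\infty$-categories into pushouts in $\mathcal{C}$ yields
$$\mathrm{colim}_{I_{m-1}} F \simeq \mathrm{colim}_{J} F \simeq \mathrm{colim}_{I_m} F \sqcup_{F(n+1-m,\, m)} F(n+1-m,\, m-1).$$

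For the second claim, I would iterate the first for $m = n, n-1, \ldots, 1$, starting from the base case $\mathrm{colim}_{I_n} F = F(0, n)$ (since $(0,n)$ is terminal in $I_n$ in the opposite order). The iterated pushouts realize $\mathrm{colim}_{I_0} F$ as the colimit of the zigzag diagram
$$F(0,n) \leftarrow F(1,n) \to F(1,n-1) \leftarrow F(2,n-1) \to \cdots \leftarrow F(n,1) \to F(n,0),$$
which is exactly the coequalizer form asserted in the lemma, with the two parallel arrows corresponding to the maps $F(i,j) \to F(i-1, j)$ and $F(i,j) \to F(i, j-1)$. The main technical point to verify carefully is the cofinality of $J \hookrightarrow I_{m-1}$ and the pushout-of-indexing-categories principle in the $\infty$-categorical setting; both are standard consequences of Joyal's cofinality criterion and the universal property of pushouts of small $\infty$-categories.
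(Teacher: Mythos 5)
Your proposal reaches the correct conclusion, and the iterated-pushout part is identical in spirit to the paper's argument. However, your route through the first claim is genuinely different from the paper's, and it trades simplicity for extra technical burden. Let me compare.

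The paper applies \cite[Proposition 4.4.2.2]{HTT} \emph{directly} to the decomposition of the nerve $N(I_{m-1})$ as a union of two simplicial subsets
\[
N(I_{m-1}) = N(I_m) \cup N\big(\{(i,j) : i \geq n+1-m,\ j \geq m-1\}\big),
\]
with intersection $N\big(\{(i,j) : i \geq n+1-m,\ j \geq m\}\big)$. Each of the two auxiliary subposets has a terminal object (namely $(n+1-m, m-1)$ and $(n+1-m, m)$ respectively), so their colimits collapse to $F(n+1-m, m-1)$ and $F(n+1-m, m)$. Because this is a union of simplicial \emph{subobjects} inside $N(I_{m-1})$, the pushout of simplicial sets is automatic, and HTT 4.4.2.2 applies verbatim with no further processing.

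Your argument instead first passes to the cofinal subposet $J = I_m \cup \{(n+1-m, m-1)\}$ --- the cofinality check via Joyal's criterion is correct --- and then asserts that $J$ is the pushout $I_m \sqcup_{[0]} [1]$ ``of indexing $\infty$-categories.'' The observation that every arrow from $I_m$ into $(n+1-m, m-1)$ factors (uniquely) through $(n+1-m, m)$ is the right reason this is true, but it does not by itself establish the claim. Note that the pushout of \emph{simplicial sets} $N(I_m) \sqcup_{\Delta^0} \Delta^1$ is strictly smaller than $N(J)$ (it is missing all edges $x \to (n+1-m, m-1)$ with $x \neq (n+1-m, m)$, plus the higher simplices they span), so one must argue that the inclusion $N(I_m) \sqcup_{\Delta^0} \Delta^1 \hookrightarrow N(J)$ is a categorical equivalence (or inner anodyne, or at least cofinal) before invoking the colimit-preserves-pushouts principle. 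That extra step is where the subtlety of your approach lies, and it is precisely what the paper's union-of-subposets decomposition avoids entirely. The cofinality reduction to $J$ also buys you nothing: to compute $\mathrm{colim}_J F$ you still need a decomposition, and the same union-of-subposets trick applied to $J$ would again use terminal objects, so you might as well have applied it to $I_{m-1}$ directly.

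In short: your proof is salvageable but carries an unproved (and genuinely nontrivial) step, whereas the paper's decomposition sits squarely within the hypotheses of HTT 4.4.2.2 and needs no reduction.
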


\begin{proof}
The second statement follows from repeatedly applying the first statement and observing that
\[
\mathrm{colim}_{i+j \geq n, j \geq n} F(i,j) = \mathrm{colim}_{(i,j) \geq (0,n)} F(i,j) = F(0,n)
\]
as $(0,n)$ is the final object in $\{(i,j) \geq (0,n)\} \subset \mathbb{N}^{\mathrm{op}} \times \mathbb{N}^{\mathrm{op}}$.
The first statement follows from \cite[Proposition 4.4.2.2]{HTT}: just apply the statement to
\[
\{i+j \geq n, j \geq m-1\} = \{i+j \geq n, j \geq m\} \bigsqcup_{\{(i,j) \geq (n+1-m, m)\}} \{(i,j) \geq (n+1-m, m-1)\}
\]
yields the desired pushout diagram.
\end{proof}

Combining the previous two general statements yield the following. 
\begin{corollary}
\label{filtration auxiliary corollary}
Let $\mathcal{I}^{[\bullet]} \subset S$ be the filtration given by divided powers of $(u-p)$.
The natural maps, for any $q + j \geq m$,
\[
\mathrm{R\Gamma}(\mathcal{X},\Fil^q_{\rH}\dR^{\wedge}_{-/\s}) \hat{\otimes}_W \mathcal{I}^{[j]} \to 
\mathrm{R\Gamma}(\mathcal{X},\Fil^m_{\rH}\dR^{\wedge}_{-/\s})
\]
give rise to an exact triangle, 
\[
\bigoplus_{q+j = \ell +1, i>0, j>0} 
\mathrm{R\Gamma}(\mathcal{X},\Fil^q_{\rH}\dR^{\wedge}_{-/\s})/p^n \hat{\otimes}_{W_n} \big(\mathcal{I}^{[j]}/p^n\big)
\to
\]
\[
\to
\bigoplus_{q+j = \ell} \mathrm{R\Gamma}(\mathcal{X},\Fil^q_{\rH}\dR^{\wedge}_{-/\s})/p^n \hat{\otimes}_{W_n} \big(\mathcal{I}^{[j]}/p^n\big) \to
\mathrm{R\Gamma}(\mathcal{X},\Fil^n_{\rH}\dR^{\wedge}_{-/\s})/p^n
\]
for any $\ell \in \mathbb{Z}$ and any $n \in \mathbb{Z} \cup \{\infty\}$.
\end{corollary}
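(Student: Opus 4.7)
The plan is to combine the filtered base-change identity of Proposition \ref{derived FL to B}(2) with the abstract pushout formula of Lemma \ref{HTT fact}.

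First, using Proposition \ref{derived FL to B}(2) together with Remark \ref{Breuil ring as dR} (which identifies $\dR^{\wedge}_{W/\s}$ with $S$ equipped with the divided-power filtration $\mathcal{I}^{[\bullet]}$), I obtain a filtered isomorphism of quasi-syntomic sheaves
\[
\dR^{\wedge}_{\mathcal{X}/\s}/p^n \;\cong\; \dR^{\wedge}_{\mathcal{X}/W}/p^n \,\hat{\otimes}_{W_n}\, S/p^n,
\]
where the target carries the tensor product (convolution) of the Hodge filtration on $\dR^{\wedge}_{\mathcal{X}/W}$ with $\mathcal{I}^{[\bullet]}$ on $S$. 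Unwinding the definition of convolution and then applying $\mathrm{R\Gamma}(\mathcal{X}, -)$ --- which commutes with filtered colimits on the qcqs formal scheme $\mathcal{X}$ --- yields
\[
\mathrm{R\Gamma}\bigl(\mathcal{X}, \Fil^m_{\rH}\dR^{\wedge}_{-/\s}\bigr)/p^n \;\cong\; \operatorname*{colim}_{q+j \geq m} F(q,j),
\]
where I set $F(q,j) \coloneqq \mathrm{R\Gamma}(\mathcal{X}, \Fil^q_{\rH}\dR^{\wedge}_{-/W})/p^n \hat{\otimes}_{W_n} \mathcal{I}^{[j]}/p^n$, a bifunctor $\mathbb{N}^{\mathrm{op}} \times \mathbb{N}^{\mathrm{op}} \to \mathcal{D}(W_n)$ whose structure arrows come from the inclusions $\Fil^{q+1} \subset \Fil^q$ and $\mathcal{I}^{[j+1]} \subset \mathcal{I}^{[j]}$. (The natural map $F(q,j) \to \mathrm{R\Gamma}(\mathcal{X}, \Fil^{q+j}_{\rH}\dR^{\wedge}_{-/\s})/p^n$ is the leg of the colimit cocone, arising at the sheaf level from the multiplication $\Fil^q \hat{\otimes} \mathcal{I}^{[j]} \to \Fil^{q+j}$ intrinsic to the convolution filtration.)

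Second, I apply Lemma \ref{HTT fact} to this bifunctor $F$ with index $n = \ell$. The conclusion of that lemma, identifying $\operatorname*{colim}_{q+j \geq \ell} F(q,j)$ with the cofiber of the double arrow $\bigoplus_{q+j=\ell+1,\,q,j>0} F(q,j) \rightrightarrows \bigoplus_{q+j=\ell} F(q,j)$, combined with the colimit identification above, produces precisely the asserted exact triangle. The $n = \infty$ case is handled by the same argument with $p$-completed tensor products throughout; the finitary $n$ cases then follow by reducing modulo $p^n$ and using the uniform boundedness of $p$-power torsion in the relevant prismatic and Hodge pieces.

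The main obstacle is verifying that the base change isomorphism from Proposition \ref{derived FL to B}(2) is genuinely a \emph{filtered} isomorphism with the Hodge filtration matched precisely with the convolution filtration (and not merely a quasi-isomorphism on each filtered piece separately). This amounts to checking that under the quasi-syntomic-descent argument proving Proposition \ref{derived FL to B}, the filtered $S$-algebra structure on $\dR^{\wedge}_{W/\s}$ extends coherently across the base change. A secondary, more routine, obstacle is verifying that $\mathrm{R\Gamma}(\mathcal{X}, -)$ commutes with the filtered colimit defining the convolution filtration; this uses the qcqs hypothesis together with the fact that the indexing poset $\{(q,j) : q+j \geq m\}$ is filtered and that the relevant complexes have uniformly bounded cohomological amplitude.
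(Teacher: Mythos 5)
Your approach is essentially the paper's own: both proofs invoke Proposition~\ref{derived FL to B}(2) and Remark~\ref{Breuil ring as dR} to express $\mathrm{R\Gamma}(\mathcal{X},\Fil^m_{\rH}\dR^{\wedge}_{-/\s})/p^n$ as the $m$-th Day convolution piece of a filtered tensor product, and then apply Lemma~\ref{HTT fact} to resolve that colimit by a pushout. One point in your second ``obstacle'' needs correction, though it does not derail the argument: the poset $\{(q,j) : q+j \geq m\}$ inside $\mathbb{N}^{\mathrm{op}} \times \mathbb{N}^{\mathrm{op}}$ is \emph{not} filtered for $m > 0$ --- the elements $(m,0)$ and $(0,m)$ have no common upper bound, since that would force $q=j=0$ which violates $q+j\geq m$. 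So you cannot appeal to $\mathrm{R\Gamma}(\mathcal{X},-)$ commuting with filtered colimits. In fact you do not need any such commutation: Proposition~\ref{derived FL to B}(2) already gives the filtered isomorphism at the level of global cohomology objects, so the identification of $\mathrm{R\Gamma}(\mathcal{X},\Fil^m_{\rH}\dR^{\wedge}_{-/\s})/p^n$ with $\operatorname{colim}_{q+j\geq m} F(q,j)$ holds directly, without ever passing through a sheaf-level colimit. (Alternatively, Lemma~\ref{HTT fact} exhibits that colimit as a \emph{finite} colimit, namely an iterated pushout, so the exact functor $\mathrm{R\Gamma}(\mathcal{X},-)$ on stable $\infty$-categories does commute with it --- but the cleaner route is simply to stay global, which is what the paper does.)
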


\begin{proof}
The comparison of filtration \Cref{derived FL to B} (2) shows the right hand side is given by the
$\ell$-th Day convolution filtration on $\mathrm{R\Gamma}(\mathcal{X}, \dR^{\wedge}_{-/\s})/p^n \hat{\otimes}_{W_n} \dR^{\wedge}_{W/\s}/p^n$.
Here the filtered ring $\dR^{\wedge}_{W/\s}/p^n$ is given by $(S/p^n, \mathcal{I}^{[\bullet]})/p^n$,
see \Cref{Breuil ring as dR}.
Last we apply \Cref{HTT fact} to conclude the proof.
\end{proof}

\begin{theorem}[{c.f.~\cite[p.~559 Remarques.(2)]{Bre98}}]
\label{MF and Breuil module}
For any $j, \ell \in \mathbb{Z}$ and any $n \in \mathbb{Z} \cup \{\infty\}$, use 
\[
\mathrm{Im}\big(\rH^j(\mathcal{X},\Fil^\ell_{\rH}\dR^{\wedge}_{-/\s}/p^n) \to 
\rH^j(\mathcal{X}, \dR^{\wedge}_{-/\s})/p^n \big) \eqqcolon \Fil^\ell\rH^j(\mathcal{X}, \dR^{\wedge}_{-/\s}/p^n)
\]
to filter $\rH^j(\mathcal{X}, \dR^{\wedge}_{-/\s}/p^n)$,
and similarly filter $\rH^j(\mathcal{X}, \dR^{\wedge}_{-/W}/p^n)$.
Then we have a filtered isomorphism 
\[
\rH^j(\mathcal{X}, \dR^{\wedge}_{-/W}/p^n) \hat{\otimes}_{W_n} \big(S/p^n\big) \xrightarrow{\cong} \rH^j(\mathcal{X}, \dR^{\wedge}_{-/\s}/p^n).
\]
Moreover it is compatible with the divided Frobenii $\varphi_m$ on $m$-th filtration
of both sides for all $m \leq p-1$.
\end{theorem}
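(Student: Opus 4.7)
The plan is to upgrade the derived-level filtered identification of Proposition~\ref{derived FL to B}(2) to a statement about the cohomology groups, using the auxiliary exact triangle in Corollary~\ref{filtration auxiliary corollary} to handle the filtration.

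First I would establish the unfiltered isomorphism. Applying $\rH^j(\mathcal{X},-)$ to the derived comparison $\dR^{\wedge}_{\mathcal{X}/W}/p^n \hat{\otimes}_{W_n} \dR^{\wedge}_{W/\s}/p^n \xrightarrow{\cong} \dR^{\wedge}_{\mathcal{X}/\s}/p^n$ and using that $\dR^{\wedge}_{W/\s}/p^n \cong S/p^n$ is flat over $W_n$ (being $p$-torsion free with a basis given by divided powers of $u-p$, see Remark~\ref{Breuil ring as dR}), I get the identification $\rH^j(\mathcal{X},\dR^{\wedge}_{-/W}/p^n)\hat{\otimes}_{W_n}(S/p^n) \xrightarrow{\cong} \rH^j(\mathcal{X},\dR^{\wedge}_{-/\s}/p^n)$.

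Next I would identify the two filtrations. The one on the left (Day convolution) is, by definition, $\sum_{q+i=\ell} \Fil^q\rH^j(\mathcal{X},\dR^{\wedge}_{-/W}/p^n) \hat{\otimes}_{W_n} (\mathcal{I}^{[i]}/p^n)$, while the one on the right is the image of $\rH^j(\mathcal{X},\Fil^\ell_{\rH}\dR^{\wedge}_{-/\s}/p^n)$. The containment $\subseteq$ is immediate from the filtered compatibility of the derived comparison. For the reverse, I would take the exact triangle of Corollary~\ref{filtration auxiliary corollary}, apply $\rH^j(\mathcal{X},-)$, and chase the resulting long exact sequence: the resulting presentation expresses the target image as a quotient of $\bigoplus_{q+i=\ell}\rH^j(\mathcal{X},\Fil^q_{\rH}\dR^{\wedge}_{-/\s}/p^n)\hat{\otimes}_{W_n}(\mathcal{I}^{[i]}/p^n)$, and an induction on $\ell$ (downward from $\ell \geq j+1$, where both sides vanish or stabilise in the appropriate range) lets me replace each $\rH^j(\mathcal{X},\Fil^q_{\rH}\dR^{\wedge}_{-/\s}/p^n)$ by the image $\Fil^q\rH^j(\mathcal{X},\dR^{\wedge}_{-/\s}/p^n) \cong \Fil^q\rH^j(\mathcal{X},\dR^{\wedge}_{-/W}/p^n)\hat{\otimes}_{W_n}(S/p^n)$, yielding the desired equality.

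The compatibility with divided Frobenii for $m \leq p-1$ is then a formal consequence of Proposition~\ref{derived FL to B}(3): the divided Frobenii $\varphi_m$ on the derived $m$-th Hodge filtrations on both sides agree under the filtered isomorphism, and taking $\rH^j$ and passing to the images recovers the divided Frobenius on $\Fil^m\rH^j(\mathcal{X},\dR^{\wedge}_{-/\s}/p^n)$.

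The main obstacle will be the filtration-matching step in the middle paragraph: the definition of $\Fil^\ell$ on cohomology is via the image, and the auxiliary triangle mixes different cohomological degrees when taking the long exact sequence, so some care is needed to ensure the induction closes and that one is not obstructed by higher Ext/Tor terms. The flatness of $S/p^n$ and $\mathcal{I}^{[i]}/p^n$ over $W_n$, together with the $p$-torsion freeness of $S/\mathcal{I}^{[i]}$, should keep the Tor spectral sequence degenerate and make the bookkeeping go through.
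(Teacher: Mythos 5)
Your approach uses the right ingredients (the derived-level filtered identification of Proposition~\ref{derived FL to B}, the exact triangle of Corollary~\ref{filtration auxiliary corollary}, and the $W_n$-flatness of the divided-power graded pieces), and your first paragraph matches the paper. But the middle paragraph has a gap that your proposed induction does not close.

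The step ``replace each $\rH^j(\mathcal{X},\Fil^q_{\rH}\dR^{\wedge}_{-/\s}/p^n)$ by the image $\Fil^q\rH^j(\mathcal{X},\dR^{\wedge}_{-/\s}/p^n)$'' is not an isomorphism: the surjection from the cohomology of the filtered object onto the induced filtration on cohomology can have a kernel, and your bookkeeping does not account for it. Worse, the formula you write down for the replacement, $\Fil^q\rH^j(\dR^{\wedge}_{-/\s}/p^n) \cong \Fil^q\rH^j(\dR^{\wedge}_{-/W}/p^n)\hat{\otimes}_{W_n}(S/p^n)$, is not what the theorem asserts; the filtration on the right is Day-convolution, i.e.\ $\sum_{r+s=q}\Fil^r\rH^j(\dR^{\wedge}_{-/W}/p^n)\hat{\otimes}_{W_n}(\mathcal{I}^{[s]}/p^n)$, not a tensor with all of $S/p^n$. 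Finally, the claimed base case ``both sides vanish or stabilise'' for $\ell\geq j+1$ is not justified: the derived Hodge filtration on $\dR^{\wedge}_{-/\s}$ does not vanish for $\ell$ large (the map $\s\twoheadrightarrow\mathcal{O}_K$ is not smooth), so $\rH^j(\Fil^\ell_{\rH}\dR^{\wedge}_{-/\s}/p^n)$ is generally nonzero there.

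The paper closes the gap differently and more cleanly. It does not chase a long exact sequence or induct on $\ell$. Instead it invokes the first statement of Lemma~\ref{HTT fact}: the colimit defining the Day-convolution filtration is built by a finite sequence of pushouts, each against a single map $F(q,m)\to F(q,m-1)$ given by $\mathrm{id}\otimes(\mathcal{I}^{[m]}/p^n\hookrightarrow\mathcal{I}^{[m-1]}/p^n)$. It then observes that each such map is injective on $\rH^q$ because the quotient $(\mathcal{I}^{[m-1]}/\mathcal{I}^{[m]})/p^n\cong W_n\cdot\gamma_{m-1}(u-p)$ is $W_n$-free. Since the pushout square identifies fibers, this injectivity forces the connecting maps to vanish at every step, so each step of the filtration induces a short exact sequence on cohomology, and the whole exact triangle of Corollary~\ref{filtration auxiliary corollary} does too. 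This bypasses any need to compare $\rH^j(\Fil^q)$ with its image or to induct on $\ell$. I'd advise rewriting your middle paragraph along these lines: the flatness observation you made is the correct mechanism, but it should be used to produce short exact sequences at each pushout step, not to justify a replacement-by-images that is false in general.
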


Here again the ring $S/p^n$ is equipped with the divided power ideal filtration.
Concretely we have
\[
\Fil^\ell\rH^j(\mathcal{X}, \dR^{\wedge}_{-/\s}/p^n) = \sum_{r + s = \ell} \Fil^r\rH^j(\mathcal{X}, \dR^{\wedge}_{-/W}/p^n) \hat{\otimes}_{W_n} \big(\mathcal{I}^{[s]}/p^n\big)
\]
as sub-$W_n$-modules inside $\rH^j(\mathcal{X}, \dR^{\wedge}_{-/\s}/p^n) \hat{\otimes}_{W_n} \big(S/p^n\big) \xrightarrow{\cong} \rH^j(\mathcal{X}, \dR^{\wedge}_{-/\s}/p^n)$.

\begin{proof}
By \Cref{filtration auxiliary corollary}, it suffices to show the exact triangle obtained
induces short exact sequence after applying $\rH^q$.
To that end, it suffices to show the map 
\[
\rH^q(\mathcal{X},\Fil^\ell_{\rH}\dR^{\wedge}_{-/\s}/p^n) \hat{\otimes}_{W_n} \big(\mathcal{I}^{[j+1]}/p^n\big)
\to
\rH^q(\mathcal{X},\Fil^\ell_{\rH}\dR^{\wedge}_{-/\s}/p^n) \hat{\otimes}_{W_n} \big(\mathcal{I}^{[j]}/p^n\big)
\]
is injective for any $q,j,\ell,n$. But this follows from the fact that
$\big(\mathcal{I}^{[j]}/p^n\big)/\big(\mathcal{I}^{[j+1]}/p^n\big) \simeq W_n \cdot \gamma_j(u-p)$
is $p$-completely flat over $W_n$.
The compatibility with divided Frobenii was checked in \Cref{derived FL to B} (3).
\end{proof}

We arrive at the following result, which was already proved by Fontaine--Messing \cite[Cor. 2.7]{FontaineMessing}
and Kato \cite[II.Proposition 2.5]{Katovanishingcycles}.
In fact they did not need the existence of a lift all the way to $\Spf(W)$.

\begin{corollary}
\label{co-FM-revisited}
Let $\cX$ be a proper smooth $p$-adic formal scheme over $W$.
Let $j \leq p-1$ and $n \in \mathbb{N}$. 
Then the natural map $\rH ^j _{\cris} (\cX_n /W_n, \cI^{[i]}_{\cris}) \to \rH^j _{\cris}(\cX_n/W_n)$
is injective,
and the triple
\[
\left (\rH^j _{\cris}(\cX_n/W_n), \rH ^j _{\cris} (\cX_n /W_n, \cI^{[i]}_{\cris}), 
\varphi_i:  \rH ^j _{\cris} (\cX_n /W_n, \cI^{[i]}_{\cris}) \to \rH^j _{\cris}(\cX_n/W_n) \right )
\] is an object in $\FL_{W(k)}$. 
\end{corollary}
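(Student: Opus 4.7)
The plan is to reduce \Cref{co-FM-revisited} to results already established in the paper, leveraging the filtered K\"unneth isomorphism of \Cref{MF and Breuil module} together with the Breuil-to-Fontaine--Laffaille criterion of \Cref{FL module equivalent to Breuil module}. First I would identify crystalline cohomology with derived de Rham cohomology via the filtered comparison recalled at the beginning of this subsection, so that the map in question becomes $\rH^j(\cX, \Fil^i_{\rH}\dR^{\wedge}_{-/W}/p^n) \to \rH^j(\cX, \dR^{\wedge}_{-/W}/p^n)$. Since $e = 1$ forces $T = p$ in \Cref{integral HdRSS}, part (3) of that theorem says $\rH^j(\cX, \Fil^i_{\rH}) \subset \rH^j_{\dR}(\cX)$ is a split $W$-submodule for every $j \leq p-1$ and every $i$. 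These integral splittings descend to injectivity modulo $p^n$, which gives the injectivity claim and also shows that each $\Fil^{i+1} M$ is a direct $W_n$-summand of $\Fil^i M$, where $M \coloneqq \rH^j_{\cris}(\cX_n/W_n)$ and $\Fil^i M \coloneqq \rH^j_{\cris}(\cX_n/W_n, \cI^{[i]}_{\cris})$. Moreover $\Fil^0 M = M$ tautologically, and $\Fil^p M = 0$ because the Hodge filtration on $\rH^j_\dR$ vanishes in filtration degrees $> j$; the relation $\varphi_i|_{\Fil^{i+1} M} = p \cdot \varphi_{i+1}$ is built into the definition of divided Frobenius. This verifies axioms (1) and (2) in the definition of $\FL_{W(k)}$.

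The remaining axiom (3), admissibility $\sum_i \varphi_i(\Fil^i M) = M$, forms the core of the argument. Applying \Cref{MF and Breuil module} with $h = p-1$, the base change $M \hat{\otimes}_{W_n} S_n$ is identified as a filtered $\varphi_{p-1}$-module with $\rH^j(\cX, \dR^{\wedge}_{-/\s}/p^n)$, exhibiting the latter as $\u \calM_{\FL}(M)$. By \Cref{FL module equivalent to Breuil module} it then suffices to show that $\varphi_{p-1}$ applied to the $(p-1)$-st Hodge filtration of this Breuil module generates it as an $S_n$-module. When $j = p-1$, this is exactly \Cref{degree p-1 is Breuil}. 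When $j < p-1$, the situation is cleaner: \Cref{cor control u-torsion} (1) forces both $\rH^j$ and $\rH^{j+1}$ of $\Prism^{(1)}/p^n$ to be $u^\infty$-torsion free, so the sequence \ref{the sequence} degenerates and yields $\rH^j(\cX, \dR^{\wedge}_{-/\s}/p^n) \cong \rH^j(\cX, \Prism^{(1)}/p^n) \otimes_{\s_n} S_n$; an analogous, easier argument---using the bijectivity of $\varphi_j$ on $\rH^j(\cX, \Fil^j_\rN \Prism^{(1)}/p^n)$ from \cite[Lemma 7.8.(3)]{LL20} combined with the fact that $\varphi_{p-1-j}(E^{p-1-j}) = c_1^{p-1-j} \in S_n^\times$ lies in the image---then yields the required admissibility.

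The main obstacle is the boundary case $j = p-1$ of axiom (3), where $u^\infty$-torsion in $\rH^{p}$ of $\Prism^{(1)}/p^n$ may appear and prevent the clean identification used for $j < p-1$. However, this has already been resolved by the careful analysis of the short exact sequence \ref{the sequence}, the computation of the residual divided Frobenius in \Cref{compute residual divided Frobenius}, and the admissibility argument in the proof of \Cref{degree p-1 is Breuil}.
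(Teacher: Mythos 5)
Your overall architecture matches the paper's: you reduce via the filtered comparison to derived de Rham, invoke \Cref{MF and Breuil module} to identify $M \hat{\otimes}_{W_n} S_n$ with the Breuil-side cohomology, use \Cref{degree p-1 is Breuil} for the boundary degree $j=p-1$, and close with the ``if'' direction of \Cref{FL module equivalent to Breuil module}. The alternative treatment of $j < p-1$ via $u^\infty$-torsion-freeness (\Cref{cor control u-torsion}(1)) rather than citing \cite[Theorem 7.22, Corollary 7.25]{LL20} is a reasonable internalization of that step.

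However, the first half of your argument has a gap. You invoke \Cref{integral HdRSS}(3) -- the integral splitness of $\rH^j(\mathcal{X}, \Fil^i_{\rH}) \subset \rH^j_{\dR}(\mathcal{X})$ -- and then claim ``these integral splittings descend to injectivity modulo $p^n$.'' This does not follow: the object in the corollary is $\rH^j_{\cris}(\mathcal{X}_n/W_n, \mathcal{I}^{[i]}_{\cris})$, which is the cohomology of the \emph{mod $p^n$ filtered sheaf}, not the reduction $\rH^j_{\cris}(\mathcal{X}/W, \mathcal{I}^{[i]}_{\cris})/p^n$ of the integral filtered piece. These differ in general by universal-coefficient/boundary terms. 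Indeed the paper explicitly cautions, right after \Cref{integral HdRSS}(3), ``We do not know if the split statement in (3) above holds at the mod $p^n$ level.'' The paper instead cites \Cref{integral HdRSS}(1), whose statement is formulated directly for $\mathcal{X}_n$ and all $n$, and which gives the needed degeneration (hence injectivity) at the mod $p^n$ level for source of total degree $< T = p$.

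This detour was also unnecessary: you do not need to verify axiom (1) of $\FL_{W(k)}$ separately. The footnote in the paper's definition of $\FL_{W(k)}$ records that condition (1) is a consequence of conditions (2) and (3) by \cite[Proposition 1.4.1(ii)]{Wintenberger}, and the ``if'' direction of \Cref{FL module equivalent to Breuil module} already delivers the full $\FL_{W(k)}$-structure from admissibility alone. Replace the appeal to \Cref{integral HdRSS}(3) with \Cref{integral HdRSS}(1) for the injectivity claim, drop the direct verification of axiom (1), and the argument aligns with the paper's.
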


\begin{proof}
The injectivity follows from \Cref{integral HdRSS} (1).
The triple tensored up to $S$ is identified with 
\[
\left (\rH^j(\mathcal{X}, \dR^{\wedge}_{-/\s}/p^n), \rH^j(\mathcal{X},\Fil^j_{\rH}\dR^{\wedge}_{-/\s}/p^n), \varphi_j  \right ),
\]
by \Cref{MF and Breuil module}.
We have showed the map $\rH^j(\mathcal{X},\Fil^\ell_{\rH}\dR^{\wedge}_{-/\s}/p^n) \to \rH^j(\mathcal{X}, \dR^{\wedge}_{-/\s}/p^n)$
is injective, and the divided Frobenius $\varphi_j$ generates the image:
for $j \leq p-2$, this was the main result in our previous paper \cite[Theorem 7.22 and Corollary 7.25]{LL20};
and for $j = p-1$, use \Cref{Fil i injective} and \Cref{degree p-1 is Breuil}. 
Using the ``if'' part of \Cref{FL module equivalent to Breuil module}, we see that
$\left (\rH^j _{\cris}(\cX_n/W_n), \rH ^j _{\cris} (\cX_n /W_n, \cI^{[j]}_{\cris}), 
\varphi_i:  \rH ^j _{\cris} (\cX_n /W_n, \cI^{[i]}_{\cris}) \to \rH^j _{\cris}(\cX_n/W_n) \right )$
 is an object in $\FL_{W(k)}$. 
\end{proof}

\subsection{Comparison to \'etale cohomology}
In this section, we study how crystalline cohomology $\rH ^i _{\cris} (\mathcal X /S_n)$ compares to \'etale cohomology $\rH ^i_{\et} (\cX_{\C}, \Z/ p ^n \Z)$ 
in the boundary case $e \cdot i  = p-1 $. 
We shall freely use the notation and terminology from \Cref{modules and Galois rep}.

We first treat the case when $e=1$ and $p -1$,
in which case \Cref{co-FM-revisited} shows that 
\[
M : = \left (\rH^{p-1} _{\cris}(\cX_n/W_n), \rH ^{p-1} _{\cris} (\cX_n /W_n, \cI^{[p-1]}_{\cris}), \varphi_{p-1} \right )
\]
is an object in $\FL_{W(k)}$. 
\begin{theorem}
\label{Thm-FM-p-1}
Notations as the above, then  there exists a natural map 
$\eta \colon \rH ^{p-1} _{\et} (\cX_\C, \Z/ p ^n \Z)(p-1)  \to T_{\FL} (M )$ of $G_K$-representations such that 
\begin{enumerate}
    \item The $\ker(\eta)$ is an unramified representation of $G_K$ killed by $p$;
    \item The $\coker(\eta)$ sits in a natural exact sequence $ 0 \to W \to \coker (\eta)\to W' $,
    where $W \cong \ker(\eta)$ and $W' \cong \ker(\mathrm{Sp}_n^{p-1})$ is given by the kernel of specialization map in degree $(p-1)$.
\end{enumerate}
\end{theorem}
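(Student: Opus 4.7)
The plan is to construct $\eta$ as a natural two-step composition and then exploit the defect analysis from Section \ref{modules and Galois rep}. Since $e = 1$ and $e(i-1) = p-2 < p-1$, \Cref{cor control u-torsion} (1) implies that $\mathfrak{m} \coloneqq \rH^{p-1}_{\Prism}(\cX/\s_n)$ is $u$-torsion free, hence a classical Kisin module of height $p-1$. The \'etale comparison for prismatic cohomology combined with the Tate twist identification $T^{p-1}_\s(\mathfrak{m}) \cong T_\s(\mathfrak{m})(p-1)$ yields a canonical $G_K$-equivariant isomorphism $\rH^{p-1}_{\et}(\cX_\C, \Z/p^n\Z)(p-1) \cong T^{p-1}_\s(\mathfrak{m})$. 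Using the natural comparison map $\iota\colon T^{p-1}_\s(\mathfrak{m}) \to T_S(\underline{\calM}(\mathfrak{m}))$ from Section \ref{modules and Galois rep}, and identifying $\underline{\calM}(\mathfrak{m}) = \mathfrak{M} \otimes_{\s_n} S_n$ so that \eqref{the sequence} provides an inclusion of Breuil modules $\underline{\calM}(\mathfrak{m}) \hookrightarrow \calM$, together with the identification $\calM \cong \underline{\calM}_{\FL}(M)$ from \Cref{MF and Breuil module} (which gives $T_S(\calM) = T_{\FL}(M)$), one defines
\[
\eta\colon \rH^{p-1}_{\et}(\cX_\C, \Z/p^n\Z)(p-1) \xrightarrow{\cong} T^{p-1}_\s(\mathfrak{m}) \xrightarrow{\iota} T_S(\underline{\calM}(\mathfrak{m})) \hookrightarrow T_S(\calM) = T_{\FL}(M).
\]

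For assertion (1), the left exactness of $T_S$ applied to the injection $\underline{\calM}(\mathfrak{m}) \hookrightarrow \calM$ forces the last arrow of the composition to be injective, so $\ker(\eta) = \ker(\iota)$. By \Cref{cor-diff-killed-byp} applied to $\mathfrak{m}$, both $\ker(\iota)$ and $\coker(\iota)$ are canonically isomorphic unramified $G_K$-representations killed by $p$, which proves (1) and simultaneously provides the submodule $W \cong \coker(\iota) \cong \ker(\eta)$ of assertion (2).

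For assertion (2), applying the left exact functor $T_S$ to \eqref{the sequence} yields
\[
0 \to T_S(\mathfrak{M} \otimes_{\s_n} S_n) \to T_S(\calM) \to T_S(\overline{M}),
\]
and quotienting out the image of $\iota$ yields a natural exact sequence $0 \to \coker(\iota) \to \coker(\eta) \to T_S(\overline{M})$. By \Cref{cor-rep-Mbar}, $T_S(\overline{M}) \cong (V \otimes_{W(k)} W(\bar{k}))^{\varphi=1}$, and \Cref{ker Cosp and u-torsion} identifies this Frobenius fixed-point module with the relevant kernel of specialization $W'$. Combined with $W \cong \coker(\iota)$, this completes the proof of (2).

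The main technical obstacle is the verification of $G_K$-equivariance at each step. Specifically, the natural $G_K$-action on $\mathfrak{m} \otimes_{\s} A_{\inf}$ (arising from the base change of prismatic cohomology along $\s \to A_{\inf}$) must be shown compatible with the connection-induced $G_K$-action on $\calM \otimes_S A_{\cris}$ defined by \eqref{eqn-G-action-by-nabla}, via the natural $\varphi$-equivariant $S$-linear map $\mathfrak{m} \otimes_{\s} A_{\inf} \to \underline{\calM}(\mathfrak{m}) \otimes_S A_{\cris} \hookrightarrow \calM \otimes_S A_{\cris}$. This in turn reduces to matching the two natural Galois structures on crystalline cohomology, which follows from the prismatic crystal nature \Cref{identify crystals} combined with the uniqueness of the natural connection established in \Cref{identifying connection}. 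Once this Galois equivariance is in place, the structural results of Section \ref{modules and Galois rep} and the identifications above give the full statement.
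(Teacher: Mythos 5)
Your proposal is essentially correct and follows the same line of reasoning as the paper's proof: construct $\eta$ as the composite $\rH^{p-1}_{\et}(\cX_\C, \Z/p^n\Z)(p-1) \xrightarrow{\cong} T^{p-1}_\s(\m) \xrightarrow{\iota} T_S(\u\calM(\m)) \hookrightarrow T_S(\calM) = T_{\FL}(M)$, use left-exactness of $T_S$ on \eqref{the sequence} to reduce (1) to $\ker(\iota)$ via \Cref{cor-diff-killed-byp}, and obtain (2) from the snake-type exact sequence together with \Cref{cor-rep-Mbar} and \Cref{ker Cosp and u-torsion}. The only (minor) difference is in how the $G_K$-equivariance of $\iota$ is justified: you appeal directly to \Cref{identify crystals} and \Cref{identifying connection}, whereas the paper argues via the factorization of the $G_K$-equivariant crystalline base-change map $\rH^{p-1}_{\Prism}(\cX_{\O_\C}/A_{\inf}) \to \rH^{p-1}_{\cris}(\cX_{\O_\C}/A_{\cris})$ through the inclusion $\u\calM(\m) \otimes_S A_{\cris} \hookrightarrow \calM \otimes_S A_{\cris}$, invoking \Cref{descend connection} for stability of $\u\calM(\m)$ under $\nabla$ and the compatibility established in \cite[\S 5.3]{LL20}; these are two phrasings of the same underlying argument.
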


Note that by our \Cref{control ker Cosp} (3), $\ker(\mathrm{Sp}_n^{p-1})$ is also an unramified $G_K$-representation killed by $p$.
The $T_{\FL} (M) $ in the above theorem is what we meant by $\rho^{p-1}_{n, \mathrm{FL}}$ in \Cref{Thm 1.9}.

\begin{proof} 
Let $\m: = \rH^{p-1} _{\Prism} (\cX/ \s_n ) $ 
(note that here we do not have Frobenius twist) and 
$\calM = \rH^{p-1}_{\cris} (\cX/S_n)$. 
We have showed that the natural exact sequence \eqref{the sequence} 
induces a natural exact sequence in $\Mod_{S, \tor}^{\varphi, p-1 , \nabla}$:  
\[ \xymatrix{ 0 \ar[r] & \u \calM (\m) \ar[r] & \calM \ar[r] &  \bar M \ar[r]  & 0 },\]
see \Cref{residual filtration}, \Cref{Fil on tensor product}, \Cref{compute residual divided Frobenius}, \Cref{degree p-1 is Breuil},
\Cref{descend connection} and \Cref{residual connection} for descriptions of the filtrations, Frobenii action, and connections.
Furthermore, our \Cref{MF and Breuil module} says $\calM = \u\calM_{\FL} (M)$. 
Therefore, by left exactness of $T_S$, we have a natural sequence of $G_K$-representations:
\[
0 \to T_S (\u\calM(\m)) \hookrightarrow  T_S (\calM) = T_{\FL} (M) \to T_S(\overline{M}).
\]
On the other hand, we also have natural maps of $G_K$-representations:
$$\xymatrix{\eta: \rH^{p -1} _{\et} (\cX_\C, \Z/ p^n \Z)(p-1)\ar[r]^-{\sim} & T_\s(\m)(p-1) \ar[r]^-\sim_-\alpha & T^{p-1} _\s (\m ) \ar[r]^-\iota &  T_S (\u\calM(\m)).} $$ 
The first isomorphism is proved by \cite[Cor.~7.4,  Rem.~7.5]{LL20}. As explained before Lemma \ref{lem-Galois-unipotent}, the map $\iota \circ \alpha$ is a map compatible with $G_K$-actions if the natural map $f: \m \otimes _\s A_{\inf}\to \calM (\m) \otimes_S A_{\cris}$ is compatible with $G_K$-actions on the both sides, where the $G_K$-action on $\m \otimes _\s A_{\inf}$ given by $\m \otimes _\s A_{\inf}\simeq \rH^{p-1} _\Prism (\cX_{\O_\C}/ A_{\inf})$ and the $G_K$-action on $\u \calM(\m) \otimes_S A_{\cris}$ is defined by formula \eqref{eqn-G-action-by-nabla}. To prove that $f$ is compatible with $G_K$-actions, note that the natural map $f':  \rH^{p-1} _\s (\cX_{\O_\C}/A_{\inf}) \to \rH^{p-1}_{\cris} (\cX_{\O_\C}/ A_{\cris})$, which is compatible with $G_K$-actions,  factors through $f : \m \otimes _\s A_{\inf}\to \u \calM (\m) \otimes_S A_{\cris}$ by using inclusion $\u \calM (\m) \subset \calM $ and isomorphism $\beta: \calM \otimes_S A_{\cris}\simeq \rH^{p-1} _{\cris} (\cX_{\O_{\C}}/A_{\cris})$. So it suffices to check that 
$\u\calM (\m)\otimes_S A_{\cris}\to \calM \otimes_S A_{\cris} \simeq \rH^{p-1} _{\cris} (\cX_{\O_{\C}}/A_{\cris}) $ are compatible with $G_K$-actions. The compatibility of first map is due to that  $\u\calM(\m) \subset \calM$ is stable under $\nabla$ on $\calM$ by Corollary \ref{descend connection}, and the compatibility of second isomorphism is proved in \cite[\S 5.3]{LL20}. 
In summary, we obtain a natural map $\eta: \rH^{p -1} _{\et} (\cX_\C, \Z/ p^n \Z)(p-1)\to T_{\FL} (M)$ of $G_K$-representations. 

Now we shall justify the two extra statements concerning kernel and cokernel of $\eta.$
Since $T_S$ is left exact, $\ker (\eta) \simeq \ker (\iota)$ which is unramified
and killed by $p$, thanks to \Cref{cor-diff-killed-byp}. 

Easy diagram chase gives us a natural exact sequence:
\[
0 \to \coker(\iota) \to \coker(\eta) \to T_S (\overline M).
\]
By \Cref{cor-diff-killed-byp} we have $\coker(\iota) \cong \ker(\iota)$.
The fact that $T_S (\overline M) \cong \ker(\mathrm{Sp}_n^{p-1})$ follows from
\Cref{cor-rep-Mbar} and \Cref{ker Cosp and u-torsion}. 
\end{proof}
\begin{remark} 
\leavevmode
\begin{enumerate}
\item From the proof, we see that the appearance of $\ker(\eta)$ and $V$ is due to the defect of a key functor in integral $p$-adic Hodge theory,
and the potential $u$-torsion in degree $p$ (mod $p^n$) prismatic cohomology of $\cX$
is to be blamed for the appearance of $V'$.
\item It is unclear to us if the whole $\coker(\eta)$ is unramified and/or killed by $p$.
It could even very well be the case that the sequence $ 0 \to W \to \coker (\eta)\to W'$
is split exact (in particular, right-exact) as $G_K$-representations.
One would need extra input from integral $p$-adic Hodge theory, especially a further study of Breuil and Fontaine--Laffaille
modules in the boundary degree case, in order to obtain such refinements.
\end{enumerate}
\end{remark}

Now we discuss the case $e>1$ but $h \leq p-2$. 
We first recall that for $i \leq p -1$, in \cite[\S 5.2 ]{LL20} we have showed that $\calM^i_n : = (\rH ^i _{\cris} (\cX /S_n), \rH ^i _{\cris} (\cX/ S_n, \cI ^{[i]}), \varphi _i)$ is an object in $^\sim \Mod^{\varphi, i}_{S}$. By the discussion before equation (7.24) in \cite{LL20}, we get the following exact sequence for $i \leq h \leq p-2$: 
\begin{equation}\label{eqn-etale-Fil-2}
\cdots\rH ^{i -1} _{\cris} (\mathcal{X}_n/ A_{\cris, n}) \to  \rH^i_ {\et} (\mathcal{X}_{\C} , \Z/ p^n \Z (h)) \to \rH ^ i _{\cris} (\mathcal{X}_n/ A_{\cris, n}, \cI_{\cris} ^{[h]}) \overset{\varphi_h -1}{\longrightarrow} \rH ^i _{\cris} (\mathcal{X}_n / A_{\cris, n }),
\end{equation}
let us mention that the crucial input is \cite[Theorem F]{AMMN21}.
Thanks to $A_{\cris, n}$ being flat over $S_n$, we have
\[\rH^i_{\cris} (\cX_n /A_{\cris, n} , \cI ^{[h]}_{\cris}) \cong \rH^i _{\cris} (\cX_n /S_{ n} , \cI ^{[h]}_{\cris})\otimes_S A_{\cris}
\text{ and } 
\rH^i_{\cris} (\cX_n /A_{\cris, n}) \cong \rH^i _{\cris} (\cX_n /S_{ n})  \otimes_S A_{\cris},\]
In this case, we can still define \[T_S (\calM_n ^i): = \Fil^i (\calM_n^i\otimes  _S A_{\cris})^{\varphi_i =1} =\ker\{\varphi_i -1: \rH^i_{\cris} (\cX_n /A_{\cris, n} , \cI ^{[i]}_{\cris})\to \rH ^i _{\cris} (\mathcal{X}_n / A_{\cris, n })   \}.  \] 
The only difference is that the natural map $\rH^i _{\cris} (\cX_n /S_{ n} , \cI ^{[i]}_{\cris} ) \to \rH^i _{\cris} (\cX_n /S_{ n} )$ is not expected to be injective without the condition $e \cdot i < p-1$. 

\begin{proposition}\label{prop-TS}
Notation as above, we have a functorial isomorphism
$T_S (\calM_n^i ) \cong \rH^i_ {\et} (\mathcal{X}_{\C} , \Z/ p^n \Z (i)) $. 
\end{proposition}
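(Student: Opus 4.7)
My approach is to decompose $T_S(\calM_n^i)$ using the short exact sequence \ref{the sequence} and to exploit that, when $e > 1$, the boundary case $e \cdot i = p - 1$ forces $p \geq 3$ and $i = (p-1)/e \leq (p-1)/2 \leq p - 2$. First I would set $\tilde{\m} \coloneqq \rH^i_{\Prism}(\cX/\s_n)$, so that $\mathfrak{M} = \varphi^*_{\s}\tilde{\m}$ and $\mathfrak{M} \otimes_{\s_n} S_n \cong S \otimes_{\varphi, \s} \tilde{\m} = \u\calM(\tilde{\m})$. Since $e \cdot (i-1) = p - 1 - e < p - 1$, \Cref{cor control u-torsion}(1) gives that $\tilde{\m}$ has no $u$-torsion; hence $\tilde{\m} \in \Mod_{\s, \tor}^{\varphi, i, \text{c}}$ is a classical torsion Kisin module of height $i$, and $\u\calM(\tilde{\m}) \in \Mod^{\varphi, i}_{S, \tor}$. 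Combining the sequence \ref{the sequence} with \Cref{residual filtration}, \Cref{Fil on tensor product}, \Cref{compute residual divided Frobenius}, \Cref{descend connection}, and \Cref{residual connection} yields an exact sequence
\[
0 \to \u\calM(\tilde{\m}) \to \calM_n^i \to \overline{M} \to 0
\]
whose every term carries a compatible Hodge filtration, divided Frobenius and connection. Applying the left-exact functor $T_S$ then produces
\[
0 \to T_S(\u\calM(\tilde{\m})) \to T_S(\calM_n^i) \to T_S(\overline{M}).
\]

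The key step is to verify the vanishing $T_S(\overline{M}) = 0$. By \Cref{compute residual divided Frobenius}, the residual divided Frobenius $\overline{\varphi_i}$ on $\overline{M}$ is identically zero whenever $e > 1$. Since $\Fil^i \overline{M} = \overline{M}$ by \Cref{residual filtration}, the extension of $\varphi_i$ to $\Fil^i(\overline{M} \otimes_S A_{\cris}) = \overline{M} \otimes_S A_{\cris}$ is determined by $\overline{\varphi_i}$ on the Breuil factor together with the Frobenius on $A_{\cris}$, and therefore also vanishes. Consequently any element fixed by $\varphi_i$ must be zero, so $T_S(\overline{M}) = 0$ and $T_S(\u\calM(\tilde{\m})) \xrightarrow{\sim} T_S(\calM_n^i)$.

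Once this vanishing is in hand, the rest follows from classical Kisin/Breuil-module theory in the range $i \leq p - 2$. Since $i \leq p - 2$, the natural map $\iota \colon T^i_\s(\tilde{\m}) \to T_S(\u\calM(\tilde{\m}))$ is an isomorphism by \cite[Prop.~6.12]{LL20}. Combining this with $T^i_\s(\tilde{\m}) \cong T_\s(\tilde{\m})(i)$ from \cite[Lem.~6.11(1)]{LL20} and $T_\s(\tilde{\m}) \cong \rH^i_{\et}(\cX_\C, \Z/p^n\Z)$ from \cite[Cor.~7.4 and Rem.~7.5]{LL20} will produce the desired natural isomorphism $\rH^i_{\et}(\cX_\C, \Z/p^n\Z(i)) \xrightarrow{\sim} T_S(\calM_n^i)$. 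For the $G_K$-equivariance, I would follow the argument used in the proof of \Cref{Thm-FM-p-1}: the identification $\tilde{\m} \otimes_\s A_{\inf} \simeq \rH^i_{\Prism}(\cX_{\O_\C}/A_{\inf})$ is $G_K$-equivariant, and by \Cref{descend connection} the embedding $\u\calM(\tilde{\m}) \subset \calM_n^i$ is stable under the connection, hence compatible with the $G_K$-actions defined via \eqref{eqn-G-action-by-nabla}.

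The hard part will be to carefully unwind the formula for how $\varphi_i$ extends to $A_{\cris}$-coefficients on a Breuil-type module with $\Fil^i M = M$, so as to rigorously derive $T_S(\overline{M}) = 0$ from \Cref{compute residual divided Frobenius}. Once that is done, the remainder is a streamlined version of the proof of \Cref{Thm-FM-p-1}, with both error terms (the kernel/cokernel of $\iota$ and the residual $T_S(\overline M)$) disappearing because we are strictly below the Fontaine--Laffaille boundary in cohomological weight.
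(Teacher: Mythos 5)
Your argument is correct but takes a genuinely different route from the paper's own proof. The paper proves this directly from the Fontaine--Messing-type long exact sequence \eqref{eqn-etale-Fil-2} coming from \cite[Theorem F]{AMMN21}: by definition $T_S(\calM_n^i)$ is the kernel of $\varphi_i-1$ appearing there, and the whole content is to show that the connecting map from $\rH^{i-1}_{\cris}(\cX_n/A_{\cris,n})$ into $\rH^i_{\et}$ vanishes, which reduces to surjectivity of $\varphi_i-1$ one degree lower; this surjectivity is in turn proved by a finiteness argument over $W_n(\O_C^\flat)$ plus \Cref{cokernel of F-G}. You instead mimic the proof of \Cref{Thm-FM-p-1}: decompose $\calM_n^i$ along the sequence~\ref{the sequence}, apply the left-exact $T_S$, and observe that both error terms disappear once $e>1$. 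Your key vanishing $T_S(\overline M)=0$ is correct: by \Cref{compute residual divided Frobenius} the residual $\overline{\varphi_i}$ is identically zero for $e>1$, so $\varphi_i$ extends to zero on $\Fil^i\overline M\otimes_S A_{\cris}=\overline M\otimes_S A_{\cris}$ and the $\varphi_i$-invariants are trivial; and the numerology $e>1$, $ei=p-1$ forces $i\le(p-1)/2\le p-2$, placing $\iota\colon T^i_\s(\tilde\m)\to T_S(\u\calM(\tilde\m))$ in the range where \cite[Prop.~6.12]{LL20} already gives an isomorphism. One point you should make explicit (as the paper itself does tacitly in \Cref{Thm-FM-p-1}) is that $\mathfrak M\otimes_{\s_n}S_n$, equipped with the filtration $\Fil^i\calM_n^i\cap(\mathfrak M\otimes_{\s_n}S_n)$ and the restricted $\varphi_i$, really is $\u\calM(\tilde\m)$ as a Breuil module; this is what lets you invoke the LL20 comparison. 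The upshot of the comparison: the paper's proof is self-contained and bypasses the module decomposition, at the cost of the nontrivial finiteness input; yours avoids the finiteness lemma entirely and makes visible which graded piece of $\calM_n^i$ carries the étale cohomology, at the cost of tracking the filtered Kisin--Breuil dictionary.
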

\begin{proof} By \eqref{eqn-etale-Fil-2}, it suffices to show that $\varphi_i-1:  \rH ^ i _{\cris} (\mathcal{X}_n/ A_{\cris, n}, \cI_{\cris} ^{[i]}) {\longrightarrow} \rH ^i _{\cris} (\mathcal{X}_n / A_{\cris, n })$ is surjective for $i < p-2$. Choose an $m$ large enough so that $\varphi _i (\Fil^m S_n)= 0 $. 
So clearly $\varphi_i -1 $ restricted to the image of $\Fil^m S \otimes \rH^i_{\cris} (\cX_n / A_{\cris , n})$ is bijective.
\footnote{From now on, we abusively denote this image by $\Fil^m S\cdot \rH^i_{\cris} (\cX_n / A_{\cris , n})$.}
Hence it suffices to show that $$\varphi_{i}-1 : \rH ^ i _{\cris} (\mathcal{X}_n/ A_{\cris, n}, \cI_{\cris} ^{[i]})/ \Fil ^m  S\cdot \rH^i_{\cris} (\cX_n / A_{\cris , n}) \longrightarrow \rH^i_{\cris} (\cX_n / A_{\cris , n})/ \Fil^m S\cdot \rH^i_{\cris} (\cX_n / A_{\cris , n})$$ is surjective. Now we claim that both sides are finite generated $W_n(\O_{\C}^\flat)$-modules. Then the surjectivity of $\varphi_i-1$ follows Lemma \ref{cokernel of F-G} below. 

To check that both $\rH ^ i _{\cris} (\mathcal{X}_n/ A_{\cris, n}, \cI_{\cris} ^{[i]})/ \Fil ^m S\cdot \rH^i_{\cris} (\cX_n / A_{\cris , n})  $  and  $ \rH^i_{\cris} (\cX_n / A_{\cris , n})/ \Fil^m S\cdot \rH^i_{\cris} (\cX_n / A_{\cris , n})$ are finitely generated over $W_n (\O_{\C}^\flat)$, 
it suffices to check that $\rH ^ i _{\cris} (\mathcal{X}_n/ S_n, \cI_{\cris} ^{[i]})/ \Fil ^m  S\cdot \rH^i_{\cris} (\cX_n / S_n) $ and $\rH^i_{\cris} (\cX_n / S_{ n})/ \Fil^m S\cdot \rH^i_{\cris} (\cX_n / S_{ n})$ are finite generated $\s_n$-modules. 
This is clear for  $\rH^i_{\cris} (\cX_n / S_{ n})/ \Fil^m S\cdot \rH^i_{\cris} (\cX_n / S_{ n})$: 
it is known that  $\rH^i_{\cris} (\cX_n / S_{ n})$ is a finite generated $S_n$-module, see \cite[Proposition 7.19]{LL20}. 
For $\rH ^ i _{\cris} (\mathcal{X}_n/ S_n, \cI_{\cris} ^{[i]})/ \Fil ^m S\cdot \rH^i_{\cris} (\cX_n / S_n) $, consider the following diagram 
\[  \xymatrix{ \rH^{i-1}_{\qsyn} (\cX_n, \Prism^{(1)}/ \Fil^i_{\rm N} \Prism^{(1)} )\ar[r]^-{\alpha}\ar[d]^\wr & \rH ^i_{\qsyn} (\cX_n, \Fil^i_{\rm N} \Prism^{(1)}) \ar[r]^-{\beta}\ar[d]& \rH ^i _{\qsyn}(\cX_n, \Prism ^{(1)}_{-/\s} )\ar[d] ^\iota \ar[r]& \cdots \\  \rH^{i-1}_{\qsyn} (\cX_n , \dR^\wedge_{-/\s}/ \Fil^i_{\rm H} \dR ^\wedge _{-/ \s} )\ar[r]^-{\alpha'} & \rH ^i_{\qsyn} (\cX_n, \Fil^i_{\rm H} \dR^\wedge _{-/ \s})\ar[r] ^-{\beta'} & \rH ^i_{\qsyn} (\cX_n, \dR^\wedge_{-/\s} )\ar[r] &\cdots }\]
Since $\rH ^i _ \qsyn (\cX_n , \dR^\wedge_{\cX_n/\s_n} )$ is finitely generated over $S_n$, the image of $\rH ^ i _{\cris} (\mathcal{X}_n/ S_n, \cI_{\cris} ^{[i]})/ \Fil ^m S\cdot \rH^i_{\cris} (\cX_n / S_n) $ inside  $\rH^i_{\cris} (\cX_n / S_{ n})/ \Fil^m S\cdot \rH^i_{\cris} (\cX_n / S_{ n})$ is also finite $\s_n$-generated.
Here we have used the fact that $S_n/\Fil^m S_n$ is finitely generated over $\s_n$.
Note  that $\ker (\beta')= {\rm Im} (\alpha ')$ is also finitely generated over $\s_n$. 
So  $\rH^i_{\cris} (\cX_n / S_{ n}, \cI_{\cris}^{[i]})/ \Fil^m S\cdot \rH^i_{\cris} (\cX_n / S_{ n})$ is finite $\s_n$-generated. 
\end{proof}

\begin{lemma}
\label{cokernel of F-G in char p}
Let $C^\flat$ be a characteristic $p$ algebraically closed complete non-Archimedean field, denote its ring of integers by $\mathcal{O}_C^\flat$ with maximal ideal $m^\flat$ and residue field $k^\flat$.
Let $M$ and $N$ be two finitely generated $\mathcal{O}_C^\flat$ modules, let $F \colon M \to N$ be a Frobenius semilinear map,
and let $G \colon M \to N$ be a linear map.
The following are equivalent:
\begin{enumerate}
    \item The map $F-G \colon M \to N$ is surjective;
    \item The cokernel of $F-G \colon M \to N$ is finite;
    \item The cokernel of $\overline{F-G} \colon M/m^\flat M \to N/m^\flat N$ is surjective;    
    \item The induced map $\overline{F-G} \colon M/m^\flat M \to N/m^\flat N$ is finite.
\end{enumerate}
\end{lemma}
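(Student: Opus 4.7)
The plan is to prove the cycle $(1) \Rightarrow (2) \Rightarrow (4) \Rightarrow (3) \Rightarrow (1)$. Here $(1) \Rightarrow (2)$ is tautological, and $(2) \Rightarrow (4)$ is the natural surjection of (abelian group) cokernels $\coker(F-G) \twoheadrightarrow \coker(\overline{F-G})$ coming from reduction modulo $m^\flat$.

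For $(4) \Rightarrow (3)$: the image $H$ of $\overline{F-G} \colon \bar M \to \bar N$ is an additive subgroup of the finite-dimensional $k^\flat$-vector space $\bar N$. Although $H$ need not be a $k^\flat$-subspace (the Frobenius semi-linearity of $\bar F$ prevents this), it is constructible by Chevalley's theorem, being the image of a polynomial map of affine $k^\flat$-varieties. If $\bar N / H$ were finite and nonzero, then $H$ would have to contain a dense Zariski-open $U \subseteq \bar N \cong \mathbb{A}^n_{k^\flat}$; but then for any $v \in \bar N$ the dense opens $U$ and $v - U$ must intersect in the irreducible $\bar N$, giving $v \in U + U \subseteq H + H = H$, whence $H = \bar N$.

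The substantive step is $(3) \Rightarrow (1)$. The key observation is that $\mathcal{O}_C^\flat$ is perfect, so both $F(M)$ and $G(M)$ are honest $\mathcal{O}_C^\flat$-submodules of $N$ (from $a \cdot F(m) = F(a^{1/p} m) \in F(M)$). Applying Nakayama's lemma to the submodule $F(M) + G(M) \subseteq N$, which by $(3)$ satisfies $F(M) + G(M) + m^\flat N = N$, yields $F(M) + G(M) = N$. Thus for each $y \in N$ we may decompose $y = F(a) - G(b)$ with $a, b \in M$, and via the substitution $z = a - x$ the equation $(F-G)(x) = y$ reduces to solving $(F-G)(z) = G(b-a) \in G(M)$.

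The main obstacle is to iterate this reduction to convergence: the Frobenius semi-linearity of $F$ and the linearity of $G$ scale asymmetrically under $\pi$-adic rescaling for a pseudo-uniformizer $\pi \in m^\flat$, so a naive Newton-type iteration does not improve errors by a factor of $\pi^p$ as it would in a classical henselian setting (indeed, for $u \in \pi^\beta M$ one has $(F-G)(u) = \pi^{\beta p} F(u') - \pi^\beta G(u')$, whose dominant term has valuation only $\pi^\beta$). The resolution exploits the freedom in the decomposition $y = F(a) - G(b)$: by modifying $(a, b) \mapsto (a + d, b + e)$ along the kernel of the linear map $(d, e) \mapsto F(d) - G(e)$ and choosing the representative of minimal $\pi$-adic valuation---made possible by completeness of $\mathcal{O}_C^\flat$ and algebraic closedness of $k^\flat$---one arranges that the residual $G(b - a)$ has strictly larger $\pi$-adic valuation than $y$. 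The resulting sequence of corrections converges $\pi$-adically to the desired $x \in M$.
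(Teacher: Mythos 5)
Your cycle $(1)\Rightarrow(2)\Rightarrow(4)\Rightarrow(3)\Rightarrow(1)$ is structurally different from the paper, which proves $(4)\Rightarrow(1)$ directly. Your $(4)\Rightarrow(3)$ argument is sound and pleasant: the image $H$ of $\overline{F-G}$ is constructible by Chevalley, a finite-index constructible subgroup of $\mathbb{A}^n(k^\flat)$ with $k^\flat$ infinite must contain a dense open $U$, and $U\cap(v-U)\neq\emptyset$ then forces $H=\bar N$. Your Nakayama step is also correct: perfection of $\mathcal{O}_C^\flat$ makes $F(M)$ a finitely generated submodule, and $(3)$ plus Nakayama gives $F(M)+G(M)=N$.

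The gap is in $(3)\Rightarrow(1)$. The reduction from $(F-G)(x)=y$ to $(F-G)(z)=G(b-a)$, with $y=F(a)-G(b)$, is essentially circular: making $v(b-a)$ large amounts to finding $a$ with $(F-G)(a)\approx y$, i.e.\ to having an approximate solution of the original equation, which is exactly what you are trying to produce. In the model example $M=N=\mathcal{O}_C^\flat$, $F(x)=x^p$, $G(x)=x$, modifying along $\ker\big((d,e)\mapsto F(d)-G(e)\big)=\{(d,d^p)\}$ replaces the residual $-y$ by $-y+(d^p-d)$; making this small is again the original Artin--Schreier equation. Beyond this circularity, the convergence claim is flawed on its own terms: $v(C^\flat)$ is dense in $\mathbb{R}$, so ``strictly larger $\pi$-adic valuation at each step'' does not give $v(\text{residual}_k)\to\infty$, and the partial sums need not converge. (Also note $(d,e)\mapsto F(d)-G(e)$ is not $\mathcal{O}_C^\flat$-linear, so ``the kernel'' has less structure than the phrasing suggests, and ``representative of minimal $\pi$-adic valuation'' is not a well-posed selection.) The paper sidesteps all of this: after reducing to free modules and normalizing coordinates via a filtration so that the reduced map looks like $x_i^p+\sum_j a_{ij}x_j$, it invokes finiteness of such endomorphisms of $\mathbb{P}^n$, passes to the rigid generic fiber (finite by BGR, flat by miracle flatness), and reads off surjectivity on $C^\flat$-points. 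Some such global input — finiteness/properness rather than an iterative Hensel scheme — appears to be genuinely necessary here; a local Newton iteration does not see why the map should be onto.
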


\begin{proof}
It is clear that $(1) \implies (2) , (3) \implies (4)$. Below we shall show $(4) \implies (1)$.

Without loss of generality we may assume that both of $M$ and $N$ are finite free over $\mathcal{O}_C^\flat$.
Indeed, let us choose maps from finite free modules, say $P$ and $Q$, to $M$ and $N$ such that 
it is an isomorphism after modulo $m^\flat$.
By Nakayama's lemma we see that these maps are surjective.
Lift the two maps $F$ and $G$, to get the following diagram
\[
\xymatrix{
P \ar[r]^-{\widetilde{F}-\widetilde{G}} \ar[d] & Q \ar[d] \\
M \ar[r]^-{F-G} & N.
}
\]
By our choice of $P$ and $Q$, condition (4) still holds for the top arrow.
Since vertical arrows are surjective, it suffices to show that the top arrow is surjective.
Therefore we may and do assume $M$ and $N$ are finite free.

Let us name the reduction of $M$ and $N$ by $V$ and $W$ which are finite dimensional $k^\flat$-vector spaces,
and denote the reduction of $F$ and $G$ by $f$ and $g$.
We claim there are exhaustive increasing filtrations $\Fil_i$ with $0 \leq i \leq \ell$ on $V$ and $W$ respectively such that
\begin{itemize}
    \item The maps $f$ and $g$ respect these two filtrations;
    \item The induced $f \colon \Fil_0 V \to \Fil_0 W$ is surjective;
    \item The induced $f \colon \gr_i V \to \gr_i W$ is $0$ for all $1 \leq i \leq \ell$; and
    \item The induced $g \colon \gr_i V \to \gr_i W$ is an isomorphism for all $1 \leq i \leq \ell$.
\end{itemize}
To see the existence of such filtrations, we consider the following process:
notice the image of $f \colon V \to W$ is a $k^\flat$ subspace, now look at the map $g \colon V \to W/\mathrm{Im}(f)$.
By assumption of $\mathrm{Coker}(f-g)$ being finite, this map must be surjective, lastly we let 
\[
\Fil^0 V = \mathrm{Ker}\left(g \colon V \to W/\mathrm{Im}(f)\right),~\Fil^0 W = \mathrm{Im}(f).
\]
Replace $V$ and $W$ with $\Fil^0 V$ and $\Fil^0 W$ and repeat the above steps. 
This process terminates when we arrive at $\mathrm{Im}(f) = W$, and
it will terminate as each time the dimension of $W$ will drop.
This way we get a decreasing filtration, after reversing indexing order we arrive at the desired increasing filtration.

Choose a sub-vector space $V_0 \subset \Fil_0(M/m^\flat M)$ on which $f$ is an isomorphism, now
lift the basis of $\gr_i (M/m^\flat M)$ for $1 \leq i \leq \ell$ and the basis of $V_0$ all the way to elements in $M$,
we generate a finite free submodule $\widetilde{M}$.
Now we contemplate the map $\widetilde{M} \to N$.

After choosing basis, we may regard both sides as $\mathcal{O}_C^{\flat}$ points of formal affine space over $\mathcal{O}_C^{\flat}$,
and the map $F-G$ can be promoted to an algebraic map 
$h \colon \Spf(\mathcal{O}_C^{\flat}\langle \underline{X} \rangle) \to \Spf(\mathcal{O}_C^{\flat}\langle \underline{Y} \rangle)$.
Note that by our choice of $\widetilde{M}$, these two formal affine spaces have the same dimension.
Our choice of $\widetilde{M}$ guarantees that the reduction of $h$ is finite,
due to next lemma.
Therefore the rigid generic fiber map $h^{\mathrm{rig}}$ is also finite by \cite[6.3.5 Theorem 1]{BGR},
which implies it is flat by miracle flatness \cite[\href{https://stacks.math.columbia.edu/tag/00R4}{Tag 00R4}]{stacks-project},
hence inducing a surjective map
at the level of $C^\flat$-points.\footnote{Note that $C^\flat$-points of rigid generic fibre of an admissible
formal scheme over $\mathcal{O}_C^{\flat}$ is the same as just $\mathcal{O}_C^{\flat}$-points of the 
formal scheme, see \cite[\S 8.3]{Bosch}.}
%
%
%
%
%
%
%
%
%
\end{proof}

The following lemma was used in the proof above, we thank Johan de Jong
for providing an elegant proof.

\begin{lemma}
Let $k$ be a field, let $m>1$ be an integer, and let $\big(a_{ij}\big)$ be an $n \times n$ matrix
with entries in $k$.
Let $\bar{h} \colon \mathbb{A}^n_k \to \mathbb{A}^n_k$ be the morphism given by
$\bar{h}^{\sharp}(y_i) = x_i^m + \sum_j a_{i,j} x_j$,
then $\bar{h}$ is a finite morphism.
\end{lemma}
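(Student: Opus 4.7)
The plan is to show that $\bar{h}$ is both affine and proper, as a morphism that is proper and affine is automatically finite. Since the source and target are both $\mathbb{A}^n_k$, affineness is immediate, so the task reduces to establishing properness of $\bar{h}$.

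To prove properness, I would extend $\bar{h}$ to a morphism of projective spaces $\tilde{h}\colon \mathbb{P}^n_k \to \mathbb{P}^n_k$ via homogenization. Concretely, taking $[x_0 : x_1 : \cdots : x_n]$ as homogeneous coordinates on the source, I define
\[
\tilde{h}^{\sharp}(Y_0) = x_0^m, \qquad \tilde{h}^{\sharp}(Y_i) = x_i^m + \sum_{j} a_{ij}\, x_j\, x_0^{m-1} \quad (1 \leq i \leq n).
\]
These polynomials are all homogeneous of degree $m$, so they define a rational map $\tilde{h}\colon \mathbb{P}^n_k \dashrightarrow \mathbb{P}^n_k$, and on the open chart $\{x_0 \neq 0\}$ this rational map recovers $\bar{h}$ after dehomogenizing (using $m - 1 \geq 1$, which is where the hypothesis $m > 1$ is essential).

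Next I would check that $\tilde{h}$ has empty base locus, so it is in fact a morphism on all of $\mathbb{P}^n_k$. On the locus $\{x_0 = 0\}$ the defining polynomials reduce to $x_1^m, \ldots, x_n^m$, whose common vanishing forces $x_1 = \cdots = x_n = 0$; together with $x_0 = 0$ this is not a point of $\mathbb{P}^n$, so $\tilde{h}$ is everywhere defined. In addition, $\tilde{h}^{-1}(\{Y_0 = 0\}) = \{x_0^m = 0\} = \{x_0 = 0\}$, so $\tilde{h}$ sends the affine chart $\mathbb{A}^n_k \subset \mathbb{P}^n_k$ exactly to the affine chart on the target, and its restriction is $\bar{h}$.

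Finally, since $\tilde{h}$ is a morphism of projective $k$-schemes it is proper; properness is stable under base change, so restricting along the open immersion $\mathbb{A}^n_k \hookrightarrow \mathbb{P}^n_k$ on the target yields a proper morphism, which is exactly $\bar{h}$. Being proper and affine, $\bar{h}$ is finite. I do not foresee a real obstacle here; the only subtle point is ensuring $m > 1$ is used correctly when checking that the base locus is empty (equivalently, that the linear terms are absorbed into higher-degree terms after homogenization), which the argument handles automatically.
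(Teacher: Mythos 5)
Your proof is correct, and it uses the same compactification as the paper but finishes differently. Both you and the paper homogenize $\bar h$ to a morphism $\tilde h\colon\mathbb{P}^n_k\to\mathbb{P}^n_k$ and must observe the base locus is empty (the place where $m>1$ is used; if $m=1$ the homogenized map would be linear and could degenerate). The paper's finishing move is to note that $\tilde h$ restricted to the hyperplane at infinity is $[x_1:\cdots:x_n]\mapsto[x_1^m:\cdots:x_n^m]$, hence non-constant, and then invoke the dichotomy that any endomorphism of $\mathbb{P}^n_k$ is either constant or finite; this gives finiteness of $\tilde h$, and therefore of $\bar h$ since $\tilde h^{-1}(\mathbb{A}^n)=\mathbb{A}^n$. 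You instead use the identity $\tilde h^{-1}(\mathbb{A}^n)=\mathbb{A}^n$ to realize $\bar h$ as the base change of the proper morphism $\tilde h$ along $\mathbb{A}^n\hookrightarrow\mathbb{P}^n$, then conclude from the standard criterion that a morphism which is both proper and affine is finite. Your route is somewhat more self-contained: ``proper and affine implies finite'' is an elementary general fact, whereas the constant-or-finite dichotomy for endomorphisms of $\mathbb{P}^n$, though well known, requires a short side argument (e.g.\ via the degree of the pullback of $\mathcal{O}(1)$ and quasi-finiteness). The paper's argument in exchange yields, en passant, that $\tilde h$ itself is finite, not merely its restriction $\bar h$.
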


\begin{proof}
This map can be compatified to a morphism between $\mathbb{P}^n_k$ preserving the
infinity hyperplane.
When restricted to the infinity hyperplane, the map becomes
$[x_1 \colon \ldots \colon x_n] \mapsto [x_1^m \colon \ldots \colon x_n^m]$,
which is non-constant.
Lastly just observe that any endomorphism of $\mathbb{P}^n_k$ is either finite or constant.
\end{proof}

Here we have crucially used algebraically closedness of $\mathcal{O}_C^\flat$.
Below is an example suggested to us by Johan de Jong illustrating the failure of $(3) \implies (4)$ when one drops the
algebraically closed assumption.
Start with the field $L_0 = \mathbb{F}_p(t^{1/p^\infty})$, pick a basis of $\rH^1_{\et}(L_0, \mathbb{F}_p)$
we may find a (ginormous!) Galois pro-$p$ infinite field extension $L_1$ such that the induced
map on $\rH^1_{\et}(-, \mathbb{F}_p)$ kills every basis vector except the first one.
Repeat this process we arrive at a perfect field $L$ such that $\rH^1_{\et}(L, \mathbb{F}_p)$ is $1$-dimensional
over $\mathbb{F}_p$.

From the above we immediately conclude the following.

\begin{lemma}
\label{cokernel of F-G}
Let $M$ and $N$ be two finitely generated $A_{\inf}$ modules, let $F \colon M \to N$ be a Frobenius linear map
and $G \colon M \to N$ be a linear map.
Then the cokernel of $F-G$ (which is a $\mathbb{Z}_p$-linear map) is finitely generated over $\mathbb{Z}_p$ if and only
if it is $0$.
\end{lemma}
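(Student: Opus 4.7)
The ``only if'' direction is trivial, so the task is to prove: if the cokernel $C \coloneqq \mathrm{Coker}(F-G)$ is finitely generated over $\mathbb{Z}_p$, then $C = 0$. My plan is to reduce everything modulo $p$ and then invoke \Cref{cokernel of F-G in char p} together with Nakayama's lemma for finitely generated $\mathbb{Z}_p$-modules.

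First I would identify $C/pC$ with the cokernel of the mod $p$ reduction. Since tensoring with $\mathbb{F}_p$ is right exact, the sequence $M \xrightarrow{F-G} N \to C \to 0$ yields $M/pM \xrightarrow{\overline{F-G}} N/pN \to C/pC \to 0$. Because $A_{\inf}/p = \mathcal{O}_C^\flat$, the modules $\overline{M} \coloneqq M/pM$ and $\overline{N} \coloneqq N/pN$ are finitely generated over $\mathcal{O}_C^\flat$; moreover $\overline{F}$ is Frobenius semi-linear while $\overline{G}$ is $\mathcal{O}_C^\flat$-linear, so the hypotheses of the previous lemma are satisfied. By assumption $C$ is a finitely generated $\mathbb{Z}_p$-module, hence $C/pC$ is a finite $\mathbb{F}_p$-vector space.

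Next I would apply \Cref{cokernel of F-G in char p} directly: the cokernel of $\overline{F-G}$ equals $C/pC$, which is finite, so the equivalence in that lemma (condition (2) implies condition (1)) forces $\overline{F-G}$ to be surjective, i.e.\ $C/pC = 0$, and in particular $C = pC$.

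Finally, $C$ is a finitely generated $\mathbb{Z}_p$-module with $C = pC$, so Nakayama's lemma (applied to the local ring $\mathbb{Z}_p$ with maximal ideal $(p)$) gives $C = 0$. I do not foresee any genuine obstacle here: the real work was already done in \Cref{cokernel of F-G in char p}, and the current lemma is essentially a formal $p$-adic devissage built on top of it. The only thing one must be mildly careful about is confirming that the Frobenius semi-linearity of $F$ survives the mod $p$ reduction (which it obviously does, since $A_{\inf}$ is $\delta$-compatibly $p$-completed and $\varphi$ on $A_{\inf}$ reduces to the Frobenius on $\mathcal{O}_C^\flat$).
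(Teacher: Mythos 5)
Your proof is correct and follows the same route as the paper: reduce the cokernel mod $p$ via right-exactness of $-\otimes_{\mathbb{Z}_p}\mathbb{F}_p$, note that $C/pC$ is finite because $C$ is finitely generated over $\mathbb{Z}_p$, invoke the characteristic-$p$ lemma (condition (2) $\Rightarrow$ (1)) to get $C/pC=0$, and finish with Nakayama for finitely generated $\mathbb{Z}_p$-modules. The paper compresses this into two sentences, but the argument is identical.
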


\begin{proof}
The ``if'' part is trivial. For the ``only if part'': use right exactness of tensor and \Cref{cokernel of F-G in char p} we conclude that the 
cokernel is zero after modulo $p$. 
Now since finitely generated $\mathbb{Z}_p$ module is $0$
if and only if its reduction modulo $p$ is so, we get that the cokernel is zero.
\end{proof}

\section{An example}
\label{subsection example}

Inspired by the example in \cite[Subsection 2.1]{BMS1}, let us work out a direct generalization of their example
(as suggested in \cite[Remark 1.3]{BMS1})
in this subsection.
This example answers a question of Breuil \cite[Question 4.1]{BreuilIntegral} negatively.

Fix a positive integer $n$.\footnote{We suggest first-time readers to simply take $n=1$ which already has
the ``meat'' and the notations and formulas become much simpler.}
Let $\mathcal{E}_0$ be an ordinary elliptic curve over an algebraically closed field $k$
of characteristic $p > 0$.
Let $\mathcal{E}$ over $\Spec(W(k))$ be its canonical lift, in particular we have a closed immersion
$\mu_{p^n} \subset \mathcal{E}[p^n]$ of finite flat group schemes over $\Spec(W(k))$.
Let $\mathcal{O}_K \coloneqq W(k)[\zeta_{p^n}]$, choose $\zeta_{p^n} - 1$ to be the uniformizer
in order to get $\s \twoheadrightarrow \mathcal{O}_K$. To avoid confusion let us denote its Eisenstein
polynomial as
\[
E= d = \frac{(u+1)^{p^n} - 1}{(u+1)^{p^{n-1}} - 1} \in \s.
\]
On $\Spec(\mathcal{O}_K)$
we have the canonical group scheme homomorphism $\mathbb{Z}/p^n \to \mu_{p^n}$.

\begin{construction}
Let $\mathcal{X} \coloneqq [\mathcal{E}/(\mathbb{Z}/p^n)]$, a Deligne--Mumford stack
which is smooth proper over $\Spec(\mathcal{O}_K)$.
Here the action of $\mathbb{Z}/p^n$ on $\mathcal{E}$ is via $\mu_{p^n}$.
The generic fibre of $\mathcal{X}$ is the elliptic curve $\mathcal{E}_K/\mu_{p^n}$ (which in fact is isomorphic to $\mathcal{E}_K$ itself)
and the special fibre is $\mathcal{E}_0 \times B(\mathbb{Z}/p^n)$.
We have a factorization
\[
\mathcal{E} \to \mathcal{X} \to \mathcal{E}/\mu_{p^n} \cong \mathcal{E}
\]
of the lift of $n$-th Frobenius on $\mathcal{E}$ (exists because it is the canonical lift).
\end{construction}

We want to understand the various cohomology theories of $\mathcal{X}$. 
Since all cohomology theories that we will encounter are \'{e}tale sheaves and the quotient map 
$\mathcal{E} \to \mathcal{X}$, being a $\mathbb{Z}/p^n$-torsor,
is a finite \'{e}tale cover, we shall apply the Leray spectral sequence to this cover.
Let us first record the structure of the prismatic cohomology of $\mathcal{E}_{\mathcal{O}_K}$ relative to $\s$.
We need the following lemma explicating the Frobenius operator on the (-1) Breuil--Kisin
twist  $\s\{-1\}$, see \cite[Section 2.2]{APC}.
\begin{lemma}
The Frobenius module $\s\{-1\}$ has a generator $x$ such that $\varphi(x) = E(u) \cdot \frac{p}{E(0)} \cdot x$.
\end{lemma}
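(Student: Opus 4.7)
The plan is to unpack the definition of the Breuil--Kisin twist $\s\{-1\}$ from \cite{APC} (Section 2.2) and exploit orientability of the prism $(\s,(E))$ to produce an explicit generator, then compute the Frobenius on it. Let me describe how I would proceed.

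First, I would recall the general formalism: for any bounded prism $(A,I)$, the Breuil--Kisin twist $A\{1\}$ is an invertible $A$-module with a semilinear Frobenius, and its defining property is that the linearized Frobenius induces an isomorphism $\varphi^*(A\{1\}) \cong I^{-1}\otimes_A A\{1\}$. Dualizing, $A\{-1\}$ is invertible with linearized Frobenius factoring as an isomorphism $\varphi^*(A\{-1\}) \cong I \otimes_A A\{-1\}$. In particular, the semilinear Frobenius on $A\{-1\}$ lands in $I\cdot A\{-1\}$ after identification via any orientation.

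Next I would use orientability of $(\s, (E))$: the choice of Eisenstein generator $E$ yields a trivialization $\s\{-1\} \cong \s$, and under this trivialization any generator $x$ of $\s\{-1\}$ satisfies $\varphi(x)=c\cdot E(u)\cdot x$ for some $c\in\s^\times$ — with $E(u)$ forced by the relation $\varphi^*(\s\{-1\}) \cong E\cdot\s\{-1\}$ recorded above. The content of the lemma is to identify the unit $c$ as $p/E(0)=1/a_0\in W(k)^\times$, so the work reduces to pinning down this normalization.

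To compute $c$, I would appeal to the canonical comparison in \cite{APC} between the Breuil--Kisin twist for different prisms: namely the $\varphi$-equivariant isomorphism $\s\{-1\} \otimes_\s A_{\inf} \cong A_{\inf}\{-1\}$ coming from the map of prisms $(\s,(E))\to (A_{\inf},(\xi))$ along $u\mapsto [\underline{\pi}]$. On the $A_{\inf}$-side the twist admits a well-known presentation in terms of $\mu=[\varepsilon]-1$ (see \cite[Section 2.2]{APC}): a generator $y$ on which $\varphi(y)=\tilde\xi\cdot y$, where $\tilde\xi=\varphi(\xi)/p \cdot \xi \cdot$ (appropriate unit). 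Comparing images of the two trivializations under this base change, the discrepancy between $\tilde\xi$ and $E(u)$ modulo units is precisely $\varphi(E)/p$ evaluated at $u=0$, which equals $E(0)/p=a_0$. Thus the unit $c$ on the $\s$-side must be $1/a_0 = p/E(0)$, giving the desired formula $\varphi(x)=E(u)\cdot\tfrac{p}{E(0)}\cdot x$.

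The main obstacle I anticipate is the bookkeeping around the exact normalization convention for the twist: the shape $\varphi(x)=(\text{unit})\cdot E(u)\cdot x$ is immediate, but isolating the specific scalar $p/E(0)$ requires threading through the comparison with $A_{\inf}\{-1\}$ and verifying that the orientation-dependent identifications agree on the nose rather than only up to a unit. Once the diagram of oriented prisms is set up carefully and one tracks what happens to the chosen generators under the Frobenius, the computation collapses to the identity $\tilde\xi\equiv a_0\cdot E(u)$ modulo the relevant ambiguity, finishing the proof.
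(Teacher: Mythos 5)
Your proposal correctly identifies the shape $\varphi(x)=c\cdot E(u)\cdot x$ for some $c\in\s^\times$, and the target value $c=p/E(0)=1/a_0$, but the route through $A_{\inf}$ has a genuine gap and omits the decisive step of the argument.

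The issue is that the scalar $c$ is not canonical: replacing the generator $x$ by $vx$ for $v\in\s^\times$ replaces $c$ by $\bigl(\varphi(v)/v\bigr)\cdot c$. So the lemma is an assertion about the coset of $c$ in $\s^\times\big/\{\varphi(v)/v : v\in\s^\times\}$, and any proof must (i) pin down this coset, and (ii) show that $p/E(0)$ lies in it. Your comparison with $A_{\inf}\{-1\}$ does not address either point cleanly: a generator of $A_{\inf}\{-1\}$ (built from $\mu$) need not descend to a generator of $\s\{-1\}$, so comparing trivializations only controls $c$ up to $\varphi(w)/w$ with $w\in A_{\inf}^\times$ rather than $w\in\s^\times$ — which is too weak to conclude anything about the $\s$-side. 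Moreover, the operation "evaluate at $u=0$" you invoke really amounts to passing through $A_{\inf}\to W(\bar k)$, and that map factors through $\s\to\s/u=W$; so $A_{\inf}$ is a detour and the actual computation is the crystalline reduction.

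The paper's proof is both shorter and complete. Reducing mod $u$, the prism $(\s,(E))$ becomes the crystalline prism $(W,(p))$, whose $(-1)$-twist has a canonical generator $\bar x$ with $\varphi(\bar x)=p\bar x$; lifting $\bar x$ gives a generator $x'$ of $\s\{-1\}$ with $\varphi(x')=a\cdot x'$, $a\equiv p\bmod u$. Combined with $a=E(u)\cdot(\text{unit})$, one writes $a=E(u)\cdot\tfrac{p}{E(0)}\cdot v'$ with $v'\equiv 1\bmod u$. The key step — which your proposal never touches — is that any $v'\equiv 1\bmod u$ can be written as $\varphi(v)/v$ with $v\equiv 1\bmod u$ (take $v^{-1}=\prod_{n\geq 0}\varphi^n(v')$, which converges $u$-adically since $\varphi^n(v')\equiv 1\bmod u^{p^n}$). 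Then $x=x'/v$ is the desired generator. Without an argument of this type you only know $c$ modulo arbitrary units, which is not enough to produce the specific scalar $p/E(0)$.
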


\begin{proof}
We know modulo $u$ the Breuil--Kisin prism $\s$ reduces to crystalline prism, whose (-1)-twist
has a canonical generator $\bar{x}$ satisfying $\varphi(\bar{x}) = p \cdot \bar{x}$.
Lifting this generator, we see that there is a generator $x'$ of $\s\{-1\}$ such that
$\varphi(x') = a \cdot x'$ with $a \equiv p$ mod $u$.
On the other hand we know $a$ is necessarily $E(u) \cdot \text{unit}$, due to \cite[Construction 2.2.14]{APC}.
Therefore we see that $a = E(u) \cdot \frac{p}{E(0)} \cdot v'$ where $v' \in \s^{\times}$ and reduces to $1$ mod $u$.
It is a simple exercise that $v'$ is of the form $\varphi(v)/v$ for some unit $v \in \s^{\times}$ satisfying $v \equiv 1$ mod $u$ as well.
Finally $x = x'/v$ is our desired generator.
\end{proof}

In our concrete situation, the Eisenstein polynomial $d$ of $\zeta_{p^n} - 1$ has constant term $p$.
Therefore our $\s\{-1\}$ has a generator $x$ such that $\varphi(x) = d \cdot x$.

\begin{proposition}
\label{prop pris of ellcurve}
We have isomorphism of Frobenius modules over $\s$:
\begin{enumerate}
    \item $\rH^0_{\Prism}(\mathcal{E}_{\mathcal{O}_K}/\s) \cong \s$;
    \item $\rH^2_{\Prism}(\mathcal{E}_{\mathcal{O}_K}/\s) \cong \s\{-1\}$; and
    \item $\rH^1_{\Prism}(\mathcal{E}_{\mathcal{O}_K}/\s) \simeq \s \cdot \{e_1, e_2\}$
    with its Frobenius action given by $\varphi(e_1) = e_1, \text{ and } \varphi(e_2) = a \cdot e_1 + d \cdot e_2$
    for some $a \in \s$.
\end{enumerate}
\end{proposition}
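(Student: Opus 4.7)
My plan proceeds in three stages. Part (1) is immediate from geometric connectedness of $\mathcal{E}_{\mathcal{O}_K}$. For the freeness in (3), I would invoke \Cref{finite free follows from crystalline torsion free}: since $\rH^1_{\cris}(\mathcal{E}_0/W)$ is torsion-free of rank $2$, the module $\rH^1_{\Prism}(\mathcal{E}_{\mathcal{O}_K}/\s)$ is free of rank $2$ over $\s$. Part (2) will follow at the end, either from prismatic Poincar\'e duality (which gives $\rH^{2n}_{\Prism}(\mathcal{X}/\s) \cong \s\{-n\}$ for a smooth proper scheme of relative dimension $n$), or alternatively by computing the determinant of the Frobenius matrix from (3) and matching against the description of $\s\{-1\}$ in the lemma immediately preceding the proposition.

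The crux is producing the Frobenius-fixed generator $e_1$ required in (3). Here I would use the map of prisms $(\s, (E)) \to (W, (p))$ sending $u \mapsto 0$, well-defined because $d(0) = p$ for our choice of uniformizer $\zeta_{p^n}-1$. Prismatic base change yields
\[
\rH^1_{\Prism}(\mathcal{E}_{\mathcal{O}_K}/\s) \otimes_{\s} W \;\cong\; \rH^1_{\cris}(\mathcal{E}_0/W)
\]
compatibly with Frobenius. Ordinariness of $\mathcal{E}_0$ provides a Frobenius-fixed unit root vector $\bar{\alpha} \in \rH^1_{\cris}(\mathcal{E}_0/W)$, either by classical Dieudonn\'e--Serre--Tate theory or by applying Dieudonn\'e theory to the canonically lifting \'etale quotient $\underline{\mathbb{Q}_p/\mathbb{Z}_p}$ of $\mathcal{E}[p^\infty]$. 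Fix any $\s$-basis $(f_1, f_2)$ of $\rH^1_{\Prism}$, let $B \in M_2(\s)$ be the matrix of $\varphi$, and solve $\vec{x} = B \varphi(\vec{x})$ for a coordinate column vector $\vec{x} \in \s^2$ lifting $\bar{\alpha}$ by the successive approximation $\vec{x}^{(k+1)} := B \varphi(\vec{x}^{(k)})$: consecutive differences lie in $u^{p^k} \s^2$ because $\varphi(u) = u^p$, and the iteration converges since $W[\![u]\!]$ is $u$-adically complete. Letting $e_1$ denote the resulting element of $\rH^1_{\Prism}$, we have $\varphi(e_1) = e_1$ and $e_1 \bmod u = \bar{\alpha}$ is primitive, so Nakayama extends it to an $\s$-basis $(e_1, e_2)$.

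With $\varphi(e_1) = e_1$, the Frobenius matrix in basis $(e_1, e_2)$ is upper triangular $\left(\begin{smallmatrix} 1 & a \\ 0 & b \end{smallmatrix}\right)$ for some $a, b \in \s$. The cokernel of the linearized Frobenius equals $\omega_{\mathcal{E}/\mathcal{O}_K} \cong \s/(d)$, forcing $(b) = (d)$; replacing $e_2$ by $v \cdot e_2$ for a unit $v \in \s^\times$ rescales $b$ to $\varphi(v) b / v$, and using that $k$ is algebraically closed lets me solve $\varphi(v)/v = d/b$ in $\s^\times$ by Lang's theorem over $k$ followed by Hensel lifting, thus normalizing $b = d$. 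For (2), the cup product makes $\rH^2_{\Prism}(\mathcal{E}_{\mathcal{O}_K}/\s)$ free of rank $1$ with Frobenius acting as the determinant $\det\left(\begin{smallmatrix} 1 & a \\ 0 & d \end{smallmatrix}\right) = d$, which matches $\s\{-1\}$ by the preceding lemma. The main obstacle I anticipate is the approximation step producing $e_1$: one must verify that the base change map $\s \to W$ above does induce the claimed isomorphism (which requires the relevant base-change property in this slightly nonstandard setup) and that $\vec{x} \bmod u$ is nonzero (guaranteed by starting from a unit-root lift). The conceptual alternative via prismatic Dieudonn\'e theory applied to the connected-\'etale sequence of $\mathcal{E}[p^\infty]$ is cleaner but requires care with twist conventions.
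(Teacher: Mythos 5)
Your proposal is correct and follows the same skeleton as the paper's proof—freeness via \Cref{finite free follows from crystalline torsion free}, lifting a Frobenius-fixed generator $e_1$ from crystalline cohomology mod $u$, and pinning down the Frobenius matrix via a determinant—but several of the individual steps are executed by different means, and a few details need tightening. For part (2) the paper uses the prismatic Chern class of the origin section and checks it is an isomorphism mod $u$; your determinant route is fine, but be cautious invoking prismatic Poincar\'e duality as a citation. For the existence of $e_1$, the paper invokes in one line that $\varphi(u)=u^p$ makes $(\rH^1_\Prism)^{\varphi=1}\to(\rH^1_\Prism/u)^{\varphi=1}$ a bijection and then applies the crystalline comparison together with ordinariness; your successive approximation $\vec{x}^{(k+1)}=B\varphi(\vec{x}^{(k)})$ is a from-scratch proof of the surjectivity half of that bijection, so the two are the same idea. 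Two corrections: (i) the reduction is $\rH^1_\Prism(\mathcal{E}_{\mathcal{O}_K}/\s)/u\cong\rH^1_{\cris}(\mathcal{E}_0/W)^{(-1)}$, a Frobenius \emph{untwist} which you omit—harmless here because untwisting is base change along an automorphism of $W$, but technically the wrong comparison; and (ii) the cokernel of the linearized Frobenius on $\rH^1_\Prism$ is $\rH^1(\mathcal{E},\mathcal{O}_{\mathcal{E}})$ (the Lie algebra), not $\omega_{\mathcal{E}/\mathcal{O}_K}$—both are rank-one $\mathcal{O}_K$-modules so your deduction $(b)=(d)$ is unaffected, but the identification itself needs a justification (e.g.\ via the Nygaard filtration) that you did not supply. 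Finally, for normalizing $b=d$ the paper's cup-product isomorphism $\bigwedge^2\rH^1_\Prism\cong\s\{-1\}$ gives a much shorter route than your Lang-plus-Hensel argument: once $e_1$ exists, simply rescale $e_2$ by the unit $w^{-1}$ so that $e_1\wedge e_2$ equals the canonical generator $x$ with $\varphi(x)=d\cdot x$, and then $b=d$ drops out of comparing $\varphi(e_1\wedge e_2)=b\cdot(e_1\wedge e_2)$ with $\varphi(x)=dx$; there is no need to solve $\varphi(v)/v=d/b$ from scratch.
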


\begin{proof}
It is well-known that elliptic curve has torsion-free crystalline cohomology.
Therefore by \Cref{finite free follows from crystalline torsion free},
we know all these prismatic cohomology groups are finite free $\s$-modules.

The map $\mathcal{X} \to \Spf(\s/d)$ always induces an isomorphism on $\rH^0_{\Prism}$ by Hodge--Tate comparison,
this proves the first identification.

The second identification is well-known.
For instance, the relative prismatic Chern class \cite[Section 7.5]{APC} of 
(the line bundle associated with) the origin $0 \in \mathcal{E}_{\mathcal{O}_K}({\mathcal{O}_K})$
gives a map $c \colon \s\{-1\} \to \rH^2_{\Prism}(\mathcal{E}_{\mathcal{O}_K}/\s)$.
Reducing mod $u$ this reduces to the first Chern class map in crystalline cohomology which is well-known
to be an isomorphism. Since both source and target are finite free $\s$-module, the map $c$ is an isomorphism.

Cup product gives rise to a map of finite free Frobenius $\s$-modules: 
$\bigwedge^2_{\s} \rH^1_{\Prism}(\mathcal{E}_{\mathcal{O}_K}/\s) \to \rH^2_{\Prism}(\mathcal{E}_{\mathcal{O}_K}/\s)$.
Modulo $u$ this map reduces to the analogous map in crystalline cohomology
which is again well-known to be an isomorphism, hence it is an isomorphism before mod $u$.
Therefore it suffices to justify the existence of $e_1$. 
Since $\varphi(u) = u^p$, we see that 
\[
\left(\rH^1_{\Prism}(\mathcal{E}_{\mathcal{O}_K}/\s)\right)^{\varphi = 1} \cong 
\left(\rH^1_{\Prism}(\mathcal{E}_{\mathcal{O}_K}/\s)/u\right)^{\varphi = 1}.
\]
Now we may use the crystalline comparison $\rH^1_{\Prism}(\mathcal{E}_{\mathcal{O}_K}/\s)/u \cong \rH^1_{\cris}(\mathcal{E}_0/W)^{(-1)}$
and the fact that $\mathcal{E}_0$ is ordinary to conclude the existence of $e_1$. 
\end{proof}

Next let us compute the prismatic cohomology $\rH^{*}_{\Prism}(\mathcal{X}/\s)$.
We stare at the Leray spectral sequence
\[
E_2^{i,j} = \rH^i(\mathbb{Z}/p^n, \rH^j_{\Prism}(\mathcal{E}_{\mathcal{O}_K}/\s)) \Longrightarrow \rH^{i+j}_{\Prism}(\mathcal{X}/\s)
\]
which is compatible with Frobenius actions. 
In order to understand $E_2$ terms, we need the following:
\begin{lemma}
The action of $\mathbb{Z}/p^n$ on $\rH^j_{\Prism}(\mathcal{E}_{\mathcal{O}_K}/\s)$ is trivial.
\end{lemma}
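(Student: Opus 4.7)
The plan is to reduce the statement to the well-known principle that translations by rational points act trivially on cohomology of an abelian scheme, then verify this principle in the prismatic setting using the coalgebra structure coming from the group law.

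First I would observe that the $\mathbb{Z}/p^n$-action on $\mathcal{E}_{\mathcal{O}_K}$ factors as
\[
\mathbb{Z}/p^n \longrightarrow \mu_{p^n}(\mathcal{O}_K) \subset \mathcal{E}(\mathcal{O}_K),
\]
and acts by translation. Hence it suffices to prove that for any $\mathcal{O}_K$-point $a \in \mathcal{E}(\mathcal{O}_K)$, the induced translation $t_a \colon \mathcal{E}_{\mathcal{O}_K} \to \mathcal{E}_{\mathcal{O}_K}$ acts as the identity on $\rH^j_{\Prism}(\mathcal{E}_{\mathcal{O}_K}/\s)$ for every $j$.

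Next I would factor $t_a$ as the composition $\mathcal{E}_{\mathcal{O}_K} \xrightarrow{(\mathrm{id},a)} \mathcal{E}_{\mathcal{O}_K} \times_{\mathcal{O}_K} \mathcal{E}_{\mathcal{O}_K} \xrightarrow{m} \mathcal{E}_{\mathcal{O}_K}$, where $a$ is the constant map and $m$ is the group law. By \Cref{prop pris of ellcurve} each $\rH^j_{\Prism}(\mathcal{E}_{\mathcal{O}_K}/\s)$ is finite free over $\s$, so the K\"unneth map
\[
\rH^*_{\Prism}(\mathcal{E}_{\mathcal{O}_K}/\s) \otimes_{\s} \rH^*_{\Prism}(\mathcal{E}_{\mathcal{O}_K}/\s) \xrightarrow{\cong} \rH^*_{\Prism}(\mathcal{E}_{\mathcal{O}_K} \times \mathcal{E}_{\mathcal{O}_K}/\s)
\]
is an isomorphism (this follows from base change and the freeness of cohomology). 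For $\beta \in \rH^j_{\Prism}(\mathcal{E}_{\mathcal{O}_K}/\s)$, write $m^*(\beta) = \sum_{r+s=j} \beta_{r,s}'\otimes \beta_{r,s}''$ with $\beta_{r,s}'\in \rH^r$ and $\beta_{r,s}''\in \rH^s$. Then
\[
t_a^*(\beta) = \sum_{r+s=j} \beta_{r,s}' \cdot a^*(\beta_{r,s}'').
\]

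Now I would use that $\Spf(\mathcal{O}_K) \to \Spf(\mathcal{O}_K)$ is the identity, so $\rH^s_{\Prism}(\Spf(\mathcal{O}_K)/\s) = \s$ for $s=0$ and vanishes for $s>0$ by Hodge--Tate comparison. Hence $a^*(\beta_{r,s}'') = 0$ whenever $s > 0$, and only the term with $s=0$, $r=j$ contributes. The unit axiom for the group law --- namely that precomposing $m$ with $(\mathrm{id},e)\colon \mathcal{E} \to \mathcal{E}\times\mathcal{E}$ yields $\mathrm{id}_{\mathcal{E}}$ --- forces the $(j,0)$-component of $m^*(\beta)$ to be $\beta\otimes 1$. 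Therefore $t_a^*(\beta) = \beta$, as desired.

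The main obstacle is the K\"unneth isomorphism for relative prismatic cohomology. Since both factors have finite free prismatic cohomology by \Cref{finite free follows from crystalline torsion free}, this reduces to a standard spectral sequence or flat base change argument (one may first check K\"unneth modulo $u$, where it is the classical K\"unneth formula in crystalline cohomology, and then lift using the freeness and Nakayama); once this is in hand the coalgebra manipulation above is formal.
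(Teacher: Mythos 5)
Your proof is correct, and it takes a genuinely different route from the paper's. The paper works by first injecting $\rH^j_{\Prism}(\mathcal{E}_{\mathcal{O}_K}/\s)$ into its base change along $\s \hookrightarrow W(C^\flat)$ (using freeness), then invoking the \'{e}tale comparison to identify this base change with $\rH^j_{\et}(\mathcal{E}_C, \mathbb{Z}_p) \otimes_{\mathbb{Z}_p} W(C^\flat)$, and finally observing that translations act trivially on \'{e}tale cohomology ``by comparing to the topological situation.'' You instead run the classical ``translations act trivially on cohomology of an abelian scheme'' argument directly in the prismatic theory, via the K\"unneth decomposition and the unit axiom for the group law. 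The paper's route is shorter because the \'{e}tale comparison is already available, while yours is more self-contained and structural --- it avoids transporting the question to a different cohomology theory and makes clear that the only input beyond freeness of the cohomology is the K\"unneth formula. Both serve. One remark on your main obstacle: the K\"unneth isomorphism here indeed reduces to freeness; the cleanest route is probably to note that $\Prism_{-/\s}$ satisfies base change and symmetric monoidality (\cite[Theorem 1.8]{BS19}), which gives $\mathrm{R\Gamma}_{\Prism}(\mathcal{E}\times\mathcal{E}/\s) \cong \mathrm{R\Gamma}_{\Prism}(\mathcal{E}/\s) \otimes^{\mathbb{L}}_{\s} \mathrm{R\Gamma}_{\Prism}(\mathcal{E}/\s)$ directly, and then the graded K\"unneth formula follows from freeness of each $\rH^j$; your mod-$u$-plus-Nakayama suggestion also works.
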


\begin{proof}
Let us use the $p$-completely flat base change $\s \hookrightarrow W(C^{\flat})$.
Since our prismatic cohomology, as $\s$-modules, are free,
we get injections 
$\rH^j_{\Prism}(\mathcal{E}_{\mathcal{O}_K}/\s) \hookrightarrow \rH^j_{\Prism}(\mathcal{E}_{\mathcal{O}_K}/\s) \otimes_{\s} W(C^{\flat})$
compatible with the $\mathbb{Z}/p^n$-action.
Using the \'{e}tale comparison~\cite[Theorem 1.8.(iv)]{BMS1}, the target is canonically identified with
$\rH^j_{\et}(\mathcal{E}_C, \mathbb{Z}_p) \otimes_{\mathbb{Z}_p} W(C^{\flat})$.
We conclude the $\mathbb{Z}/p^n$-action on the target is trivial by comparing to the topological situation.
\end{proof}

Therefore the second page, which is the starting page, of the above spectral sequence looks like
\[
\label{ex spectral sequence}
\tag{\epsdice{4}}
\xymatrix{
\s\{-1\} \ar[rrd] & 0 & \s\{-1\}/p^n & \ldots \\
\rH^1_{\Prism}(\mathcal{E}_{\mathcal{O}_K}/\s) \ar[rrd]^{d_2} & 0 & \rH^1_{\Prism}(\mathcal{E}_{\mathcal{O}_K}/\s)/p^n & \ldots \\
\s & 0 & \s/p^n & \ldots \\
}
\]
To our interest is the differential
\[
d_2 \colon E_2^{0,1} = \rH^1_{\Prism}(\mathcal{E}_{\mathcal{O}_K}/\s) \longrightarrow E_2^{2,0} \cong \s/p^n.
\]
Using the multiplicative structure of the spectral sequence this arrow determines the rest arrows,
by degree reason the spectral sequence degenerates on the third page $E_3^{i,j} = E_{\infty}^{i,j}$.

\begin{lemma}
\label{lem divisible by u}
The differential $d_2$ is divisible by $u$.
In other words, it is zero after reduction modulo $u$.
\end{lemma}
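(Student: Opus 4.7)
The plan is to reduce the claim to a statement about the special fibre, where one can appeal to a product decomposition to see the differential vanishes. More precisely, the Leray spectral sequence for the \'{e}tale $\mathbb{Z}/p^n$-torsor $\mathcal{E}_{\mathcal{O}_K} \to \mathcal{X}$ in prismatic cohomology is natural with respect to base change along $\s \to \s/u \cong W(k)$, so the reduction of $d_2$ modulo $u$ is the analogous differential for the cover $\mathcal{E}_0 \to \mathcal{X}_0$ in crystalline cohomology over $W(k)$; one uses the crystalline comparison $\mathrm{R\Gamma}_{\Prism}(-/\s) \otimes^{\mathbb{L}}_{\s} W(k) \cong \mathrm{R\Gamma}_{\cris}(-_0/W)$ together with the freeness already established in \Cref{prop pris of ellcurve} to identify the $E_2$-terms.

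Now the key geometric observation: the translation action of $\mathbb{Z}/p^n$ on $\mathcal{E}_{\mathcal{O}_K}$ factors through $\mu_{p^n}$ via $1 \mapsto \zeta_{p^n} = 1 + u$, hence modulo $u$ this action becomes trivial. Consequently, as stacks over $k$,
\[
\mathcal{X}_0 \cong \mathcal{E}_0 \times B(\mathbb{Z}/p^n),
\]
and the cover $\mathcal{E}_0 \to \mathcal{X}_0$ is the pullback along the projection $\mathcal{X}_0 \to B(\mathbb{Z}/p^n)$ of the universal $\mathbb{Z}/p^n$-torsor $\Spec(k) \to B(\mathbb{Z}/p^n)$.

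By K\"{u}nneth for crystalline cohomology, the Leray spectral sequence for $\mathcal{E}_0 \to \mathcal{X}_0$ is the tensor product of the two Leray spectral sequences: one for the trivial cover $\mathcal{E}_0 \to \mathcal{E}_0$, which is concentrated in $i=0$, and one for $\Spec(k) \to B(\mathbb{Z}/p^n)$, which is concentrated in $j=0$. In particular all differentials vanish on $E_2$, so the induced $d_2$ on the special fibre is zero. Combined with the previous paragraph, we conclude that $d_2 \colon \rH^1_{\Prism}(\mathcal{E}_{\mathcal{O}_K}/\s) \to \s/p^n$ has image in $u \cdot \s/p^n$.

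The main technical point to verify is the functoriality statement, i.e.~that the prismatic Leray spectral sequence base-changes correctly to the crystalline one; this should follow cleanly from the flat-base-change property of $\mathrm{R\Gamma}_{\Prism}$ along $\s \to W(k)$ (using that the relevant prismatic cohomology groups are finite free, so the derived base change agrees with the naive one), and from the fact that Leray spectral sequences for a fixed \'{e}tale cover are functorial in the coefficient sheaf.
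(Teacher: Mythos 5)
Your argument is correct and pivots on the same central observation as the paper's: reduce the spectral sequence modulo $u$, identify the result with the descent spectral sequence in crystalline cohomology for $\mathcal{E}_0 \to \mathcal{X}_0$, and exploit the product decomposition $\mathcal{X}_0 \cong \mathcal{E}_0 \times B(\mathbb{Z}/p^n)$ (which holds because the character $\mathbb{Z}/p^n \to \mu_{p^n}$ becomes zero over $k$). The difference is in the finishing move. You observe that the \v{C}ech nerve of $\mathcal{E}_0 \to \mathcal{X}_0$ is the constant simplicial object $\mathcal{E}_0$ tensored (over $W$, via K\"{u}nneth) with the \v{C}ech nerve of $\Spec(k) \to B(\mathbb{Z}/p^n)$, so the descent spectral sequence is a tensor product of two degenerate sequences concentrated in a single column and row respectively, and therefore has no nonzero differentials. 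The paper instead cites Mondal's computation $\rH^2_{\cris}(B(\mathbb{Z}/p^n)/W) \cong W/p^n$ and uses the split injection $\rH^2_{\cris}(B(\mathbb{Z}/p^n)/W) \hookrightarrow \rH^2_{\cris}(\mathcal{X}_0/W)$ landing in the deepest filtration step $E_{\infty}^{2,0}$; since $E_2^{2,0} = W/p^n$ surjects onto $E_{\infty}^{2,0}$ and the latter must contain $W/p^n$, the surjection is an isomorphism and $d_2$ vanishes. Yours is more self-contained (no external citation), at the cost of having to spell out the K\"{u}nneth decomposition at the level of simplicial objects and the compatibility of the filtration with the tensor factorization; the paper's is terser because it outsources the only needed piece of information about $B(\mathbb{Z}/p^n)$.

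Two small corrections are needed in your last paragraph. First, $\s \to W(k)$ sending $u \mapsto 0$ is \emph{not} flat; it is the quotient by the regular element $u$. What makes naive reduction mod $u$ of the spectral sequence legitimate is that the relevant $E_2$-terms are finite free over $\s$ or $\s/p^n$, hence $u$-torsion-free, so derived and naive reduction coincide term by term. Second, the identification of $\mathrm{R\Gamma}_{\Prism}(-/\s)/u$ with crystalline cohomology carries a Frobenius untwist, $\Prism/u \cong i_*\Cris^{(-1)}$; this does not affect the underlying $W$-modules or the vanishing of $d_2$ (as $k$ is perfect), but it is worth keeping straight.
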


\begin{proof}
Let us look at the reduction modulo $u$ of the spectral sequence \ref{ex spectral sequence},
which is computing the crystalline cohomology of $\mathcal{X}/W$ by the crystalline comparison.
Using the fact that $\rH^2_{\cris}(B(\mathbb{Z}/p^n)/W) \cong W/p^n$ (see for instance \cite[Theorem 1.2]{Mon20}),
we see that the $d_2$ modulo $u$ must be zero.
\end{proof}

\begin{lemma}
\label{lem d2 of e1}
We have $d_2(e_1) = 0$.
\end{lemma}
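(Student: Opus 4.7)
The plan is to combine the Frobenius-equivariance of the Leray spectral sequence with the $u$-divisibility in \Cref{lem divisible by u}. Since the Leray spectral sequence arises from the $\mathbb{Z}/p^n$-Galois cover $\mathcal{E} \to \mathcal{X}$ and the prismatic Frobenius is natural, all differentials commute with Frobenius. In particular,
\[
\varphi(d_2(e_1)) \;=\; d_2(\varphi(e_1)) \;=\; d_2(e_1),
\]
using the defining property $\varphi(e_1) = e_1$ from \Cref{prop pris of ellcurve}(3). Therefore $d_2(e_1)$ lies in the Frobenius-fixed submodule $(\s/p^n)^{\varphi = 1}$.

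Next I would identify this fixed submodule. Expanding an element $x = \sum_{i \geq 0} a_i u^i \in \s/p^n$ with $a_i \in W(k)/p^n$ and using $\varphi(u) = u^p$, the equation $\varphi(x) = x$ forces $a_i = 0$ unless $p^m \mid i$ for all $m$, hence $a_i = 0$ for $i > 0$, and moreover $a_0 = \varphi_W(a_0)$. Since $k$ is perfect, this yields
\[
(\s/p^n)^{\varphi = 1} \;=\; \mathbb{Z}/p^n \;\subset\; W(k)/p^n \;\subset\; \s/p^n,
\]
the submodule of constants fixed by the Witt-vector Frobenius.

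Finally, by \Cref{lem divisible by u} we have $d_2(e_1) \in u \cdot (\s/p^n)$, which intersects the constants $\mathbb{Z}/p^n$ trivially. Combining with the previous step gives $d_2(e_1) = 0$. The only step that needs any care is checking Frobenius-equivariance of $d_2$, but this is standard: the spectral sequence is the group-cohomology spectral sequence for the action of $\mathbb{Z}/p^n$ on $\rH^\ast_{\Prism}(\mathcal{E}_{\mathcal{O}_K}/\s)$, and this action manifestly commutes with the prismatic Frobenius, so all differentials do as well.
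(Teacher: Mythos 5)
Your proof is correct and uses the same key ingredients as the paper's: Frobenius-equivariance of $d_2$, the fact that $\varphi(e_1)=e_1$, and the $u$-divisibility from \Cref{lem divisible by u}. The paper packages the final step slightly differently — rather than first identifying $(\s/p^n)^{\varphi=1}$ with the constants, it iterates: if $x = d_2(e_1) = uy$ then $x = \varphi(x) = u^p\varphi(y)$, so $x$ is divisible by $u^{p^m}$ for all $m$ and hence vanishes by $u$-adic separatedness — but this is logically the same observation in a different order. One small caveat in your write-up: the identification $(\s/p^n)^{\varphi=1}=\mathbb{Z}/p^n$ relies on $k$ being algebraically closed (as it is in this section), not merely perfect; however, your argument only needs the weaker fact that $\varphi$-fixed elements are constants (supported in degree $0$), which intersects $u\cdot\s/p^n$ trivially, so the conclusion stands regardless.
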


\begin{proof}
This is because $d_2$ is Frobenius-equivariant.
Now \Cref{prop pris of ellcurve} (3) implies $e_1 \in \rH^1_{\Prism}(\mathcal{E}_{\mathcal{O}_K}/\s)$ is fixed by Frobenius, 
yet \Cref{lem divisible by u} says its image under $d_2$ is divisible by $u$.
So its image is divisible by arbitrary powers of $u$, hence must be zero.
\end{proof}

\begin{lemma}
\label{lem d2 of e2}
After scaling $e_2$ by a unit in $\mathbb{Z}_p^{\times}$ we have $d_2(e_2) = (u+1)^{p^{n-1}} - 1$.
\end{lemma}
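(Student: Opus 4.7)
The plan is to pin down $f \coloneqq d_2(e_2)\in\s/p^n$ in three stages: extract a Frobenius equation, classify its solutions, and fix the remaining scalar by a generic-fiber argument.

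First, Frobenius-equivariance of the Leray spectral sequence combined with $d_2(e_1)=0$ (\Cref{lem d2 of e1}) and $\varphi(e_2)=ae_1+de_2$ from \Cref{prop pris of ellcurve}(3) yields
\[
\varphi_\s(f)\;=\;d_2(\varphi(e_2))\;=\;a\cdot d_2(e_1)+d\cdot d_2(e_2)\;=\;d\cdot f\quad\text{in }\s/p^n.
\]
A direct computation shows that $f_0\coloneqq(1+u)^{p^{n-1}}-1$ satisfies the same equation: one has $\varphi_\s(f_0)=(1+u^p)^{p^{n-1}}-1$ whereas $d\cdot f_0=(1+u)^{p^n}-1$, and the congruence $(1+u^p)^{p^{n-1}}\equiv(1+u)^{p^n}\pmod{p^n}$ follows from $(1+u)^p=1+u^p+p\,g(u)$ together with the elementary estimate $v_p\binom{p^{n-1}}{k}+k\geq n$ for $1\leq k\leq p^{n-1}$.

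Next, I will show the $\mathbb{Z}/p^n$-module of solutions $\{g\in\s/p^n:\varphi_\s(g)=dg\}$ is cyclic, generated by $f_0$. Modulo $p$, one has $\bar d=u^{p^{n-1}(p-1)}$ in $k[\![u]\!]$; comparing coefficients of $u^\ell$ in $\sum a_i^p u^{pi}=\sum a_i u^{i+p^{n-1}(p-1)}$ forces $a_i=0$ unless $i=p^{n-1}$, with $a_{p^{n-1}}\in\mathbb{F}_p$. A standard devissage (subtract a $\mathbb{Z}_p$-lift of the mod-$p$ solution, divide by $p$, induct on $n$) then identifies the full solution set in $\s/p^n$ as $\mathbb{Z}/p^n\cdot f_0$, which is free of rank one since $f_0$ has order exactly $p^n$ (because $\s/p^n$ is $u$-torsion free). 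Hence $f=c\cdot f_0$ for a unique $c\in\mathbb{Z}/p^n$.

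The main obstacle is to show $c\in(\mathbb{Z}/p^n)^\times$. The multiplicative structure of the spectral sequence gives $d_2(e_1\wedge e_2)=-f\cdot e_1$ in $E_2^{2,1}$, while $d_3$ vanishes since $E_2^{3,0}=\rH^3(\mathbb{Z}/p^n,\s)=0$. Under the identification $E_2^{0,2}\cong\s\{-1\}$ with the generator $\omega$ corresponding to $e_1\wedge e_2$, one obtains $E_\infty^{0,2}=\Ann_{\s/p^n}(f)\cdot\omega$. Since $\bar f_0=u^{p^{n-1}}$ is a non-zero-divisor in $k[\![u]\!]$, induction on $n$ shows $f_0$ is a non-zero-divisor in $\s/p^n$, hence $\Ann_{\s/p^n}(cf_0)=(p^{n-v_p(c)})$. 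If $v_p(c)\geq 1$, then $E_\infty^{0,2}$ would be $p^{v_p(c)}$-torsion; combined with $E_\infty^{2,0}$ being $p^n$-torsion, the whole $\rH^2_{\Prism}(\mathcal{X}/\s)$ would be $p$-power-torsion, forcing $\rH^2_{\dR}(\mathcal{X}_K)\cong\rH^2_{\Prism}(\mathcal{X}/\s)\otimes_{\s,\varphi}K=0$. But $\mathcal{X}_K\simeq\mathcal{E}_K/\mu_{p^n}$ is a smooth proper elliptic curve, so $\rH^2_{\dR}(\mathcal{X}_K)=K\neq 0$, a contradiction. Therefore $c$ is a unit, and rescaling $e_2$ by $c^{-1}\in\mathbb{Z}_p^\times$ yields $d_2(e_2)=f_0$.
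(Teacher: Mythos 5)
Your Stages 1 and 2 are fine and match the paper in substance: Stage 1 is exactly the Frobenius-equivariance argument, and Stage 2 reproduces the paper's next lemma (the paper's mod-$p$ step is a touch slicker --- it notes that $\varphi$ is precisely the absolute Frobenius of $k[\![u]\!]$, so the equation reads $g^p=u^{e}g$ and a one-line valuation argument finishes it, but your coefficient comparison reaches the same conclusion).

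Stage 3, however, has a genuine gap. The term $E_2^{0,2}=\rH^0(\mathbb{Z}/p^n,\s\{-1\})$ is the \emph{full} free module $\s\{-1\}$, not its reduction mod $p^n$. The differential $d_2\colon E_2^{0,2}\to E_2^{2,1}\cong\rH^1_{\Prism}(\mathcal{E}_{\mathcal{O}_K}/\s)/p^n$ sends $s\omega\mapsto\pm sf\cdot e_1$, so
\[
E_\infty^{0,2}=\ker(d_2)=\{\,s\in\s : sf\equiv 0\bmod p^n\,\}\cdot\omega,
\]
an \emph{ideal of $\s$}, not the annihilator $\Ann_{\s/p^n}(f)\subset\s/p^n$. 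Writing $f=cf_0$ with $c=p^j\cdot(\text{unit})$ and $f_0$ a non-zero-divisor in $\s/p^n$, this ideal is $(p^{n-j})\s$, hence $E_\infty^{0,2}\cong p^{n-j}\s\{-1\}$ is a \emph{free rank-one} $\s$-module for every value of $j$ (including $j\geq 1$ and even $c=0$). Consequently $\rH^2_{\Prism}(\mathcal{X}/\s)$ always has $\s$-rank one, $\rH^2_{\dR}(\mathcal{X}_K)$ is always one-dimensional, and the contradiction you try to extract never appears. In short, you conflated a free submodule of $\s\{-1\}$ with a torsion quotient of $\s/p^n$.

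The paper gets the unit by a different and correct route: it base changes the spectral sequence along $\s\hookrightarrow W(C^\flat)$, where (crucially) $f_0$ becomes a \emph{unit} of $W(C^\flat)/p^n$ because $(u+1)^{p^{n-1}}-1$ maps to $[\epsilon^{p^{n-1}}]-1$ with $\epsilon^{p^{n-1}}\neq 1$. Since $\rH^2_{\et}(\mathcal{X}_C,\mathbb{Z}_p)$ is torsion-free, the base-changed $d_2$ must kill all of $E_2^{2,0}\cong W(C^\flat)/p^n$, and surjectivity of multiplication by $cf_0$ on $W(C^\flat)/p^n$ forces $c\in(\mathbb{Z}/p^n)^\times$. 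If you want to salvage your approach, you would need to look at $E_\infty^{2,0}=(\s/p^n)/(cf_0)$ after base change to $W(C^\flat)$ and argue that a nonzero surviving torsion class there contradicts torsion-freeness of $\rH^2_{\et}(\mathcal{X}_C,\mathbb{Z}_p)\otimes W(C^\flat)$ --- which is essentially the paper's argument --- rather than trying to get a rank drop out of $E_\infty^{0,2}$.
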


\begin{proof}
Note that $\varphi(e_2) = d \cdot e_2$ by \Cref{prop pris of ellcurve} (3),
its image must be an element $x \in \s/p^n$ satisfying the same Frobenius eigen-class condition.
Next lemma guarantees that $d_2(e_2) = b \cdot ((u+1)^{p^{n-1}} - 1)$
for some $b \in \mathbb{Z}/p^n$.
\'{E}tale comparison for prismatic cohomology says that
base changing the spectral sequence \ref{ex spectral sequence} along $\s \hookrightarrow W(C^{\flat})$
gives a spectral sequence computing \'{e}tale cohomology of $\mathcal{X}_C$ 
(base changed along $\mathbb{Z}_p \hookrightarrow W(C^{\flat})$).
Since $\mathcal{X}_C$ is an elliptic curve, its second \'{e}tale cohomology has no torsion,
hence the base changed $d_2$ must be surjective.
In particular we see that $b \not\in p \cdot \mathbb{Z}/p^n$,
hence $b$ must be a unit in $(\mathbb{Z}/p^n)^{\times}$.
\end{proof}

In the proof above, we have utilized the following:

\begin{lemma}
For any $m \leq n$, we have an exact sequence
\[
0 \to \mathbb{Z}/p^m \cdot \left((u+1)^{p^{n-1}} - 1\right) \to \s/p^m \xrightarrow{\varphi - d}
\s/p^m.
\]
\end{lemma}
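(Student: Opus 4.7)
The plan is to set $v \coloneqq (u+1)^{p^{n-1}} - 1 \in \s$ and verify two things: (a) the mod-$p^m$ image of $v$ has annihilator exactly $(p^m)$ in $\s/p^m$---immediate from $v \equiv u^{p^{n-1}} \pmod{p}$---so the leftmost arrow of the sequence is a genuine injection onto a cyclic $\mathbb{Z}/p^m$-submodule; and (b) the full kernel of $\varphi - d$ equals $\mathbb{Z}/p^m \cdot v$. Step (b) splits into a direct computation that $v$ lies in the kernel and a converse inclusion proved by induction on $m$.

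For ``$v \in \ker(\varphi - d)$'' I would compute
\[
\varphi(v) = (u^p + 1)^{p^{n-1}} - 1, \qquad d \cdot v = (u+1)^{p^n} - 1,
\]
reducing the claim to the key congruence
\[
(u+1)^{p^n} \equiv (u^p + 1)^{p^{n-1}} \pmod{p^n}
\]
in $\s$. To prove this, write $(u+1)^p = (u^p+1) + p h$ with $h \in \s$ and raise to the $p^{n-1}$-th power; every cross term $\binom{p^{n-1}}{k}(ph)^k(u^p+1)^{p^{n-1}-k}$ with $k \geq 1$ has $p$-adic valuation at least $(n - 1 - v_p(k)) + k \geq n$, using the standard estimate $v_p\!\bigl(\binom{p^{n-1}}{k}\bigr) \geq n-1-v_p(k)$ and the elementary inequality $k - v_p(k) \geq 1$ for $k \geq 1$. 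Since $m \leq n$, this yields $(\varphi - d)(v) = 0$ in $\s/p^m$.

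For the reverse inclusion I would induct on $m$. The base case $m=1$ takes place in $k[[u]]$, where $d$ reduces to $u^e$ with $e = (p-1)p^{n-1}$. For nonzero $x \in k[[u]]$ with $\varphi(x) = u^e x$, write $x = u^a y$ with $y(0) \neq 0$; comparing $u$-adic valuations forces $a(p-1) = e$, hence $a = p^{n-1}$, and the residual equation reduces to $\varphi(y) = y$ in $k[[u]]$. A direct coefficient analysis of $\sum b_i^p u^{pi} = \sum b_i u^i$ (noting $b_i = 0$ for $p \nmid i$ and $b_{pj} = b_j^p$) shows the only solutions are $y \in \mathbb{F}_p$. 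Hence the mod-$p$ kernel is $\mathbb{F}_p \cdot u^{p^{n-1}} = \mathbb{F}_p \cdot v$.

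For the inductive step, given $x \in \ker(\varphi - d) \subset \s/p^m$, the inductive hypothesis yields $c \in \mathbb{Z}/p^m$ with $x \equiv c v \pmod{p^{m-1}}$. Write $x - c v = p^{m-1} z$ with $z \in \s/p = k[[u]]$; then $z$ satisfies $(\varphi - d)(z) = 0$ mod $p$, so by the base case $z = c' \cdot u^{p^{n-1}}$ for some $c' \in \mathbb{F}_p$. Since $v \equiv u^{p^{n-1}} \pmod{p}$, we have $p^{m-1} z \equiv c' p^{m-1} v \pmod{p^m}$, and therefore $x = (c + c' p^{m-1}) v$ in $\s/p^m$. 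The only genuinely technical step is the binomial congruence establishing $v \in \ker(\varphi - d)$; the remaining arguments are routine $p$-adic bookkeeping together with the standard fact $k[[u]]^{\varphi = 1} = \mathbb{F}_p$.
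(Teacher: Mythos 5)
Your proof is correct and follows essentially the same outline as the paper's: establish the key congruence $(u+1)^{p^n}\equiv(u^p+1)^{p^{n-1}}\pmod{p^n}$ to show $v\in\ker(\varphi-d)$, compute the kernel in the base case $\s/p\cong k[\![u]\!]$ directly, and induct on $m$. The differences are cosmetic: for the congruence the paper invokes ``induction on $n$'' where you give a direct estimate via $v_p\binom{p^{n-1}}{k}=n-1-v_p(k)$ and $k-v_p(k)\geq 1$, which is cleaner; and for the inductive step the paper applies the snake lemma to $0\to\s/p^m\xrightarrow{\,p\,}\s/p^{m+1}\to\s/p\to 0$ and counts lengths, whereas you lift a kernel element along $\s/p^m\twoheadrightarrow\s/p^{m-1}$ and analyze the error term in $k[\![u]\!]$ --- both routes hinge on the same two facts (that $v$ lies in the kernel at every level, and the mod-$p$ kernel is one-dimensional), so they are equivalent in substance.
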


\begin{proof}
First of all, let us check that $(u+1)^{p^{n-1}} - 1$ does satisfy the Frobenius action condition.
Recall $d = \frac{(u+1)^{p^n} - 1}{(u+1)^{p^{n-1}} - 1}$,
it suffices to know that $(u+1)^{p^n} \equiv (u^p+1)^{p^{n-1}}$ modulo $p^n$.
When $n = 1$ this is well-known, induction on $n$ proves the statement.

Next we verify this exact sequence for $m = 1$. In that situation $\s/p \cong k[\![u]\!]$,
and the Frobenius condition becomes $f^p = u^{p^{n-1}(p-1)} \cdot f$.
One immediately verifies that $f \in \mathbb{F}_p \cdot u^{p^{n-1}}$.

Lastly we finish the proof by induction on $m$ and applying the snake lemma to the following diagram:
\[
\xymatrix{
0 \ar[r] & \s/p^m \ar[r]^{\cdot p} \ar[d]^{\varphi - d} & \s/p^{m+1} \ar[r] \ar[d]^{\varphi - d} & \s/p \ar[r] \ar[d]^{\varphi - d} & 0 \\
0 \ar[r] & \s/p^m \ar[r]^{\cdot p} & \s/p^{m+1} \ar[r] & \s/p \ar[r] & 0.
}
\]
Notice that we have verified the kernel of middle vertical arrow surjects onto the kernel of right vertical arrow,
thanks to the previous two paragraphs.
The snake lemma tells us that the kernel of middle vertical arrow has length $m+1$,
but we also know $\mathbb{Z}/p^{m+1} \cdot \left((u+1)^{p^{n-1}} - 1\right)$ sits inside the kernel.
\end{proof}

From now on let us scale $e_2$ by the $p$-adic unit so that $d_2(e_2) = (u+1)^{p^{n-1}} - 1$.
Using multiplicativity of the spectral sequence \ref{ex spectral sequence}, we can compute the prismatic cohomology of $\mathcal{X}$.
Let us record the result below.

\begin{corollary}
The prismatic cohomology ring of $\mathcal{X}/\s$ is
\[
\rH^*_{\Prism}(\mathcal{X}/\s) \cong \s\langle e,f \rangle [g]/((u+1)^{p^{n-1}} - 1 \cdot g, p^n \cdot g, f \cdot g),
\]
where $e,f$ have degree $1$ and are pulled back to $e_1$ and $p^n \cdot e_2$ respectively inside 
$\rH^1_{\Prism}(\mathcal{E}_{\mathcal{O}_K}/\s)$,
and $g$ has degree $2$ being the generator of $E_3^{2,0} = E_{\infty}^{2,0}$.
Moreover the Frobenius action is given by 
\[
\varphi(e) = e,~\varphi(f) = p^n a' \cdot e +  d \cdot f, \text{ and } \varphi(g) = g.
\]

In particular we see that
\[
\rH^2_{\Prism}(\mathcal{X}/\s)[u^{\infty}] \cong \s/((u+1)^{p^{n-1}} - 1, p^n) \cdot g
\]
and 
\[
\rH^{\ell}_{\Prism}(\mathcal{X}/\s) \simeq \s/((u+1)^{p^{n-1}} - 1, p^n)
\]
for all $\ell \geq 3$ generated by either $g^{\ell/2}$ or $e \cdot g^{(\ell - 1)/2}$ depending on the parity of $\ell$.
\end{corollary}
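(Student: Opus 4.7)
The plan is to mine the spectral sequence \eqref{ex spectral sequence} for all its information by combining the already established formulas $d_2(e_1)=0$, $d_2(e_2)=((u+1)^{p^{n-1}}-1)\cdot g$ with its multiplicativity and the computation of $\rH^*_{\Prism}(\mathcal{E}_{\mathcal{O}_K}/\s)$ in \Cref{prop pris of ellcurve}. Note that the non-zero columns $E_2^{i,*}$ occur only for $i$ even (odd-degree group cohomology of $\mathbb{Z}/p^n$ with coefficients in a $p$-torsion-free module vanishes), so all $d_r$ for $r\geq 3$ land in the zero columns and the sequence collapses at $E_3 = E_\infty$. Since the cup-product structure on $\rH^*_{\Prism}(\mathcal{E}_{\mathcal{O}_K}/\s)$ is free over $\s$ generated by $e_1,e_2$ (with $e_1 e_2$ a generator of $\rH^2$) and $g$ satisfies $d_2(g) \in E_2^{4,-1}=0$, the Leibniz rule determines every $d_2$: on the representative monomials
$$d_2(e_1 g^k)=0,\quad d_2(e_2 g^k)=((u+1)^{p^{n-1}}-1)\,g^{k+1},\quad d_2(e_1 e_2 g^k)=\pm e_1((u+1)^{p^{n-1}}-1)g^{k+1}.$$

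The main technical input I will need is that $(u+1)^{p^{n-1}}-1$ is a non-zerodivisor in $\s/p^n$; indeed, modulo $p$ it equals $u^{p^{n-1}}$, which is regular in $k[\![u]\!]$, and this lifts to $\s/p^n$ by induction on $n$ using the standard sequence $0\to \s/p\to \s/p^{n}\to \s/p^{n-1}\to 0$. Granted regularity, every kernel/image computation on the $E_2$ page resolves cleanly: the kernel of the outgoing $d_2$ on $E_2^{2k,1}$ is the $e_1 g^k$-direction, and the image of the incoming $d_2$ is $((u+1)^{p^{n-1}}-1)\cdot\s/p^n\cdot e_1 g^k$. This yields
$$E_\infty^{2k,0}=E_\infty^{2k,1}=\s/((u+1)^{p^{n-1}}-1,p^n)\quad(k\geq 1),\qquad E_\infty^{2k,2}=0\quad(k\geq 1),$$
while $E_\infty^{0,1}\subset \rH^1_{\Prism}(\mathcal{E}_{\mathcal{O}_K}/\s)$ is the kernel of $d_2$ there, namely $\s\cdot e_1\oplus p^n\s\cdot e_2$, and $E_\infty^{0,2}=p^n\s\{-1\}$ by the same regularity argument.

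Reading off cohomology: $\rH^1_{\Prism}(\mathcal{X}/\s)=\s\cdot e\oplus \s\cdot f$ with $e:=e_1$ and $f:=p^n e_2$; in $\rH^2$ there is an extension $0\to \s/((u+1)^{p^{n-1}}-1,p^n)\cdot g\to \rH^2\to p^n\s\{-1\}\to 0$ which splits as $\s$-modules because the quotient is torsion-free of rank one and lifts to the genuine class $ef\in\rH^2$ obtained from the ring structure; for $\ell\geq 3$ only one filtration piece survives, giving $\rH^\ell_{\Prism}(\mathcal{X}/\s)\cong \s/((u+1)^{p^{n-1}}-1,p^n)$ generated by $g^{\ell/2}$ or $eg^{(\ell-1)/2}$. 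The ring relation $f\cdot g=0$ is immediate since $fg=p^n e_2 g$ already vanishes in $E_2^{2,1}=(\s/p^n)^2$, and this then propagates to the associated graded (with no extension obstruction in $\rH^3$, which is a single piece). The $u^\infty$-torsion assertion follows because $((u+1)^{p^{n-1}}-1,p^n)$ contains $u^{p^{n-1}}$ modulo $p$ and hence, iteratively, some power of $u$.

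The last step is the Frobenius calculation. The values $\varphi(e)=e$ and $\varphi(f)=p^n a\cdot e+d\cdot f$ follow at once from $\varphi(e_1)=e_1$ and $\varphi(e_2)=ae_1+de_2$. For $\varphi(g)$ I will use the fact that Frobenius commutes with $d_2$: applying $\varphi$ to $d_2(e_2)=((u+1)^{p^{n-1}}-1)g$ and using $\varphi((u+1)^{p^{n-1}}-1)\equiv d\cdot ((u+1)^{p^{n-1}}-1)\pmod{p^n}$ (which is just the identity $(u+1)^{p^n}-1=d\cdot((u+1)^{p^{n-1}}-1)$ rewritten via $\varphi(u)=u^p$), one arrives at $d\cdot((u+1)^{p^{n-1}}-1)\cdot\varphi(g)=d\cdot((u+1)^{p^{n-1}}-1)\cdot g$ in $E_\infty^{4,0}=\s/((u+1)^{p^{n-1}}-1,p^n)$. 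The hard part will be justifying the cancellation, which requires regularity of $(u+1)^{p^n}-1$ in $\s/p^n$ (argued as before via reduction modulo $p$) and the fact that Frobenius strictly preserves the Leray filtration on $\rH^2$, so $\varphi(g)$ really lies in the same filtration piece as $g$ and thus $\varphi(g)=g$ on the nose rather than only modulo lower filtration.
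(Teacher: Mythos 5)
Your plan—running the multiplicative Leray spectral sequence with the $E_2$ page and the $d_2$ values established in the preceding lemmas, observing $E_3=E_\infty$, and then reassembling the ring—is exactly the paper's intended argument, and your bookkeeping of $E_\infty$ terms, the splitting of the degree-$2$ extension via $ef$, and the vanishing of $fg$ are all correct. The regularity argument (reduce mod $p$ to $u^{p^{n-1}}\in k[\![u]\!]$ and devissage) is the right one, and the reading-off of $\rH^\ell$ for $\ell\ge 3$ is sound.

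There is, however, a genuine slip in the Frobenius-on-$g$ step. The identity $\varphi(d_2(e_2))=d_2(\varphi(e_2))$ is an equality in $E_2^{2,0}\cong \s/p^n$, \emph{not} in $E_\infty^{4,0}\cong\s/((u+1)^{p^{n-1}}-1,p^n)$. In the latter both sides of your displayed equation are literally $0$ because they carry a factor of $(u+1)^{p^{n-1}}-1$, so no cancellation is possible there and the argument gives nothing. Done on $E_2^{2,0}$ — where $g$ is honestly a generator of $\s/p^n$ and $(u+1)^{p^n}-1$ is a non-zerodivisor — the cancellation works and yields $\varphi(g)=g$ on $E_2^{2,0}$, which then passes to $E_\infty^{2,0}=F^2\rH^2$ and hence to $\rH^2$ since this is the bottom filtration step (here your worry about ``modulo lower filtration'' is vacuous). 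Relatedly, your gloss on the congruence $\varphi((u+1)^{p^{n-1}}-1)\equiv d\cdot((u+1)^{p^{n-1}}-1)\pmod{p^n}$ as ``just the identity $(u+1)^{p^n}-1=d((u+1)^{p^{n-1}}-1)$ rewritten'' is not right: $\varphi$ applied to the left factor gives $(u^p+1)^{p^{n-1}}-1$, which only \emph{agrees with} $(u+1)^{p^n}-1$ modulo $p^n$; that congruence is the content of the ensuing lemma in the paper and should be cited, not derived from the definition of $d$.

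Finally, a simplification worth noting: $\varphi(g)=g$ can be seen directly, with no appeal to $d_2$, because $g$ is the image of the canonical generator of $\rH^2(\mathbb{Z}/p^n,\mathbb{Z}_p)$ under $\mathbb{Z}_p\to\s$, and the Frobenius on $E_2^{2,0}=\rH^2(\mathbb{Z}/p^n,\s)\cong\s/p^n$ is the one induced by $\varphi_\s$, which fixes $1$. This sidesteps the cancellation entirely.
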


Here $a'$ is a $p$-adic unit (that we used to scale $e_2$) times the constant $a$ from \Cref{prop pris of ellcurve} (3).
We remark that $g$ can be taken as a generator of the group cohomology $\rH^2(\mathbb{Z}/p^n, \mathbb{Z}_p)$.

Later on we will produce a schematic example using approximations of $B(\mathbb{Z}/p^n)$,
but before that let us observe that our stacky example matches with some predictions
made in 

\begin{remark}
\label{rmk stacky example}
The discussion in \Cref{Albanese subsection} extends to smooth proper Deligne--Mumford stacks,
such as our $\mathcal{X}$.
Since the generic fibre of $\mathcal{X}$ is $\big(\mathcal{E}/\mu_{p^n}\big)_K$,
the map $g \colon \mathcal{X} \to \text{ N\'{e}ron model of } \mathcal{X}_K$ becomes
the natural map $[\mathcal{E}/(\mathbb{Z}/p^n)] \to \mathcal{E}/\mu_{p^n}$.
Taking special fibre and factoring through $\mathrm{Alb}(\mathcal{X}_0) = \mathcal{E}_0$,
we see the map $f$ becomes the natural quotient map $\mathcal{E}_0 \to \mathcal{E}_0/\mu_{p^n}$
which has kernel $\mu_{p^n}$. 
Note that when $n = 1$, we have $e = p-1$,
and our \Cref{defect controlled by ramification index} (3) indeed predicts that $\ker(f)$ can be at worst a form of several copies of $\mu_p$.

Since $\mathcal{X}_0 \cong \mathcal{E}_0 \times B(\mathbb{Z}/p^n)$, we know its $\pi_1$ is abelian
with torsion given by $\mathbb{Z}/p^n$.
Consequently the torsion part in $\rH^2_{\et}(\mathcal{X}_0, \mathbb{Z}_p)$ is also given by $\mathbb{Z}/p^n$.
Since $\mathcal{X}_C$ is an elliptic curve, its \'{e}tale cohomology is torsion-free.
Hence the specialization map in degree $2$ for $p$-adic \'{e}tale cohomology has kernel given by $\mathbb{Z}/p^n$.
This matches up with what \Cref{ker Cosp and u-torsion} predicts.
Indeed since $\varphi(g) = g$, we see that 
\[
\left(\rH^2_{\Prism}(\mathcal{X}/\s)[u^{\infty}]\right)^{\varphi = 1}
= \left(\rH^2_{\Prism}(\mathcal{X}/\s)[u^{\infty}]/u\right)^{\varphi = 1}
= \left(W/p^n \cdot g\right)^{\varphi = 1} = \mathbb{Z}/p^n \cdot g.
\]
Here in the second identification we have used the fact that $u$ divides $(u+1)^{p^{n-1}} - 1$.
\end{remark}


The above stacky example can be turned into a scheme example, by the procedure of
approximation explained below.

\begin{construction}
\label{cons approximating stack}
Choose a representation $V$ of $\mathbb{Z}/p^n$ over $\mathbb{Z}_p$,
so that inside $\mathbb{P}(V)$ one can find a $\mathbb{Z}/p^n$-stable
complete intersection $3$-fold $\mathcal{Y}$ with no fixed point
and smooth proper over $\mathbb{Z}_p$, see \cite[2.7-2.9]{BMS1}.
Now we form $\mathcal{Z} \coloneqq (\mathcal{E} \times_{\mathbb{Z}_p} \mathcal{Y})_{\mathcal{O}_K}/(\mathbb{Z}/p^n)$,
which is a smooth proper relative $4$-fold over $\mathcal{O}_K$.
Here the action of $\mathbb{Z}/p^n$ is the diagonal action.
\end{construction}

Let us show the prismatic cohomology of $\mathcal{Z}/\s$ approximates that of $\mathcal{X}/\s$ in degrees $\leq 2$
in a suitable sense.

\begin{proposition}
\label{prop calculate pris of scheme example}
The natural $\mathbb{Z}/p^n$-equivariant projection
$(\mathcal{E} \times_{\mathbb{Z}_p} \mathcal{Y})_{\mathcal{O}_K} \to \mathcal{E}_{\mathcal{O}_K}$
gives rise a map $\mathcal{Z} \to \mathcal{X}$, which induces isomorphisms:
\[
\rH^0_{\Prism}(\mathcal{X}/\s) \xrightarrow{\cong} \rH^0_{\Prism}(\mathcal{Z}/\s) \text{ and }
\rH^1_{\Prism}(\mathcal{X}/\s) \xrightarrow{\cong} \rH^1_{\Prism}(\mathcal{Z}/\s).
\]
Together with the similarly defined map $\mathcal{Z} \to \mathcal{Y}_{\mathcal{O}_K}/(\mathbb{Z}/p^n)$,
we have
\[
\rH^2_{\Prism}(\mathcal{X}/\s) \oplus \s\{-1\} \xrightarrow{\cong} \rH^2_{\Prism}(\mathcal{Z}/\s).
\]
\end{proposition}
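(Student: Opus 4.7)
The plan is to compare the Hochschild--Serre spectral sequences associated with the two $\mathbb{Z}/p^n$-torsors $(\mathcal{E}\times_{\mathcal{O}_K}\mathcal{Y}_{\mathcal{O}_K})\to\mathcal{Z}$ (an honest \'etale torsor, since the action on $\mathcal{Y}$ is free) and the stacky one $\mathcal{E}_{\mathcal{O}_K}\to\mathcal{X}$, and to relate them via the natural projection morphism $\mathcal{Z}\to\mathcal{X}$ as well as $\mathcal{Z}\to\mathcal{Y}_{\mathcal{O}_K}/(\mathbb{Z}/p^n)$. First I would determine the low-degree Breuil--Kisin prismatic cohomology of $\mathcal{Y}$. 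Since $\mathcal{Y}$ is a smooth proper complete intersection $3$-fold in $\mathbb{P}(V)$, weak Lefschetz shows $\rH^i_{\cris}(\mathcal{Y}_0/W)$ is torsion-free and agrees with $\rH^i_{\cris}(\mathbb{P}(V)_0/W)$ for $i\leq 2$. Applying \Cref{finite free follows from crystalline torsion free} together with the prismatic first Chern class argument used for $\mathcal{E}$ in \Cref{prop pris of ellcurve}(2), one then obtains $\rH^0_{\Prism}(\mathcal{Y}_{\mathcal{O}_K}/\s)\cong\s$, $\rH^1_{\Prism}(\mathcal{Y}_{\mathcal{O}_K}/\s)=0$, and $\rH^2_{\Prism}(\mathcal{Y}_{\mathcal{O}_K}/\s)\cong\s\{-1\}$. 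Moreover, since the $\mathbb{Z}/p^n$-action on $\mathcal{Y}$ extends to a linear action on $\mathbb{P}(V)$ fixing the tautological hyperplane class, the induced $\mathbb{Z}/p^n$-action on each of these groups is trivial.

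Next I would invoke the K\"unneth formula for Breuil--Kisin prismatic cohomology, which applies here because both factors have free prismatic cohomology: this yields a $\mathbb{Z}/p^n$-equivariant decomposition
\[
\rH^j_{\Prism}((\mathcal{E}_{\mathcal{O}_K}\times_{\mathcal{O}_K}\mathcal{Y}_{\mathcal{O}_K})/\s)\simeq\bigoplus_{a+b=j}\rH^a_{\Prism}(\mathcal{E}_{\mathcal{O}_K}/\s)\otimes_{\s}\rH^b_{\Prism}(\mathcal{Y}_{\mathcal{O}_K}/\s).
\]
In the range $j\leq 2$ this reads $\s$, $\rH^1_{\Prism}(\mathcal{E}_{\mathcal{O}_K}/\s)$, and $\rH^2_{\Prism}(\mathcal{E}_{\mathcal{O}_K}/\s)\oplus\s\{-1\}$ respectively, with trivial $\mathbb{Z}/p^n$-action throughout (by the previous paragraph together with the lemma preceding equation~\ref{ex spectral sequence}). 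Hence the two Hochschild--Serre spectral sequences have $E_2$-pages agreeing in total degrees $\leq 1$, while in total degree $2$ the $E_2$-page for $\mathcal{Z}$ differs from that of $\mathcal{X}$ by exactly one extra summand $\s\{-1\}$ sitting in slot $(0,2)$, pulled back from $\rH^2_{\Prism}(\mathcal{Y}_{\mathcal{O}_K}/\s)$ along the projection.

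By naturality of the Hochschild--Serre spectral sequence, the pullback $\mathcal{Z}\to\mathcal{X}$ identifies all differentials landing in or emanating from the $\mathcal{E}$-summand with those computed in the proof sketch before this proposition; hence the induced maps on $\rH^0_{\Prism}$ and $\rH^1_{\Prism}$ are isomorphisms. For the degree $2$ claim, the extra $\s\{-1\}$-summand in $E_2^{0,2}(\mathcal{Z})$ maps isomorphically to itself under the pullback along $\mathcal{Z}\to\mathcal{Y}_{\mathcal{O}_K}/(\mathbb{Z}/p^n)$, and it cannot support a nontrivial $d_2$ because any outgoing differential would, by naturality for the torsor $\mathcal{Y}_{\mathcal{O}_K}\to\mathcal{Y}_{\mathcal{O}_K}/(\mathbb{Z}/p^n)$, have to be the $d_2$ on $E_2^{0,2}$ in that spectral sequence, which vanishes since $E_2^{2,1}=\rH^2(\mathbb{Z}/p^n,\rH^1_{\Prism}(\mathcal{Y}_{\mathcal{O}_K}/\s))=0$. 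Combining the two spectral sequence comparisons then yields the desired splitting $\rH^2_{\Prism}(\mathcal{X}/\s)\oplus\s\{-1\}\xrightarrow{\cong}\rH^2_{\Prism}(\mathcal{Z}/\s)$.

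The main technical obstacle is verifying the K\"unneth formula for Breuil--Kisin prismatic cohomology of proper smooth formal schemes over $\mathcal{O}_K$, and ensuring that the two spectral sequences can be compared $\mathbb{Z}/p^n$-equivariantly in a way that is compatible with the K\"unneth decomposition; the rest of the computation is essentially bookkeeping paralleling what was done for $\mathcal{X}$.
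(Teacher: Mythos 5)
Your proposal is correct and follows essentially the same strategy as the paper: compute $\rH^{\leq 2}_{\Prism}(\mathcal{Y}_{\mathcal{O}_K}/\s)\cong(\s,0,\s\{-1\})$, use the vanishing of $\rH^1$ to split the Leray/Hochschild--Serre spectral sequence for $(\mathcal{E}\times\mathcal{Y})_{\mathcal{O}_K}\to\mathcal{Z}$ into the $\mathcal{X}$-part and the $\mathcal{Y}_{\mathcal{O}_K}/(\mathbb{Z}/p^n)$-part in total degrees $\leq 2$, and observe that the extra $\s\{-1\}$ survives. The only difference is cosmetic: you obtain the prismatic cohomology of $\mathcal{Y}$ via weak Lefschetz for crystalline cohomology combined with \Cref{finite free follows from crystalline torsion free}, whereas the paper reduces the Lefschetz-type statement to Hodge cohomology and quotes a reference there; you also spell out the vanishing of $d_2$ on the extra summand via naturality along $\mathcal{Z}\to\mathcal{Y}_{\mathcal{O}_K}/(\mathbb{Z}/p^n)$, which the paper leaves implicit. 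Both approaches rely on a K\"{u}nneth decomposition for the finite-free prismatic cohomology of $\mathcal{E}\times\mathcal{Y}$, so you are right to flag it, but this is a shared (and standard, given the freeness) ingredient rather than a gap in your argument.
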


\begin{proof}
We want to apply the Leray spectral sequence to the finite \'{e}tale cover
$(\mathcal{E} \times_{\mathbb{Z}_p} \mathcal{Y})_{\mathcal{O}_K} \to \mathcal{Z}$.

First we claim the natural embedding $\mathcal{Y}_{\mathcal{O}_K} \to \mathbb{P}(V)_{\mathcal{O}_K}$
induces an isomorphism of prismatic cohomology in degrees $\leq 2$.
It suffices to show the same for Hodge--Tate cohomology, which in turn reduces us to showing
it for Hodge cohomology.
This follows from $\mathcal{Y}$ being a smooth complete intersection inside $\mathbb{P}(V)$,
see~\cite[Proposition 5.3]{ABM19}.
Lastly it is well-known that $\rH^2_{\Prism}(\mathbb{P}(V)_{\mathcal{O}_K}/\s) \cong \s\{-1\}$,
for instance see \cite[10.1.6]{APC}.

Since $\rH^1_{\Prism}(\mathcal{Y}_{\mathcal{O}_K}/\s) = 0$, the Leray spectral sequence
in degrees $\leq 2$ is the direct sum of the spectral sequences for $\mathcal{X}$
and $\mathcal{Y}/(\mathbb{Z}/p^n)$ respectively.
This gives the statement for cohomological degrees $\leq 1$.
Looking at the shape of the Leray spectral sequence for $\mathcal{Y}/(\mathbb{Z}/p^n)$,
one easily sees that the $E_2^{0,2}$ term:
\[
\big(\rH^2_{\Prism}(\mathcal{Y}_{\mathcal{O}_K}/\s)\big)^{\mathbb{Z}/p^n} \cong \rH^2_{\Prism}(\mathcal{Y}_{\mathcal{O}_K}/\s) \cong \s\{-1\}
\]
survives, hence proving the  statement in cohomological degree $2$.
\end{proof}

\begin{remark}
\label{rmk example}
\leavevmode
\begin{enumerate}
\item Since $\rH^2_{\Prism}(\mathcal{X}/\s) \oplus \s\{-1\} \cong \rH^2_{\Prism}(\mathcal{Z}/\s)$
we know the $\rH^2_{\Prism}(\mathcal{Z}/\s)[u^\infty] \cong \s/((u+1)^{p^{n-1}} - 1, p^n)$.
In particular its annihilator ideal is $((u+1)^{p^{n-1}} - 1, p^n) \in \s$, congruent to $(u^{p^{n-1}})$ modulo $(p)$.
The ramification index is $p^{n-1}(p-1)$, hence these examples demonstrate that the bound in \Cref{cor exponent inequality}
is sharp.
\item Now assume $p \geq 3$, then $p-2+1 \geq 2$, our previous result \cite[Theorem 7.22]{LL20}
together with the fact that $\rH^2_{\Prism}(\mathcal{Z}/\s)$ contains $u$-torsion implies
that Breuil's first crystalline cohomology of $\mathcal{Z}$, with mod $p^m$ coefficient for any $m$,
together with Frobenius action and filtration is \emph{not} a Breuil module.
When $n = 1$, we have $e = p-1$, which shows that our result \cite[Corollary 7.25]{LL20} is sharp.
Below we shall see that the first crystalline cohomology cannot even support a strongly divisible lattice structure
because it is torsion-free but not free.
\item Same reasoning as in \Cref{rmk stacky example} shows that the map 
$f \colon \mathrm{Alb}(\mathcal{Z}_0) \to \mathrm{Alb}(\mathcal{Z})_0$
is given by the quotient map $\mathcal{E}_0 \to \mathcal{E}_0/\mu_{p^n}$.
\item The special fibre $\mathcal{Z}_0 = \mathcal{E}_0 \times (\mathcal{Y}_0/(\mathbb{Z}/p^n))$ has abelian $\pi_1$,
with its torsion part being $\mathbb{Z}/p^n$.
Here we used the fact that complete intersections of dimension $\geq 3$ are simply connected,
see \cite[\href{https://stacks.math.columbia.edu/tag/0ELE}{Tag 0ELE}]{stacks-project}.
On the other hand the same argument as in \cite[proof of Proposition 2.2.(i)]{BMS1} shows that
$\pi_1(\mathcal{Z}_C) \cong \widehat{\mathbb{Z}}^{\oplus 2}$.
Hence we see again the specialization map $\rH^2_{\et}(\mathcal{Z}_0) \to \rH^2_{\et}(\mathcal{Z}_C)$
has kernel given by $\mathbb{Z}/p^n$,
c.f.~\cite[Remarks 2.3-2.4]{BMS1}.
\end{enumerate}
\end{remark}

In fact it was the desire of finding examples with non-trivial kernel under specialization,
together with inspiring discussions with Bhatt and Petrov separately, 
that leads us to analyze and generalize the example in \cite[Subsection 2.1]{BMS1}.
The Enriques surface used there turns out to be a little bit red herring, the actual purpose
it serves is just an approximation of classifying stack of $\mathbb{Z}/2$,
like our $(\mathcal{Y}/(\mathbb{Z}/p^n))$ here.

Finally let us explain why our example negates a prediction of Breuil \cite[Question 4.1]{BreuilIntegral}.
Let $S$ denote the $p$-adic PD envelope of $\s \twoheadrightarrow \mathcal{O}_K$.

\begin{proposition}
There is an exact sequence:
\[
0 \to \rH^1_{\cris}(\mathcal{Z}/S) \hookrightarrow S \cdot \{e_1, e_2\} \xrightarrow{d_2} S/p^n,
\]
where $d_2(e_1) = 0$ and $d_2(e_2) = (u+1)^{p^n} - 1$. 
In particular $\rH^1_{\cris}(\mathcal{Z}/S)$ is torsion-free rank $2$
but \emph{not} free unless $(n,p) = (1,2)$.
\end{proposition}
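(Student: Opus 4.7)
The plan is to mimic the prismatic computation already carried out, using the crystalline Leray spectral sequence for the finite \'etale $\mathbb{Z}/p^n$-cover $(\mathcal{E}\times_{\mathbb{Z}_p}\mathcal{Y})_{\mathcal{O}_K}\to\mathcal{Z}$ and exploiting the comparison $\Prism^{(1)}\to\dR^{\wedge}_{-/\s}$. Since $\mathcal{Y}$ is a smooth complete intersection of dimension $\geq 3$ and hence simply connected, $\rH^1_{\cris}(\mathcal{Y}/S)=0$, giving $\rH^1_{\cris}((\mathcal{E}\times\mathcal{Y})_{\mathcal{O}_K}/S)\cong\rH^1_{\cris}(\mathcal{E}_{\mathcal{O}_K}/S)$. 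Applying the sequence~\ref{the sequence} to $\mathcal{E}_{\mathcal{O}_K}$, whose prismatic cohomology groups are free (so the $\overline{M}$-term vanishes), one identifies this with $\rH^1_{\Prism}(\mathcal{E}_{\mathcal{O}_K}/\s)^{(1)}\otimes_{\s}S=S\tilde e_1\oplus S\tilde e_2$, free of rank $2$. Because $S$ is $p$-torsion free and the $\mathbb{Z}/p^n$-action on cohomology is trivial, $\rH^{\mathrm{odd}}(\mathbb{Z}/p^n,S)=0$ and $\rH^{2}(\mathbb{Z}/p^n,S)=S/p^n$, so Leray collapses into the claimed exact sequence with $d_2\colon S\tilde e_1\oplus S\tilde e_2\to S/p^n$.

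I would compute $d_2$ by naturality from the prismatic version in the preceding analysis, which sends $e_1\mapsto 0$ and $e_2\mapsto (u+1)^{p^{n-1}}-1$ up to a unit in $\mathbb{Z}_p^{\times}$. The crystalline $d_2$ is its Frobenius twist reduced modulo $p^n$ in $S$. The key binomial identity
\[
(u+1)^{p^n}\equiv (u^p+1)^{p^{n-1}}\pmod{p^n},
\]
which follows from $v_p\!\left(\binom{p^{n-1}}{k}p^k\right)\geq n$ for all $k\geq 1$, gives $\varphi((u+1)^{p^{n-1}}-1)\equiv (u+1)^{p^n}-1\pmod{p^n}$. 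Hence $d_2(\tilde e_2)=(u+1)^{p^n}-1$ and $d_2(\tilde e_1)=0$, so $\rH^1_{\cris}(\mathcal{Z}/S)\cong S\oplus J$ with $J$ as in the statement. Torsion-freeness is immediate since $S^2$ is torsion-free over the domain $S$ (which injects into $S[1/p]$), and rank $2$ follows from $p^n\in J$.

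Freeness of $S\oplus J$ is equivalent to principality of $J$. For $(n,p)=(1,2)$: $\alpha=u(u+2)=uE$, and $E^2=2\gamma_2(E)\in 2S$ together with $u^2=(E-2)^2=E^2-4E+4$ gives $u^2\in 2S$, hence $\alpha\in 2S=p^nS$ and $J=S$ is principal. For $(n,p)\ne(1,2)$, the main obstacle is to rule out principality. My approach has three parts. (i) $J\subsetneq S$: the vanishing $\Tor_1^{\s}(S,\s/p^n)=S[p^n]=0$ forces $p^nS\cap\s=p^n\s$, but $\alpha\in\s$ has coefficient $1$ on $u^{p^n}$, so $\alpha\notin p^n\s$ and hence $\alpha\notin p^nS$. (ii) $J\supsetneq p^nS$: the PD identity $E\cdot\gamma_{p^n-1}(E)=p^n\gamma_{p^n}(E)$ yields
\[
\gamma_{p^n-1}(E)\cdot\alpha=p^n\gamma_{p^n}(E)\cdot((u+1)^{p^{n-1}}-1)\in p^nS,
\]
so $\gamma_{p^n-1}(E)\in J$, while a factorial valuation bookkeeping together with the same splitness argument shows $\gamma_{p^n-1}(E)\notin p^nS$ outside the exceptional case. (iii) Localizing at the augmentation maximal ideal $\mathfrak{m}=\ker(S\twoheadrightarrow k)$, I would check that the classes of $p^n$ and $\gamma_{p^n-1}(E)$ are $k$-linearly independent in $J/\mathfrak{m}J$, so Nakayama forces $J$ to require at least two generators. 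This last step is the main technical hurdle, and essentially measures a divided-power defect that vanishes exactly when $(n,p)=(1,2)$.
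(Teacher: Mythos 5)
The first half of your proposal---the Leray collapse and the identification of $d_2$ via the Frobenius twist of the prismatic $d_2$---matches the paper and is fine. The non-freeness argument, however, has two real problems.

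First, step (i) is incorrect as stated. The vanishing $\Tor_1^{\s}(S,\s/p^n)=S[p^n]=0$ only records that $S$ is $p$-torsion-free; it does \emph{not} imply $p^nS\cap\s=p^n\s$. Indeed that equality fails already for $n=1$: one has $E^p = p\cdot (p-1)!\,\gamma_p(E)\in pS\cap\s$, while $E^p$ has leading coefficient $1$ on $u^{ep}$ and so $E^p\notin p\s$. The relevant $\Tor$ group is $\Tor_1^{\s}(S/\s,\,\s/p^n)=(S/\s)[p^n]$, and it is \emph{nonzero} (witnessed by $E^p/p$), so $\s/p^n\to S/p^n$ is not injective. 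The conclusion $\alpha\notin p^nS$ is still true, but you need to verify it inside $S$ directly (say, in the $u^i/e(i)!$-basis of $S$), not in $\s$. The same concern applies to your step (ii), where you invoke ``the same splitness argument'' to get $\gamma_{p^n-1}(E)\notin p^nS$; there $\gamma_{p^n-1}(E)$ isn't even in $\s$, so a fresh computation is required in any case.

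Second, step (iii) is the crux and you explicitly leave it unfinished (``the main technical hurdle''). Showing that the images of $p^n$ and $\gamma_{p^n-1}(E)$ are $k$-linearly independent in $J/\mathfrak m J$ is not obviously simpler than the original statement, because one has to rule out relations of the form $a\,p^n+b\,\gamma_{p^n-1}(E)\in\mathfrak m J$ in a non-Noetherian ring. The paper avoids this entirely: it assumes $J=(g)$, expands $g=\sum a_i\,u^i/e(i)!$ in the canonical topological basis of $S$ over $W(k)$, uses $g\mid p^n$ to force $a_0\neq 0$ and $p\mid a_0$ (outside $(n,p)=(1,2)$), and then uses $g\mid u^m/e(m)!$ for $m=p^ne-1$ to compare the coefficient of $u^m$ and derive $a_0 b_m=1$, a contradiction. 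This is a more elementary coefficient-comparison that sidesteps both the flawed $\s$-vs-$S$ reduction and the Nakayama dimension count. I would recommend replacing your (i)--(iii) with this explicit-basis argument, or else actually carrying out (iii) and repairing the $\alpha,\gamma_{p^n-1}(E)\notin p^nS$ claims by working directly in $S$.
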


\begin{proof}
In \Cref{prop calculate pris of scheme example} we see that the map
$\mathcal{Z} \to \mathcal{X}$ induces isomorphism in degree $1$ prismatic cohomology and $u^\infty$-torsion
in degree $2$ prismatic cohomology.
The comparison between prismatic and crystalline cohomology \cite[Theorem 1.5]{LL20} (see also \cite[Theorem 5.2]{BS19})
tells us that $\rH^1_{\cris}(\mathcal{X}/S) \xrightarrow{\cong} \rH^1_{\cris}(\mathcal{Z}/S)$.
The same comparison result implies that 
after applying $-\otimes_{\s} \varphi_*S$ to the spectral sequence \ref{ex spectral sequence},
one calculates the crystalline cohomology of $\mathcal{Z}/S$.
Therefore the first statement follows from \Cref{lem d2 of e1} and \Cref{lem d2 of e2}.
Note that $\varphi\big((u+1)^{p^{n-1}} - 1\big) \equiv (u+1)^{p^n} - 1$ modulo $p^n$.

To see the second assertion, note that $\rH^1_{\cris}(\mathcal{Z}/S) \cong S \cdot e_1 \oplus J \cdot e_2$
where $J$ is the ideal 
\[
\{x \in S \mid p^n \text{ divides } x \cdot ((u+1)^{p^n} - 1)\}.
\]
If $J$ were free, then it would be generated by a particular such element, denoted below as $g$.
Let $g= \sum\limits _{i = 0}^\infty a_i \frac{u^i}{e(i)!}$ 
with $ a_i \in W(k)$ approaching $0$ and $e(i) = \lfloor\frac{i}{e} \rfloor$ where $e = p^{n-1}\cdot(p-1)$,
note that every element in $S$ can be uniquely expressed in this form.
Since $p^n$ trivially lies in $J$, it must also be divisible by this $g$. 
Therefore there exists $h_1\in S$ such that $g h_1 = p^{n}$.
Write $q_n = (u +1)^{p ^n}-1 $.
\begin{claim}
$a_0$ is nonzero and divisible by $p$.
\end{claim}
\begin{proof}
The fact that $a_0$ is nonzero follows from $g h_1 = p ^{n}$.
If $a_0$ is a unit in $W(k)$ then $g \in S^\times$ is a unit, 
which implies $q_n \in p^n S$. But this is equivalent to $n =1$ and $p =2$.
\end{proof}
So now we can assume that $a_0 = p a'_0$ with $a'_0 \not = 0$.
Pick $\frac{u ^m}{e(m)!}$ so that $\frac{u ^m}{e(m)!} q_n \in p ^n S$ (select $m = p ^n e -1$ for example). 
Then we have $ g h_2 = \frac{u ^m}{e(m)!} $ for some $h_2 \in S$. 
The above equation implies that $h_2 =  \sum\limits_{i = m}^{\infty}b_i\frac{u ^{i}}{e(i)!}$. 
But compare $u ^m$ term on both sides, we have $a_0 b_m = 1$ which contradicts $a_0 = p a'_0$.
This finishes the proof.

\end{proof}

\begin{remark}
In Breuil's terminology, this shows that the first crystalline cohomology of our examples are
\emph{not} strongly divisible lattices \cite[Definition 2.2.1]{BreuilIntegral}.
This contradicts the claimed \cite[Theorem 4.2.(2)]{BreuilIntegral},
in the proof of loc.~cit.~one is led to Faltings' paper \cite{Faltings}.
However Faltings was treating the case of $p$-divisible groups, hence Breuil's Theorem/proof should only
be applied to abelian schemes.
Now it is tempting to say smooth proper schemes over $\mathcal{O}_K$ and their Albanese should share the
same $\rH^1$ for whatever cohomology theory.\footnote{
To quote Sir Humphrey Appleby: ``It is not for a humble mortal such as I 
to speculate on the complex and elevated deliberations of the mighty.''
But we suspect this is what Breuil had in mind when he claimed that his conjecture holds for $\rH^1$.}
But our example clearly negates this philosophy: the stacky example is squeezed between
two abelian schemes and neither should really be the ``mixed characteristic $1$-motive'' of our stack (even though
these two abelian schemes are abstractly isomorphic).
Indeed the sequence $\mathcal{E}_{\mathcal{O}_K} \to \mathcal{X} \to \mathcal{E}_{\mathcal{O}_K}$
has the property that the first map only induces an isomorphism of first crystalline cohomology of the special fibre
(relative to $W$) and the second map only induces an isomorphism of first \'{e}tale cohomology of the (geometric) generic fibre.
\end{remark}

\subsection{Raynaud's theorem on prolongations}
\label{Raynaud subsection}
Lastly we give a geometric proof of Raynaud's theorem 
\cite[Th\'{e}or\`{e}me 3.3.3]{Ray74}
on uniqueness of prolongations of finite flat commutative group schemes
over a mildly ramified $\mathcal{O}_K$.

Let $G_K$ be a finite flat commutative group scheme over $K$.
A prolongation of $G_K$ is a finite flat commutative group scheme $G$
over $\mathcal{O}_K$ together with an isomorphism of its generic fiber with
$G_K$ (as finite flat commutative group schemes).
Once $G_K$ is fixed, its prolongations form a category with homomorphisms
given by maps of group schemes compatible with the isomorphisms of their generic
fiber.

Recall \cite[Corollaire 2.2.3.]{Ray74} that the
(possibly empty) category of prolongations
of a finite flat group scheme $G$ over $K$ has an initial $G_{\rm{min}}$
and a terminal object $G_{\rm{max}}$.
Moreover these two are interchanged under Cartier duality.

\begin{theorem}[{c.f.~\cite[Th\'{e}or\`{e}me 3.3.3]{Ray74}}]
Assume $G_K$ is a finite flat commutative group scheme
which has a prolongation over $\mathcal{O}_K$.
\begin{enumerate}
\item If $e < p-1$, then the prolongation is unique;
\item If $e < 2(p-1)$, then the reduction of the
canonical map $G_{\rm{min}} \to G_{\rm{max}}$ has kernel
and cokernel annihilated by $p$;
\item If $e = p-1$, then the reduction of the above map
has \'{e}tale kernel and multiplicative type cokernel.
\end{enumerate}
\end{theorem}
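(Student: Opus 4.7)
The plan is to geometrize the problem. Given prolongations $G_{\min} \to G_{\max}$ of $G_K$, I will construct a smooth proper (formal) Deligne--Mumford stack $\mathcal{X}$ over $\mathcal{O}_K$ whose Albanese discrepancy $f \colon \Alb(\mathcal{X}_0) \to \Alb(\mathcal{X}_K)_0$ realises $\coker\bigl((G_{\min})_0 \to (G_{\max})_0\bigr)$, and then apply \Cref{defect controlled by ramification index}. First I embed $G_K$ as a closed subgroup scheme of an abelian variety $A_K/K$ admitting good reduction $\mathcal{A}/\mathcal{O}_K$ (possible after an unramified enlargement of $K$; e.g.\ $A_K$ is a product of Jacobians of sufficiently general curves through $G_K$). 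Because $G_{\max}$ is terminal among all prolongations of $G_K$, the generic embedding $G_K \subset \mathcal{A}_K$ automatically extends to a closed immersion $G_{\max} \hookrightarrow \mathcal{A}$ of finite flat $\mathcal{O}_K$-group schemes.

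Next I form the stack quotient $\mathcal{X} := [\mathcal{A}/G_{\min}]$, where $G_{\min}$ acts on $\mathcal{A}$ by translation via the canonical map $G_{\min} \to G_{\max} \hookrightarrow \mathcal{A}$. The action is free on the generic fibre, so $\mathcal{X}_K \cong \mathcal{A}_K/G_K$ is an abelian variety, while $\mathcal{X}_0 \cong [\mathcal{A}_0/(G_{\min})_0]$. The N\'eron model of $\mathcal{X}_K$ is the abelian scheme $\mathcal{A}/G_{\max}$, and a direct computation identifies
\[
f \colon \Alb(\mathcal{X}_0) = \mathcal{A}_0/\im\bigl((G_{\min})_0 \to \mathcal{A}_0\bigr) \longrightarrow \mathcal{A}_0/(G_{\max})_0
\]
as the canonical quotient, whose kernel is naturally isomorphic to $\coker\bigl((G_{\min})_0 \to (G_{\max})_0\bigr)$. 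To invoke the hypothesis of \Cref{defect controlled by ramification index} (stated for smooth proper formal schemes), I further approximate $\mathcal{X}$ by a genuine smooth proper scheme via the trick of \Cref{cons approximating stack}: replace $\mathcal{A}$ by $\mathcal{A} \times_{\mathcal{O}_K} \mathcal{Y}$ for an auxiliary smooth proper $\mathcal{Y}/\mathcal{O}_K$ carrying a free $G_{\min}$-action and satisfying $\rH^1(\mathcal{Y}_0, \mathcal{O}) = 0$, so that $\Alb$ and the map $f$ are unchanged.

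Now \Cref{defect controlled by ramification index}, parts $(1)$--$(3)$, applied to this $\mathcal{X}$ directly gives the advertised bounds on $\coker\bigl((G_{\min})_0 \to (G_{\max})_0\bigr)$. To obtain the analogous bounds on the \emph{kernel} of $(G_{\min})_0 \to (G_{\max})_0$, I apply the same construction to the Cartier dual $G_K^\vee$: by \cite[Corollaire 2.2.3]{Ray74} Cartier duality swaps the minimal and maximal prolongations, $(G_{\min})^\vee = (G^\vee)_{\max}$ and $(G_{\max})^\vee = (G^\vee)_{\min}$, so cokernel statements for $G_K^\vee$ translate into kernel statements for $G_K$. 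Since Cartier duality also exchanges \'etale group schemes with multiplicative-type ones, the kernel/cokernel dichotomy in part $(3)$ drops out automatically. For $(1)$, the map $(G_{\min})_0 \to (G_{\max})_0$ is an isomorphism; combined with the given generic isomorphism and $\mathcal{O}_K$-flatness of both sides, a Nakayama argument upgrades this to an isomorphism $G_{\min} \cong G_{\max}$ over $\mathcal{O}_K$.

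The principal obstacles are the two non-formal inputs: $(a)$ producing a global embedding $G_{\max} \hookrightarrow \mathcal{A}$ into a good-reduction abelian scheme---one likely wants to first d\'evissage to the case $p \cdot G_K = 0$ and then realise $G_{\max}$ inside the Jacobian of a sufficiently general curve chosen over $\mathcal{O}_K$; and $(b)$ exhibiting an auxiliary scheme $\mathcal{Y}$ carrying a free $G_{\min}$-action with vanishing $\rH^1(\mathcal{O})$ in the spirit of \Cref{cons approximating stack}, so that the approximation of $\mathcal{X}$ by a smooth proper scheme does not corrupt the Albanese calculation. Both are classical in spirit but need to be set up carefully in mixed characteristic.
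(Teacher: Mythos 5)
Your strategy is exactly the one the paper uses: embed $G_{\max}$ into an abelian scheme $\mathcal{A}$ over $\mathcal{O}_K$, form $[\mathcal{A}/G_{\min}]$, approximate by a genuine scheme using the $\mathcal{Y}$-trick of \Cref{cons approximating stack}, identify $\ker(f)$ with $\coker(\rho_0)$ where $\rho\colon G_{\min}\to G_{\max}$, invoke \Cref{defect controlled by ramification index}, and dualize to handle $\ker(\rho_0)$.

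However, there is a gap in the step producing the embedding $G_{\max}\hookrightarrow\mathcal{A}$. You claim that, having embedded $G_K$ into an abelian variety $A_K$ with good reduction, ``terminality of $G_{\max}$'' forces the generic embedding to extend to a closed immersion $G_{\max}\hookrightarrow\mathcal{A}$. This is not so. What the embedding produces is the schematic closure $G'$ of $G_K$ inside $\mathcal{A}$, which is \emph{some} prolongation; terminality then gives you a morphism $G'\to G_{\max}$, but nothing forces $G' = G_{\max}$ and nothing turns that morphism into an embedding of $G_{\max}$ into $\mathcal{A}$. Indeed the closure can land anywhere in the poset of prolongations depending on the chosen embedding: with $K=\mathbb{Q}_p(\zeta_p)$ and $G_K\cong\mathbb{Z}/p\cong\mu_p$, embedding $G_K$ into the étale part of $E[p]$ of an ordinary good-reduction elliptic curve gives closure $\mathbb{Z}/p=G_{\min}$, while embedding it into the connected part gives $\mu_p=G_{\max}$. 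The paper sidesteps this by citing Raynaud's global embedding theorem~\cite[Th\'eor\`eme~3.1.1]{BBM82}, which directly embeds the finite flat group scheme $G_{\max}$ (over $\mathcal{O}_K$) as a closed subgroup of an abelian scheme; this is the clean input you were groping for in your obstacle $(a)$. With that citation substituted for the terminality argument, the rest of your proof---the quotient stack, the approximation, the identification of $\coker(\rho_0)$ with $\ker(f)$, the duality, and the Nakayama upgrade for part $(1)$---is correct and matches the paper's.
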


\begin{proof}
To ease the notation, let us denote $G_1 \coloneqq G_{\rm{min}}$
and $G_2 \coloneqq G_{\rm{max}}$.
Denote the canonical map by $\rho \colon G_1 \to G_2$.
Choose a group scheme embedding $G_2 \to \mathcal{A}$
of $G_2$ into an abelian scheme $\mathcal{A}$ over $\mathcal{O}_K$,
which is guaranteed by yet another Theorem of Raynaud
(see \cite[Th\'{e}or\`{e}me 3.1.1]{BBM82}).

We shall contemplate with the quotient stack $[\mathcal{A}/G_1]$,
which is a smooth proper Artin stack.
Similar to \Cref{cons approximating stack}, let us pick
a smooth complete-intersection $\mathcal{Y}$ with a fixed-point free
action by $G_1$, let 
$\mathcal{Z} \coloneqq (\mathcal{A} \times_{\mathcal{O}_K} \mathcal{Y})/G_1$,
which is a smooth projective scheme over $\mathcal{O}_K$, thanks to the second factor.
Moreover $\mathcal{Z}$ is pointed because it admits a map from $\mathcal{A}$,
which has a canonical point given by the identity section.

Let $H$ be the image group scheme of the map $\rho_k \colon G_{1,k} \to G_{2,k}$.
Applying the same reasoning as in \Cref{rmk stacky example} shows us that
the canonical map $f \colon \mathrm{Alb}(\mathcal{Z}_0) \to \mathrm{Alb}(\mathcal{Z})_0$
is identified with $\mathcal{A}_0/H \to \mathcal{A}_0/G_{2,k}$,
whose kernel group scheme is given by $G_{2,k}/H$,
which is none other than the cokernel of $\rho_0$.
Now our statements on $\coker(\rho_0)$ follows directly from applying 
\Cref{defect controlled by ramification index} to our $\mathcal{Z}$.
The statements on $\ker(\rho_0)$ follows from Cartier duality.
\end{proof}

\begin{remark}
Note that Raynaud first proved his theorem on prolongations,
then use it to prove statements concerning Picard scheme of a $p$-adic integral
scheme, which is directly related to statements concerning
natural map between Albanese of reduction and reduction of Albanese,
see \Cref{translation to Picard remark}.
Our roadmap is the exact opposite. 

Finally, we remark that  the estimate of $s$ so that $p ^s$ kills the corkernel of 
$G_{\rm min}\to G_{\rm max}$ has been studied before, see for example, 
\cite{Vasiu-Zink} and \cite{Bondako} (and the references therein), which used completely different methods than ours. 
Note that an affirmative answer to our \Cref{general question} for $i = 2$, when specialized to the construction made
in the above proof, agrees with Bondako's sharp estimate.
\end{remark}

\bibliographystyle{amsalpha}
\bibliography{main}

\end{document}